\newtheorem{thm}{Theorem}[section]
\newtheorem{cor}[thm]{Corollary}
\newtheorem{lem}[thm]{Lemma}
\newtheorem{ex}[thm]{Example}
\newtheorem{definition}[thm]{Definition}
\newtheorem{rmk}[thm]{Remark}
\newtheorem{prop}[thm]{Proposition}
\newtheorem{conj}[thm]{Conjecture}
\begin{document}

\title{Open Gromov-Witten Invariants on Elliptic K3 Surfaces and Wall-Crossing}
\author{Yu-Shen Lin}

\maketitle
\section{Introduction}
 
 Gromov \cite{G} invented the techniques of pseudo-holomorphic curves and soon became an important tool to study symplectic topology. In this paper, we will provide a version of open Gromov-Witten invariants $\tilde{\Omega}^{Floer}(\gamma;u)$ for elliptic K3 surfaces. Naively, $\tilde{\Omega}^{Floer}(\gamma;u)$ counts the holomorphic discs with boundary on a special Lagrangian $L_u$ (after hyperK\"ahler rotation). The open Gromov-Witten invariant $\tilde{\Omega}^{Floer}(\gamma;u)$ locally with respect to the boundary condition of holomorphic discs. The base is locally divided into chambers and the invariants may jump when the torus fibre move from one chamber to another. 
  
Despite the interest in symplectic geometry, the counting of holomorphic discs also play an important role in mirror symmetry. Strominger-Yau-Zaslow \cite{SYZ}\footnote{Later Kontsevich-Soibelman \cite{KS4} modified the original Strominger-Yau-Zaslow conjecture to the following statement: Calabi-Yau manifolds will converge in the sense of Gromov-Hausdorff to affine manifolds with singularities.} conjectured that Calabi-Yau manifolds will admit special Lagrangian fibration near the large complex limit and the mirror Calabi-Yau manifolds can be constructed via dual torus fibration. The conjecture achieves big success toward the understanding of mirror symmetry when the Calabi-Yau manifolds are trivial torus fibration, which called the semi-flat case\cite{L8}. When the special Lagrangian fibrations admit singular fibres, the semi-flat complex structures on the mirror do not extend over the singular fibres and thus need certain "correction". The corrected of the mirror complex structure is constructed by Kontsevich-Soibelman \cite{KS1} for K3 surface and Gross-Seibert \cite{GS1} for Calabi-Yau manifolds with toric degenerations. It is conjectured that the quantum correction of the mirror complex structures come from holomorphic discs \cite{F3}\cite{KS1}\cite{GS1}. The conjecture is verified for toric Calabi-Yau manifolds \cite{A}\cite{CLL}. Along the line of thought, Gaiotto-Moore-Neitzke \cite{GMN} proposed a recipe of explicit expression of hyperK\"ahler metric for hyperK\"ahler manifolds with abelian fibrations. The key ingredient of the recipe is an invariant which satisfies the Kontsevich-Soibelman wall-crossing formula. In this article, we construct an explicit example of the above wall-crossing phenomenon of holomorphic discs via gluing. Moreover, we prove the open Gromov-Witten invariants satisfy Kontsevich-Soibelman wall-crossing formula under the assumption of discs classes are primitive.
  \begin{thm}(=Theorem \ref{32}) Assume $\gamma_1,\gamma_2$ are primitive classes, and $\gamma=\gamma_1+\gamma_2$ is the unique splitting of holomorphic discs. Then the difference of the open Gromov-Witten invariant $\tilde{\Omega}^{Floer}(\gamma)$ on the different side of the wall is given by
     \begin{align*}
         \Delta \tilde{\Omega}^{Floer}(\gamma)=|\langle\partial\gamma_1,\partial\gamma_2\rangle| \tilde{\Omega}^{Floer}(\gamma_1)\tilde{\Omega}^{Floer}(\gamma_2).
     \end{align*}
  \end{thm}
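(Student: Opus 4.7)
The plan is to compare the moduli spaces of holomorphic discs in class $\gamma$ with boundary on $L_u$ for $u$ in the two chambers adjacent to the wall, and to identify the jump by a degeneration/gluing analysis at the wall itself. Because $\gamma=\gamma_1+\gamma_2$ is the \emph{unique} splitting and both $\gamma_i$ are primitive, the only nodal configurations in class $\gamma$ that can appear along the wall are two-component discs whose pieces realize $\gamma_1$ and $\gamma_2$; there are no multiple covers and no higher bubbling to worry about.

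First I would pin down the wall geometrically. By the definition of $\tilde{\Omega}^{Floer}$ after hyperK\"ahler rotation, the wall through $u_0$ is the locus where the central charges $Z_{\gamma_1}(u)$ and $Z_{\gamma_2}(u)$ become $\mathbb{R}_{>0}$-proportional, so that honest holomorphic discs $D_i$ in class $\gamma_i$ with boundary on $L_{u_0}$ coexist and can be concatenated. On $L_{u_0}\simeq T^2$, the boundary circles $\partial D_i$ represent the homology classes $\partial\gamma_i\in H_1(L_{u_0},\mathbb{Z})$. By the standard formula for the algebraic intersection of $1$-cycles on a $2$-torus, any transverse pair of such representatives meets in exactly $|\langle\partial\gamma_1,\partial\gamma_2\rangle|$ points. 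The invariants $\tilde{\Omega}^{Floer}(\gamma_i)$ count the $D_i$ themselves, so the total number of transverse boundary intersections accumulated over all $(D_1,D_2)$ is $|\langle\partial\gamma_1,\partial\gamma_2\rangle|\tilde{\Omega}^{Floer}(\gamma_1)\tilde{\Omega}^{Floer}(\gamma_2)$.

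The heart of the proof is a boundary-node gluing. At each intersection point $p\in\partial D_1\cap\partial D_2$, I would run a gluing construction in the spirit of Fukaya-Oh-Ohta-Ono: the pre-glued configuration, parametrized by a gluing parameter $\rho>0$, is deformed via the implicit function theorem to a genuine smooth holomorphic disc in class $\gamma$ with boundary on a nearby fibre $L_u$. One orients the gluing parameter axis so that the smoothing exists on exactly one side of the wall; this is the chamber that acquires an extra $\gamma$-disc. Transversality of the gluing (together with the primitivity hypotheses, which preclude branched or multiply covered pieces) guarantees that the gluing is one-to-one: each $(D_1,D_2,p)$ contributes exactly one new disc in class $\gamma$. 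Summing over the choices yields the asserted formula.

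Finally, I would need the converse: every smooth holomorphic disc in class $\gamma$ that appears near the wall arises from this gluing. Here one uses Gromov compactness applied to a family of discs in class $\gamma$ whose boundary fibre tends to $L_{u_0}$: the limit must be a stable map in class $\gamma$, and by the uniqueness of the splitting together with the primitivity of $\gamma_i$, the only combinatorial type available is a two-component disc of types $\gamma_1,\gamma_2$ meeting at a single boundary node. Thus the wall-crossing contribution is completely described by the gluings above. The principal technical obstacle is this analytic gluing/surjectivity step: one must verify both that the Cauchy-Riemann operator at the nodal configuration is surjective in the relevant deformation space so that the implicit function theorem applies, and that the absolute value $|\langle\partial\gamma_1,\partial\gamma_2\rangle|$ appears rather than a signed count, which requires checking that each intersection point contributes a smoothing on the \emph{same} side of the wall. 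Once these analytic and orientation issues are pinned down, the theorem follows.
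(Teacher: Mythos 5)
Your geometric picture is the right one, and the paper in fact runs essentially your argument in the concrete situation of Example \ref{900} (two regular initial discs whose boundaries meet transversally in $L_p$, smoothed out by an explicit gluing theorem). But as a proof of Theorem \ref{32} your proposal has a genuine gap: you treat $\tilde{\Omega}^{Floer}(\gamma_i)$ as a literal count of honest holomorphic discs $D_i$ and then enumerate transverse intersection points of their boundary circles on the torus. The invariants are not defined that way --- they are virtual counts $Corr_*(\mathcal{M}_{0,\gamma_i}(\mathfrak{X},L);tri,tri)(1)$ built from Kuranishi structures and perturbed multisections, precisely because the moduli spaces need not be regular (the unperturbed problem has virtual dimension $-1$, and non-transverse configurations cannot be excluded). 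Your gluing-at-each-intersection-point scheme, the claimed one-to-one correspondence with new $\gamma$-discs, and the Gromov-compactness converse all presuppose regularity and transversality that the hypotheses of the theorem do not grant. You flag surjectivity of the linearized operator at the nodal configuration as ``the principal technical obstacle,'' but in this framework it is not an obstacle to be overcome; it is systematically bypassed.

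The paper's actual proof is a virtual cobordism argument. One joins the two chambers by a path, forms the family moduli space $\bigcup_{t}\mathcal{M}_{0,\gamma}(\mathfrak{X}_t,L)$ with a compatible system of Kuranishi structures and multisections (Theorem \ref{64}), and applies the Stokes formula for smooth correspondences (Proposition \ref{2}); the jump $\Delta\tilde{\Omega}^{Floer}(\gamma)$ is thereby identified with the correspondence over the single boundary stratum $\mathcal{M}_{1,\gamma_1}(\mathfrak{X}_{t_0},L)\times_{(ev_0,ev_{\vartheta})}\mathcal{M}_{1,\gamma_2}(\mathfrak{X}_{t_0},L)$. The factor $\langle\partial\gamma_1,\partial\gamma_2\rangle$ then arises not from counting intersection points of actual boundary circles but from Lemma \ref{50}: compatibility with the forgetful map lets one take the Kuranishi charts of $\mathcal{M}_{1,\gamma_i}$ to be products $V_{\alpha_i}\times S^1$ of those of $\mathcal{M}_{0,\gamma_i}$ with the boundary circle, and the fiber product over $L\cong T^2$ of the two $S^1$-factors (mapping in the classes $\partial\gamma_1$, $\partial\gamma_2$) contributes the degree $\pm\langle\gamma_1,\gamma_2\rangle$ to the integral of the product of the perturbation data. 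This yields $\Delta\tilde{\Omega}^{Floer}(\gamma)=\pm\langle\gamma_1,\gamma_2\rangle\,\tilde{\Omega}^{Floer}(\gamma_1)\tilde{\Omega}^{Floer}(\gamma_2)$ with no regularity hypotheses; the sign (hence the absolute value in the introduction's statement) is pinned down separately, by determining on which side of the wall the extended (tropical) disc exists as in Proposition \ref{31} and the argument of Theorem \ref{47}, not by an orientation analysis of individual gluings. To salvage your approach you would either have to add regularity hypotheses to the theorem or re-express each step virtually --- at which point you arrive at the paper's argument.
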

 
 Tropical geometry also arises naturally from the point of view of Strominger-Yau-Zaslow conjecture. It is speculated that the holomorphic curves (with boundaries) project to amoebas under the fibration. As the Calabi-Yau manifolds approach the large complex limit points, the amoebas will converge to $1$-skeletons on the base, known as the tropical curves. One advantage of introducing the notation tropical geometry is that one can reduce an enumerative problem into a combinatoric problem. Mikhalkin \cite{M2} first establish such correspondence theorem between tropical curves and holomorphic curves on toric surfaces. Later it is generalized to toric manifolds by Nishinou-Seibert \cite{NS}. Nishinou \cite{N} also proved a correspondence of counting tropical discs and holomorphic discs on toric manifolds. In this article, we introduce the tropical discs on K3 surfaces and the associated weights. In particular, there exists a tropical discs counting invariants $\tilde{\Omega}^{trop}(\gamma;u)$ counts the tropical discs end on $u$ with the weight. One application of the open Gromov-Witten invariants is that we establish a weaker version of correspondence theorem: 
   \begin{thm} (=Theorem \ref{47})
      If $\tilde{\Omega}^{Floer}(\gamma;u)\neq 0$ for some relative class $\gamma\in H_2(X,L_u)$, then there exists a tropical disc represents $\gamma$. 
   \end{thm}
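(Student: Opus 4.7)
The plan is to construct the desired tropical disc as the tropical (large-volume) limit of (the image under the fibration of) an actual holomorphic disc representing $\gamma$.

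Since $\tilde{\Omega}^{Floer}(\gamma;u)\neq 0$, there exists a somewhere-injective holomorphic map $\phi\colon (D^2,\partial D^2)\to(X,L_u)$ with $[\phi]=\gamma$, for an almost complex structure in the appropriate chamber. Composing with the SYZ fibration $\pi\colon X\to B$ produces the amoeba $A_\phi:=\pi(\phi(D^2))\subset B$, a compact subset containing $u$ that meets the discriminant locus only at isolated interior points. I then pass to the semiflat / large-complex-structure limit, rescaling the fiber metric to zero. Under this rescaling the amoebas shrink transversely, and Gromov compactness, together with the uniform area bound provided by the relative period of $\gamma$, shows that a Hausdorff limit $\Gamma\subset B$ exists and is a one-dimensional piecewise-linear finite tree rooted at $u$.

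It then remains to verify that $\Gamma$ satisfies the axioms of a tropical disc representing $\gamma$ introduced earlier in the paper: each edge is a straight segment along an integral affine direction, the weight on each edge equals the $H_1$-class of the boundary loop of the corresponding piece of the disc, every interior vertex obeys the balancing condition, and each unbounded-type end terminates at a singular fiber whose local vanishing cycle matches the limiting boundary class. Balancing at interior vertices follows because, at a node of the bubbled limit, the boundary loops of the adjacent components sum to zero in $H_1$ of a reference regular fiber; the identification of slopes with $H_1$-classes is dictated by the integral affine structure on $B$ coming from the periods.

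The main technical obstacle is the local analysis near the discriminant locus. One must show that in the limit each disc component stretches thinly along a single affine direction, and that at a focus-focus singular fiber the incoming leg of $\Gamma$ really carries the corresponding vanishing cycle as its tropical weight. I plan to address this by adapting Nishinou's tropicalisation of holomorphic discs from the toric setting to the local focus-focus model, which serves as the building block at each singular fiber. Once this local correspondence is in place, the global tropical-disc structure on $\Gamma$, including balancing at trivalent vertices and matching of weights at the legs, follows by gluing the local models along the edges read off from the amoeba.
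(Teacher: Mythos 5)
Your proposal takes the classical adiabatic-limit route --- project an actual holomorphic disc to the base, degenerate toward the large complex structure limit, and extract a piecewise-linear limit --- and this is precisely the approach the paper explicitly declines to take, because the key analytic step is not available. The gap is in your second paragraph: to form the Hausdorff limit $\Gamma$ you need a family of holomorphic discs in class $\gamma$ for a collapsing family of hyperK\"ahler structures $(\omega_\epsilon,\Omega)$, together with a Gromov-type compactness/convergence statement as $\epsilon\to 0$. Neither piece is justified. The Ricci-flat metrics on an elliptic K3 with $24$ singular fibres are not explicitly known (only Gross--Wilson's $C^1$-approximation away from an exponentially small error), and there is no tropical convergence theorem in this collapsing, non-toric setting; Mikhalkin's and Nishinou's results rely on explicit toric degenerations that do not exist here. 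Moreover, "each disc component stretches thinly along a single affine direction" is not a technicality to be checked at the end --- it is the entire content of the theorem, and asserting it as the outcome of an unproved local analysis near a focus-focus fibre leaves the argument circular. You also implicitly assume the disc in class $\gamma$ persists along the whole degeneration, which is not automatic since the moduli spaces undergo wall-crossing as the structure varies (this is controlled only via the cobordism Theorem~\ref{64}, which produces correction terms, not persistence).

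The paper avoids all of this with a soft, purely base-level argument: starting from $u$ it follows the affine half-line (gradient flow of $|Z_\gamma|^2$) along which $|Z_{\gamma_t}|$ strictly decreases; by local constancy of $\tilde{\Omega}^{Floer}$ (Proposition~\ref{2012}), either the invariant survives all the way to a singular fibre --- where Lemma~\ref{360} and Theorem~\ref{43} force $\gamma$ to be a multiple of the Lefschetz thimble, i.e.\ an initial edge --- or it jumps at a wall, where Proposition~\ref{310} and Theorem~\ref{64} force a splitting $\gamma=\sum_n k_n\gamma_n$ into classes with nonvanishing invariants, giving a trivalent vertex with balancing from conservation of charge; one then recurses, with termination from Gromov compactness and the minimality of the initial disc. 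The tropical disc is thus assembled from the loci where the invariant is nonzero, never from a limit of holomorphic curves. If you want to salvage your approach you would need to prove the tropical convergence theorem for collapsing K3 metrics, which is a substantial open problem rather than an adaptation of Nishinou's toric argument.
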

 Together with the primitive wall-crossing formula for open Gromov-Witten invariants on K3 surfaces, we prove in certain cases there exists a correspondence theorem for tropical discs counting/open Gromov-Witten invariants.

\section*{Outline of the Paper}
In Section \ref{1036}, we review the basic facts about hyperK\"ahler geometry. In section 3, we first talk about tropical geometry on the base elliptic K3. After that, we construct a scattering diagram on the base of elliptic K3 surface and use it to define tropical version of Donaldson-Thomas invariants $\Omega^{trop}$. We also discuss the relation between the invariants $\Omega^{trop}$ and counting of tropical discs. In section 4, we will study holomorphic discs inside K3 surfaces with special Lagrangian fibration. With the help of auxiliary $S^1$-family of complex structures coming from twistor family, we define the open Gromov-Witten invariants for elliptic K3 surfaces. We calculate a multiple cover formula for this new invariant using localization theorem. At the end, we derive a corresponding theorem between holomorphic discs and tropical discs.

\section*{Acknowledgements} The author would like to thank Shing-Tung Yau for constant support and encouragement. The author is also indebted to Kenji Fukaya, Ono, Andrew Neitzke, Wenxuan Lu for explaining their related work. The author would like to thank Chiu-Chu Melissa Liu for helpful discussion and teaching him the localization techniques. The author would also like to thank Yan Soibelman for pointing out many useful references.

\section{Review of HyperK\"ahler Manifolds}\label{1036}
\subsection{HyperK\"ahler Manifolds and HyperK\"ahler Rotation Trick}
\begin{definition}
 A complex manifold $X$ of dimension $2n$ is called a hyperK\"ahler manifold if its holonomy group falls in $Sp(n)$.
\end{definition}
The holonomy of a hyperK\"ahler manifold guarantee the existence of a covariant constant holomorphic symplectic $2$-form, i.e., a holomorphic non-degenerate closed $2$-form.
\begin{ex}
  Every compact complex K\"ahler manifold admits a holomorphic symplectic $2$-form and therefore is hyperK\"ahler \cite{Y1}. In particular, K3 surfaces and Hilbert scheme of points on K3 surfaces are hyperK\"ahler. 
\end{ex}
\begin{ex} There are also non-compact examples of hyperK\"ahler manifolds such as Hitchin moduli spaces and Ooguri-Vafa space ( Section \ref{1001}).
\end{ex}

Let $X$ be a hyperK\"ahler manifold, $\Omega$ be its holomorphic symplectic $2$-form and $\omega$ be a K\"ahler form such that $g$ is the corresponding
Ricci-flat metric from the definition. Write
  \begin{align*}
    \omega_1=\mbox{Re}\Omega,
    \hspace{5mm}\omega_2=\mbox{Im}\Omega,\hspace{5mm}
    \omega_3=\omega.
  \end{align*}
and
  \begin{align*}
     g(\cdot,\cdot)=\omega_i(\cdot,J_i \cdot), \mbox{ for }i=1,2,3.
  \end{align*}
Then it is straight-forward to check that $J_1$, $J_2$ and $J_3$ are
integrable complex structures satisfying quaternionic relations.
In particular, $\underline{X}$ admits a family of
complex structures parametrized by $\mathbb{P}^1$, called the twistor line. Explicitly, they are given
by
\begin{equation*}
   J_{\zeta}=\frac{i(-\zeta+\bar{\zeta})J_1-(\zeta+\bar{\zeta})J_2+(1-|\zeta|^2)J_3}{1+|\zeta|^2},
   \hspace{3mm}
   \zeta\in \mathbb{P}^1.
\end{equation*} We will denote $\underline{X}$ with the complex structure $J_{\zeta}$ by $X_{\zeta}$.
 The holomorphic
symplectic $2$-forms $\Omega_{\zeta}$ with respect to the compatible
complex structure $J_{\zeta}$ are given by
\begin{equation}  \label{38}
  \Omega_{\zeta}=-\frac{i}{2\zeta}\Omega+\omega_3-\frac{i}{2}\zeta\bar{\Omega}.
\end{equation} In particular, straightforward computation gives
\begin{prop} \label{49}
Assume $\zeta=e^{i\vartheta}$, then we have
  \begin{align*}
     \omega_{\vartheta}:=\omega_{\zeta}&=-\mbox{Im}(e^{-i\vartheta}\Omega),\\
     \Omega_{\vartheta}:=\Omega_\zeta&=\omega_3-iRe(e^{-i\vartheta}\Omega).
  \end{align*}
\end{prop}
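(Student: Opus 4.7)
The plan is to verify both formulas by direct substitution of $\zeta = e^{i\vartheta}$, using only the twistor-line expression for $J_\zeta$, the hyperk\"ahler identity $g(J_iX,Y)=\omega_i(X,Y)$, and equation (\ref{38}) for $\Omega_\zeta$. The statement is really a packaging of a few elementary trigonometric identities.

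First I would handle $\Omega_\vartheta$, which is the cleaner of the two. Substituting $\zeta=e^{i\vartheta}$ in (\ref{38}) gives
\begin{align*}
\Omega_\vartheta = -\tfrac{i}{2}e^{-i\vartheta}\Omega + \omega_3 - \tfrac{i}{2}e^{i\vartheta}\bar\Omega.
\end{align*}
Setting $B := e^{-i\vartheta}\Omega$ so that $\bar B = e^{i\vartheta}\bar\Omega$, the outer terms combine as $-\tfrac{i}{2}(B+\bar B) = -i\,\mathrm{Re}(B)$, giving the claimed formula.

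For $\omega_\vartheta$, I would first specialize the twistor formula at $|\zeta|=1$. Since $\zeta+\bar\zeta=2\cos\vartheta$, $-\zeta+\bar\zeta=-2i\sin\vartheta$, and $1-|\zeta|^2=0$, one obtains
\begin{align*}
J_\vartheta = \sin\vartheta\, J_1 - \cos\vartheta\, J_2.
\end{align*}
The K\"ahler form compatible with $J_\vartheta$ and the metric $g$ satisfies $\omega_\vartheta(X,Y)=g(J_\vartheta X,Y)$, which by linearity and the identity $g(J_iX,Y)=\omega_i(X,Y)$ gives
\begin{align*}
\omega_\vartheta = \sin\vartheta\, \omega_1 - \cos\vartheta\, \omega_2 = \sin\vartheta\,\mathrm{Re}\,\Omega - \cos\vartheta\,\mathrm{Im}\,\Omega.
\end{align*}
Expanding $e^{-i\vartheta}\Omega = (\cos\vartheta - i\sin\vartheta)(\mathrm{Re}\,\Omega + i\,\mathrm{Im}\,\Omega)$ and taking minus the imaginary part recovers exactly this expression, proving $\omega_\vartheta = -\mathrm{Im}(e^{-i\vartheta}\Omega)$.

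There is no real obstacle here; the only thing to be careful about is the sign convention relating $\omega_i$, $g$, and $J_i$, since a reversed convention would flip the sign in $\omega_\vartheta$. I would state the convention $\omega_i(X,Y)=g(J_iX,Y)$ explicitly at the start to make the computation unambiguous.
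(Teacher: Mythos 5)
Your computation is correct, and it is exactly the ``straightforward computation'' the paper invokes without writing out: the paper offers no proof of Proposition \ref{49}, and your direct substitution of $\zeta=e^{i\vartheta}$ into (\ref{38}) for $\Omega_\vartheta$, together with specializing the twistor formula for $J_\zeta$ at $|\zeta|=1$ to get $\omega_\vartheta=\sin\vartheta\,\omega_1-\cos\vartheta\,\omega_2=-\mathrm{Im}(e^{-i\vartheta}\Omega)$, is the intended argument. Your explicit flagging of the convention $\omega_i(X,Y)=g(J_iX,Y)$ (which is consistent with the paper's $g(\cdot,\cdot)=\omega_i(\cdot,J_i\cdot)$) is a sensible precaution and does not change the conclusion.
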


The complex structures on $X_{\zeta}$, $\zeta\in \mathbb{P}^1$ actually glue together to a complex dimension $3$ manifold. The manifold is called twistor space and topologically is just $\underline{X}\times \mathbb{P}^1$. We will denote $\mathcal{X}$ to be the twistor space with $X_{0},X_{\infty}$ deleted for the later on purpose.

\begin{rmk} \label{300}
   Let $L$ be a holomorphic Lagrangian in $(X,\omega,\Omega)$, namely,
   $\Omega|_L=0$. Assume the
north and south pole of the twistor line are given by
$(\omega,\Omega)$ and $(-\omega,\bar{\Omega})$ respectively, making
$L$ an holomorphic Lagrangian. The hyperK\"ahler structures
corresponding to the equator $\{\zeta=e^{i\vartheta}:|\zeta|=1\}$
make $L$ a special Lagrangian in
$X_{\vartheta}=(\underline{X},\omega_{\vartheta},\Omega_{\vartheta})$.
In particular, if $(\underline{X},\omega,\Omega)$ admits holomorphic
Lagrangian fibration, then it induces an $S^1$-family of special
Lagrangian fibrations on $X_{\vartheta}$ for each $\vartheta\in S^1$.
This is the so-called hyperK\"ahler rotation trick.
\end{rmk}

\subsection{Twistorial Data} \label{1003}
Let $f:X \rightarrow B\cong \mathbb{P}^1$ be an elliptic K3 surface with a holomorphic symplectic $2$-form $\Omega$ and a prescribed K\"ahler class $[\omega]$. From Yau's theorem there exists a unique Ricci-flat K\"ahler form
$\omega$ satisfying the Monge-Amp\`ere equation
$2\omega^2=\Omega\wedge \bar{\Omega}$. The triple $(X, \omega,
\Omega)$ will induce a twistor family of K3 surfaces. Let $\Delta\subseteq B$ be the discriminant locus (also referred as singularity of the affine structures later on) of the fibration and $B_0=B\backslash \Delta$. We will denote the fibre over $u\in B$ by $L_u$.We have the follow long exact sequence of local system of lattices
\begin{equation} \label{1002}
     \bigcup_{u\in B_0}H_2(X) \rightarrow \Gamma:=\bigcup_{u\in B_0} H_2(X, L_u) \rightarrow \Gamma_g:= \bigcup_{u\in B_0} H_1(L_u) \rightarrow 0.
\end{equation}
  The natural non-degenerate symplectic pairing $\langle, \rangle$ on $\Gamma_g$ lifts to a degenerate symplectic pairing on $\Gamma$ with kernel $ \bigcup_{u\in B_0}H_2(X)$. 
\begin{definition}
  The central charge is a homomorphism from the local system of lattices $\Gamma$ to $\mathbb{C}$ defined by 
     \begin{align*}
        Z :&\Gamma\longrightarrow \mathbb{C} \\
        &    \gamma_u \mapsto \int_{\gamma_u}\Omega
     \end{align*} for each $\gamma_u\in H_2(X,L_u)$.
\end{definition}The integral is well-defined because
$\Omega|_L=0$. The following lemma is straight forward computation:

\begin{lem} \label{34}
    For any $v \in TB_0$, we have
   \begin{equation}
   dZ_{\gamma}(v)=\int_{\partial \gamma}\iota_{\tilde{v}}\Omega,
   \end{equation}
   where $\tilde{v} \in TX$ is any lifting of $v$.
\end{lem}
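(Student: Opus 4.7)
The plan is to realise the variation of the flat section $u\mapsto\gamma_u$ of the local system $\Gamma$ by flowing along a chosen lift of $v$, then reduce to a Cartan--Stokes computation, using the holomorphic-Lagrangian property of the fibres to absorb any ambiguity in the choice of lift.

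First I would settle the independence of the right-hand side from the chosen lift. If $\tilde v_1,\tilde v_2\in TX$ are two lifts of the same $v\in T_uB_0$, then $w:=\tilde v_1-\tilde v_2$ is vertical, so $w|_{L_u}\in TL_u$. Since $f:X\to B$ is a holomorphic Lagrangian fibration, $\Omega(w,w')=0$ for every $w'\in TL_u$, hence $\iota_w\Omega$ pulls back to $0$ on $L_u\supseteq\partial\gamma$. Therefore $\int_{\partial\gamma}\iota_{\tilde v_1}\Omega=\int_{\partial\gamma}\iota_{\tilde v_2}\Omega$, and it suffices to verify the formula for one convenient choice of lift.

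For the derivative itself, let $c(t)$ be the integral curve of $v$ with $c(0)=u$, and fix a smooth Ehresmann connection on $f$ over a neighbourhood of $u$. Take $\tilde v$ to be the corresponding horizontal lift; its flow $\phi_t$ is fibre-preserving, $\phi_t(L_u)=L_{c(t)}$. If $\Gamma$ is a smooth $2$-chain representing $\gamma_u$, then $\phi_t(\Gamma)$ represents the parallel-transported class $\gamma_{c(t)}$ in $H_2(X,L_{c(t)})$, so
\begin{align*}
Z_\gamma(c(t))=\int_{\phi_t(\Gamma)}\Omega=\int_\Gamma \phi_t^*\Omega.
\end{align*}
Differentiating at $t=0$ and applying Cartan's magic formula together with $d\Omega=0$ yields
\begin{align*}
dZ_\gamma(v)=\int_\Gamma \mathcal{L}_{\tilde v}\Omega=\int_\Gamma d(\iota_{\tilde v}\Omega)=\int_{\partial\Gamma}\iota_{\tilde v}\Omega,
\end{align*}
the last step by Stokes' theorem. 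Combined with the previous paragraph this establishes the formula for an arbitrary lift.

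The only delicate point is realising the flat section $\gamma_{c(t)}$ as $\phi_t(\Gamma)$ for a single ambient diffeomorphism, which is what forces the use of a fibre-preserving lift in the derivative step; after that the computation is a routine Cartan--Stokes identity. The elegant feature of the statement, namely that \emph{any} lift works, is a direct consequence of the Lagrangian condition $\Omega|_{L_u}=0$, so I do not anticipate a genuine obstacle beyond bookkeeping.
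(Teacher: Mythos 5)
Your proof is correct and is essentially the paper's argument: the author simply cites the ``variational formula of the relative pairing,'' which is exactly the Cartan--Stokes identity $\mathcal{L}_{\tilde v}\Omega=d\iota_{\tilde v}\Omega$ applied to a flat family of relative cycles, with the Lagrangian condition $\Omega|_{L_u}=0$ absorbing the ambiguity in the lift. Your version just unwinds that formula explicitly via an Ehresmann connection, so the two proofs coincide in substance.
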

\begin{proof}Since $\Omega|_L=0$, we view $\Omega$ as the
   element $(\Omega,0)\in H^2(X,L)$. From the variational formula of
   relative pairing,
   \begin{align*}
      dZ_{\gamma}(v)&=\mathcal{L}_v \langle \gamma,(\Omega,0)\rangle \\
                    &=\langle \gamma, (\iota_{\tilde{v}}d\Omega,
                    \iota_{\tilde{v}}(0-\Omega))\rangle=\int_{\partial
                    \gamma}\iota_{\tilde{v}}\Omega
   \end{align*}
\end{proof}

Given a point $u_0\in B_0$ and an element $\gamma_{u_0}\in
\Gamma_{u_0}$, there exists a neighborhood $\mathcal{U}$ of $u_0$
and a neighborhood of $\tilde{\mathcal{U}}$ of $\gamma_{u_0}$ such
that $\mathcal{U}$ is homeomorphic to $\tilde{\mathcal{U}}$. Under
this identification, we have
\begin{cor}
  The central charge $Z$ is a
  holomorphic function on $\Gamma$.
\end{cor}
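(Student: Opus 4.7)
The plan is to work locally using the flatness of the local system $\Gamma$: the identification of a neighborhood $\tilde{\mathcal{U}}$ of $\gamma_{u_0}$ with a neighborhood $\mathcal{U}$ of $u_0 \in B_0$ turns $Z$ into a complex-valued function on an open subset of $B_0 \subset \mathbb{P}^1$, so the statement reduces to checking the Cauchy--Riemann condition $dZ_\gamma(Jv) = i\, dZ_\gamma(v)$ for every real tangent vector $v \in T_{u_0} B_0$, where $J$ denotes the complex structure on $B$.

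The computation is essentially immediate from Lemma \ref{34}: picking any real lift $\tilde{v} \in TX$ of $v$ one has $dZ_\gamma(v) = \int_{\partial\gamma} \iota_{\tilde{v}}\Omega$. Since $f: X \to B$ is holomorphic, $df$ is complex linear, hence $J_X \tilde{v}$ (with $J_X$ the ambient complex structure) is automatically a real lift of $Jv$. Therefore
\[
dZ_\gamma(Jv) \;=\; \int_{\partial\gamma} \iota_{J_X \tilde{v}}\,\Omega \;=\; i\int_{\partial\gamma} \iota_{\tilde{v}}\,\Omega \;=\; i\, dZ_\gamma(v),
\]
where the middle equality uses that $\Omega$ is a holomorphic $(2,0)$-form, so $\iota_{J_X w}\Omega = i\,\iota_w \Omega$ for every real vector $w$ (decompose $w = w^{1,0}+w^{0,1}$ and note that $\Omega$ kills anti-holomorphic directions).

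I do not anticipate a genuine obstacle; the only small point I would check in passing is that the expression in Lemma \ref{34} is independent of the chosen lift, which follows because two lifts differ by a vector tangent to the holomorphic Lagrangian fibre $L_{u_0}$, and the vanishing $\Omega|_{L_{u_0}} = 0$ kills the resulting contribution to $\int_{\partial \gamma} \iota_{(\cdot)}\Omega$. The remainder is bookkeeping.
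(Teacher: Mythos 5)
Your proof is correct and is essentially the paper's own argument in an equivalent phrasing: the paper verifies $(v+iJv)Z=0$ by noting that $\tilde v+iJ\tilde v$ is a $(0,1)$-vector annihilated by the $(2,0)$-form $\Omega$, which is exactly your identity $\iota_{J_X\tilde v}\Omega=i\,\iota_{\tilde v}\Omega$ recast as the vanishing of the antiholomorphic derivative. Both rest on Lemma \ref{34} and the type decomposition of $\Omega$, so there is nothing to add.
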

\begin{proof}
  Since any $(0,1)$-vector on $TB_0$ can be expressed in term of
  $v+iJv$ for some $v\in TB_0$,
     \begin{align*}
        (v+iJv)Z=\int_{\partial
        \gamma}\iota_{(\tilde{v}+iJ\tilde{v})}\Omega=0.
     \end{align*}The latter equality holds because $\Omega$ is a
     $(2,0)$-form and $\tilde{v}+iJ\tilde{v}$ is a $(0,1)$-vector.
\end{proof}

\begin{cor} \label{350}
   Let $\gamma\in \Gamma$, then $dZ_{\gamma}(u)\neq 0$ if $u\in B_0$. Moreover, if $\gamma$ is the Lefschetz thimble around an $I_1$-singularity, then both $Z_{\gamma}$ and $dZ_{\gamma}$ extends to $0$ over the singularity.
\end{cor}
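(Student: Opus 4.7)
The plan is to apply Lemma \ref{34} in both clauses. For the first claim, we may assume $\partial\gamma \neq 0$ in $H_1(L_u,\mathbb{Z})$—otherwise $\gamma$ lifts to an absolute class in $H_2(X)$ and $dZ_\gamma \equiv 0$ identically. Then Lemma \ref{34} reads
\begin{equation*}
  dZ_\gamma(v) = \int_{\partial\gamma} \iota_{\tilde v}\Omega|_{L_u}.
\end{equation*}
Because $\Omega$ is of type $(2,0)$ and $L_u$ is a complex submanifold, $\iota_{\tilde v}\Omega|_{L_u}$ is a holomorphic $1$-form on the elliptic curve $L_u$. The assignment $v \mapsto [\iota_{\tilde v}\Omega|_{L_u}]$ is precisely the derivative of the period map of the holomorphic Lagrangian fibration $f$ and is an $\mathbb{R}$-linear isomorphism $T_u B_0 \xrightarrow{\sim} H^{1,0}(L_u)$. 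Combined with the non-degenerate Hodge pairing $H_1(L_u,\mathbb{Z}) \times H^{1,0}(L_u) \to \mathbb{C}$, this produces some $v$ with $dZ_\gamma(v) \neq 0$.

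For the extension across an $I_1$-singular value $u_*$, let $\gamma$ be the Lefschetz thimble whose boundary $\lambda_u = \partial\gamma$ is the vanishing cycle contracting to the nodal point as $u \to u_*$. Since $\gamma_u$ is a $2$-chain of area tending to zero while $\Omega$ is bounded in a neighborhood of the nodal fiber, $Z_\gamma(u) = \int_{\gamma_u}\Omega$ extends continuously across $u_*$ with limit $0$. For the differential, Lemma \ref{34} gives $dZ_\gamma(v) = \int_{\lambda_u}\iota_{\tilde v}\Omega$, and one works in the standard local Lefschetz model $xy = u-u_*$ for the $I_1$-degeneration with $\Omega$ equal to a nonvanishing holomorphic multiple of $-dx \wedge dy$. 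A direct computation in these coordinates, parametrizing $\gamma_u$ by $xy \in [0, u-u_*]$ with $|x| = |y|$, exhibits $Z_\gamma$ as a holomorphic function of $u-u_*$ vanishing at $u_*$, and hence $dZ_\gamma$ extends holomorphically across the singularity as claimed.

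The main obstacle is the period-map isomorphism in the first part: the cleanest route is to work in local Darboux-type coordinates adapted to the holomorphic Lagrangian fibration, writing $\Omega = du_1 \wedge d\zeta_1 + du_2 \wedge d\zeta_2$ with $u_j$ base and $\zeta_j$ fiberwise holomorphic coordinates, so that $\iota_{\tilde v}\Omega|_{L_u} = \sum_j v^j d\zeta_j$ visibly spans $H^{1,0}(L_u)$ as $v$ varies over $T_u B_0$. For part two the subtlety is that the natural lift $\tilde v$ of a base vector becomes singular at the critical point of $f$; however, for $u$ near but distinct from $u_*$ the integrands are regular on a neighborhood of $\lambda_u$, and their holomorphic extension across $u_*$ is read off directly from the local Lefschetz model together with the Picard-Lefschetz description of the monodromy.
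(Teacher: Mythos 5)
Your proof is correct and follows essentially the same route as the paper's (which simply cites Lemma \ref{34} together with the non-degeneracy of $\Omega$ for the first claim, and the removability of the singularity of $Z_\gamma$ for the second); you have merely filled in the details, including the necessary caveat that $\partial\gamma \neq 0$, which the paper's statement leaves implicit. The period-map/local-Lefschetz computations you supply are the standard justifications for the two facts the paper asserts without elaboration.
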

\begin{proof}
  The first part follows immediately from Lemma \ref{34} and $\mbox{Re}(e^{i\vartheta}\Omega)$ is always a symplectic $2$-form. The second part is because $Z_{\gamma}$ has a removable singularity at the origin. 
\end{proof}

\begin{cor} \label{1012}
  $\langle dZ, dZ \rangle=0$ and $\langle dZ, d\bar{Z} \rangle >0$.
\end{cor}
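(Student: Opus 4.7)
The plan is to translate both identities into integrals over the fiber $L_u$ via Lemma \ref{34}, then exploit $\Omega|_{L_u}=0$ (since $L_u$ is a complex $1$-dimensional submanifold of the complex surface $X$, on which the $(2,0)$-form $\Omega$ automatically vanishes) together with the Monge--Amp\`ere identity $\Omega\wedge\bar\Omega = 2\omega^2$.

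First I would fix $u_0 \in B_0$ and choose a local basis $\gamma_e,\gamma_m$ of $\Gamma_{u_0}/H_2(X)$ with $\langle\partial\gamma_e,\partial\gamma_m\rangle = 1$. By Lemma \ref{34}, for $v\in T_{u_0}B_0$ with lift $\tilde v\in TX$, the class $[\iota_{\tilde v}\Omega|_{L_{u_0}}]\in H^1(L_{u_0},\mathbb{C})$ has expansion $dZ_e(v)\,\xi_e + dZ_m(v)\,\xi_m$, where $\xi_e,\xi_m$ is the basis of $H^1(L_{u_0},\mathbb{C})$ dual to $\partial\gamma_e,\partial\gamma_m$. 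Pairing two such classes via the intersection form on $L_{u_0}$ and transporting through the induced pairing on $\Gamma^*$, one obtains (up to universal signs)
\begin{align*}
   \langle dZ, dZ\rangle(v,w) &= 2\int_{L_{u_0}} (\iota_{\tilde v}\Omega)\wedge(\iota_{\tilde w}\Omega),\\
   \langle dZ, d\bar Z\rangle(v,w) &= \int_{L_{u_0}}\bigl[(\iota_{\tilde v}\Omega)\wedge(\iota_{\tilde w}\bar\Omega) - (\iota_{\tilde w}\Omega)\wedge(\iota_{\tilde v}\bar\Omega)\bigr].
\end{align*}

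For $\langle dZ, dZ\rangle = 0$, applying $\iota_{\tilde v}\iota_{\tilde w}$ to the identity $\Omega\wedge\Omega = 0$ (valid on the complex surface $X$) yields $(\iota_{\tilde v}\Omega)\wedge(\iota_{\tilde w}\Omega) = \Omega(\tilde v,\tilde w)\,\Omega$, which vanishes after restriction to $L_{u_0}$. Equivalently, $dZ_e, dZ_m$ are $(1,0)$-forms on the Riemann surface $B_0$, so their wedge is a $(2,0)$-form on a $1$-complex-dimensional manifold and vanishes identically. For the second identity, apply $\iota_{\tilde v}\iota_{\tilde w}$ to $\Omega\wedge\bar\Omega = 2\omega^2$, expand via the Leibniz rule, and restrict to $L_{u_0}$, where both $\Omega$ and $\bar\Omega$ vanish. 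The integrand collapses to $2\,\iota_{\tilde v}\iota_{\tilde w}\omega^2|_{L_{u_0}}$. Choosing $\tilde v,\tilde w$ to be horizontal lifts of a positively oriented basis of $T_{u_0}B_0$, this expression is a positive multiple of the induced volume form on $L_{u_0}$, since $\omega^2$ is the Riemannian volume form of $X$. Integrating over $L_{u_0}$ yields a positive real number, which is what positivity of $\langle dZ, d\bar Z\rangle$ means on the Riemann surface $B_0$.

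The main nuisance will be the bookkeeping of signs and normalizations: the sign of the induced symplectic pairing on $\Gamma_g^*$, the normalization $\int_{L_u}\xi_e\wedge\xi_m = 1$ relating the Poincar\'e pairing to the intersection pairing on the torus fiber, and matching orientations on $X$, $L_u$ and $B_0$ so that ``positive'' genuinely means positive. Once these conventions are fixed, both identities reduce to the two algebraic relations $\Omega\wedge\Omega = 0$ and $\Omega\wedge\bar\Omega = 2\omega^2$, combined with the vanishing $\Omega|_{L_u} = 0$.
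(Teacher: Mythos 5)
Your argument is correct, but it takes a genuinely different route from the paper's. The paper's proof first identifies $\Omega$ (up to scale, and up to translation by a flat section when there is no holomorphic section) with the descent of the canonical holomorphic symplectic form on the total space of $R^1f_*\mathcal{O}_X\cong\omega_{\mathbb{P}^1}$, i.e.\ on the holomorphic cotangent bundle of the base; in the resulting semi-flat coordinates $dZ_{\gamma_e}$ and $dZ_{\gamma_m}$ become $dz$ and $\tau(z)\,dz$ with $\mathrm{Im}\,\tau>0$, and both identities are read off by ``direct computation.'' You instead stay intrinsic: Lemma \ref{34} expresses $dZ_{\gamma}(v)$ as a period of the closed $1$-form $(\iota_{\tilde v}\Omega)|_{L_u}$, the symplectic pairing on $\Gamma^*$ becomes the cup product on $H^1(L_u)$, and the two identities reduce to contracting $\Omega\wedge\Omega=0$ and $\Omega\wedge\bar\Omega=2\omega^2$ twice with $\tilde v,\tilde w$ and restricting to the fiber, where the terms proportional to $\Omega$ or $\bar\Omega$ die because $L_u$ is $\Omega$-isotropic. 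Your approach buys independence from the section/Jacobian detour (it works verbatim for any holomorphic Lagrangian fibration on a hyperK\"ahler surface) and the pleasant interpretation of $\langle dZ,d\bar Z\rangle$ as a positive multiple of the fiberwise pushforward of the Riemannian volume form, which makes positivity transparent; the paper's approach buys the explicit semi-flat normal form for $\Omega$, which it reuses elsewhere (e.g.\ in the remark that $\Omega$ receives no quantum corrections). If you write your version out in full, two points deserve a sentence each: the period formula requires $(\iota_{\tilde v}\Omega)|_{L_u}$ to be closed, which holds for a projectable lift $\tilde v$ precisely because every nearby fiber is a holomorphic curve and hence $\Omega$-isotropic; and, as you note, the first identity already follows from the preceding corollary on holomorphicity of $Z$, since $dZ_{\gamma_e}\wedge dZ_{\gamma_m}$ is a $(2,0)$-form on the one-dimensional complex base.
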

\begin{proof}
   For elliptic K3 with a holomorphic section, there is a standard short exact sequence of sheaves
 \begin{equation}
     0 \rightarrow R^1 f_* \mathbb{Z} \rightarrow R^1 f_* \mathcal{O}_X \cong \omega_{\mathbb{P}^1} \rightarrow \mathcal{O}^{\#} \rightarrow 0,
 \end{equation}
where $\mathcal{O}^{\#}$ denotes the sheaf of holomorphic sections
of $f:X \rightarrow \mathbb{P}^1$. Here $ R^1 f_* \mathcal{O}_X$ is
identified as the normal bundle of the zero section and the last map
is the fibrewise exponential map. The total space of $ R^1 f_* \mathcal{O}_X$ can be identified as the holomorphic cotangent bundle of $\mathbb{P}^1$ and admits a natural holomorphic symplectic $2$-form $\Omega_{can}$ which descends to the quotient. On the other hand, any holomorphic
symplectic $2$-form is a multiple of $\Omega_{can}$. Therefore, the
holomorphic volume form of an elliptic K3 surface coincides with the
one descending from the canonical volume form of the cotangent
bundle of the base by Hartogs' extension theorem. The proposition follows from direct computations
and Lemma \ref{34}. For an elliptic K3 surface $X$ without a holomorphic section, the Jacobian $X^{\#}$ is another elliptic K3 surface with a holomorphic section. One can identify the underlying space of $X$ and $X^{\#}$ fibre. We will denote the holomorphic volume form of $X$ and $X^{\#}$ by $\Omega_{can}$ and $\Omega$.
Then locally there exists a flat section $\sigma$ of the torus fibration such that
$\Omega=T_{\sigma}^*\Omega_{can}$, where $T_{\sigma}$ is the translation $\sigma$ \cite{GW}. The corollary follows from the fact that the pairing $\langle, \rangle$ is preserved by the operation. 
\end{proof}

\begin{rmk}In particular, the non-vanishing holomorphic $2$-form of
an elliptic K3 surface receives no quantum correction and admits
local $S^1$-action near singularities and local $T^2$-action away
from singularities.
\end{rmk}

The twistorial data consists of an hyperK\"ahler manifold with holomorphic Lagrangian fibration, a short exact sequence of lattices as (\ref{1002}), a holomorphic function $Z$ called central charge satisfying Corollary \ref{1012} and $\{dZ_{\gamma}\}_{\gamma\in \Gamma}$ span $T^*B$ pointwisely.

By the Strominger-Yau-Zaslow conjecture, the semi-flat metric approximates the true Ricci-flat metric near the large complex limit point. However, the curvature of the semi-flat metric blows up near the singular fibres and thus cannot be extended to the whole K3 (or a general) hyperK\"ahler manifold $X$. To remedy this defect, one has to introduce the "quantum corrections". From the result of Hitchin \cite{HKLR}, the explicit expression of hyperK\"ahler metric of a hyperK\"ahler manifold can be achieved from holomorphic symplectic $2$-forms with respect to all complex structures parametrized by the twistor line. To achieve this goal, the idea is to glue pieces of flat space with standard holomorphic symplectic $2$-form via certain symplectormorphisms, which are determined by the generalized Donaldson-Thomas invariants. These invariants are locally some integer-valued functions depending on the charge (thus depending on the base). There are so-called walls of marginal stability which separate the base of abelian fibration into chambers locally. The invariants are constants inside the chamber while might jump when across the wall. The jump of the invariants cannot be arbitrary but governed by the Kontsevich-Soibelman wall-crossing formula \cite{KS2}, which suggests the compatibility of the gluing flat pieces with a global holomorphic symplectic $2$-form. The Kontsevich-Soibelman wall-crossing formula is then interpreted as the smoothness of the holomorphic symplectic $2$-form. 

Given the twistorial data, Gaiotto-Moore-Neitzke found out the correct notion of generalized Donaldson-Thomas invariants and carried out the above scheme for Hitchin moduli spaces \cite{GMN2} (but outside of singular fibres). It still remains open for the construction of hyperK\"ahler metric for general abelian fibred hyperK\"ahler manifolds, especially compact ones such as K3 surfaces. Follow the the recipe in \cite{GMN}, the key step is to find the corresponding generalized Donaldson-Thomas invariants satisfying various properties. Here, we introduce the reduced open Gromov-Witten invariants in Section 4, which are conjectured to satisfy the Kontsevich-Soibelman wall-crossing formula and serve this purpose. 

\section{Tropical Geometry and Generalized Donaldson-Thomas Invariants on Elliptic K3 Surfaces}
\subsection{Tropical Discs}
Tropical geometry raises naturally from the point of view of modified Strominger-Yau-Zaslow conjecture \cite{GW}\cite{KS4}\cite{SYZ}. Naively, the special Lagrangian fibration in a Calabi-Yau manifold collapses to the base affine manifold when the Calabi-Yau manifold goes to the large complex limit point. The projection of holomorphic curves are amoebas and degenerate to some $1$-skeletons at the limit, which are called "tropical curves" in modern terminology. This idea has been carried out for toric varieties by Mikhalkin \cite{M2} and Lagrangian bundles with no monodromy by Parker \cite{P}. Here we are going to discuss the case where special Lagrangian fibration comes from K3 surfaces, which admits singular fibres and we study holomorphic discs instead of holomorphic curves. 

Throughout the paper unless specified, we will assume $X$ is an elliptic fibred K3 surface with $24$ $I_1$-type singular fibres. Let $B$ be the base of an elliptic fibration, then we have an $S^1$-family of integral affine structures on $B_0$. Indeed, for any $\vartheta \in S^1$ and lifting $\gamma_i$ of the generators of $\Gamma_g$, the functions $f_i=-Re(e^{-i\vartheta}Z_{\gamma_i})$ give the local  affine coordinates with transition functions in $Sp(2,\mathbb{Z})\ltimes \mathbb{R}^2$. We will denote $B$ together with this integral affine structure by $B_{\vartheta}$. In particular, neither the choice of K\"ahler class $[\omega]$ of $X$ nor the real scaling of the holomorphic $(2,0)$-form $\Omega$ change the affine straight lines on the base affine manifold. One advantage of introducing the affine structure is allowing us to discuss tropical geometry on $B$.

\begin{rmk}
  In the context of mirror symmetry, the affine structure described above is usually called the complex affine structure of the special Lagrangian in $X_{\vartheta}$. If the phase $\vartheta$ in $f_i$ is replaced by $\vartheta-\pi/2$ then it gives the so-called symplectic affine structure of $X_{\vartheta}$.
\end{rmk}   
  
 In particular, the following map
   \begin{align}\label{1009}
               (&\Gamma_g)_u \longrightarrow \hspace{7mm} T^*_uB_{\vartheta} \\ \nonumber
                  &\partial \gamma \mapsto   \big(v \in T_uB \mapsto \int_{\partial\gamma}\iota_{\tilde{v}}\mbox{Im}(e^{-i\vartheta}\Omega) \big)
   \end{align}is injective and full rank. Therefore, the set $\{dZ_{\gamma}, \gamma\in \Gamma\}$ spans $T^*B$. Dually, the above map induces an integral structure on $T_u^*B_{\vartheta}$ and thus on also induces one on $T_uB_{\vartheta}$. One can identify $\Gamma_g\otimes \mathbb{Q}$ with rational points in $TB_{\vartheta}$ using the map (\ref{1009}) and the unimodular symplectic pairing $\langle,\rangle$. The following is a temporary definition of tropical discs modified from \cite{G3}\cite{N}. The main difference from the usual definition of tropical discs in the literature is that we allow an edge maps to a point. This is due to the presence of singularities of the affine structure and is necessary for the tropical counting defined later to match with the corresponding geometry.
     
\begin{definition} \label{322}
    Let $B$ be an affine manifold with at worst singularity of monodromy conjugate to $\bigl(
    \begin{smallmatrix}
      1 & 1\\
      0 & 1
    \end{smallmatrix} \bigr)$ and with integral structure on $TB$. Let $B_0$ be the complement of the singularities $\Delta$. A tropical curve (with stop) on $B_{\vartheta}$ is a $3$-tuple $(\phi,G,w)$ where $G$ is a rooted connected graph (with a root $x$).  We denote the set of vertices and edges by $G^{[0]}$ and $G^{[1]}$ respectively, with a weight function $w:G^{[1]}\rightarrow \mathbb{N}$. And $\phi:G\rightarrow B$ is a continuous map such that 
  \begin{enumerate}
    \item We allow $G$ to have unbounded edges only when $B$ is non-compact.
    \item For each $e\in G^{[1]}$, $\phi|_e$ is either an embedding of affine segment on $B_0$ or $\phi |_e$ is a constant map. In the later case, $e$ is associated with a non-zero tangent direction (up to sign).
    \item For the root $x$, $\phi(x)\in B_0$.
    \item For each $v\in G^{[0]}$, $v\neq x$ and $\mbox{val}(v)=1$, we have $\phi(v)\in \Delta$. Moreover, if $\phi(v)$ corresponds to an $I_1$-type singular fibre \footnote{Straight forward computation shows that the monodromy invariant direction of the affine structure is rational. There might be other constraints for other kind of singularities. However, the author does not know the appropriate local model at the time.}, then the image of edge adjacent to $v$ is in the monodromy invariant direction. 
    \item For each $v\in G^{[0]}$, $\mbox{val}(v)\geq 1$, we have the following assumption: 
    
      (balancing condition) Each outgoing tangent at $u$ along the image of each edge adjacent to $v$ is rational with respect to the above integral structure on $T_{\phi(v)}B$. Denote the outgoing primitive tangent vectors by $v_i$, then 
          \begin{align*}
           \sum_i w_i v_i=0.
          \end{align*}
  \end{enumerate}
\end{definition}
 The
balancing condition will make the following definition well-defined.
\begin{definition} \label{2000}
    Let $\phi: G \rightarrow B$ be a parametrized tropical curves (with stop $x$) with only trivalent vertices. The multiplicity at a vertex $v\in G^{[0]}$ (except at the stop $x$) is given by 
       \begin{equation} \label{1010}
            Mult_v(\phi)=w_1 w_2|m_1 \wedge m_2|,
       \end{equation}
    where $E_1, E_2$ are two of the edges containing $V$ and $w_i=w_{\Gamma}(E_i)$ and $m_i \in T_{\phi(v)}B$ are the primitive integral vectors in the direction of $\phi(E_i)$. The last term $m_1\wedge m_2$ in (\ref{1010}) falls in $\wedge^2 T_{\phi(v)}B\cong \mathbb{Z}$ because of the integral structure.
\end{definition}

\begin{definition}
 A tropical disc is a tropical curve with stop $(w,G,\phi)$ and $G$ is a tree.
\end{definition}

The definition of tropical curves (discs) does not involve the K\"ahler structure. However, we here define the central charge of the tropical discs on K3 surface for the purpose of defining generalized Donaldson-Thomas invariants in the later section. One can view the central charge as the volume of the tropical disc together with a phase.
\begin{definition} \label{303}
   (Central charge of tropical discs)Let $(X,\omega,\Omega)$ be an elliptic K3 surface with singular fibres all of $I_1$-type. Given a tropical discs $\phi:G \rightarrow B$ with stop at $u$ on an
   tropical affine manifold $B$ induced from the affine coordinate of special Lagrangian fibration on $X_{\vartheta}$, we will associate it with a central charge as
   follows by induction on the number of singularities of affine structure $\phi$
   hits: If the $\phi$ only hits only one singularity and has its stop at $u$, then let
   $[\phi]\in H_2(X,L_u)$ \footnote{The class $[\phi]$ can also be understood as follows: one associate a cyclinder by normal construction or each edge of tropical disc. At each trivalent vertex, we glue in a pair of pant as local model. However, it is not known the suitable local model for vertex of higher valency}be the relative class of Lefschetz thimble
   such that $\int_{[\phi]}\omega_{\vartheta}>0$ and its central charge $Z_{\phi}=\int_{[\phi]}\Omega$. Assume $p$
   is an internal vertex of $\phi$ and each let $\phi_1,\cdots
   \phi_s$ are the components of $\mbox{Im}(\phi)\backslash p$
   containing an ingoing edge of $p$. By induction we already define $[\phi_i]\in H_2(X,L_p)$ and the corresponding central
   charges $Z_{\phi_i}=\int_{[\phi_i]}\Omega$ for each
   $i=1,\cdots,s$. For any $p'$ on the outgoing
   edge of $p$, there is a natural tropical disc $\phi'$ with stop at $p'$
   induced from $\phi$, then we define $[\phi']\in H_2(X,L_{p'})$ the parallel
   transport of $\sum_{i=1}^s[\phi_i]$ along the outgoing edge from
   $p$ to $p'$. The central charge of $\phi'$ is given by
   $Z_{\phi'}=\int_{[\phi']}\Omega$.
\end{definition}

\subsection{Scattering Diagrams on Elliptic K3 Surfaces}\label{1032}
In the section, we will review the notion of scattering diagrams invented by Kontsevich-Soibelman \cite{KS1}. Scattering diagrams can be viewed as the mirror of tropical discs. Indeed, scattering diagram of $X_{\vartheta}$ consists of affine rays on the base $B_0$ with the symplectic affine structure of $X_{\vartheta}$ while the tropical discs of $X_{\vartheta+\pi/2}$ are affine $1$-skeletons on $B_0$ with the complex affine structure of $X_{\vartheta}+\pi/2$. Since the above two affine structures on $B_0$ are canonically identified, the two K3 surface $X_{\vartheta}$ and $X_{\vartheta+\pi/2}$ are mirror pairs in the sense that the scattering diagram of one is identified with the tropical discs of another.

We first introduce the Novikov ring with formal parameter $T$. Let
\begin{equation*}
    \Lambda_0=\bigg\{ \sum^{\infty}_{i=0}a_i T^{\lambda_i} \bigg|  a_i \in \mathbb{C}, \lambda \in \mathbb{R}_{\geq 0}, \lim_{i \to \infty} \lambda_i =\infty \bigg\}
\end{equation*}
and
\begin{equation*}
    \Lambda_+=\bigg\{ \sum^{\infty}_{i=0}a_i T^{\lambda_i} \in \Lambda_0 \bigg| \lambda_i >0 \bigg\}.
\end{equation*}
There is a natural filtration on $\Lambda_0$ given by
\begin{equation}\label{1066}
    F^{\lambda}\Lambda_0= T^{\lambda}\Lambda_0
\end{equation}
for each $\lambda \in \mathbb{R}_{\geq 0}$. The
localization of $\Lambda_0$ at its maximal ideal, which is a
generalization of puiseux series, is algebraically closed and
complete in $T$-adic topology.

Recall that $\Gamma_g=\cup_{u\in B_0}H_1(L_u;\mathbb{Z})$. Let
\begin{equation*}
   \mathbb{C}[\Gamma_g] \hat{\otimes}_{\mathbb{C}}\Lambda_0=\lim_{\leftarrow}\mathbb{C}[\Gamma_g]\otimes_{\mathbb{C}}\Lambda_0/ F^{\lambda}\Lambda_0
\end{equation*} and $\mathfrak{g}=\Lambda_+ ( \mathbb{C}[\Gamma_g]
\hat{\otimes}_{\mathbb{C}}\Lambda_0)\otimes \Gamma_g^*$. Given any $\xi \in
\mathfrak{g}$, we have an element
\begin{equation*}
   \exp(\xi)\in
   Aut_{\Lambda_0}(\mathbb{C}[\Gamma_g]\hat{\otimes}_{\mathbb{C}}\Lambda_0).
\end{equation*} Write $e_{\partial\gamma}=z^{\partial\gamma} \otimes w_{\partial\gamma}\partial\gamma^{\bot}$, where $\partial\gamma=\omega_{\partial\gamma}\partial\gamma_{prim}$, $\omega_{\partial\gamma}\in \mathbb{Z}$ and $\partial\gamma_{prim}$ is primitive.
Then from Baker-Cambell-Hausdorf formula, there is a Lie algebra structure on
$\mathfrak{g}$ given by
\begin{equation} \label{1040}
       [e_{\partial\gamma_1},e_{\partial\gamma_2}]=(-1)^{\langle \partial\gamma,\partial\gamma' \rangle}\langle \gamma,\gamma' \rangle e_{\partial\gamma+\partial\gamma'}.
    \end{equation}
 This is the Lie algebra structure on the twisted complexified symplectic torus $\mathfrak{T}=\mbox{Spec}(\mathbb{C}[\Gamma_g] \hat{\otimes}_{\mathbb{C}}\Lambda_0)$ which admits a standard holomorphic symplectic $2$-form
 \begin{align*}
   \varpi=\langle \gamma_1,\gamma_2\rangle \frac{dx^{\partial \gamma_1}}{x^{\partial \gamma_1}}\wedge\frac{dx^{\partial \gamma_2}}{x^{\partial \gamma_2}},
 \end{align*}
 where $\{\partial\gamma_1,\partial\gamma_2\}$ is a basis of $\Gamma_g$. In particular, the subspace
\begin{equation*}
   \mathfrak{h}=\bigoplus_{\partial\gamma \in \Gamma_g \backslash \{0\}} z^{\partial\gamma}(\Lambda_+ \otimes \partial\gamma^{\bot})
\end{equation*}
is closed under the above bracket and thus via exponential map
produces a  Lie group $\mathbb{V}_{trop}$. Straight forward computation shows that elements in $\mathbb{V}^{trop}$ induce symplectomorphism on the twisted symplectic torus.

\begin{definition}\label{305}
   Let $B$ be an affine manifold with singularities $\Delta$ and with integral structures on its tangent bundle. Assume the monodromy at each singularity is conjugate to $\bigl(
   \begin{smallmatrix}
     1 & 1\\
     0 & 1
   \end{smallmatrix} \bigr)$. A scattering diagram $\mathfrak{D}=\{(\mathfrak{d},f_{\mathfrak{d}})\}$ on an affine manifold $B$ with an integral structure on its tangent bundle $TB$ is a collection of $2$-tuples $(\mathfrak{d},f_{\mathfrak{d}})$ such that
    \begin{enumerate}
       \item
        $\mathfrak{d}$ is either of line $\mathfrak{d}=\mathbb{R}\partial\gamma_{\mathfrak{d}}$ with rational slope with respect to the affine
               structure or a ray
        $\mathfrak{d}=o_{\mathfrak{d}}+\mathbb{R}_{\geq 0}\partial\gamma_{\mathfrak{d}}$ emanating from $o_{\mathfrak{d}}$ with rational slope with respect to the affine
       structure, where $\partial \gamma_{\mathfrak{d}}$ is a primitive integral vector. If $o_{\mathfrak{d}}\in \Delta$, then $\partial \gamma_{\mathfrak{d}}$ is in the monodromy invariant direction.
       \item The slab function $f_{\mathfrak{d}}(u)\in
       \mathbb{C}[z^{\partial \gamma_{\mathfrak{d},u}}]\hat{\otimes}_{\mathbb{C}}\Lambda_+$
       for each $u\in \mathfrak{d}$, where $\partial \gamma_{\mathfrak{d},u}$ is the
       parallel transport of $\partial \gamma_{\mathfrak{d}}$ from $o_{\mathfrak{d}}$ to $u$ along
       $\mathfrak{d}_i$. Moreover, $f_{\mathfrak{d}}(u)$ satisfies 
         \begin{align}
           f_{\mathfrak{d}}(u)\equiv 1 \pmod{\Lambda_+} 
         \end{align}
       and each of its monomial of is of the form
       $cz^{l\partial \gamma_{\mathfrak{d},u}}T^{A(u)}$, where $l\in \mathbb{N}$, $c$ is a constant. If $\mathfrak{d}$ is a ray, then $A(u)$ is a strictly increasing function
       along $\mathfrak{d}_i$ and $A(o_{\mathfrak{d}})>0$. If $\mathfrak{d}$ is a line, then $A(u)$ is a strictly monotone positive function.
       \item For every point $u\in B$ and a given $\lambda>0$, there are only finitely
       many rays $(\mathfrak{d},f_{\mathfrak{d}})\in \mathfrak{D}$ such that
       $f_{\mathfrak{d}}(u)\not\equiv 0 \pmod{T^{\lambda}}$.
       \item The singularity of the scattering diagram
       $\mbox{Sing}(\mathfrak{D})^{>\lambda}$ is given by the set
          \begin{align*}
             \{u\in B| &\exists (\mathfrak{d}_i,f_i)\in
             \mathfrak{D},i=1,2 \mbox{ such that } u\in \mathfrak{d}_1\cap
             \mathfrak{d}_2 \\
              &\hspace{50mm}\mbox{ and } f_1(u)f_2(u)\not \equiv 0
             (\mbox{mod }T^{\lambda})\}
          \end{align*}
    \end{enumerate}
\end{definition}
Let $\mathfrak{D}$ is a scattering diagram on an integral affine
manifold $B$, $u\in B$ and $\lambda>0$. Consider an embedding
   \begin{align*}
       \phi:S^1\rightarrow B\backslash\mbox{Sing}(\mathfrak{D})^{>\lambda}
   \end{align*}
in a small enough neighborhood of $u$ such that it intersects every ray $\mathfrak{d}$ starting from $u$ (or passing through $u$)
 transversally and exactly once (or twice respectively) if $(u\in\mathfrak{d},f)\in \mathfrak{D}$
and $(f(u)\not \equiv 0\mbox{ mod }T^{\lambda})$. Assume the
intersection order is $\mathfrak{d}_1,\cdots \mathfrak{d}_s$, then
we form an ordered product as follows :
    \begin{align} \label{1011}
      \theta_{\phi,\mathfrak{D}}^{u,\lambda}=\theta_{\mathfrak{d}_1}(u)\circ
      \dots \circ \theta_{\mathfrak{d}_s}(u),
    \end{align} where each term on right hand side of (\ref{1011}), called Kontsevich-Soibelman transformation, is of the form
    \begin{align*}
      \theta_{\mathfrak{d}_i}(u)=\mbox{exp}(\log{(f_i(u))}\otimes n_i)\in \mathbb{V}_{trop},
    \end{align*}with $n_i\in (\Gamma_g)^*$ primitive, annihilates the tangent
    space to $\mathfrak{d}_i$ and such that
      \begin{align*}
        \langle n_i,\phi'(p_i)\rangle >0, \mbox{ for } p_i\in \mbox{Im}\phi\cap
        \mathfrak{d}_i.
      \end{align*}
Now given an elliptic K3 surface $(X,\omega,\Omega)$ with Ricci-flat
K\"ahler form $\omega$. After hyperK\"ahler rotation, it induces a special Lagrangian torus fibration on
$X_{\vartheta}$, for each $\vartheta\in S^1$ (see Remark \ref{300}).
Fix a phase parameter $\vartheta \in S^1$ and we have an affine
structure with singularities $B_{\vartheta}$. 

 We will consider the completion (in the sense of Theorem \ref{13}) of following scattering diagram: each singular point $v$ emanates two BPS rays called initial rays along both monodromy invariant direction
$\mathfrak{d}_{\pm}$ with the slab function
\begin{align} \label{1045}
f_{\mathfrak{d}_{\pm}}=1+z^{\partial\gamma_{\pm}}T^{|Z_{\gamma_{\pm}}(u)|},
\end{align}
where $\gamma_{\pm}\in\Gamma$ denotes the relative classes of Lefschetz thimbles around
the corresponding singularity such that 
  \begin{align}
    \int_{\mathfrak{d}_{\pm}}(\iota_{\partial \gamma_{\pm}}\omega) du >0.
  \end{align}

The following theorem is a modified version of statements in \cite{KS1}\cite{GS1}\cite{L3}.
\begin{thm} \label{13}
   Let $\mathfrak{D}_{\vartheta}$ be a scattering diagram given by the initial data above. Then for generic $\vartheta\in S^1$, there is a scattering diagram $S(\mathfrak{D}_{\vartheta})$ which contains $\mathfrak{D}_{\vartheta}$ such that for any $\lambda>0$,
   there are only finitely many rays with nontrivial slab
   functions modulo $T^{\lambda}$. Moreover, given $u\in B_0$, $\lambda>0$ and a closed loop $\phi$ near $u$,
   one has
   \begin{align} \label{1050}
   \theta^{u,\lambda}_{\phi, S(\mathfrak{D}_{\vartheta})}\equiv 0\pmod{T^{\lambda}}
   \end{align}
\end{thm}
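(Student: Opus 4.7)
\textbf{Proof plan for Theorem \ref{13}.} The strategy is the standard order-by-order construction of Kontsevich--Soibelman and Gross--Siebert, adapted to the elliptic K3 base where one must control both the monodromy singularities of $B_\vartheta$ and the genericity conditions on the phase $\vartheta$. The plan is to build $S(\mathfrak{D}_\vartheta)$ as an increasing union $\bigcup_k \mathfrak{D}^{(k)}$ of finite diagrams, each consistent modulo a higher power $T^{\lambda_k}$.

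I would start from the base case $\mathfrak{D}_\vartheta$ consisting of the $48$ initial BPS rays emanating from the $24$ $I_1$-singularities in the monodromy invariant directions, with slab functions as in (\ref{1045}). Corollary \ref{350} asserts that $Z_{\gamma_\pm}$ extends to zero at the singularity and that $dZ_{\gamma_\pm}\neq 0$ on $B_0$, so $|Z_{\gamma_\pm}(u)|$ is strictly monotone along each initial ray, giving condition (2) of Definition \ref{305}. For the inductive step, suppose one has a finite scattering diagram $\mathfrak{D}^{(k)} \supseteq \mathfrak{D}_\vartheta$ whose total loop ordered product is trivial modulo $T^{\lambda_k}$. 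At each point $p\in\mathrm{Sing}(\mathfrak{D}^{(k)})^{>\lambda_{k+1}}$ where two rays $(\mathfrak{d}_1,f_1),(\mathfrak{d}_2,f_2)$ cross, a small loop $\phi$ around $p$ gives an obstruction element $\log\theta^{p,\lambda_{k+1}}_{\phi,\mathfrak{D}^{(k)}}\in \mathfrak{h}$ at order $T^{|Z_{\gamma_1}(p)|+|Z_{\gamma_2}(p)|}$. By the Baker--Campbell--Hausdorff formula and the bracket (\ref{1040}), this obstruction decomposes into a finite sum of components of the form $c\,z^{a\partial\gamma_1+b\partial\gamma_2}\otimes n$. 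For each such component, emit a new outgoing ray from $p$ in the affine direction dual to $a\partial\gamma_1+b\partial\gamma_2$ with the unique slab monomial cancelling that component; after passing to the union over all scattering points, the resulting $\mathfrak{D}^{(k+1)}$ is consistent modulo $T^{\lambda_{k+1}}$. The key monotonicity input is that $|Z_{a\gamma_1+b\gamma_2}(p)|=a|Z_{\gamma_1}(p)|+b|Z_{\gamma_2}(p)|$ strictly exceeds each of $|Z_{\gamma_1}(p)|,|Z_{\gamma_2}(p)|$, so each newly emitted ray has $T$-adic order strictly larger than that of the incoming rays; this forces the number of rays nontrivial modulo any fixed $T^\lambda$ to be finite.

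The main obstacle is global: because $B$ is compact and $\Delta\neq\emptyset$, new rays may in principle hit further affine singularities or accumulate onto them, phenomena absent from the toric/noncompact setting. I would handle this in two steps. First, Lemma \ref{34} shows $|Z_\gamma|$ is bounded below by a positive constant on any compact subset of $B_0$, so the monotonicity estimate above forces only finitely many rays with $f_\mathfrak{d}\not\equiv 1\pmod{T^\lambda}$ to enter any fixed tubular neighborhood of $\Delta$. Second, the genericity assumption on $\vartheta\in S^1$ is used to guarantee that the arguments $\arg Z_\gamma$ for the countably many classes appearing in $S(\mathfrak{D}_\vartheta)$ are pairwise distinct modulo $\pi$; the set of $\vartheta$ for which this fails is a countable union of proper closed subsets and hence meager, so by a Baire category argument it may be avoided. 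This genericity prevents coincidental alignment of three or more rays at a single point and prevents any newly emitted ray from lying along the monodromy-invariant direction of a singularity it meets, which would otherwise violate condition (1) of Definition \ref{305}. With these two ingredients, taking $S(\mathfrak{D}_\vartheta)=\varinjlim_k \mathfrak{D}^{(k)}$ yields a scattering diagram with the required finiteness property and, by construction, satisfying the consistency (\ref{1050}).
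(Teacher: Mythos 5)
Your overall architecture --- the order-by-order Kontsevich--Soibelman/Gross--Siebert induction, cancelling the obstruction at each scattering point by emitting new rays whose $T$-adic order is the sum of the orders of the incoming monomials --- is the same as the paper's, and that part is sound. But your two "global" steps do not close, and they are precisely the two points the paper spends most of its effort on. On finiteness: your termination argument rests on each newly emitted ray having strictly larger $T$-adic order, with increments effectively bounded below so that only finitely many rays survive modulo a fixed $T^{\lambda}$. That lower bound fails exactly where it is needed. The slab function of an initial ray is $1+z^{\partial\gamma_{\pm}}T^{|Z_{\gamma_{\pm}}(u)|}$ and $|Z_{\gamma_{\pm}}(u)|\to 0$ as $u$ approaches the singularity (Corollary \ref{350}), so a ray scattering against an initial ray near its origin gains an arbitrarily small increment and could a priori re-scatter against it repeatedly, producing infinitely many rays below a fixed order within a single inductive step. (Your appeal to Lemma \ref{34} for a positive lower bound of $|Z_{\gamma}|$ on compact subsets of $B_0$ is not what that lemma says --- it computes $dZ_{\gamma}$ --- and in any case the problem occurs inside the tubular neighborhood of $\Delta$, not outside it.) The paper closes this with two geometric facts: an affine ray leaving an initial ray cannot meet that initial ray again without exiting the chosen neighborhood of the singularity, and after shrinking the neighborhoods any affine segment travelling from one singularity's neighborhood to another has affine length at least $\lambda_0$. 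Some input of this kind is indispensable.

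On the genericity of $\vartheta$: you use it to make the phases $\mbox{Arg}\,Z_{\gamma}$ pairwise distinct and to rule out triple intersections, but neither is actually needed (several rays through one point are handled by the ordered product in the consistency condition), and neither implies the property that \emph{is} needed, namely that no ray of $S(\mathfrak{D}_{\vartheta})$ passes through a point of $\Delta$. If a ray meets a singularity of the affine structure --- in any direction, not just when it emanates from one in a non-monodromy-invariant direction --- then the parallel transport of $\partial\gamma_{\mathfrak{d}}$ along $\mathfrak{d}$ and the affine continuation of the ray are undefined, so the object fails Definition \ref{305} outright; distinctness of phases of different classes says nothing about whether a single ray's trajectory contains a singular point. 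The paper proves the avoidance statement by identifying the BPS rays with gradient flow lines of $|Z_{\gamma}|^2$, which are affine lines of constant phase $\mbox{Arg}\,Z_{\gamma}$; near an $I_1$ singularity the central charge has local form $Ay+B$ or $\frac{A}{2\pi i}(y\log y-y)+B$ with nonzero limit at the origin, so a flow line of generic phase cannot be forced into the singular point, and a Baire category argument over the countably many rays (finitely many modulo each $T^{\lambda}$) finishes. You would need to replace your genericity condition with an argument of this type.
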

\begin{proof}
       Notice that when
      $\lambda$ is small we can take $S(\mathfrak{D}_{\vartheta})=\mathfrak{D}_{\vartheta}$. Indeed,
      since there are $24$ isolated singularities of the affine structures, we can choose disjoint open neighborhood for theses singularities such that the slab function $f_{\mathfrak{d}_{\pm}}$ for any singularity satisfies 
        \begin{align*}
         f_{\mathfrak{d}_{\pm}}(p)\equiv 1 \pmod {T^{\lambda_0}},
        \end{align*}for some $\lambda_0$ if $p$ is in the complement of above chosen open neighborhoods. In particular, the equation (\ref{1050}) holds for $\lambda < \lambda_0$. Therefore we will set $(\mathfrak{D}_{\vartheta})_0=\mathfrak{D}_{\vartheta}$.
       Assume that we already construct a scattering diagram $(\mathfrak{D}_{\vartheta})_k\supseteq \mathfrak{D}_{\vartheta}$ with only finitely many rays such that (\ref{1050}) holds for all $\lambda\leq k\lambda_0$. We want to construct $(\mathfrak{D}_{\vartheta})_{k+1}$ with analogue properties. 
       We consider $\theta_{\phi}$ at each singularity
      $u_0$ of $(\mathfrak{D}_{\vartheta})_k$ and a small loop $\phi$ around $u_0$. Note that the exponent of $T$
in $\theta_{\phi,(\mathfrak{D}_{\vartheta})_k}^{u,(k+1)\lambda_0}(u_0)$ is discrete and there are only finitely many terms with power of $T$ less than $(k+1)\lambda_0$. Then we have
\begin{equation*}
   \theta_{{\phi}, (\mathfrak{D}_{\vartheta})_k}^{u,(k+1)\lambda_0}(u_0) \equiv \exp{(\sum^s_{i=1}c_iz^{\partial \gamma_i}T^{\lambda_i}\otimes n_i)}\pmod{T^{(k+1)\lambda_0}},
\end{equation*} where $c_i\in \mathbb{R}$ and $\partial \gamma_i\in \Gamma_g$ and $k\lambda_0<\lambda_i<(k+1)\lambda_0$. (Actually, there might be more than one $u_0$. Then we will just consider them all at the same time.) We set
\begin{equation} \label{1020}
   (\mathfrak{D}_{\vartheta}')_{k+1}=(\mathfrak{D}_{\vartheta})_k \cup \{u_0+\mathbb{R}_{\geq 0}\partial\gamma_i, 1 \pm c_i z^{\partial\gamma_i}T^{f_i(u)} | i=1, \cdots, s \} 
\end{equation}
The sign in front of $c_i$ is chosen such that each contribute $\exp{(-c_i
z^{\partial\gamma_i}T^{f_i(u)}\otimes n_i)}$ to
$\theta_{\phi,\mathfrak{D}_k} \pmod{T^{\lambda_{k+1}+\epsilon}}$,
 where the slab function is given by
  \begin{align} \label{1061}
   f_i(u)=\lambda_{i}+|\int_{u_0}^u(\int_{\partial \gamma_i}\iota_{\tilde{\partial\gamma_i}}e^{-i\vartheta}\Omega) du|.
  \end{align}
However, the scattering diagram $(\mathfrak{D}_{\vartheta}')_{k+1}$ may not serves the purpose. This is because the added rays in (\ref{1020}) may intersects the initial rays near the singularities and create new singularities for the scattering diagram. Say $p$ is the intersection of $\mathfrak{d}\in (\mathfrak{D}_{\vartheta}')_{k+1}\backslash (\mathfrak{D}_{\vartheta})_k$ and an initial ray $\mathfrak{d}'$. Similarly, we will have
   \begin{align}\label{2020}
      \theta_{{\phi}, (\mathfrak{D}_{\vartheta}')_k}^{u,(k+1)\lambda_0}(u_0) \equiv \exp{(\sum^{t}_{l=1}c_iz^{\partial \gamma_l}T^{\lambda_l}\otimes n_l)}\pmod{T^{(k+1)\lambda_0}}.
   \end{align} Moreover, $\lambda_l=mf_{\mathfrak{d}'}(p)+f_{\mathfrak{d}}(p)$ for some $m\in \mathbb{N}$ and thus the sum in (\ref{2020}) can only be a finite summation. Let 
    \begin{equation} 
       (\mathfrak{D}_{\vartheta})_{k+1}=(\mathfrak{D}_{\vartheta}')_{k+1} \cup \{p+\mathbb{R}_{\geq 0}\partial\gamma_l, 1 \pm c_l z^{\partial\gamma_l}T^{f_l(u)} | l=1, \cdots, t \},
    \end{equation} then we have 
      \begin{align*}
      \theta_{{\phi}, (\mathfrak{D}_{\vartheta})_{k+1}}^{u,(k+1)\lambda_0}(u_0) \equiv 0\pmod{T^{(k+1)\lambda_0}}
      \end{align*}due to the following two facts:
 \begin{enumerate}
    \item Any affine ray start from the initial ray will not be able to intersect 
         the initial ray again without going out a neighborhood of the singularity.
    \item Shrinking the neiborhood around each singularity if necessary, then any affine line from a neighborhood of one singularity to another has its affine length larger than $\lambda_0$.
 \end{enumerate}      
        By induction, it suffices to take
        $S(\mathfrak{D}_{\vartheta})=\cup_k (\mathfrak{D}_{\vartheta})_k$ and this will complete the proof if there is no BPS rays hit the singularities of the affine structure.
  \begin{definition}
   We will call the rays in $S(\mathfrak{D}_{\vartheta})$ BPS rays.
\end{definition}
\begin{prop}
  Each BPS ray $(\mathfrak{d},f_{\mathfrak{d}})$ in the scattering diagram $S(\mathfrak{D}_{\vartheta})$ corresponds to an image of tropical disc $\phi_{\mathfrak{d}}$. In particular, we will write $\gamma_{\mathfrak{d}}=[\phi_{\mathfrak{d}}]$.
\end{prop}
\begin{proof}
  Notice that the monomials of the form $cz^{\partial\gamma}T^{|Z_{\gamma}|}$, where $c$ is a constant and  $\gamma\in \Gamma$, are closed under multiplication. We will prove the proposition and all the monomials of $f_{\mathfrak{d}}$ are in the form $cz^{\partial \gamma}T^{|Z_{\gamma}|}$ by induction on $\lambda_k$, the exponent of $T$ in $f_{\mathfrak{d}}(o_{\mathfrak{d}})$. First of all, each term of the slab functions associate to initial BPS rays are in this form and the initial BPS rays correspond to Lefschetz thimbles. Assume the two statement hold or $\lambda<\lambda_k$ and there is a BPS ray $\mathfrak{d}$ with $f_{\mathfrak{d}}(o_{\mathfrak{d}})=\lambda_k$. Then from the construction of the scattering diagram, we have 
    \begin{align*}
      \lambda_{k+1}=\sum_{\mathfrak{d}':o_{\mathfrak{d}}\notin \mathfrak{d}'}a_{\mathfrak{d}'}f_{\mathfrak{d}'}(o_{\mathfrak{d}}), 
    \end{align*} for some $a_{\mathfrak{d}'}\in \mathbb{Z}_{\geq 0}$. 
 Together with (\ref{1061}) and Lemma \ref{34}, we have $f_{\mathfrak{d}}=Z_{\sum a_{\mathfrak{d}'}\gamma_{\mathfrak{d}'}}$ and $\gamma_{\mathfrak{d}}=\sum a_{\mathfrak{d}'}\gamma_{\mathfrak{d}'}$.
\end{proof}

  We will prove that all the rays in $S(\mathfrak{D})$ will not contain any singularities of the affine structure for a generic $\vartheta$. For this purpose, we need to study the perturbation of BPS rays. This will also help us to define the generalized Donaldson-Thomas invariants in the later section.

Given a charge $\gamma\in H_2(X,L)$, we can consider the gradient
flow lines associate to the function
  \begin{align*}
    F_{\gamma}:=|Z_{\gamma}|^2. 
  \end{align*}Since we are interested in the situation when $\gamma$ comes from a tropical disc (see Definition \ref{303}), we may assume that $Z_{\gamma}$ is nonzero locally. We will consider the gradient flow lines of $F_{\gamma}$, namely, $y(t)$ satisfies the equation
     \begin{align} \label{1013}
       \frac{dy}{dt}=\nabla F_{\gamma}= |Z_{\gamma}|^2\nabla \log{|Z_{\gamma}|^2}
     \end{align}
  
\begin{prop}
   The (multi-)function $F_{\gamma}$ has no critical points. In particular, it is an Morse function.
\end{prop}  
\begin{proof}
   Indeed, assume $p$ is a critical point of $F_{\gamma}$. From the equation (\ref{1013}) and Cauchy-Riemann equation of $Z_{\gamma}$ at $p$, we have $dZ_{\gamma}(p)=0$. Using the expression in Lemma \ref{34}, it contradicts to the fact that $\mbox{Re}(e^{i\vartheta}\Omega)$ is a non-degenerate $2$-form at every point. 
\end{proof}

\begin{prop}
The phase of the central charge $\mbox{Arg}Z_{\gamma}$ is
constant along the gradient flow lines $y(t)$ and each gradient flow
line is an affine line on $B_{\vartheta}$, where
$\vartheta=\mbox{Arg}Z_{\gamma}$. In particular, the gradient flow lines are determined completely by the central charges and independence of the choice of K\"ahler metric on $B$.
\end{prop}  
\begin{proof}
Because $\log{Z_{\gamma}}=\log{|Z_{\gamma}|}+i\mbox{Arg}Z_{\gamma}$
is holomorphic, we have
   \begin{align*}
      \frac{d}{dt} \mbox{Arg}Z_{\gamma}(y(t))=(\nabla F_{\gamma})
      \mbox{Arg}Z_{\gamma}=J\nabla F_{\gamma}(\log{|Z_{\gamma}|})=0.
   \end{align*}
The second equation is because the central charge $Z_{\gamma}$ is a holomorphic function and follows from Cauchy-Riemann equation. The third equation holds because for any function $f$, its gradient $\nabla f$ is
perpendicular to the level set of $f$. So $J\nabla f(f)=0$. 
\end{proof}

Now there are two cases when a BPS ray hit the singularity of the affine structure depending on the BPS ray hit the singularity from the monodromy invariant direction or not. The corresponding central charges near the singularity would either the form $Ay+B$ or $\frac{A}{2\pi i}(y\log{y}-y)+B$, where $A\in \mathbb{Z}\backslash \{0\}$, $B\in \mathbb{C}^*$ are constants and $y$ is the local coordinate around the singularity of the affine structure. Notice that in both cases the limits of central charge at the origin exist and are nonzero. If the gradient flow of generic phase can't avoid the origin, then the central charge would have a constant phase around the origin. This contradicts to the form of central charge above. Since the there are only finitely many BPS rays modulo $T^{\lambda}$, $\lambda>0$. Thus, all the BPS rays in $S(\mathfrak{D}_{\vartheta})$ can avoid the singularities of the affine structure for a choice of generic phase $\vartheta$ by the Baire category theorem.
\end{proof}

\begin{rmk}
Notice that we don't have the notion of degree as in
\cite{GS1}\cite{KS1} therefore we need to use energy filtration
instead and a static construction of the scattering diagram. At each
singularity of the scattering diagram, the degree filtration and energy filtration are
equivalent. 
\end{rmk}

\begin{rmk}
  Notice that it is quite different from Gaiotto-Moore-Neitzke theory in the sense of the following: In \cite{GMN}, the geodesics (compared to the affine lines here) live on a Riemann surface while the BPS rays live on the base of Hitchin moduli of that Riemann surface. Therefore, the geodesics and BPS rays live on different spaces and can't be compared. 
\end{rmk}

\begin{rmk}
In physics literature, the mass $M$ of any charge $\gamma$ obeys
\begin{equation}
    M \geq|Z_{\gamma}|,
\end{equation}
where the mass is $\int_{\gamma}|\Omega|$ along a path. We call the
charge $\gamma$ is BPS if and only if the equality holds. Thus, a
charge is BPS if only if its phase of central charge is the same
angle along the path. Therefore, it is reasonable to expect tropical
discs to correspond to BPS charges in physics.
\end{rmk}

There are also some byproduct of Theorem \ref{13}. The tropical discs on $B_{\vartheta}$ corresponds to rays in the scattering diagram $S(\mathfrak{D}_{\vartheta})$ on $B_{\vartheta}$. Although they both live on the same affine manifold $B_{\vartheta}$, one should view $B_{\vartheta}$ as different ones. The tropical discs lives in $B_{\vartheta}$ coming from the complex affine structure of $X_{\vartheta}\rightarrow B$ while the scatter diagram $S(\mathfrak{D}_{\vartheta})$ lives in $B_{\vartheta}$ coming from the symplectic affine structure of $X_{\vartheta+\pi/2}\rightarrow B$. The coincidence of these two affine structures is a statement of mirror symmetry: complex instanton correction of a Calabi-Yau manifold is related to the holomorphic discs on its mirror. From the study of deformation of BPS rays, we also have the following corollary:
\begin{cor}
There is no tropical rational curves (see Definition \ref{322} in
the next section) for a generic $\vartheta$.
\end{cor}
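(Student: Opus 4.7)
The plan is to reduce the corollary to the genericity-of-phase argument already used at the end of the proof of Theorem \ref{13}. A tropical rational curve should be read as a tropical curve $(\phi,G,w)$ in the sense of Definition \ref{322} but with no stop in $B_0$, so that every univalent vertex of $G$ is mapped into the discriminant locus $\Delta$. Condition (4) of that definition then forces the edge adjacent to any such vertex to lie in the monodromy-invariant direction at an $I_1$-singularity, which is precisely the direction of an initial BPS ray emanating from that singular point.

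The key step is to set up a correspondence between tropical rational curves and BPS rays terminating at singularities. Pick any univalent vertex $v_0\in\Delta$ of a purported tropical rational curve and choose an interior point $p\in B_0$ on the adjacent edge; cutting at $p$ yields a tropical disc $\phi_0$ with stop at $p$ whose image terminates at $v_0$ along the monodromy-invariant direction. By the proposition immediately preceding the corollary, $\phi_0$ is the image of some BPS ray $\mathfrak{d}\in S(\mathfrak{D}_\vartheta)$ with class $\gamma_\mathfrak{d}$, and the existence of the tropical rational curve is equivalent to the forward extension of $\mathfrak{d}$ past its final scattering vertex passing through $v_0$.

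Finally I would apply the Baire argument from the proof of Theorem \ref{13}. Along the BPS ray $\mathfrak{d}$ the phase $\mathrm{Arg}\,Z_{\gamma_\mathfrak{d}}$ is constant, whereas near $v_0$ the local model of $Z_{\gamma_\mathfrak{d}}$ is either $Ay+B$ or $\tfrac{A}{2\pi i}(y\log y - y)+B$ with $B\neq 0$, whose argument varies as $y\to 0$ and equals only the discrete value $\mathrm{Arg}\,B$ in the limit. Hence, for each fixed class $\gamma_\mathfrak{d}$ and singular point $v_0$, the set of $\vartheta\in S^1$ for which $\mathfrak{d}$ extends forward to $v_0$ is finite. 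Modulo any fixed $T^\lambda$ there are only finitely many BPS rays and $|\Delta|=24$, so the bad locus at energy $\le\lambda$ is finite; taking the countable union over $\lambda\in\mathbb{N}$ yields a meager set, and Baire category produces a generic $\vartheta\in S^1$ admitting no tropical rational curves. The anticipated obstacle is verifying that every tropical rational curve of total energy $\le\lambda$ decomposes into constituent tropical discs each of energy $\le\lambda$, so that the Baire argument can be stratified by the filtration (\ref{1066}); this follows from the additivity of the central charge at each vertex built into Definition \ref{303}.
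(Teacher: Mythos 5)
Your proposal is correct and follows essentially the same route the paper intends: the paper gives no separate proof, merely citing the "study of deformation of BPS rays," which is exactly the Baire-category argument at the end of the proof of Theorem \ref{13} showing that for generic $\vartheta$ no BPS ray terminates at a singularity of the affine structure. Your reduction of a tropical rational curve to a BPS ray hitting a singular point, followed by the local form of $Z_\gamma$ near the singularity and the countable-union-over-energy-levels Baire argument, is precisely the reasoning the paper is invoking.
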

This is reasonable because generic K3 has Picard number $0$ by
Torelli theorem.

\subsection{Generalized Donalson-Thomas Invariants on Elliptic K3 Surfaces} \label{2008}
In this section, we will construct the generalized Donaldson-Thomas invariants on elliptic K3 surfaces with only $I_1$-type singular fibres. Part of the construction might overlap with the one in \cite{KS5}.
From the $S^1_{\vartheta}$-family of scattering diagram, there is a very important concept called wall of marginal stability:

\begin{definition}
For a given $u_0\in B_0$ and $\gamma\in \Gamma_{u_0}$, there exists an open subset $\mathcal{U}\subseteq B_0$ containing $u_0$ such that there is an associated  wall of marginal stability $W^{trop}_{\gamma}\subseteq \mathcal{U}$ given by
  \begin{align*}
    W_{\gamma}^{trop}=\bigcup_{\gamma=\gamma_1+\gamma_2} W^{trop}_{\gamma_1,\gamma_2},
  \end{align*}
    where
\begin{equation*}
    W^{trop}_{\gamma_1,\gamma_2}=\{ u: \exists \gamma_{1,u}, \gamma_{2,u}\in \Gamma_u \text{ coming from BPS rays and } \frac{Z_{\gamma_1}(u)}{Z_{\gamma_2}(u)} \in \mathbb{R}_+  \}
\end{equation*}
\end{definition}

Now we are going to define the generalized Donaldson-Thomas invariants using the following procedure: For any pair $(\gamma,u)$, we first perturb $u$ to be generic nearby one and assume that $u\notin W^{trop}_{\gamma}$. The phase of the central charge $\vartheta=\mbox{Arg}Z_{\gamma}$ indicates which scattering diagram $S(\mathfrak{D}_{\vartheta})$ is going to determine the invariant $\Omega^{trop}(\gamma;u)$. 

Now we want to decompose the symplectormorphism $\theta_{\mathfrak{d}}(u)$ at each point $u$ of a BPS ray $\mathfrak{d}\in S(\mathfrak{D}_{\vartheta})$ into some elementary ones of the form
   \begin{align} \label{1060}
     \mathcal{K}_{\gamma}(u):  \mathbb{C}[\Gamma_{g,u}]  &\hat{\otimes}_{\mathbb{C}}\Lambda_0 \mapsto \mathbb{C}[\Gamma_{g,u}] \hat{\otimes}_{\mathbb{C}}\Lambda_0    \nonumber \\
     & z^{\partial\gamma_u'}\longmapsto z^{\partial \gamma_u'}(1-\sigma(\gamma_u)z^{\partial\gamma_u}T^{Z_{\gamma_u}}),
   \end{align} for $\gamma_u\in \Gamma_u$. Here $\sigma(\gamma_u)=\pm 1$ is called the quadratic refinement satisfying the relation
      \begin{align} \label{2003}
       \sigma(\gamma_{1,u})\sigma(\gamma_{2,u})=(-1)^{\langle \gamma_{1,u},\gamma_{2,u}\rangle}\sigma(\gamma_{1,u}+\gamma_{2,u}),
      \end{align} where the weird $(-1)^*$ is due to the $(-1)^*$ factor in (\ref{1040}).

We will drop the subindex $u$ for shorter notation if the context is clear. Given a path $\phi$ from $u$ to $u'$, one has the parallel transport 
   \begin{align*}
      T^{u',u}_{\phi}:\Gamma_u\rightarrow \Gamma_{u'},
   \end{align*}   
 which extends naturally to 
    \begin{align*}
       T^{u',u}_{\phi}:\mbox{Aut}(\mathbb{C}[\Gamma_{g,u}]  &\hat{\otimes}_{\mathbb{C}}\Lambda_0) \rightarrow \mbox{Aut}(\mathbb{C}[\Gamma_{g,u'}]  \hat{\otimes}_{\mathbb{C}}\Lambda_0)\\
       &\mathcal{K}_{\gamma}(u)\mapsto \mathcal{K}_{T^{u',u}_{\phi}(\gamma)}(u').
    \end{align*}   
   
For this purpose, we will need the following lemmas, which are straight forward algebra computation and we will omit the proof.
\begin{lem} \label{27}
   Let $\gamma_1,\gamma_2\in \Gamma$ be two relative classes. If the intersection pairing $\langle \gamma_1,\gamma_2\rangle=0$ , then the associate Kontsevich-Soibelman transformation (\ref{1060}) commutes Namely, we have $\mathcal{K}_{\gamma_1}\mathcal{K}_{\gamma_2}=\mathcal{K}_{\gamma_2}\mathcal{K}_{\gamma_1}$. In particular, the Kontsevich-Soibelman transformation associate to a pure flavor charge is an identity.
   \end{lem}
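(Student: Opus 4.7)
The plan is to reduce the identity $\mathcal{K}_{\gamma_1}\mathcal{K}_{\gamma_2}=\mathcal{K}_{\gamma_2}\mathcal{K}_{\gamma_1}$ to the vanishing of a Lie bracket in $\mathfrak{g}$ and then invoke Baker--Campbell--Hausdorff. First, I want to recognize $\mathcal{K}_{\gamma}(u)$ as an element of the pro-nilpotent Lie group $\mathbb{V}_{trop}$: expanding the logarithm gives
$$\mathcal{K}_{\gamma}(u)=\exp\!\Bigl(-\sum_{k\geq 1}\tfrac{\sigma(\gamma)^{k}}{k}\,z^{k\partial\gamma}T^{kZ_{\gamma}}\otimes n_{\gamma}\Bigr),$$
where $n_{\gamma}\in\partial\gamma^{\perp}\subset(\Gamma_{g})^{*}$ is the primitive covector appearing in \eqref{1011}. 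Thus the infinitesimal generator $\xi_{\gamma_{i}}$ of $\mathcal{K}_{\gamma_{i}}$ lies in the graded summand $\bigoplus_{k\geq 1} z^{k\partial\gamma_{i}}(\Lambda_{+}\otimes\partial\gamma_{i}^{\perp})$ of $\mathfrak{h}$.

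Next, I compute the brackets between these summands. The pairing $\langle\cdot,\cdot\rangle$ on $\Gamma$ descends from the unimodular symplectic pairing on $\Gamma_g$, so $\langle\gamma_{1},\gamma_{2}\rangle=\langle\partial\gamma_{1},\partial\gamma_{2}\rangle$, and the hypothesis forces the right-hand side of the bracket formula \eqref{1040} to vanish for each pair $(k\partial\gamma_{1},l\partial\gamma_{2})$. Since $\xi_{\gamma_{1}}$ and $\xi_{\gamma_{2}}$ are sums of such terms and the bilinearity of $[\,,\,]$ preserves this vanishing, we obtain $[\xi_{\gamma_{1}},\xi_{\gamma_{2}}]=0$, and inductively every iterated bracket vanishes. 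The Baker--Campbell--Hausdorff series in the pro-nilpotent topology then collapses to $\exp(\xi_{\gamma_{1}})\exp(\xi_{\gamma_{2}})=\exp(\xi_{\gamma_{1}}+\xi_{\gamma_{2}})=\exp(\xi_{\gamma_{2}})\exp(\xi_{\gamma_{1}})$, which is exactly $\mathcal{K}_{\gamma_{1}}\mathcal{K}_{\gamma_{2}}=\mathcal{K}_{\gamma_{2}}\mathcal{K}_{\gamma_{1}}$.

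For the second assertion, recall from \eqref{1002} that a pure flavor charge $\gamma$ lies in $\bigcup_{u}H_{2}(X)=\ker(\Gamma\to\Gamma_{g})$, so $\partial\gamma=0$. Then $\langle\gamma,\gamma'\rangle=\langle\partial\gamma,\partial\gamma'\rangle=0$ for every $\gamma'$, and more importantly the exponent $z^{\partial\gamma}=z^{0}=1$ carries no $\Gamma_{g}$-weight; in the direct-sum decomposition defining $\mathfrak{h}$ (indexed over $\Gamma_{g}\setminus\{0\}$) the generator $\xi_{\gamma}$ is zero, which is how the formula \eqref{1060} should be read in this degenerate case. Hence $\mathcal{K}_{\gamma}=\mathrm{id}$.

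No step is genuinely difficult; the only care needed is in bookkeeping the two sign conventions (the $\sigma$ quadratic refinement \eqref{2003} and the $(-1)^{\langle\partial\gamma,\partial\gamma'\rangle}$ in \eqref{1040}) so that passing between the twisted torus algebra and the Lie algebra $\mathfrak{h}$ is consistent. Since both signs are built into the definitions of $\mathcal{K}_{\gamma}$ and the bracket, they play no role in the final commutativity once $\langle\gamma_{1},\gamma_{2}\rangle=0$.
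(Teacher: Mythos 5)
Your proof is correct. The paper omits the argument entirely (``straight forward algebra computation''), and what you give --- identifying $\mathcal{K}_{\gamma}$ with $\exp(\xi_{\gamma})$ for $\xi_{\gamma}$ supported in the graded pieces $z^{k\partial\gamma}(\Lambda_+\otimes\partial\gamma^{\perp})$, noting that the pairing on $\Gamma$ factors through $\Gamma_g$ so the bracket (\ref{1040}) kills every pair of such pieces, and then collapsing Baker--Campbell--Hausdorff --- is exactly the standard computation the author has in mind. The one point worth flagging is that (\ref{1060}) as printed omits the exponent $\langle n_{\gamma},\partial\gamma'\rangle$ on the factor $(1-\sigma(\gamma)z^{\partial\gamma}T^{Z_{\gamma}})$, without which the map is not even an algebra automorphism; your reading, which silently restores that exponent so that $\mathcal{K}_{\gamma}\in\mathbb{V}_{trop}$ and so that $\partial\gamma=0$ forces $\xi_{\gamma}=0$ (hence $\mathcal{K}_{\gamma}=\mathrm{id}$ for pure flavor charges), is the intended one.
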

\begin{lem}\label{1037}
Fix $\epsilon=0$ or $1$\footnote{
 The sign $\epsilon$ will be related to quadratic refinement in
 \cite{GMN}.}
.
   Let $f=1+a_1x+a_2x^2+\cdots \in \mathbb{Q}[[x]]$, then there is a unique factorization
   \begin{equation*}
       f=\prod_k(1-(-1)^{k\epsilon}x^k)^{kd_k},
   \end{equation*}
  for some $d_k \in \mathbb{Q}$. Moreorver, we have the estimate for size of $d_k$.
\end{lem}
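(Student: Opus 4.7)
The plan is to take logarithms of both sides and match coefficients of $x^n$. Writing $\log f = \sum_{n\geq 1} c_n x^n$, where each $c_n$ is a universal polynomial in $a_1,\ldots,a_n$, and expanding the logarithm of the right-hand side using $\log(1-t) = -\sum_{m\geq 1} t^m/m$, I collect terms of degree $n$. Since $k\mid n$ forces $k(n/k)=n$, we have the identity $(-1)^{k\epsilon(n/k)} = (-1)^{\epsilon n}$, and the matching of $x^n$-coefficients reduces to the linear system
\begin{align*}
   -n c_n = (-1)^{\epsilon n}\sum_{k\mid n} k^2 d_k.
\end{align*}
The diagonal term $k=n$ contributes $(-1)^{\epsilon n} n^2 d_n \neq 0$, so $d_n$ is uniquely determined by $c_n$ and the $d_k$ already fixed for proper divisors $k\mid n$. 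Standard M\"obius inversion gives the closed form
\begin{align*}
   n^2 d_n = -\sum_{k\mid n}\mu(n/k)(-1)^{\epsilon k}\, k\, c_k,
\end{align*}
which establishes simultaneously the existence and uniqueness of the $d_k\in\mathbb{Q}$.

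For the size estimate I would impose a Cauchy-style bound $|a_n|\leq AC^n$ on the input series. A standard estimate on the coefficients of $\log(1+u)$ then yields $|c_n|\leq A'(C')^n$ for suitably enlarged constants. Substituting into the M\"obius formula, estimating $|\mu(n/k)|\leq 1$, and noting that $\sum_{k\mid n} k|c_k|$ is dominated by a partial geometric sum in $k$, one concludes $|d_n|\leq B D^n$ for some $B, D$ depending on $A, C, \epsilon$. The divisor function $d(n)$ enters only as a subexponential factor that is absorbed by enlarging $D$ slightly.

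The main obstacle is the passage from a bound on the coefficients of $f$ to one on the coefficients of $\log f$: this operation is not uniformly bounded in arbitrary Cauchy norms and must be carried out within an appropriate exponential growth class. The sign factor $(-1)^{\epsilon n}$ plays no essential role in either the existence/uniqueness step or the estimate. Once the growth-class issue is handled, the explicit M\"obius formula closes the induction and completes the proof.
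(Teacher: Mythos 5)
Your argument is correct, and in fact the paper supplies no proof to compare against: Lemma \ref{1037} is dismissed there as ``straightforward algebra computation'' with the proof omitted. Your log-and-match-coefficients derivation, the triangular system $-nc_n=(-1)^{\epsilon n}\sum_{k\mid n}k^2d_k$, and the M\"obius inversion are exactly the standard argument one would write to fill that gap. The only loose end is the ``estimate for size of $d_k$,'' which the paper never states precisely; your Cauchy-bound interpretation (geometric growth of $a_n$ implies geometric growth of $c_n$ and hence of $d_n$, with the divisor count absorbed subexponentially) is a reasonable and correctly executed completion.
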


The Lemma \ref{27} guarantees that the Kontsevich-Soibelman transformations in both sides of (\ref{1030}) do not depend on the order. The Lemma \ref{1037} guarantees that one can be uniquely factorized left hand side of (\ref{1030}) into product of $\mathcal{K}_{\gamma}$ with rational exponents.
 \begin{align}\label{1030}
    \prod_{\mathfrak{d}:\mbox{Arg}Z_{\gamma_{\mathfrak{d}}}=\vartheta, u\in \mathfrak{d}\backslash \{o\}}\theta_{\mathfrak{d}}(u)=\prod_{\gamma:\partial \gamma\parallel \mathfrak{d}} \mathcal{K}_{\gamma}^{\Omega^{trop}(\gamma;u)}
 \end{align}
In particular, the generalized Donaldson-Thomas invariant $\Omega^{trop}(\gamma;u)\in \mathbb{Q}$ is the exponent of $\mathcal{K}_{\gamma}$ in the decomposition (\ref{1030}). For a neighborhood $\mathcal{U}$ of $u$, we already define the invariant $\Omega^{trop}(\gamma;u)$ for generic $u\in \mathcal{U}\backslash W_{\gamma}^{trop}$. We will extend $\Omega^{trop}(\gamma;u)$ continuously to each components of $\mathcal{U}\backslash W_{\gamma}^{trop}$. This will finish the definition of the generalized Donaldson-Thomas invariants $\Omega^{trop}(\gamma;u)$. 
\begin{rmk} The construction of $\Omega^{trop}(\gamma)$ can be viewed as an
inverse of the procedure in \cite{L3}.
\end{rmk}
From the definition here, we only know that these generalized Donaldson-Thomas invariants are rational numbers. However, we expect the following integrality conjecture
\begin{conj}The generalized Donaldson-Thomas invariants 
  $\Omega^{trop}(\gamma)\in \mathbb{Z}$, for every $\gamma \in
  \Gamma$.
\end{conj}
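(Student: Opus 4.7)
The plan is to identify $\Omega^{trop}(\gamma)$ with an integer-valued geometric quantity, most naturally the open Gromov-Witten invariant $\tilde{\Omega}^{Floer}(\gamma;u)$ (up to a multiple-cover correction), and then to propagate integrality through the construction of $S(\mathfrak{D}_{\vartheta})$ via the primitive wall-crossing formula of Theorem \ref{32} combined with an inductive argument on the energy exponent $T^{\lambda}$.

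Concretely, I would induct on the index $k$ of the finite-order approximations $(\mathfrak{D}_{\vartheta})_k$ built in Theorem \ref{13}. The base case is immediate: each initial BPS ray carries slab function $1 + z^{\partial\gamma_{\pm}}T^{|Z_{\gamma_{\pm}}|}$, which agrees with exactly one factor of the form \eqref{1060} under the quadratic refinement $\sigma(\gamma_{\pm}) = -1$, so Lemma \ref{1037} forces $\Omega^{trop}(\gamma_{\pm}) = 1$ and $\Omega^{trop}(k\gamma_{\pm}) = 0$ for $k \geq 2$. For the inductive step, at a scattering point where two BPS families interact, the newly produced rays carry classes of the form $\gamma = a\gamma_1 + b\gamma_2$. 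When $\gcd(a,b) = 1$, the resulting splitting is primitive in the sense required by Theorem \ref{32}, so the jump is precisely $|\langle \partial\gamma_1,\partial\gamma_2\rangle|\,\tilde{\Omega}^{Floer}(\gamma_1)\,\tilde{\Omega}^{Floer}(\gamma_2)$; by the inductive hypothesis and the geometric definition of the Floer count this is an integer, and matching orders of $T$ in the Kontsevich-Soibelman factorization \eqref{1030} then gives $\Omega^{trop}(\gamma) \in \mathbb{Z}$.

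The genuine obstacle is the non-primitive case $\gamma = k\gamma_0$ with $k \geq 2$. Lemma \ref{1037} only supplies a rational exponent $d_k \in \mathbb{Q}$, and upgrading this to $d_k \in \mathbb{Z}$ requires a multiple-cover formula in the spirit of Gopakumar-Vafa or Aspinwall-Morrison. I would attempt to derive such a formula using the equivariant localization on the twistor family promised at the end of Section 4: the $S^1$-family of complex structures gives a virtual torus action on the moduli space of discs with moving boundary on $L_u$, and the contribution of a $k$-fold cover of a primitive disc class should reduce, after localization, to a closed-form rational expression whose denominators exactly absorb those produced by Lemma \ref{1037}.

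The hardest technical step will be this open multiple-cover formula, for two interlocking reasons. First, $X$ has no global torus action, so the localization only uses the partial equivariance furnished by the twistor rotation, and transversality for the disc moduli must be arranged separately by a Kuranishi-type construction. Second, the full non-primitive wall-crossing formula for $\tilde{\Omega}^{Floer}$ is not yet established in the present paper, so the matching of rational exponents between \eqref{1030} and the geometric count cannot be closed without simultaneously proving the general wall-crossing statement. Closing this loop — full wall-crossing plus the correct open multiple-cover formula — is what the conjecture is ultimately asserting, and is the reason it is stated only as a conjecture here.
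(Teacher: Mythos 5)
The statement you are trying to prove is stated in the paper only as a conjecture; the paper offers no proof, and explicitly says that from the construction one only knows $\Omega^{trop}(\gamma)\in\mathbb{Q}$ (via Lemma \ref{1037}). So there is no argument in the paper to compare yours against, and your proposal --- which you candidly describe as a plan with an unclosed loop --- does not constitute a proof either. That said, it is worth naming precisely where the plan breaks, beyond your own summary.

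First, the identification $\Omega^{trop}(\gamma;u)$ with (a Gopakumar--Vafa-type integralization of) $\tilde{\Omega}^{Floer}(\gamma;u)$ is itself Conjecture \ref{35} combined with Conjecture \ref{222}, neither of which is established; and even granting the identification, $\tilde{\Omega}^{Floer}$ is defined by a smooth correspondence over a Kuranishi space and a priori lands in $\mathbb{R}$, not $\mathbb{Z}$, so integrality would have to be extracted from an integral structure on the virtual class that the paper does not construct. Second, your inductive step only treats scattering vertices producing a class $\gamma_1+\gamma_2$ with a unique primitive splitting, which is the hypothesis of Theorem \ref{32}; but the recursion generating $S(\mathfrak{D}_{\vartheta})$ produces classes $a\gamma_1+b\gamma_2$ for all $a,b$, and the relevant wall-crossing (Theorem \ref{63}) expresses $\Delta\tilde{\Omega}^{trop}(d\gamma)$ as a sum over weight vectors $\mathbf{w}$ with rational prefactors $N^{trop}_{\{\partial\gamma_i\}}(\mathbf{w})/|\mathrm{Aut}(\mathbf{w})|$ multiplying products of rational inputs such as $(-1)^{d-1}/d^2$. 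Showing that the Möbius-inverted quantity $\Omega^{trop}$ assembled from these rational data is an integer is exactly the arithmetic content of the conjecture, and Theorem \ref{32} gives you no leverage on it. Third, the localization computation in the paper (Section \ref{1016}) establishes the multiple cover formula only for the initial Lefschetz-thimble classes near an $I_1$ fibre; there is no torus action, even a partial one, adapted to a general disc class, so your proposed "open multiple-cover formula via twistor localization" has no foundation in the general case. In short, your outline correctly locates the difficulty but does not reduce it.
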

One can apply the following M\"obius type transformation to the logarithm of the slab functions. 
\begin{lem}\label{1038}
    Let $c(n)=\sum_{k=1}\frac{d_{n/k}}{k^2}$, and $d_s=0$ if $s$ is not an integer. Then we have
  \begin{equation} \label{1017}
     \sum^{\infty}_{k=1}kd_k\log{(1-(-1)^{k\epsilon}x^k)}=\sum^{\infty}_{n=1}nc(n)u^n,
  \end{equation}where $u=(-1)^{\epsilon}x$.
\end{lem}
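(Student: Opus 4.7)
The plan is a direct power-series manipulation: expand each logarithm on the left, collect terms by total degree in $x$, and match against the definition of $c(n)$.

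First I would apply the Taylor expansion $\log(1-y) = -\sum_{m\geq 1} y^m/m$ with $y = (-1)^{k\epsilon} x^k$ to each summand on the left, producing the double series
\begin{align*}
\sum_{k\geq 1} k d_k \log(1-(-1)^{k\epsilon}x^k) = -\sum_{k\geq 1}\sum_{m\geq 1} \frac{k d_k}{m}\,(-1)^{km\epsilon}\, x^{km}.
\end{align*}
Then I would reindex by setting $n = km$: for each $n\geq 1$ the pairs $(k,m)$ with $km = n$ correspond bijectively to divisors $k \mid n$, with $m = n/k$. Collecting like powers of $x$ gives
\begin{align*}
\sum_{k\geq 1} k d_k \log(1-(-1)^{k\epsilon}x^k) = -\sum_{n\geq 1} \frac{(-1)^{n\epsilon}x^n}{n}\sum_{k\mid n} k^2 d_k,
\end{align*}
so in the variable $u = (-1)^\epsilon x$ the coefficient of $u^n$ on the left is (up to sign) $\frac{1}{n}\sum_{k\mid n} k^2 d_k$.

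Next I would rewrite $c(n)$ via the substitution $j = n/k$, which is well defined because $d_s = 0$ whenever $s\notin \mathbb{Z}$, so only divisors $k \mid n$ contribute to the sum defining $c(n)$. This yields $c(n) = \sum_{j\mid n} j^2 d_j / n^2$, hence $n c(n) = \frac{1}{n}\sum_{j \mid n} j^2 d_j$. Comparing with the previous display gives the claimed identity term-by-term, the overall sign being absorbed into the convention $u = (-1)^\epsilon x$.

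The only bookkeeping to watch is the sign factor $(-1)^{k\epsilon}$: since $(-1)^{km\epsilon} = ((-1)^\epsilon)^{km}$ and $u^n = (-1)^{n\epsilon} x^n$ when $n = km$, the signs combine cleanly under reindexing. No serious obstacle is anticipated; this is a purely formal identity of generating functions, and together with Lemma \ref{1037} it expresses the Möbius-type inversion relating the exponents $d_k$ to the coefficients $c(n)$.
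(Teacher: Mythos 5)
The paper gives no proof of this lemma (it is declared a straightforward algebraic computation and omitted), and your expand-and-reindex strategy is exactly the intended one: Taylor-expand each logarithm, substitute $n=km$, and observe that only divisors $k\mid n$ contribute because $d_s=0$ for non-integral $s$. Your identification $nc(n)=\frac{1}{n}\sum_{j\mid n}j^2d_j$ via $j=n/k$ is also correct.

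However, the final step of your argument does not close. Your own displays give, for the coefficient of $u^n$ on the left-hand side, the value $-\frac{1}{n}\sum_{k\mid n}k^2d_k$: the factor $(-1)^{km\epsilon}x^{km}=u^n$ accounts for \emph{all} of the sign convention $u=(-1)^{\epsilon}x$, so there is nothing left for the overall minus sign coming from $\log(1-y)=-\sum_{m\ge1}y^m/m$ to be ``absorbed'' into. The two sides therefore differ by a global sign, and the sentence claiming the sign is absorbed by the convention is simply false. A sanity check with the paper's initial data confirms this: for $\epsilon=1$, $d_1=1$, $d_k=0$ ($k>1$), the left side is $\log(1+x)=-\sum_m u^m/m$ while $\sum_n nc(n)u^n=+\sum_n u^n/n$. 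So either the lemma as printed is missing a minus sign (equivalently, the logarithm should be of $(1-(-1)^{k\epsilon}x^k)^{-1}$), or $c(n)$ should be defined with the opposite sign; in either case you need to state the corrected identity or explicitly flag the discrepancy rather than assert an equality that your preceding computation contradicts.
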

Lemma \ref{1038} converts $\Omega^{trop}$ to another rational value invariants $\tilde{\Omega}^{trop}$, which is more directly related to the counting of tropical discs (See Section \ref{1039} in more details). 

The reason why we call these numbers $\Omega^{trop}(\gamma;u)$ generalized Donaldson-Thomas invariants is because they are locally constants from the construction and jump according to a version of Kontsevich-Soibelman wall-crossing formula, which is described below, when $u$ moves across the wall of marginal stability $W_{\gamma}$.

For any strictly convex cone $V\subseteq \mathbb{C}$ with its apex at the
origin, we define
\begin{equation*}
  A_V(u)=\prod_{\gamma:Z_{\gamma}(u)\in V}\mathcal{K}_{\gamma}(u)^{\Omega^{trop}(\gamma;u)},
\end{equation*}
where the product is taken in order of increasing
$\mbox{Arg}Z_{\gamma}(u)$. 
\begin{thm} \label{45}
   The generalized Donaldson-Thomas invariants $\{\Omega^{trop}(\gamma;u)\}$ satisfy the following version of Kontsevich-Soibelman wall-crossing formula: for any path in $B_0$ connecting $u_1$ and $u_2$ which has no point $u$ with $Z_{\gamma}(u)\in \partial V$ and $\Omega(\gamma;u) \neq 0$,
   then $A_V(u)$ and $A_V(u')$ are related by parallel transport in $B_0$ along the path and modulo $T^{\lambda}$, for any $\lambda>0$. 
\end{thm}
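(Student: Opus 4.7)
The plan is to deduce this wall-crossing formula from the consistency of the scattering diagram $S(\mathfrak{D}_\vartheta)$ established in Theorem \ref{13}, combined with the unique factorization provided by Lemma \ref{1037} and the commutation relations of Lemma \ref{27}. First I fix a truncation level $\lambda>0$; modulo $T^\lambda$ only finitely many BPS rays carry a nontrivial slab function, so after truncation $A_V(u)$ is a finite ordered product of elementary Kontsevich--Soibelman factors $\mathcal{K}_\gamma^{\Omega^{trop}(\gamma;u)}$. Since the hypothesis of the theorem prevents any relevant $Z_\gamma(u)$ from lying on $\partial V$ along the path, the set of charges $\gamma$ contributing to $A_V(u)$ is well-defined and locally constant as $u$ varies except possibly at walls of marginal stability.

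Next I would reduce, by subdividing, to the case of a path that crosses a single wall $W^{trop}_{\gamma_1,\gamma_2}$ transversally at one generic point $u_0$, where the only phase coincidence of charges relevant modulo $T^\lambda$ is among charges of the form $\gamma = a\gamma_1 + b\gamma_2$ with $a,b\in \mathbb{Z}_{\geq 0}$. Set $\vartheta_0=\mbox{Arg}Z_{\gamma_1}(u_0)=\mbox{Arg}Z_{\gamma_2}(u_0)$, and let $\phi$ be a small contractible loop around $u_0$. Theorem \ref{13} gives $\theta^{u_0,\lambda}_{\phi,S(\mathfrak{D}_{\vartheta_0})} \equiv \mbox{id}\pmod{T^\lambda}$. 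Any BPS ray through the neighborhood whose charge is not in the $\mathbb{Z}_{\geq 0}$-span of $\{\gamma_1,\gamma_2\}$ has phase separated from $\vartheta_0$, so by Lemma \ref{27} its contribution commutes through and cancels between the two arcs of $\phi$ on either side of the wall, up to the parallel transport appearing in the statement.

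What remains is an identity, modulo $T^\lambda$, between the ordered products of $\mathcal{K}_\gamma$'s coming from rays with $\gamma = a\gamma_1+b\gamma_2$ on the two sides of the wall. Splitting each side's product according to whether $Z_\gamma(u)\in V$ or not (which, by the $\partial V$-avoidance hypothesis, is locally constant near the relevant charges), one extracts the desired identity $A_V(u_1) \equiv T^{u_2,u_1}_\phi\, A_V(u_2) \pmod{T^\lambda}$. Letting $\lambda\to \infty$ then finishes the proof.

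The main obstacle I expect is organizing the decomposition (\ref{1030}) consistently on both sides of the wall: on each side the phases of $\gamma_1,\gamma_2$ and their combinations separate into distinct phase clusters, so Lemma \ref{1037} a priori produces different rational exponents, and one must verify that the uniqueness in Lemma \ref{1037} together with the loop identity from Theorem \ref{13} forces exactly the Kontsevich--Soibelman jump of $\Omega^{trop}$ across the wall. A secondary subtlety is that pure flavor charges $\gamma$ with $\partial\gamma=0$ must be accommodated, but Lemma \ref{27} shows their $\mathcal{K}_\gamma$ are identity transformations, so they drop out of the computation.
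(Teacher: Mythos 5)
Your proposal follows essentially the same route as the paper: reduce to a single generic transversal wall-crossing at a point $u_0$, invoke the consistency identity $\theta^{u_0,\lambda}_{\phi,S(\mathfrak{D}_{\vartheta_0})}\equiv 0 \pmod{T^{\lambda}}$ from Theorem \ref{13} for a small loop around $u_0$, rearrange the ordered product of Kontsevich--Soibelman factors into the two arcs on either side of the wall, and identify each arc with the parallel transport of $A_V(u_1)$ and $A_V(u_2)$ respectively. The only cosmetic difference is that you make explicit the roles of Lemma \ref{27} and Lemma \ref{1037}, which the paper absorbs into the definition of $\Omega^{trop}$ via the factorization (\ref{1030}); note also that the jump in the rational exponents you flag as an ``obstacle'' is not something to be ruled out but is precisely the wall-crossing of $\Omega^{trop}$ that the invariance of $A_V$ encodes.
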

\begin{proof}
 Since $\Omega^{trop}$ are locally constant away from the wall of marginal stability, it suffices to prove the statement when path passes through only one wall $W^{trop}_{\gamma_1,\gamma_2}$. We may assume the pass passes through the wall at a generic point $u_0$ with
 $d\frac{Z_{\gamma_1}}{Z_{\gamma_2}}(u_0) \neq 0$ by small perturbation of the path, which preserves the parallel transport. Since there is no $\gamma$ such that $Z_{\gamma}(u_0)\mathbb{R}_{\geq 0} \in \partial V$ with  $\Omega^{trop}(\gamma;u_0)\neq 0$, there is no BPS ray passing through $u_0$ tangent to $W^{trop}_{\gamma_1,\gamma_2}$ at $u_0$. From (\ref{1050}) in the Theorem \ref{13}, we have 
    \begin{align*}
       \theta_{\phi,S(\mathfrak{D}_{\vartheta_0})}^{u_0,\lambda}:=\theta_{\mathfrak{d}_1}(u_0)\circ \cdots \circ\theta_{\mathfrak{d}_s}(u_0)\equiv 0. \pmod{T^{\lambda}}
    \end{align*}We may rearranging the terms 
    \begin{align*}
      \theta_{\mathfrak{d}_1}(u_0)\circ \cdots \circ\theta_{\mathfrak{d}_{s'}}(u_0)=\theta_{\mathfrak{d}_s}^{-1}(u_0)\circ \cdots \circ\theta_{\mathfrak{d}_{s'+1}}^{-1}(u_0)
    \end{align*}such that $u_0+\partial\gamma_{\mathfrak{d}_1},\cdots, u_0+\partial\gamma_{\mathfrak{d}_{s'}}$ are in one side of the wall (say with $u$) and appear in counterclockwise  order, while $u_0-\partial \gamma_{\mathfrak{d}_{s}}, \cdots, u_0-\partial \gamma_{\mathfrak{d}_{s'+1}}$, are on the other side of the wall and appear in counterclockwise order. The Kontsevich-Soibelman wall-crossing formula then follows
       \begin{align*}
         A_V(u_1)=&\prod_{\gamma:Z_{\gamma}(u_1)\in V}\mathcal{K}_{\gamma}(u_1)^{\Omega^{trop}(\gamma;u_1)} \\ 
         \rightarrow & \prod_{\gamma:Z_{\gamma}(u_0)\in V}\big(T^{u_0,u_1}\mathcal{K}_{\gamma}\big)(u_0)^{\Omega^{trop}(\gamma;u_1)} \\ =& \theta_{\mathfrak{d}_1}(u_0)\circ \cdots \circ\theta_{\mathfrak{d}_{s'}}(u_0) \\=&\theta_{\mathfrak{d}_s}^{-1}(u_0)\circ \cdots \circ\theta_{\mathfrak{d}_{s'+1}}^{-1}(u_0)\\
         =& \prod_{\gamma:Z_{\gamma}(u_0)\in V}\big(T^{u_0,u_2}\mathcal{K}_{\gamma}\big)(u_0)^{\Omega^{trop}(\gamma;u_2)}  \\
         \rightarrow & \prod_{\gamma:Z_{\gamma}(u_2)\in V}\mathcal{K}_{\gamma}(u_2)^{\Omega^{trop}(\gamma;u_2)}=A_V(u_2), 
       \end{align*}where the terms connected by arrows means they
     are related by the parallel transport. 
\end{proof}

\begin{rmk} \label{46}
  The smoothness of holomorphic symplectic $2$-form constructed in \cite{GMN} is interpreted as
  Kontsevich-Soibelman wall-crossing formula.
\end{rmk}

\begin{rmk} \label{1049}
Although we don't define the invariants $\Omega^{trop}(\gamma;u)$ when $\partial \gamma=0$, one can still expect the wall crossing structures of $\Omega^{trop}(\gamma;u)$ for charges with $\partial \gamma\neq 0$ and $\partial \gamma =0$ decouple from Lemma \ref{27}. 
\end{rmk}

Notice that the affine structure on $B_{\vartheta}$ and $B_{-\vartheta}$ are actually the same one. Thus we have the following reality condition:
\begin{prop} \label{2010}
For each $u\in B_0$ and $\gamma\in \Gamma$, we have
\begin{align}
\Omega^{trop}(\gamma;u)=\Omega^{trop}(-\gamma;u)
\end{align}
\end{prop}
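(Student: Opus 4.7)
The plan is to exploit the symmetry $\gamma \mapsto -\gamma$, under which $Z_{-\gamma} = -Z_\gamma$ and therefore $\mathrm{Arg}\, Z_{-\gamma}(u) = \mathrm{Arg}\, Z_{\gamma}(u) + \pi$. Writing $\vartheta = \mathrm{Arg}\, Z_{\gamma}(u)$, the invariant $\Omega^{trop}(\gamma;u)$ is read off the factorization (\ref{1030}) for the scattering diagram $S(\mathfrak{D}_\vartheta)$, while $\Omega^{trop}(-\gamma;u)$ is read off $S(\mathfrak{D}_{\vartheta+\pi})$. Since $f_i(\vartheta+\pi) = -\mathrm{Re}(-e^{-i\vartheta}Z_{\gamma_i}) = -f_i(\vartheta)$, the affine charts on $B_{\vartheta+\pi}$ differ from those on $B_\vartheta$ only by negation, hence define the same underlying affine structure with integral tangent lattice. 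The strategy is then to show that, under this identification of the base, the full scattering diagrams $S(\mathfrak{D}_{\vartheta+\pi})$ and $S(\mathfrak{D}_\vartheta)$ are identified by the involution $\iota:\gamma \mapsto -\gamma$, which acts on the Lie algebra $\mathfrak{g}$ by $z^{\partial\gamma}\otimes \partial\gamma^{\bot}\mapsto z^{-\partial\gamma}\otimes(-\partial\gamma)^{\bot}$ and preserves the bracket (\ref{1040}) since $\langle -\gamma_1,-\gamma_2\rangle = \langle \gamma_1,\gamma_2\rangle$.

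First I would verify the base case, namely the initial data (\ref{1045}): at each $I_1$-singularity the two initial rays emanate along the monodromy invariant direction for any phase, and the Lefschetz thimble class selected by the sign convention $\int_{\mathfrak{d}_{\pm}}\iota_{\partial\gamma_\pm}\omega\, du>0$ at phase $\vartheta+\pi$ is exactly the negative of the class at phase $\vartheta$, yielding slab functions $1 + z^{-\partial\gamma_\pm}T^{|Z_{\gamma_\pm}|}$, which is precisely the image of (\ref{1045}) under $\iota$. Next I would run the inductive construction of $S(\mathfrak{D}_\vartheta)$ in the proof of Theorem \ref{13} in parallel at phases $\vartheta$ and $\vartheta+\pi$: the slab exponent (\ref{1061}) depends only on the modulus of integrals of $\iota_{\tilde{\partial\gamma}}e^{-i\vartheta}\Omega$ and is therefore invariant under $\vartheta\mapsto \vartheta+\pi$, while the new rays inserted by $(\mathfrak{D}_{\vartheta+\pi})_{k+1}$ at each scattering singularity are obtained from those inserted by $(\mathfrak{D}_{\vartheta})_{k+1}$ by applying $\iota$ to every charge. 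The generic perturbation that keeps BPS rays off the affine singularities can be chosen simultaneously for both phases by a Baire category argument identical to the one in Theorem \ref{13}.

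Finally I would pass to the factorization (\ref{1030}). Using the quadratic refinement relation (\ref{2003}) with $\gamma_2 = -\gamma_1$ yields $\sigma(-\gamma) = \sigma(\gamma)$, and together with $|Z_{-\gamma}| = |Z_\gamma|$ this gives the conjugation identity $\iota \circ \mathcal{K}_\gamma(u)\circ \iota^{-1} = \mathcal{K}_{-\gamma}(u)$ as elements of $\mathrm{Aut}(\mathbb{C}[\Gamma_{g,u}]\hat\otimes_{\mathbb{C}} \Lambda_0)$. Consequently the ordered product on the left-hand side of (\ref{1030}) at phase $\vartheta+\pi$ is the image under $\iota$ of the corresponding product at phase $\vartheta$, and the uniqueness of the $\mathcal{K}_\gamma$-factorization from Lemma \ref{1037} forces $\Omega^{trop}(-\gamma;u) = \Omega^{trop}(\gamma;u)$. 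The main obstacle is the inductive bookkeeping in the second paragraph: one must verify that the scattering process commutes with $\iota$ at the secondary walls created by intersections of non-initial rays, where the exponent of the newly produced slab function takes the form $\lambda_l = m f_{\mathfrak{d}'}(p) + f_{\mathfrak{d}}(p)$ and both the charge and its direction must transform consistently. Once this is settled, the reality statement drops out of Lemma \ref{1037}.
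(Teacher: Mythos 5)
Your argument is correct and follows the same route as the paper, which justifies this proposition with the single observation that the affine structures underlying the two relevant scattering diagrams coincide (so the rays agree as subsets of $B$ while every charge is negated); you have simply supplied the inductive bookkeeping and the $\iota$-equivariance of the Kontsevich--Soibelman factorization that this one-line observation leaves implicit. The only discrepancy is that the paper phrases the symmetry as the coincidence of $B_{\vartheta}$ with $B_{-\vartheta}$, whereas the phase governing $\Omega^{trop}(-\gamma;u)$ is $\mathrm{Arg}\,Z_{-\gamma}=\vartheta+\pi$ as in your write-up; since the charts of $B_{\vartheta+\pi}$ are the negatives of those of $B_{\vartheta}$, your identification is the one that actually matches the definition via the factorization (\ref{1030}).
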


Following is another direct consequence due to the holomorphicity the central charge.
\begin{prop} \label{31} Let $\gamma_1,\gamma_2$ be two relative classes.
  Assume that $u_{\pm}\in B_0$ be on the different side of $W^{trop}_{\gamma_1+\gamma_2}$ with 
    
   \begin{enumerate}
      \item $\mbox{Arg}Z_{\gamma_1+\gamma_2}(u_+)=\mbox{Arg}Z_{\gamma_1+\gamma_2}(u_-)$.
      \item \begin{equation} \label{902}
                 \frac{\langle \gamma_1, \gamma_2 \rangle
                }{Im\big[Z_{\gamma_1} \bar{Z}_{\gamma_2}
                 \big]}(u_+)>0.
              \end{equation}
    \end{enumerate}
    Then every tropical disc of relative class $\gamma_1+\gamma_2$ end at $u_-$ has a unique natural extension to one with end at $u_+$ by adding the affine straight line segment from $u_+$ to $u_-$. 
   
\end{prop}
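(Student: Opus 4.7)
Write $\gamma=\gamma_1+\gamma_2$. The strategy is to check that $\phi^-$ together with the affine straight segment $\sigma$ from $u_+$ to $u_-$, with the root relocated to $u_+$ and $u_-$ turned into a bivalent internal vertex, satisfies every item of Definition \ref{322}. The only non-trivial points are: (i) $\sigma$ is an affine segment on $B_0$ with rational tangent direction, (ii) the balancing condition holds at $u_-$, and (iii) the class assigned to the extended disc at $u_+$ by Definition \ref{303} is $\gamma$.

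For (i): Condition~(1) places $u_+$ and $u_-$ on a common level set of $\mbox{Arg}Z_\gamma$. Because $\log Z_\gamma$ is holomorphic in $u$, those level sets coincide with the integral curves of $\nabla|Z_\gamma|^2$, which by the earlier proposition in this section are affine straight lines on $B_\vartheta$ (with $\vartheta=\mbox{Arg}Z_\gamma$) whose tangent direction is dual to $dZ_\gamma$. Under the identification (\ref{1009}), Lemma \ref{34} identifies this tangent with $\partial\gamma\in\Gamma_g$, so it is in particular rational.

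For (ii) and (iii): the root edge of $\phi^-$ at $u_-$ has class $\gamma$, so by the same identification its outgoing primitive tangent (into the body of $\phi^-$) is $-\partial\gamma_{prim}$, while $\sigma$ supplies a primitive tangent at $u_-$ in the direction $\pm\partial\gamma_{prim}$. The correct sign is pinned down by Condition~(2): $\mbox{Im}(Z_{\gamma_1}\bar Z_{\gamma_2})$ vanishes on $W^{trop}_{\gamma_1,\gamma_2}$ and flips sign across it, so (2) is a condition on which side of the wall contains $u_+$. A short computation differentiating $\mbox{Arg}(Z_{\gamma_1}/Z_{\gamma_2})$ transverse to the wall via Lemma \ref{34} shows that $u_+$ lies on the $+\partial\gamma_{prim}$ side of $u_-$ precisely when the quotient in (2) is positive. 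Once the two primitive tangents at $u_-$ are negatives of each other, (iii) is then automatic from Definition \ref{303}, since the class at $u_+$ is the parallel transport of $\gamma$ along $\sigma$.

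Uniqueness follows from the construction: once $u_+$ and the class $\gamma$ are fixed, the new root edge out of $u_+$ is forced to be the unique affine ray in direction $-\partial\gamma_{prim}$, and by (1) this ray first meets $\phi^-$ at exactly $u_-$. The main obstacle I anticipate is the sign-bookkeeping in the previous paragraph, which ultimately reduces to computing $d\mbox{Arg}(Z_{\gamma_1}/Z_{\gamma_2})$ using holomorphicity of the central charge and the unimodular symplectic pairing on $\Gamma_g$.
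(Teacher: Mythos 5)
The paper gives no proof of Proposition \ref{31}; it is introduced only with the remark that it is ``another direct consequence due to the holomorphicity of the central charge,'' so there is no argument of the paper's to compare yours against step by step. Your proposal is a correct fleshing-out of that remark and uses exactly the ingredients the paper supplies: the (unlabeled) proposition in Section \ref{1032} that gradient flow lines of $|Z_\gamma|^2$ are constant-phase affine lines, Lemma \ref{34} together with the identification (\ref{1009}) to read off their tangent as $\partial\gamma$, and Corollary \ref{1012} for the sign. The one computation you defer does close: writing $Z_{\gamma_i}=Z_{\gamma_i}(u_0)+c_iz+O(z^2)$ at a wall point $u_0$ with the common phase normalized to $0$, the constant-phase line $\{\mathrm{Im}\,Z_\gamma=0\}$ is $z=s/(c_1+c_2)$, $s\in\mathbb{R}$, with $s>0$ the direction of increasing $|Z_\gamma|$ (i.e.\ $+\partial\gamma$, the direction BPS rays propagate), and along it
\begin{equation*}
   \mathrm{Im}\big(Z_{\gamma_1}\bar{Z}_{\gamma_2}\big)=\frac{s\,(|Z_{\gamma_1}(u_0)|+|Z_{\gamma_2}(u_0)|)}{|c_1+c_2|^2}\,\mathrm{Im}(c_1\bar{c}_2)+O(s^2),
\end{equation*}
while bilinearity of $\mathrm{Im}(c_1\bar{c}_2)$ in $\partial\gamma_1,\partial\gamma_2$ and $\langle dZ,d\bar{Z}\rangle>0$ give $\mathrm{sign}\,\mathrm{Im}(c_1\bar{c}_2)=\mathrm{sign}\,\langle\partial\gamma_1,\partial\gamma_2\rangle$; hence (\ref{902}) holds at $u_+$ exactly when $u_+$ lies on the $+\partial\gamma_{prim}$ side, as you claimed. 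Two caveats worth making explicit: your step (ii) silently assumes the edge of $\phi^-$ adjacent to the stop $u_-$ points in the direction $\pm\partial\gamma_{prim}$ with the full weight of $\gamma$ --- Definition \ref{322} does not literally force this, but it holds for the discs the wall $W^{trop}_{\gamma_1,\gamma_2}$ is defined through (those coming from BPS rays), and without it balancing at the new bivalent vertex fails; and condition (1) alone only places $u_\pm$ on the same level \emph{value} of $\mathrm{Arg}\,Z_\gamma$, not the same component, though the statement's reference to ``the affine straight line segment from $u_+$ to $u_-$'' presupposes the latter. The uniqueness claim is essentially tautological once the segment is prescribed, so the looseness there is harmless.
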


Given an elliptic fibration K3 surface $f: X\rightarrow
\mathbb{P}^1$ with holomorphic $(2,0)$-form $\Omega$, then any
$2$-form $\alpha$ on $\mathbb{P}^1$ such that
$\Omega'=\Omega+f^*\alpha$, $\Omega'\wedge \Omega'=0$ gives rise to
another elliptic fibration with same Jacobian. Moreover, any
elliptic fibration with the same Jacobian arises in above
construction. It is obvious that for any $\gamma\in H_2(X,L)$ and
$\tilde{v}$ a lifting of $v\in T\mathbb{P}^1$,
   \begin{align*}
      \int_{\partial \gamma}\iota_{\tilde{v}}f^*\alpha=0.
   \end{align*}
Therefore, changing elliptic fibred K3 surfaces within same Jacobian
doesn't change the affine structure and the scattering diagram. Notice that the construction of the scattering diagram and the definition of the generalized Donaldson-Thomas invariants $\Omega^{trop}$ do not depend on the K\"ahler form of the elliptic fibration. To sum up, we proved
\begin{thm} \label{1004}
  The invariants $\Omega^{trop}(\gamma;u)$ only depend on the Jacobian of
  the elliptic fibration and are independent of the choice of K\"ahler form
   $\omega$.
\end{thm}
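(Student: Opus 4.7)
\bigskip

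\noindent\textbf{Proof proposal.} The plan is to trace through the construction of $\Omega^{trop}(\gamma;u)$ ingredient by ingredient and verify that each piece depends only on the holomorphic symplectic $2$-form $\Omega$ (equivalently, only on the Jacobian fibration), and never on the K\"ahler form $\omega$.

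First I would dispatch the K\"ahler-form independence directly. The central charge $Z_\gamma = \int_\gamma \Omega$, the affine coordinates $f_i = -\mathrm{Re}(e^{-i\vartheta} Z_{\gamma_i})$ defining $B_\vartheta$, and the slab functions $1+z^{\partial\gamma_\pm}T^{|Z_{\gamma_\pm}(u)|}$ on initial BPS rays are all written purely in terms of $\Omega$. By the proposition in Section \ref{1032}, the BPS rays coincide with the constant-phase trajectories of $Z_\gamma$, and are therefore \emph{independent of the K\"ahler metric} used to write the gradient flow. Since the subsequent inductive scattering construction in the proof of Theorem \ref{13} only involves forming products and solving for new slab functions of the form $f_i(u) = \lambda_i + |\int_{u_0}^{u}(\int_{\partial\gamma_i}\iota_{\tilde{\partial\gamma_i}}e^{-i\vartheta}\Omega)\,du|$, which again involves only $\Omega$, the entire scattering diagram $S(\mathfrak{D}_\vartheta)$ is independent of $\omega$. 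The factorization defining $\Omega^{trop}(\gamma;u)$ then inherits this independence.

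Next I would handle the Jacobian part. Given two elliptic fibrations with the same Jacobian, write $\Omega' = \Omega + f^*\alpha$ for some closed $2$-form $\alpha$ on $\mathbb{P}^1$ (as recalled in the paragraph preceding the theorem). Using Lemma \ref{34} together with $\iota_{\tilde v} f^*\alpha = f^*(\iota_v\alpha)$, which restricts to zero on every fibre $L_u$, one gets
\begin{equation*}
  dZ'_\gamma(v) - dZ_\gamma(v) = \int_{\partial\gamma}\iota_{\tilde v} f^*\alpha = 0,
\end{equation*}
so $Z'_\gamma$ and $Z_\gamma$ differ by a constant $c_\gamma\in\mathbb{C}$ on each connected component of $B_0$. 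This already shows the affine structure on $B_\vartheta$ is unchanged. To upgrade this to equality of central charges (and therefore equality of slab functions), I would argue by induction on the scattering process. For the initial Lefschetz thimbles $\gamma_\pm$ we have $Z_{\gamma_\pm} \to 0$ at the nodal value by Corollary \ref{350}, and the same holds for $Z'_{\gamma_\pm}$ since $f^*\alpha$ integrates to zero over the $1$-dimensional image $f(\gamma_\pm)$; hence $c_{\gamma_\pm}=0$. The inductive step in Theorem \ref{13} shows that each new BPS ray carries a class $\gamma_{\mathfrak{d}} = \sum a_{\mathfrak{d}'}\gamma_{\mathfrak{d}'}$; since $c$ is $\mathbb{Z}$-linear in $\gamma$, vanishing on all previously generated classes forces $c_{\gamma_{\mathfrak{d}}}=0$, and hence $|Z'_{\gamma_{\mathfrak{d}}}| = |Z_{\gamma_{\mathfrak{d}}}|$ and all slab functions in $S(\mathfrak{D}'_\vartheta)$ match those of $S(\mathfrak{D}_\vartheta)$.

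Finally, since the Kontsevich--Soibelman factorization
\begin{equation*}
  \prod_{\mathfrak{d}:\,\mathrm{Arg}\,Z_{\gamma_{\mathfrak{d}}}=\vartheta,\ u\in\mathfrak{d}}\theta_{\mathfrak{d}}(u)=\prod_{\gamma:\,\partial\gamma\parallel\mathfrak{d}}\mathcal{K}_\gamma^{\Omega^{trop}(\gamma;u)}
\end{equation*}
used to define $\Omega^{trop}(\gamma;u)$ depends only on the data $S(\mathfrak{D}_\vartheta)$, the invariants agree. The only delicate point I foresee is the inductive verification that \emph{every} class appearing in $S(\mathfrak{D}_\vartheta)$ is a $\mathbb{Z}$-combination of initial Lefschetz thimbles, so that the $c$-vanishing propagates through scattering without loss; this is essentially the content of the proposition in Section \ref{1032} identifying BPS rays with tropical discs built from Lefschetz thimbles, but it must be invoked carefully at each generation of the scattering process.
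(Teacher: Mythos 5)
Your proposal is correct and follows essentially the same route as the paper: the paper's entire argument is the identity $\int_{\partial\gamma}\iota_{\tilde v}f^*\alpha=0$ (your $dZ'_\gamma=dZ_\gamma$), from which it concludes that the affine structure and scattering diagram, and hence $\Omega^{trop}$, are unchanged, together with the observation that no step of the construction uses $\omega$. The one thing you add that the paper leaves implicit is the normalization step pinning down the additive constants $c_\gamma$ — using Corollary \ref{350} at the singular fibres and $\mathbb{Z}$-linearity through the scattering process to get equality of the central charges themselves, not just their differentials, which is what the slab functions $T^{|Z_\gamma(u)|}$ actually require; this is a genuine (if small) gap in the paper's write-up that your argument closes.
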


\subsection{Generalized Donaldson-Thomas Invariants and Tropical Disc Counting}\label{1039}
In this section, we want to relate the generalized Donaldson-Thomas invariants to counting of tropical discs. First, we define the multiplicity of a tropical curve (with stop) following \cite{M2}:
\begin{definition}\label{1041}
Let $\phi$ be a tropical curve (with stop) has only trivalent interior vertices. The multiplicity of a tropical curve (with stop) $\phi$ is defined by
       \begin{equation}
           Mult(h)=\prod_{v\in G^{[0]}} Mult_v(\phi)
       \end{equation}
\end{definition}
However, neither the generalized Donaldson-Thomas invariant $\Omega^{trop}(\gamma,u)$ or $\tilde{\Omega}^{trop}(\gamma,u)$ are coming from usual weighted count of tropical curve of relative class $\gamma$ and with stop at $u$. We still need the following definition:

\begin{definition}\label{1042}
Given primitive vectors (might be repeated) $m_i\in \mathbb{Z}^2$, and vectors $\mathbf{w}=(\mathbf{w}_1,\cdots,\mathbf{w}_n)$ , 
       $\mathbf{w}_i=(w_{i1},\cdots,w_{il_i})\in \mathbb{Z}^{l_i}_{\geq
       0}$ such that 
      \begin{align*}
       0<w_{i1}\leq w_{i2}\leq \cdots \leq w_{il_i},
      \end{align*}
       we associate a number
  $N^{trop}_{\{m_i\}}(\mathbf{w})$ which counts the weighted count (in the sense of Definition \ref{1041}) of the number of trivalent tropical curves (with
  stop at a generic point) in $\mathbb{Z}^2\otimes \mathbb{R}$ with $l_i$ fixed position unbounded edges in the directions $m_i$ and multiplicities $w_{i,j}$, where $j=1,\cdots, l_i$.
      
\end{definition}

Note that the numbers $N^{trop}_{\{m_i\}}(\mathbf{w})$ do not depend on the generic position of the unbounded edges \cite{GPS}. Now here is the main theorem of this section to illustrate the relation between the generalized Donaldson-Thomas invariants and tropical geometry which is used to inductively compute all the generalized Donaldson-Thomas invariants $\tilde{\Omega}^{trop}(\gamma;u)$:
\begin{thm} \label{63} Assume there are $n$ BPS rays labeled by $\gamma_1,\cdots,\gamma_n$ intersect at a generic point of the wall of marginal stability $W^{trop}_{\gamma}$. Under the same  notation in Definition \ref{1042} and some more notations:
   \begin{enumerate}
    \item let $|\mathbf{w}_i|=\sum^{l_i}_{k=1}w_{ik}$.
    \item The set $\mbox{Aut}(\mathbf{w}_i)$ is the subgroup of the permutation group stabilizing the set $(w_{i1},\cdots,w_{il_i})$, and 
    \item  we denote
             $|\mbox{Aut}(\mathbf{w})|=\prod_i |\mbox{Aut}(\mathbf{w}_i)|$.
   \end{enumerate}
 Then when $u$ crosses the wall of marginal stability $W_{\gamma}^{trop}$, one has the
              following wall-crossing formula for $\tilde{\Omega}^{trop}$:
     \begin{align} \label{37}
        \Delta \tilde{\Omega}^{trop}(d\gamma)=\sum_{\mathbf{w}:\sum|\mathbf{w_i}|\gamma_i=d\gamma}\frac{N^{trop}_{\{\partial\gamma_i\}}(\mathbf{w})}{|Aut(\mathbf{w})|} \bigg(\prod_{1 \leq i \leq n, 1 \leq j \leq l_i} \tilde{\Omega}^{trop}(w_{ij}\gamma_i)
        \bigg).
     \end{align}In particular, the invariant $\tilde{\Omega}^{trop}$ is sum of product of tropical counts in (\ref{37}) at each vertex after infinitesimal
       deformation.
\end{thm}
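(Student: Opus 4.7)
The plan is to derive (\ref{37}) by combining the Kontsevich-Soibelman wall-crossing formula of Theorem \ref{45} with the tropical vertex formalism of Gross-Pandharipande-Siebert, mediated by the M\"obius inversion of Lemma \ref{1038}. I would work in a small neighborhood of a generic intersection point $u_0$ of the $n$ incoming BPS rays $\mathfrak{d}_i$ with $\partial\gamma_{\mathfrak{d}_i}=\partial\gamma_i$. On either side of the wall $W^{trop}_{d\gamma}$, the ordered product (\ref{1030}) must agree under parallel transport modulo $T^{\lambda}$ by Theorem \ref{45}, so the entire discrepancy between the two sides is the commutator obstruction produced at $u_0$. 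Lemma \ref{27} guarantees that only pairs with $\langle \partial\gamma_i,\partial\gamma_j\rangle\neq 0$ contribute, which concentrates the new outgoing rays in the cone spanned by $\partial\gamma_1,\ldots,\partial\gamma_n$ and in particular on $\mathbb{R}_{\geq 0}\partial(d\gamma)$.

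Next I would read off the initial data at $u_0$: the decomposition (\ref{1030}) combined with Lemma \ref{1037} writes each incoming slab function as a product of elementary Kontsevich-Soibelman transformations $\mathcal{K}_{k\gamma_i}^{\Omega^{trop}(k\gamma_i)}$. This is precisely the form of initial scattering datum to which the tropical vertex theorem applies: the exponent of each outgoing $\mathcal{K}_{d\gamma}$ is computed by summing, with appropriate weights, over trivalent tropical curves in $\mathbb{R}^2$ with $l_i$ unbounded legs in the directions $\partial\gamma_i$ of prescribed integer weights $w_{ij}$, with genericity making the weighted count equal to $N^{trop}_{\{\partial\gamma_i\}}(\mathbf{w})$ independently of leg position (as noted after Definition \ref{1042}).

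To pass from the $\Omega^{trop}$-exponents produced by the tropical vertex formula to the $\tilde{\Omega}^{trop}$-exponents claimed in (\ref{37}), I would invoke Lemma \ref{1038} leg-by-leg. An external edge of direction $\partial\gamma_i$ with weight $w_{ij}$ is fed, via that M\"obius inversion, by the coefficient $\tilde{\Omega}^{trop}(w_{ij}\gamma_i)$ rather than by the naive $\Omega^{trop}$-exponent, which accounts for the product in (\ref{37}). The factor $|\mathrm{Aut}(\mathbf{w})|^{-1}$ appears because Definition \ref{1042} records the legs as an ordered tuple while the exponent of a single $\mathcal{K}_{d\gamma}$ only sees the unordered collection; summing over partitions $\mathbf{w}$ with $\sum_{i,j}w_{ij}\gamma_i=d\gamma$ then assembles the right-hand side of (\ref{37}).

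The main technical obstacle I foresee is bookkeeping the signs produced by the quadratic refinement $\sigma$ satisfying (\ref{2003}) together with the $(-1)^{\langle\cdot,\cdot\rangle}$-twist in the bracket (\ref{1040}). One must verify that after the substitution $u=(-1)^{\epsilon}x$ of Lemma \ref{1038} these signs are absorbed consistently, so that the tropical multiplicity $w_1 w_2 |m_1\wedge m_2|$ of Definition \ref{2000} reproduces exactly the coefficient generated by each commutator $[\mathcal{K}_{\gamma_i},\mathcal{K}_{\gamma_j}]$ at a trivalent vertex of the tropical curve. Once this sign bookkeeping is settled, the inductive construction of the scattering diagram in Theorem \ref{13} propagates the local identity at $u_0$ to the global wall-crossing statement (\ref{37}) for arbitrary $d$, and the final sentence of the theorem follows because each tropical curve contributes multiplicatively at its vertices after the infinitesimal perturbation that resolves the coincidence of rays.
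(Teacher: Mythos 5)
Your proposal follows essentially the same route as the paper: localize at the generic intersection point, apply the Gross--Pandharipande--Siebert tropical vertex theorem (their Theorem 2.8) to express the new outgoing exponents as $N^{trop}_{\{\partial\gamma_i\}}(\mathbf{w})/|\mathrm{Aut}(\mathbf{w})|$ times products of log-coefficients of the incoming slab functions, and use the M\"obius-type Lemma \ref{1038} to convert those coefficients into the $\tilde{\Omega}^{trop}(w_{ij}\gamma_i)$ appearing in (\ref{37}), with induction on vertices propagating the local identity. The only point the paper makes that you omit is the brief verification that the energy filtration of (\ref{1066}) is equivalent to the degree filtration used in the GPS theorem (since any tropical disc with a vertex in $B_0$ has $|Z_{\gamma_\phi}|>\lambda_0$), which is needed to legitimize the citation; otherwise the arguments coincide.
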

\begin{proof}
 From the slab function (\ref{1045}) associated to initial BPS rays and Lemma \ref{1038}, we have
 \begin{align}\label{321}
 \tilde{\Omega}^{trop}(d\gamma_e)=\frac{(-1)^{d-1}}{d^2}, 
 \end{align}where
 $\gamma_e$ is the Lefschetz thimble from each singularity and $d\in \mathbb{Z}$. The theorem follows from recursively applying Theorem 2.8 \cite{GPS} at each vertex of the image of tropical discs representing $d\gamma$ and induction on the number of their vertices. Notice that there is no concept of degree in our context. However, any tropical disc $\phi$ with the stop $u\notin \Delta$ and at least one vertex in $B_0$, then $|Z_{\gamma_{\phi}}|>\lambda_0$. In particular, the energy filtration defined in (\ref{1066}) and the degree filtration in Theorem 2.8 \cite{GPS} are equivalent. 
\end{proof}

\begin{cor} \label{2006}
The invariant $\tilde{\Omega}^{trop}(\gamma;u)$ is the weighted count of the number of tropical discs in the Definition \ref{322} with only trivalent vertices. The weight of tropical discs is the weight in Definition \ref{2000} times $\frac{(-1)^{d-1}}{d^2}$ for each edge with weight $d$ and adjacent to a singularity.
\end{cor}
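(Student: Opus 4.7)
The plan is to induct on the number of vertices of the tropical discs (equivalently, on the step $k$ in the construction $S(\mathfrak{D}_{\vartheta})=\bigcup_k(\mathfrak{D}_{\vartheta})_k$ from Theorem \ref{13}), using equation (\ref{321}) as the base case and Theorem \ref{63} as the inductive engine. The target identity is that for every BPS ray $\mathfrak{d}$ with class $\gamma_{\mathfrak{d}}=d\gamma$, the coefficient $\tilde{\Omega}^{trop}(d\gamma;u)$ equals $\sum_{\phi}\tfrac{1}{|\mathrm{Aut}(\phi)|}\mathrm{Mult}(\phi)\prod_{e\in G^{[1]}_{\partial}}\tfrac{(-1)^{w(e)-1}}{w(e)^2}$, where the sum is over trivalent tropical discs $\phi$ representing $d\gamma$ with stop at $u$ and $G^{[1]}_{\partial}$ denotes the edges adjacent to singularities of the affine structure.

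For the base case I would take $\phi$ to be a single (possibly multi-covered) Lefschetz thimble edge from a singularity with weight $d$. The only tropical disc with no interior vertex representing $d\gamma_e$ is the $d$-fold cover, $\mathrm{Mult}(\phi)=1$ (empty product over trivalent vertices), and the edge weight contribution is exactly $\tfrac{(-1)^{d-1}}{d^2}$; matching equation (\ref{321}) which was deduced from the initial slab function (\ref{1045}) together with Lemma \ref{1038}. This anchors the induction at the level of the initial scattering diagram $(\mathfrak{D}_{\vartheta})_0$.

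For the inductive step, suppose the statement holds for every BPS ray in $(\mathfrak{D}_{\vartheta})_{k-1}$ and consider a new ray $\mathfrak{d}$ emanating from a singular point $u_0$ of $(\mathfrak{D}_{\vartheta})_{k-1}$ where BPS rays of classes $\gamma_1,\ldots,\gamma_n$ meet. Theorem \ref{63} yields
\begin{align*}
\tilde{\Omega}^{trop}(d\gamma;u_0)=\sum_{\mathbf{w}:\sum|\mathbf{w}_i|\gamma_i=d\gamma}\frac{N^{trop}_{\{\partial\gamma_i\}}(\mathbf{w})}{|\mathrm{Aut}(\mathbf{w})|}\prod_{i,j}\tilde{\Omega}^{trop}(w_{ij}\gamma_i;u_0).
\end{align*}
By induction each factor $\tilde{\Omega}^{trop}(w_{ij}\gamma_i;u_0)$ is the weighted count of trivalent tropical sub-discs $\phi_{ij}$ ending at $u_0$ with class $w_{ij}\gamma_i$. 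The combinatorial number $N^{trop}_{\{\partial\gamma_i\}}(\mathbf{w})$ counts, with the Mikhalkin vertex weight $w_1 w_2|m_1\wedge m_2|$ of Definition \ref{2000}, the ways to build a new trivalent tree rooted at a generic stop with prescribed incoming edges of directions $\partial\gamma_i$ and weights $w_{ij}$; grafting these combinatorial trees onto the $\phi_{ij}$'s at $u_0$ gives precisely the trivalent tropical discs $\phi$ of class $d\gamma$ with stop near $u_0$. Comparing vertex weights, the Mikhalkin weights contributed by $N^{trop}$ together with those already accumulated in the sub-disc weights reassemble into $\mathrm{Mult}(\phi)$, while the boundary edge factors $\tfrac{(-1)^{w-1}}{w^2}$ are inherited from the sub-discs; the automorphism factor $|\mathrm{Aut}(\mathbf{w})|$ absorbs the overcounting coming from permuting parallel incoming edges with equal weights.

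The main obstacle I anticipate is the careful bookkeeping in this last step: matching $N^{trop}_{\{\partial\gamma_i\}}(\mathbf{w})/|\mathrm{Aut}(\mathbf{w})|$ against the symmetry factor $1/|\mathrm{Aut}(\phi)|$ of the glued disc and verifying that no vertex multiplicity is counted twice when sub-discs are attached. This is essentially a combinatorial fact about gluing rooted trees, standard after \cite{GPS}, but requires one to observe that the multiplicity at $u_0$ of $\phi$ factors cleanly as (Mikhalkin weight at $u_0$) $\times$ (product of multiplicities of the $\phi_{ij}$'s), which is exactly what the formula of Theorem \ref{63} reflects. Once this bookkeeping is settled, the induction closes and the corollary follows.
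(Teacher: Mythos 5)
Your proposal is correct and follows essentially the same route the paper takes: the paper derives Corollary \ref{2006} by unwinding the proof of Theorem \ref{63}, whose base case is the initial-disc formula (\ref{321}) and whose inductive step is the recursive application of Theorem 2.8 of \cite{GPS} at each trivalent vertex, exactly as in your induction on the number of vertices. The bookkeeping issue you flag (matching $N^{trop}_{\{\partial\gamma_i\}}(\mathbf{w})/|\mbox{Aut}(\mathbf{w})|$ against the glued disc's symmetry and vertex multiplicities) is precisely what the paper delegates to \cite{GPS}, so nothing further is needed.
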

\begin{rmk}
   The sign $(-1)^{d-1}$ in Corollary \ref{2006} seems to give an refinement of the invariants. It may come from the stratification of the wall of marginal stability in symplectic geometry.
\end{rmk}

\begin{ex} \label{65}
   Assume there are two initial BPS rays labeled with relative classes $\gamma_1, \gamma_2$ hitting at $p$ from direction $(-1,0),(0,-1)\in T_pB$. In particular, the intersection point $p$ falls on the wall $W_{\gamma_1,\gamma_2}$ and we want to know how the generalized Donaldson-Thomas invariants jump on one side of the wall to the other side.
     \begin{enumerate}
        \item  In Figure \ref{1051} (a), it seems that there is only one tropical disc. However, there are actually two figures of tropical discs that contribute to the computation of $\Delta\tilde{\Omega}^{trop}(\gamma_1+2\gamma_2)$. Notice that the one in Figure \ref{1051} (c) is a tropical disc in the sense of Definition \ref{322} but not a tropical disc in the usual sense: 
           \begin{align*}
           (1,2)&= 1\cdot (1,0)+2\cdot(0,1) \Rightarrow N^{trop}=1, Aut=2 \leadsto \frac{1}{2}\cdot 1\cdot1^2=\frac{1}{2} \\
                &= 1\cdot (1,0)+1\cdot(0,2) \Rightarrow N^{trop}=2, Aut=1 \leadsto \frac{2}{1}\cdot \frac{-1}{2^2}=\frac{-1}{2}
            \end{align*} Therefore, we have 
            \begin{align*}
             \Delta\tilde{\Omega}^{trop}(\gamma_1+2\gamma_2)=\frac{1}{2}+\frac{-1}{2}=0.
            \end{align*}
            Applying the Lemma \ref{1038}, we have  $\Omega^{trop}(\gamma_1+2\gamma_2)=0$. 
     \begin{figure}\label{1051}
     \begin{center}
     \includegraphics[height=1.5in,width=3in]{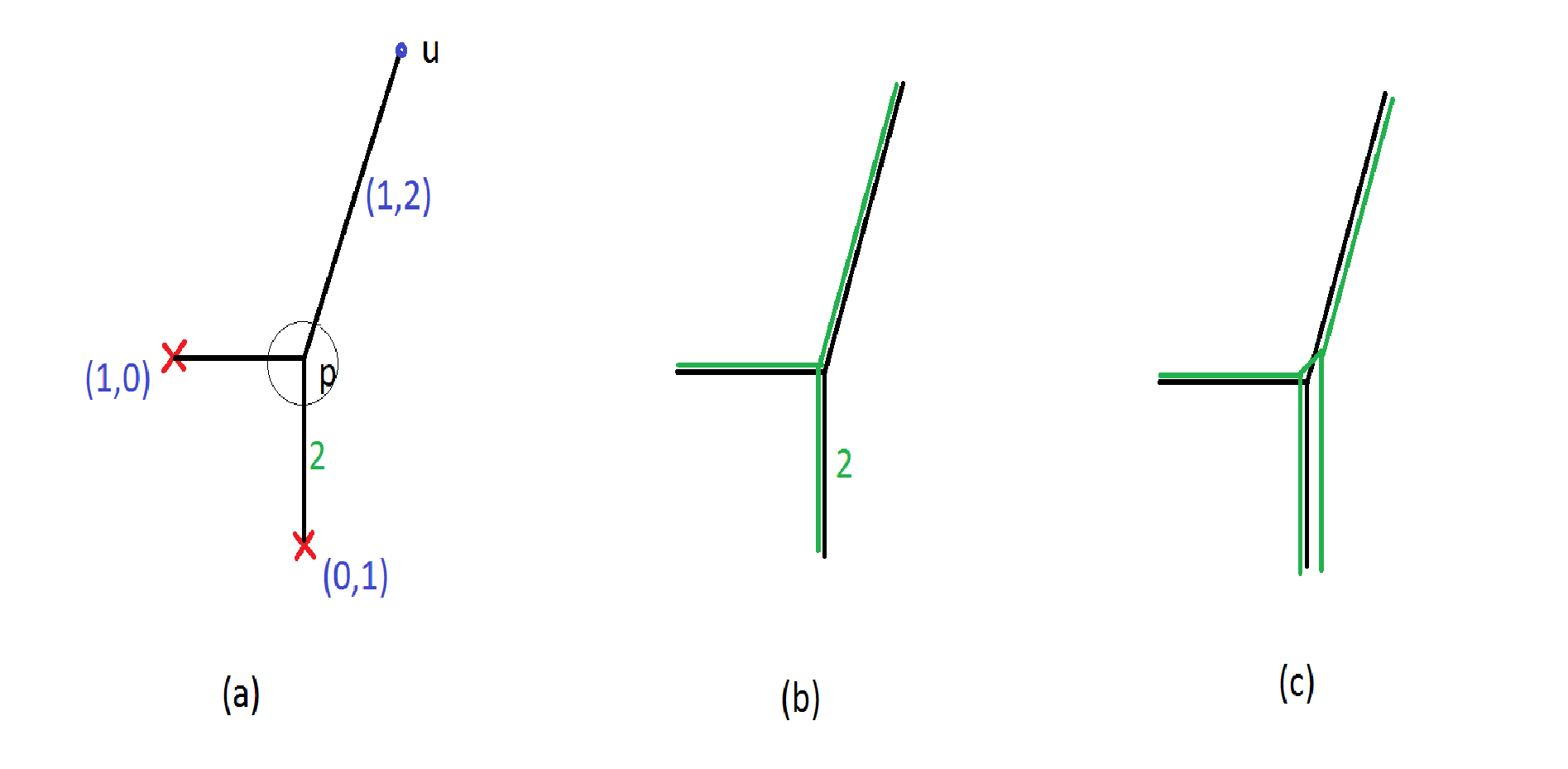}
     \caption{In the Example \ref{65}, there are two  tropical discs (drawn in green) shown in (b) and (c) corresponding to the splitting $(1,2)=(1,0)+(0,2)$ and $(1,2)=(1,0)+2\cdot (0,1)$ respectively. The bounded edges should be viewed as the edges shrink to a point and thus the two tropical discs have the same image.}
     \end{center}
     \end{figure}       
                       
          \item To compute $\Delta\tilde{\Omega}^{trop}(2\gamma_1+2\gamma_2)$, there are four figures of tropical discs contribute to the invariant (See Figure \ref{1053}).
             \begin{align*}
           (2,2)&= 1\cdot (2,0)+1\cdot(0,2) \Rightarrow N^{trop}=4, Aut=1 \leadsto \frac{4}{1}\cdot \frac{-1}{2^2}\frac{-1}{2^2}=\frac{1}{4} \\
                &= 1\cdot (2,0)+2\cdot(0,1) \Rightarrow N^{trop}=4, Aut=2 \leadsto \frac{4}{2}\cdot \frac{-1}{2^2}\cdot 1^2=\frac{-1}{2} \\
                &= 2\cdot (1,0)+1\cdot(0,2) \Rightarrow  \mbox{same as above}=\frac{-1}{2}      \\
                &= 2\cdot (1,0)+2\cdot(0,1) \Rightarrow N^{trop}=2, Aut=2^2 \leadsto \frac{4}{2^2}\cdot 1^2\cdot1^2=\frac{1}{2}.
            \end{align*}Therefore, we have 
             \begin{align*}
            \Delta\tilde{\Omega}^{trop}(2,2)=\frac{1}{4}+\frac{-1}{2}+\frac{-1}{2}++\frac{1}{2}=\frac{-1}{4}
            \end{align*}
             and $\Delta \Omega^{trop}(2\gamma_1+2\gamma_2)=0$ by the Lemma \ref{1038}. 
    \begin{figure}\label{1053}
         \begin{center}
         \includegraphics[height=1.5in,width=3in]{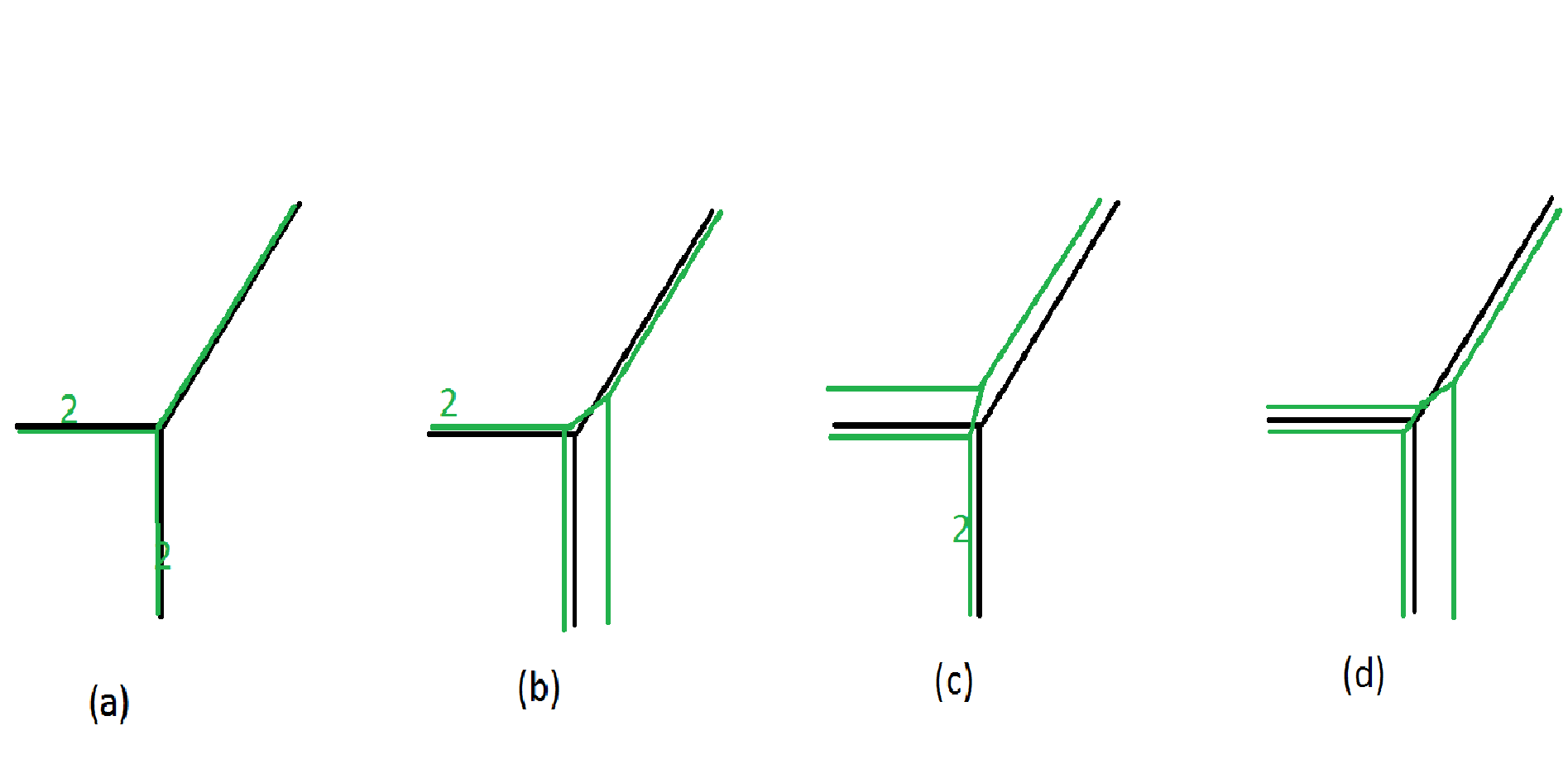}
         \caption{(a)$(2,2)= 1\cdot (2,0)+1\cdot(0,2)$ (b) $(2,2)= 1\cdot (2,0)+2\cdot(0,1)$ (c) $(2,2)= 2\cdot (1,0)+1\cdot(0,2)$ (d) $(2,2)=2\cdot (1,0)+2\cdot(0,1)  $}
         \end{center}
         \end{figure}           
            
     \end{enumerate}
     Actually, using the Theorem \ref{63} one can derive
          \begin{align*}
         \Delta \Omega^{trop}(\gamma)=\begin{cases} 1& \text{if $\gamma= \pm(\gamma_1+\gamma_2),$}\\
                      0& \text{otherwise,}
                                                                \end{cases}
          \end{align*} which is the famous pentagon identity given in \cite{KS2}:
          \begin{align*}
         \mathcal{K}_{\gamma_1}\mathcal{K}_{\gamma_2}=\mathcal{K}_{2}\mathcal{K}_{\gamma_1+\gamma_2}\mathcal{K}_{\gamma_1}.
          \end{align*}
        
\end{ex}

\subsection{Revisit of Definition of Tropical Discs}
In Definition \ref{322}, we restrict the tropical discs can only pass through the singularities via the monodromy invariant direction. However, it might be inevitable to have BPS rays passing through the singularity for all $\vartheta\in S^1$. To keep Corollary \ref{2006} holds for all $\vartheta$ and $\Omega^{trop}$ be locally constant, we may want to generalize the definition of tropical discs on $B_\vartheta$ to be the limit of tropical discs on $B_{\vartheta'}$ in the sense of Definition \ref{322}, $\vartheta'\rightarrow \vartheta$. For instance (under the same notation as in Definition \ref{322}), the interior of an edge $e$ can hit a singularity $p$. Then the affine structure with singularities of $B$ around $\phi(p)$ can locally be modeled by two charts:
    \begin{align*}
      U_1=\mathbb{R}^2\backslash \{(x,0)|x\geq 0\},
      U_2=\mathbb{R}^2\backslash \{(x,0)|x\leq 0\}
    \end{align*}with coordinates 
    \begin{align*}
      \phi_1:U_1\longrightarrow \mathbb{R}^2, \\
            (x,y)\mapsto (x,y)
    \end{align*} and
    \begin{align*}
    \phi_2&:U_2\longrightarrow \mathbb{R}^2 \\
           &  (x,y)  \mapsto \begin{cases} (x+y,y), & \text{if $x\geq 0$}\\
                       (x,y),& \text{otherwise.}
                                                                 \end{cases}
    \end{align*}
    then $\phi(e)$ near $\phi(p)$ can be the limit of affine lines in $U_1$ or $U_2$. 
 In particular, $\phi^{-1}(B_0)$ can be finite union of embedding of affine open intervals.
Notice that the limit of affine lines in $U_1$ can look bend in $U_2$ around the origin and vice versa. One can have more complicated situation such as the limit of a vertex of tropical discs hit the singularity. In any cases, we extend the definition of charges and central charges of the generalized tropical discs by parallel transport. 

At the end of this section, we will have an interesting observation.
In general, it is not an easy task to know whether or not a tropical
curve can be lifted as an actual holomorphic curve. Here we provide
a lifting criteria for certain tropical curves on K3 surfaces which is
independent of the later context.
\begin{thm}
  Assume that  $X_{\vartheta}$ admits a special Lagrangian fibration with a special Lagrangian section and there exists a trivalent tropical curve $\phi$ with multiplicity one on each edge attached to the singularity of affine base of $X_{\vartheta}$, then
  $X_\vartheta$ contains a holomorphic curve in the homology class $[\phi]$.
 \end{thm}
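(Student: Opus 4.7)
The plan is to assemble the holomorphic curve by lifting each leaf edge of $\phi$ to a Lefschetz thimble disc and gluing these across the interior trivalent vertices via a pair-of-pants construction. Since $\phi$ is trivalent and its val-$1$ vertices all lie on the singular locus $\Delta$, the underlying graph is a finite tree. The hypothesis that each leaf edge has weight one means that the edge lifts to a genuine Lefschetz thimble: a holomorphic disc in $X_\vartheta$ whose boundary is the primitive vanishing cycle on a nearby torus fibre $L_u$. The special Lagrangian section in the hypothesis pins down the translation ambiguity along each fibre, so a Lefschetz thimble with a prescribed base point is unique.

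Next, I would proceed by induction from the leaves inward. At each trivalent vertex $v \in B_0$, the balancing condition $w_1 m_1 + w_2 m_2 + w_3 m_3 = 0$ ensures that once two of the three adjacent edges have been lifted to holomorphic discs $D_1, D_2$ with boundaries on torus fibres near $v$, the third boundary class in the relevant nearby torus fibre is forced. I would produce a holomorphic disc on the outgoing edge by gluing across $v$: model a neighborhood of $v$ as a holomorphic pair of pants in $X_\vartheta$ and solve the nonlinear Cauchy--Riemann equation with the matching boundary data from $D_1$ and $D_2$, in the spirit of the Nishinou--Siebert lifting argument for toric varieties. The multiplicity-one hypothesis on the leaves, together with trivalence, is exactly what makes the relevant linearized operator transverse so that the implicit function theorem produces an honest holomorphic disc on the outgoing edge.

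After lifting every edge except a chosen interior edge $e^*$, I would be left with two holomorphic discs $D', D''$ coming from the two sides of $e^*$, both with boundary on a common torus fibre $L_p$ for some $p \in e^*$. The global balancing forces $\partial D' = -\partial D''$ in $H_1(L_p;\mathbb{Z})$, and the normalization supplied by the section allows these boundary circles to be matched on the nose, not just in homology. Gluing along this common boundary then produces a closed holomorphic curve in $X_\vartheta$ whose fundamental class is $[\phi]$.

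The main obstacle is the local gluing step at each interior trivalent vertex, which is a nonlinear deformation problem. The multiplicity-one condition on the leaf edges, combined with trivalence, is precisely what delivers the transversality needed for the moduli of discs glued across a pair-of-pants neck; without it, the linearized operator could have a cokernel that cannot be absorbed into the torus-fibre moduli. Everything else --- matching of homology classes under balancing, inductively propagating the holomorphic structure from leaves toward the root edge $e^*$, and the final closing up --- is a combinatorial consequence once this local analytic input is in place.
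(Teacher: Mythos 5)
Your proposal takes a direct analytic route (lift leaves to Lefschetz thimbles, glue across trivalent vertices by an implicit function theorem, close up along a final edge), but it has a genuine gap at its central step, and it also misses the global obstruction that actually controls the problem. The local gluing cannot be carried out by transversality in a fixed complex structure $X_{\vartheta}$: as the paper stresses repeatedly, the moduli space of holomorphic discs with boundary on a special Lagrangian torus fibre of a K3 surface has virtual dimension $-1$, so the linearized Cauchy--Riemann operator at each of your intermediate discs has a cokernel that no amount of multiplicity-one or trivalence hypotheses will kill. (This is precisely why the paper must work with the auxiliary $S^1$-family of complex structures to define its invariants at all.) Your claim that ``multiplicity-one plus trivalence is exactly what makes the linearized operator transverse'' is asserted, not proved, and is false in this setting. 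Moreover, the existence of a holomorphic curve in a class $[\phi]\in H_2(X;\mathbb{Z})$ on a K3 surface is governed by a global Hodge-theoretic condition --- the class must be of type $(1,1)$ --- which is invisible to any purely local gluing argument; a generic K3 has Picard number zero and carries no holomorphic curves whatsoever, so no local analysis near the tropical vertices can produce one without first establishing this global condition.

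The paper's proof is entirely different and sidesteps the analysis: it first checks that $\int_{[\phi]}\Omega_{\pm\vartheta}=0$, so $[\phi]$ is an integral $(1,1)$-class on $X_{\vartheta}$; it then invokes Riemann--Roch on K3 surfaces (every integral $(1,1)$-class with self-intersection at least $-2$ is represented by an effective curve, and such classes are realized holomorphically at exactly two points of each twistor line); and finally it verifies $[\phi]^2\geq -2$ by a tropical intersection computation, namely
\begin{align*}
   [\phi]^2 \;\geq\; -\#v_{out}+\sum_{v_{int}}\bigl(\mathrm{deg}(v_{int})-2\bigr)\;\geq\; -2,
\end{align*}
proved by induction on the number of vertices and edges. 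If you want to salvage your approach, you would need to replace the implicit-function-theorem step with the $(1,1)$-condition and the self-intersection bound; at that point the gluing becomes unnecessary, which is exactly the point of the paper's argument.
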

\begin{proof}
  A real integral class $[\phi]$ is of $(1,1)$-type if and only if the pairing with the holomorphic volume form is zero. It is easy to check that $\int_{[\phi]}\Omega_{\pm \vartheta}=0$. From Riemann-Roch theorem, it is easy to see that every integral
  $[\phi]\in H^{1,1}$ if and only if $\int_{[\phi]}\Omega=0$. The latter condition cuts out a hyperplane in the period
  domain and thus meets the twistor $\mathds{P}^1$ (as a quadric in period domain) at exactly $2$ points (actually one with its complex
  conjugate). Thus every real cycle with self-intersection $\geq -2$
  can be realized as holomorphic cycle exactly twice in any twistor
  family.

  Therefore, to prove $X_{\vartheta}$ contains a holomorphic curve in homology class $[\phi]$, it suffices to prove $[\phi]^2\geq -2$.
  Then by tropical perturbative intersection theory, we have     
  \begin{align*}
        [\phi]^2 \geq -\#v_{out}+\sum_{v_{int}}(\mbox{deg}(v_{int})-2)\geq
        -2.
     \end{align*} The last inequality follows easily from induction
     on the number of vertices and edges. Notice that the proof also
     holds for tropical curves with valency $2$ vertices.
\end{proof}

\section{Holomorphic Discs in Elliptic K3 Surfaces}
 In this section, we will study holomorphic discs in K3 surfaces. We first notice  the following observations: 
\begin{prop}\label{600}
Locally the set of special Lagrangian torus fibres bounding holomorphic
discs of the same relative class in $X_{\vartheta}$ all fall above an affine hyperplane on the base affine manifold $B_{\vartheta}$.
\end{prop}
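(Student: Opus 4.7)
The strategy is to compute the symplectic area of any $J_\vartheta$-holomorphic disc representing $\gamma$, express it as a real-linear function of the central charge $Z_\gamma(u)$, and then invoke the fact that $Z_\gamma$ is affine linear in the natural integral affine coordinates on $B$ coming from the twistorial data. Positivity of area will then force $u$ into an open half-space bounded by an affine hyperplane.

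Concretely, I would first fix a simply connected neighborhood $\mathcal{U}$ of $u_0\in B_0$ over which the local system $\Gamma$ in (\ref{1002}) admits a flat trivialization, so that a relative class extends to a locally constant section $u\mapsto\gamma_u\in H_2(X,L_u)$. If $D$ is a $J_\vartheta$-holomorphic disc with boundary on $L_u$ representing $\gamma_u$, positivity of the K\"ahler area for $(X_\vartheta,\omega_\vartheta)$ gives $\int_D\omega_\vartheta>0$. Using $\omega_\vartheta=-\mathrm{Im}(e^{-i\vartheta}\Omega)$ from Proposition \ref{49},
\begin{align*}
\int_D\omega_\vartheta \;=\; -\mathrm{Im}\!\left(e^{-i\vartheta}\int_D\Omega\right) \;=\; -\mathrm{Im}\bigl(e^{-i\vartheta}Z_{\gamma}(u)\bigr),
\end{align*}
so the existence of such a disc forces $\mathrm{Im}(e^{-i\vartheta}Z_\gamma(u))<0$.

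It then remains to verify that $F(u):=\mathrm{Im}(e^{-i\vartheta}Z_\gamma(u))$ is affine linear in appropriate integral affine coordinates on $\mathcal{U}$. Choosing a basis $\{\partial\gamma_1,\partial\gamma_2\}$ of $\Gamma_g$ on $\mathcal{U}$ and writing $\partial\gamma_u=a_1\partial\gamma_1+a_2\partial\gamma_2$ with constant integers $a_i$, Lemma \ref{34} gives $dZ_{\gamma_u}=a_1\,dZ_{\gamma_1}+a_2\,dZ_{\gamma_2}$; taking imaginary parts shows that $F$ equals a constant linear combination of the symplectic affine coordinates $g_i:=-\mathrm{Im}(e^{-i\vartheta}Z_{\gamma_i})$ up to an additive constant. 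Hence $\{F<0\}$ is an open half-space bounded by the affine hyperplane $\{F=0\}$, which is exactly the desired assertion. The only point I expect to require care, rather than a genuine obstacle, is that this hyperplane is affine with respect to the symplectic affine structure on $B_\vartheta$ (equivalently, the complex affine structure of $B_{\vartheta-\pi/2}$) rather than the complex one used to define the coordinates $f_i$ on $B_\vartheta$; both affine structures are part of the twistorial data on the same smooth base, so the statement holds as claimed and is actually the affine-geometric shadow of the mirror identification discussed in Section \ref{1032}.
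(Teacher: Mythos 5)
There is a genuine gap: you have proved a strictly weaker statement than the proposition asserts. You read ``fall above an affine hyperplane'' as a half-space condition and accordingly extract only the open inequality $\int_D\omega_\vartheta=-\mathrm{Im}(e^{-i\vartheta}Z_\gamma(u))>0$ from positivity of the K\"ahler area. But the intended (and proved) content is that the fibres bounding such discs sit \emph{over} a codimension-one affine subspace, namely the locus $\{f_\gamma=0\}$ with $f_\gamma:=\mathrm{Re}(e^{-i\vartheta}Z_\gamma)$; the remark immediately following the proposition calls these the ``special affine lines'' along which $Z_\gamma$ has constant phase, and the whole BPS-ray/wall structure of the paper depends on this locus being codimension one rather than an open chamber. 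Your half-space conclusion cannot be ``exactly the desired assertion.''

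The missing idea is that a holomorphic disc $D$ in $X_\vartheta$ is a complex curve, so the holomorphic symplectic $(2,0)$-form $\Omega_\vartheta=\omega-i\,\mathrm{Re}(e^{-i\vartheta}\Omega)$ (Proposition \ref{49}) restricts to zero on it. Hence
\begin{align*}
\int_{\gamma}\Omega_{\vartheta}=\int_{\gamma}\omega- i\int_{\gamma}\mathrm{Re}(e^{-i\vartheta}\Omega)=0,
\end{align*}
and the vanishing of the imaginary part is precisely the equation $\mathrm{Re}(e^{-i\vartheta}Z_\gamma(u))=0$. This is an affine-linear equation in the coordinates $f_i=-\mathrm{Re}(e^{-i\vartheta}Z_{\gamma_i})$ that define $B_\vartheta$ itself, so no passage to the symplectic affine structure is needed (your closing worry about which affine structure is relevant dissolves once you use the real part rather than the imaginary part). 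Your affine-linearity argument via Lemma \ref{34} is fine and transfers verbatim to $\mathrm{Re}(e^{-i\vartheta}Z_\gamma)$; what you need to add is the isotropy of holomorphic curves for $\Omega_\vartheta$, which upgrades your open half-space to the hyperplane $\{f_\gamma=0\}$.
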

\begin{proof}
   Assume $\{L_t\}_{t\in (-\epsilon,\epsilon)}$ are a family of special Lagrangian torus fibres bounding holomorphic discs in relative class $\gamma_t\in H_2(\underline{X},L_t)$ in $X_{\vartheta}$. Then 
     \begin{align*}
      \int_{\gamma_t}\Omega_{\vartheta}=\int_{\gamma_t}\omega-i\mbox{Re}(e^{-i\vartheta}\Omega)=0. 
     \end{align*}
  In particular, $L_t$ are confined by the equation
     \begin{align*}
       f_{\gamma}:=\mbox{Re}(e^{-i\vartheta}Z_{\gamma})=0,
     \end{align*}which all sit above an affine hyperplane. 
\end{proof}
\begin{rmk}
  From the proof of Proposition \ref{600}, the prescribed affine line is special in the sense that the corresponding central charge $Z_{\gamma}$ has constant phase along $\{f_{\gamma}=0\}$. We will call them special affine lines with respect to phase $\vartheta$.
\end{rmk}

We have support property \cite{KS2}\footnote{The support property is required to define stability
data for a suitable category. It is also needed
to prove the convergence of hyperK\"ahler metric in \cite{GMN} which we don't discuss it here.} for
holomorphic discs because holomorphic cycles are calibrated .
Namely,
\begin{prop}
 There exists $\delta >0$, such that
 \begin{equation*}
    \frac{|Z_{\gamma}|}{\| \gamma \|} > \delta
 \end{equation*}
 for all $\gamma$ which can be represented by holomorphic cycles in $X_{\vartheta}$ for some $\vartheta$.
\end{prop}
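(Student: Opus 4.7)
The plan is to prove the bound in two steps: first identify $|Z_\gamma|$ with a Riemannian area via the hyperKähler calibration, then compare area with the lattice norm using finite-dimensionality.

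First I would show that for any $J_\vartheta$-holomorphic cycle $\gamma$ in $X_\vartheta$ (whether a closed curve or a disc with boundary on a special Lagrangian fiber $L_u$), one has the identity $|Z_\gamma| = \mathrm{vol}_g(\gamma)$, where $g$ is the common underlying hyperKähler metric. The argument is that $\gamma$ is a complex submanifold of complex dimension $1$ in the complex surface $X_\vartheta$, so the pull-back of the holomorphic symplectic form vanishes, $\Omega_\vartheta|_\gamma=0$. Using the expression $\Omega_\vartheta=\omega_3-i\mathrm{Re}(e^{-i\vartheta}\Omega)$ from (\ref{38}), integration yields
\begin{align*}
\int_\gamma\omega_3=0, \qquad \mathrm{Re}\bigl(e^{-i\vartheta}Z_\gamma\bigr)=0,
\end{align*}
so $Z_\gamma=\pm ie^{i\vartheta}|Z_\gamma|$. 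Combining with Proposition \ref{49} and the calibration identity for a $J_\vartheta$-holomorphic cycle in the hyperKähler setting gives
\begin{align*}
\mathrm{vol}_g(\gamma)=\int_\gamma\omega_\vartheta=-\mathrm{Im}\bigl(e^{-i\vartheta}Z_\gamma\bigr)=|Z_\gamma|.
\end{align*}
Note that for the disc case $Z_\gamma=\int_\gamma\Omega$ is still well-defined since $\Omega|_{L_u}=0$.

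Next I would bound $\mathrm{vol}_g(\gamma)$ below by $\|\gamma\|$. Since $X$ is compact and $g$ is a fixed Riemannian metric (independent of $\vartheta$), the stable area seminorm on $H_2(X;\mathbb{R})$ is a genuine norm on this finite-dimensional vector space, hence equivalent to any chosen lattice norm $\|\cdot\|$. In particular there exists $\delta>0$ such that $\mathrm{area}_{\min}(\gamma)\geq\delta\|\gamma\|$ for every integral class $\gamma$, and since the holomorphic representative is one specific representative, $\mathrm{vol}_g(\gamma)\geq\mathrm{area}_{\min}(\gamma)$. Chaining with the previous step yields $|Z_\gamma|\geq\delta\|\gamma\|$, which is the claim.

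The main obstacle is the disc/relative case: the norm $\|\cdot\|$ and stable area seminorm live on $H_2(X,L_u)$, which varies with $u\in B_0$ through the local system $\Gamma$ of (\ref{1002}) and is not globally trivial due to monodromy around $\Delta$. I would address this by fixing equivalent norms on each fiber compatible with parallel transport — the monodromy around each $I_1$-singularity lies in $SL(2,\mathbb{Z})$-type groups, so norm equivalences are preserved — and by invoking a Lagrangian version of the monotonicity inequality for holomorphic discs with boundary in the continuous family $\{L_u\}$. Uniformity of the monotonicity constant in $u$ follows by restricting to a compact exhaustion of $B_0$ and then analyzing the behavior near $\Delta$, where $L_u$ degenerates to a circle but the relative classes of interest are controlled by Lefschetz thimbles with vanishing $Z_\gamma$, so the ratio $|Z_\gamma|/\|\gamma\|$ remains bounded in that regime as well.
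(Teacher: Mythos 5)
Your first step (the calibration identity $|Z_\gamma|=\mathrm{vol}_g(\gamma)$ for holomorphic cycles) is correct and is exactly the input the paper uses. The gap is in your second step, and specifically in the relative case, which is the case the proposition is actually about. For absolute classes you appeal to the stable norm on $H_2(X;\mathbb{R})$, which is fine, but for classes in $H_2(X,L_u)$ you fall back on ``a Lagrangian version of the monotonicity inequality.'' Monotonicity gives a uniform positive lower bound $\mathrm{area}(\gamma)\geq\hbar>0$ for any nonconstant holomorphic disc; it does \emph{not} give a bound growing linearly in $\|\gamma\|$, which is what the support property demands as $\|\gamma\|\to\infty$. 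Likewise, the assertion that the minimal-mass seminorm on $H_2(X,L_u;\mathbb{R})$ is a genuine norm is precisely the nontrivial point: to rule out nonzero relative classes of arbitrarily small mass one has to pair against relative cohomology, and once you do that you have reproduced the paper's argument rather than bypassed it. The paper's route is more direct: choose a basis $\{(\alpha_i,\gamma_i)\}$ of $H^2(X_\vartheta,L)$ represented by pairs with $d\alpha_i=0$, $\alpha_i|_L=d\gamma_i$, observe that $|\int_\gamma(\alpha_i-d\tilde\gamma_i)|\leq C_i\,\mathrm{Vol}(\gamma)=C_i|Z_\gamma|$ with $C_i$ a pointwise sup-norm of a fixed form (hence independent of $\vartheta$), and sum over the basis to produce a norm on $H_2(X,L)$ dominated by $C|Z_\gamma|$. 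This handles absolute and relative classes uniformly and requires no stable-norm or monotonicity input.

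A second, smaller problem: your closing claim that near $\Delta$ ``the ratio $|Z_\gamma|/\|\gamma\|$ remains bounded in that regime as well'' is the wrong direction and, for uniformity in $u$, actually false. For the Lefschetz thimble $\gamma_e$ one has $Z_{\gamma_e}\to 0$ as $u$ approaches the singular point (Corollary \ref{350}) while $\|\gamma_e\|$ is the norm of a fixed nonzero lattice vector, so the ratio tends to $0$; no $\delta$ uniform in $u$ exists. This is harmless only because the proposition is stated for a fixed fiber $L$, so the entire apparatus you introduce to achieve uniformity over $u\in B_0$ is unnecessary — but the way you propose to discharge it does not work.
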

\begin{proof}
   We first choose a basis $\{(\alpha_i,\gamma_i)\}$ of $H^2(X_{\vartheta},L)$
   ,where $\alpha_i\in \mathcal{A}^2(X_{\vartheta})$, $\gamma_i\in \mathcal{A}^1(L)$ satisfying $d\alpha_i=0$ and $\alpha_i|_{L}=d\gamma_i$. Then there is a non-degenerate
   pairing given by
   \begin{align*}
     H_2(X_{\vartheta},L) &\times H^2(X_{\vartheta},L) \longrightarrow \mathbb{R} \\
        (\gamma,& \quad (\alpha_i,\gamma_i)) \mapsto
         \int_{\gamma}\alpha_i-\int_{\partial
         \gamma}\gamma_i=\int_{\gamma}\alpha_i-d\tilde{\gamma}_i,
   \end{align*} where $\tilde{\gamma}_i$ are fixed smooth extension
   of $\gamma_i$ to whole $X_{\vartheta}$. If $\gamma$ can be
   represented by a holomorphic cycle (also denoted by $\gamma$) in $X_{\vartheta}$, then we have $\mbox{Vol}(\gamma)=| Z_{\gamma}|$.  Therefore, we have the following inequality
   \begin{align*}
        |\int_{\gamma}(\alpha_i,\gamma_i)| \leq
         \sup_{\substack{v_1,v_2\in T_pX,\\ |v_1\wedge v_2|=1}}\langle \alpha_i-d\tilde{\gamma}_i, v_1\wedge v_2
        \rangle \cdot\mbox{Vol}(\gamma)\leq C\cdot |Z_{\gamma}|,
   \end{align*}where $C$ is a constant independent of $\vartheta$. Summing $i$ through basis of $H^2(X_{\vartheta},L)$,
   the left hand side gives a norm on $H_2(X_{\vartheta},L)$ and we
   prove the support property. 
\end{proof}

\subsection{Holomoprhic Discs in the $S^1$-Family} \label{2009}
Let $(X,\omega,\Omega)$ be a hyperK\"ahler manifold (not necessarily
compact) with K\"ahler form $\omega$ and holomorphic symplectic
$2$-form $\Omega$, then the hyperK\"ahler triple $(\omega,\Omega)$
will give a twistor $\mathbb{P}^1$. Let $L$ be a holomorphic
Lagrangian with respect to $\Omega$, then by Remark \ref{300} there is an $S^1$-family of
complex structures in the twistor family such that $L$ is special
Lagrangian. 

Let $\mathcal{M}_{k+1,\gamma}(\mathfrak{X},L)$ be the moduli space of stable
pesudo-holomorphic discs in the above $S^1$-family \footnote{The $S^1$-family and the moduli space $\mathcal{M}_{k+1,\gamma}(\mathfrak{X},L)$ depends on the choice of the holomorhpic symplectic $2$-form and the Ricci-flat metric $\omega$. However, we will prove that the invariant we defined is independent of these choices when $\partial \gamma \neq 0\in H_1(L)$. Therefore, we will omit the dependence for shorter notations.} with boundary on the fixed
special Lagrangian $L$ with $k+1$ boundary marked points in
counter-clockwise order.

 Let $f:\Sigma\rightarrow X_{\vartheta}\in \mathcal{M}_{0,\gamma}(\mathfrak{X},L)$.
  For each $\alpha \in \mathbb{R}Im\Omega_{\vartheta} \subseteq(\Omega_{\vartheta}^{2,0}
\oplus \Omega_{\vartheta}^{0,2})_{\mathbb{R}}$, Lee \cite{L} associated an
endomorphism $K_{\alpha}:TX_{\vartheta} \rightarrow TX_{\vartheta}$ by
\begin{equation*}
   g(u,K_{\alpha}v)=\alpha(u,v)
\end{equation*}
and considered the following twisted $\bar{\partial}$-equation
\begin{equation} \label{1005}
    \bar{\partial}_{J_{\vartheta}}f=K_{J_{\vartheta}}(f,\alpha)=\frac{1}{2}K_{\alpha}(\partial_{J_{\vartheta}} f \circ
    j).
\end{equation} 
The map that satisfies the above twisted $\bar{\partial}$-equation (\ref{1005}) are
actually $J_{\vartheta,\alpha}$-holomorphic with 
\begin{equation} \label{220}
   J_{\vartheta,\alpha}=\frac{1-|\alpha|^2}{1+|\alpha|^2}J_{\vartheta}-\frac{2}{1+|\alpha|^2}K_{\alpha}
\end{equation}
in the equator $S^1 \subseteq \mathbb{P}^1$ of the twistor family making $L$ special
Lagrangian.

One of the key observation is the following: Assume $f'$ is
$J_{\vartheta,\alpha}$-holomorphic and in the same relative class as $f$, then
by Proposition 1.3 \cite{L},
\begin{align*}
    \int_{(\Sigma',\partial \Sigma')}|\bar{\partial}_{J_{\vartheta}} f'|^2 dv &=\int_{(\Sigma,\partial \Sigma)} g(\bar{\partial}f', K_{J_{\vartheta}}(f',\alpha))dv \\
                                                                                &=\int_{(\Sigma',\partial
                                                                                \Sigma')}f'^*\alpha
                                                                                \\
                                                                &=\int_{(\Sigma,\partial \Sigma)}f^*\alpha+\int_C dF^*\alpha+\int_{C'}F^*\alpha=0,
\end{align*}
where $F$ is any homotopy between $f$ and $f'$. The first term
vanishes because $f$ is $J$-holomorphic. The second term
vanishes because $\Omega_{\vartheta}$ is $d$-closed, and the last term is from
boundary of domain and vanishes because $\alpha|_L=0$. Therefore,
$\bar{\partial}_{J_{\vartheta}}f'\equiv 0$.

\begin{prop} \label{48}
   Given any relative class $\gamma \in H_2(X,L)$, there is at most one complex structure in equator $S^1$ in the twistor family such that
   $\gamma$ has a holomorphic representative.
    In particular, any holomorphic Riemann surface with boundary on the special Lagrangian in a K3 surface is rigid in the $S^1$-family.
\end{prop}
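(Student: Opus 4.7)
The plan is to argue by contradiction: suppose the relative class $\gamma$ is represented by a holomorphic curve $f$ in $X_\vartheta$ and by another holomorphic curve $f'$ in $X_{\vartheta'}$ with $\vartheta\ne\vartheta'$ on the equator. By Lee's parametrization recalled in equations (\ref{1005}) and (\ref{220}), the complex structure $J_{\vartheta'}$ can be written as $J_{\vartheta,\alpha}$ for some nonzero $\alpha\in\mathbb{R}\,\mathrm{Im}\,\Omega_\vartheta$, so $f'$ satisfies the deformed equation $\bar\partial_{J_\vartheta}f' = K_{J_\vartheta}(f',\alpha)$.

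I would then directly reuse the $L^2$-identity displayed in the paragraph preceding the statement, namely $\int |\bar\partial_{J_\vartheta}f'|^2\,dv = \int_{(\Sigma',\partial\Sigma')} f'^*\alpha$. Since $\alpha$ is closed and $\alpha|_L = \mathrm{Im}\,\Omega_\vartheta|_L = 0$ (both facts ultimately follow from $\Omega|_L=0$), the right-hand side depends only on the relative homology class $\gamma$, hence equals $\int_{(\Sigma,\partial\Sigma)} f^*\alpha$. But $\alpha$ is of pure type $(2,0)+(0,2)$ with respect to $J_\vartheta$, and $f$ is a $J_\vartheta$-holomorphic map from a Riemann surface, so $f^*\alpha = 0$ pointwise. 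Combining these equalities forces $\bar\partial_{J_\vartheta}f'\equiv 0$.

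To finish I would exploit $\alpha\ne 0$ one more time. Since $\mathrm{Im}\,\Omega_\vartheta = -\cos\vartheta\,\omega_1 - \sin\vartheta\,\omega_2$ is a nonzero real linear combination of the symplectic forms $\omega_1,\omega_2$ of the hyperK\"ahler triple, it is a non-degenerate $2$-form on the $4$-real-dimensional manifold, so the endomorphism $K_\alpha$ defined by $g(u,K_\alpha v)=\alpha(u,v)$ is invertible. Substituting $\bar\partial_{J_\vartheta}f' = \tfrac12 K_\alpha(\partial_{J_\vartheta}f'\circ j) = 0$ forces $\partial_{J_\vartheta}f' = 0$ as well, so $df' = 0$ and $f'$ is constant. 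This contradicts $f'$ representing a non-trivial relative class, and therefore $\vartheta=\vartheta'$. The rigidity statement follows immediately: a one-parameter deformation of a holomorphic representative of $\gamma$ inside $\mathcal{M}_{k+1,\gamma}(\mathfrak{X},L)$ moving in the $\vartheta$-direction would violate this uniqueness.

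The main obstacle I foresee is the invertibility step for $K_\alpha$: one must check carefully that $\alpha = c\,\mathrm{Im}\,\Omega_\vartheta$ is actually a symplectic form (not merely closed), which uses the $4$-real-dimensional hyperK\"ahler geometry of a K3 surface in an essential way; the complementary type argument $f^*\alpha = 0$ similarly uses that $\Sigma$ has complex dimension one. Both verifications are short, but they are the only places where the K3 (as opposed to a general hyperK\"ahler) hypothesis is actually invoked; every other ingredient has been assembled in the preceding discussion of Section \ref{2009}.
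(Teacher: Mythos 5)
Your argument is essentially the paper's own: the paper likewise first uses Lee's $L^2$-identity to conclude $\bar{\partial}_{J_{\vartheta}}f'\equiv 0$ and then invokes the non-degeneracy of any nonzero $\alpha\in\mathbb{R}\,\mathrm{Im}\,\Omega_{\vartheta}$ (a real linear combination of the hyperK\"ahler symplectic forms) to rule out a second holomorphic representative; whether one phrases the last step as ``$\mathrm{Im}(\partial f)\subseteq\mathrm{Ker}(K_{\alpha})$ forces $\alpha=0$'' (the paper) or as ``$K_{\alpha}$ invertible forces $df'=0$'' (you) is immaterial.

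The one point you skip is the antipodal complex structure. The family $J_{\vartheta,\alpha}$, $\alpha\in\mathbb{R}\,\mathrm{Im}\,\Omega_{\vartheta}$, sweeps out the equator \emph{minus} the point $-J_{\vartheta}$ (reached only as $|\alpha|\to\infty$ in (\ref{220})), so your opening reduction ``$J_{\vartheta'}=J_{\vartheta,\alpha}$ for some nonzero $\alpha$'' does not cover $\vartheta'=\vartheta+\pi$. The paper disposes of this case in one sentence: $-J_{\vartheta}$ can only realize $-\gamma$, not $\gamma$, as a holomorphic cycle (a nonconstant $J$-holomorphic curve has positive $\omega_{\vartheta}$-area, and this sign is reversed for $-J_{\vartheta}$). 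You should add that sentence; everything else in your write-up, including the verification that $\alpha|_L=0$ and that $f^*\alpha=0$ for type reasons, matches the paper.
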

\begin{proof}
    Assume $f$ is both $J_{\vartheta}$ and $J_{\vartheta,\alpha}$-holomorphic and $[Im(f)]=\gamma$, then
     $K_{J}(\partial f \circ j)=0$ or $Im(\partial f) \subseteq
     Ker(K_{\alpha})$. If $[f]\neq 0$, then there exists holomorphic
     $v$ such that $df(v) \neq 0$. For each nonzero $\alpha \in \mathbb{R}\mbox{Im}\Omega_{\vartheta}$, $\alpha$ is a symplectic
$2$-form (here we use the fact $X$ is a hyperK\"ahler). Therefore,
$\alpha(u, df(v))
     \equiv 0$ implies $\alpha=0$. The only possible complex
     structure is $-J_{\vartheta}$ but it can only realize $-[\gamma]$ as a
     holomorphic cycle.
\end{proof}
Therefore, the moduli space for family
$\mathcal{M}_{k+1,\gamma}(\mathfrak{X},L)$ has the same underlying
space as the usual moduli space of holomorphic discs
$\mathcal{M}_{k+1,\gamma}(X_{\vartheta},L)$ for some $\vartheta \in
S^1$. However, we will equip it with different Kuranishi structure
in the next section. Another direct consequence of Proposition \ref{48} is the following:
\begin{cor}\label{9}
   $ImD\bar{\partial} \cap \{K_{J_{\vartheta}}(f,\alpha)| \alpha \in
   \mathbb{R} \mbox{Im}\Omega_{\vartheta} \}= \{0\}$. In particular, There
is a non-trivial map $T_{\vartheta}S^1 \rightarrow L^p(f^*TX\otimes
\Lambda^{0,1})$ induced from the $S^1$-family of hyperK\"ahler
manifolds.
\end{cor}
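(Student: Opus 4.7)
The plan is to prove the first, linear statement by contradiction using the nonlinear uniqueness in Proposition~\ref{48}. Suppose that $D\bar{\partial}(\xi)=K_{J_{\vartheta}}(f,\alpha)$ for some $\xi$ and some nonzero $\alpha\in\mathbb{R}\mbox{Im}\Omega_{\vartheta}$. The goal is to derive a contradiction by producing, for small $s\neq 0$, a one-parameter family of honest $J_{\vartheta,s\alpha}$-holomorphic maps $f_s$ in the same relative class as $f$. Since equation~(\ref{220}) places $J_{\vartheta,s\alpha}$ in the equator $S^1$ of the twistor family and $J_{\vartheta,s\alpha}\neq J_{\vartheta}$ for $s\neq 0$, such a family would contradict Proposition~\ref{48}.

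To produce this family, consider the parametric nonlinear map
\[
\Phi(\xi',s)=\bar{\partial}_{J_{\vartheta}}(\exp_f \xi')-K_{J_{\vartheta}}(\exp_f \xi', s\alpha),
\]
whose zeros are precisely the $J_{\vartheta,s\alpha}$-holomorphic deformations of $f$, by the calculation preceding Proposition~\ref{48}. One has $\Phi(0,0)=0$ with $\partial_{\xi'}\Phi|_{(0,0)}=D\bar{\partial}$ and $\partial_s\Phi|_{(0,0)}=-K_{J_{\vartheta}}(f,\alpha)$. The hypothesis $D\bar{\partial}(\xi)=K_{J_{\vartheta}}(f,\alpha)$ is exactly the statement that the $s$-variation of $\Phi$ at the origin lies in the image of $D\bar{\partial}$. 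A Kuranishi-type argument (absorbing any finite-dimensional cokernel of $D\bar{\partial}$ into an obstruction bundle, on which the parameter direction projects trivially by hypothesis) then produces $\xi'(s)$ with $\Phi(\xi'(s),s)=0$ and $\xi'(0)=0$; setting $f_s=\exp_f \xi'(s)$ yields the desired family.

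The main obstacle is the potential non-surjectivity of $D\bar{\partial}$, which prevents a naive application of the implicit function theorem; the point that unlocks the argument is that the hypothesis $K_{J_{\vartheta}}(f,\alpha)\in \mbox{Im}(D\bar{\partial})$ is precisely the vanishing of the obstruction in the $s$-direction, so the Kuranishi model still yields a smooth curve of solutions which can then be summed by standard elliptic estimates. For the \emph{in particular} assertion, the $S^1$-family induces the linear map $T_{\vartheta}S^1 \to L^p(f^*TX\otimes \Lambda^{0,1})$ given by $v \mapsto K_{J_{\vartheta}}(f,\alpha_v)$ under the identification $T_{\vartheta}S^1 \cong \mathbb{R}\mbox{Im}\Omega_{\vartheta}$. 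This map is non-zero because, repeating the final step of the proof of Proposition~\ref{48}, non-degeneracy of $\alpha_v$ as a symplectic form combined with $df \not\equiv 0$ forces $K_{J_{\vartheta}}(f,\alpha_v)\neq 0$ whenever $\alpha_v \neq 0$; together with the first part of the corollary just proved, this nonzero image descends non-trivially to $\mbox{coker}(D\bar{\partial})$.
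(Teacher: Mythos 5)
There is a genuine gap at the key step of your argument. From the hypothesis $K_{J_{\vartheta}}(f,\alpha)=D\bar{\partial}(\xi)$ you conclude that the Kuranishi model for $\Phi(\xi',s)$ ``still yields a smooth curve of solutions'' $\xi'(s)$. What the hypothesis actually gives is only that the direction $(\xi,1)$ lies in the kernel of the differential of the obstruction section at the origin: writing $\mathcal{V}=\{(\xi',s):\Phi(\xi',s)\in E\}$ for the thickened solution space and $\mathfrak{s}:\mathcal{V}\to E$ for the induced section, you have shown $d\mathfrak{s}_{(0,0)}(\xi,1)=D\bar{\partial}\xi-K_{J_{\vartheta}}(f,\alpha)=0$, i.e.\ the \emph{first-order} obstruction in the $s$-direction vanishes. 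This does not force $\mathfrak{s}^{-1}(0)$ to contain a curve with nonconstant $s$: the obstruction section can vanish to first order and still be, say, quadratic in $s$ along that direction, in which case there are no actual $J_{\vartheta,s\alpha}$-holomorphic maps for $s\neq 0$. So no contradiction with Proposition \ref{48} is produced, and indeed the geometry here (rigid discs of virtual dimension $-1$, existing at exactly one $\vartheta$) is precisely a situation where such higher-order obstructions must be expected. Your scheme, pushed as far as it legitimately goes, only shows that $D\bar{\partial}$ cannot be surjective when $f$ is nonconstant (since then the honest implicit function theorem would apply); it does not show that $K_{J_{\vartheta}}(f,\alpha)$ avoids the image.

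The statement is instead a \emph{linear} consequence of the energy identity displayed just before Proposition \ref{48}. Lee's identity $\int_{(\Sigma,\partial\Sigma)}g(\bar{\partial}_{J_{\vartheta}}f',K_{J_{\vartheta}}(f',\alpha))\,dv=\int f'^{*}\alpha$ holds for every map $f'$ with boundary on $L$, and the right-hand side is a homotopy invariant equal to $\int f^{*}\alpha=0$ because $d\alpha=0$, $\alpha|_{L}=0$ and $\alpha$ is of type $(2,0)+(0,2)$ for $J_{\vartheta}$. Differentiating at $f'=f$ in an admissible direction $\xi$ and using $\bar{\partial}_{J_{\vartheta}}f=0$ kills the term involving the variation of $K$, leaving $\int g(D\bar{\partial}\xi,\,K_{J_{\vartheta}}(f,\alpha))=0$ for all $\xi$. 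Thus $K_{J_{\vartheta}}(f,\alpha)$ is $L^{2}$-orthogonal to all of $\mathrm{Im}\,D\bar{\partial}$, so if it lay in the image it would vanish; by the nondegeneracy argument you correctly reproduce in your last paragraph, $K_{J_{\vartheta}}(f,\alpha)=0$ forces $\alpha=0$. That orthogonality computation is the content of the corollary, and your treatment of the ``in particular'' clause then goes through unchanged.
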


\subsection{Kuranishi Structure for Moduli Space of Holomorphic Discs in the $S^1$-Family}\label{998}
As mentioned in the introduction, the difficulties of defining open Gromov-Witten invariants on K3 surfaces are two-fold. First of all, bubbling phenomenon of pseudo-holomorphic discs may occur and usually the virtual fundamental cycle can not be defined. Secondly, the virtual dimension of the relevant moduli spaces is minus one. In other words, there are no pseudo-holomorphic discs with respect to a generic almost complex structure. Therefore, even if the invariant can be defined, it will be zero and not interesting at all. 

Instead the usual Kuranishi structure defined on the moduli space $\mathcal{M}_{\gamma}(X_{\vartheta},L)$, we will consider the holomorphic discs in the $S^1$-family discussed in the previous section and consider the Kuranishi structure of the family moduli space $\mathcal{M}_{\gamma}(\mathfrak{X},L)$. The construction of the Kuranishi structure can be found in \cite{F1}\cite{FOOO} and we will call it the "reduced" Kuranishi structure since the idea is similar to the reduced Gromov-Witten theory in algebraic geometry. We will put the definition of Kuranishi structures and related terminology in the appendix for reader's reference. Below we will sketch the proof of the construction.

\begin{thm} \label{11} \cite{F1}\cite{FOOO}
There exists a Kuranishi structure admits required properties listed
below:
\begin{enumerate}
  \item It is compatible with the forgetful maps for each $k\geq 1$,
       \begin{equation}
          \mathfrak{forget}_{k,0}:
          \mathcal{M}_{k,\gamma}(\mathfrak{X},L)\rightarrow
          \mathcal{M}_{k-1,\gamma}(\mathfrak{X},L)
       \end{equation}
  \item For each $k \geq 1$, the evaluation map $\{ev_i,ev_{\vartheta}\}:\mathcal{M}_{k,\gamma}(\mathfrak{X},L)\rightarrow L^k\times S^1_{\vartheta}$ are weakly
  submersive. Thus the fibre product of Kuranishi structure in 4.
  make sense.
  \item It is invariant under the cyclic permutation of the boundary
  marked points.
  \item For the decomposition of the boundary of moduli spaces,
  \begin{align} \label{12}
         \partial \mathcal{M}_{k+1,\gamma}(\mathfrak{X},L)= \bigcup_{1\leq i\leq j+1
         \leq k+1} \bigcup_{\substack{\gamma_1+\gamma_2=\gamma,\\ Z_{\gamma_1}/Z_{\gamma_2}\in
         \mathbb{R}_{>0}}}
         \mathcal{M}_{j-1+1,\gamma_1}(\mathfrak{X},L) \notag \\
          \; {}_{(ev_0,ev_{\vartheta})} \! \times_{(ev_i,ev_{\vartheta})}
         \mathcal{M}_{k-j+1,\gamma_2}(\mathfrak{X},L).
       \end{align} and
       \begin{align}\label{7}
             \partial
            \mathcal{M}_{0,\gamma}&(\mathfrak{X},L)=\bigcup_{\tilde{\gamma}:i_*(\tilde{\gamma})=\gamma}
                                                    \big( \mathcal{M}^{cl}_{1,\tilde{\gamma}} \underset{\frak{X}}{\times}   (L\times S^1_{\vartheta})\big)\notag \\
            &\cup \bigcup_{\substack{\gamma_1+\gamma_2=\gamma,\\ Z_{\gamma_1}/Z_{\gamma_2}\in \mathbb{R}_{>0} }}(\mathcal{M}_{1,\gamma_1}(\mathfrak{X},L)\; {}_{(ev_0,ev_{\vartheta})} \! \times_{(ev_0,ev_{\vartheta})}
            \mathcal{M}_{1,\gamma_2}(\mathfrak{X},L))/\mathbb{Z}_2
         \end{align}  
     the restriction of the Kuranishi structure to the boundary coincides with the fibre product of the Kuranishi
     structures of the decomposition.

\end{enumerate}
\end{thm}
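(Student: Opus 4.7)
The plan is to carry out the Fukaya--Oh--Ohta--Ono construction of Kuranishi structures on moduli of stable pseudo-holomorphic discs, with a single structural modification dictated by the $S^1$-family: at every stratum, the obstruction bundle is required to contain the image of the canonical map $T_\vartheta S^1 \to L^p(f^*TX \otimes \Lambda^{0,1})$ furnished by Corollary \ref{9}. This is what makes the family moduli space have virtual dimension zero where each individual $\mathcal{M}_{\gamma}(X_\vartheta, L)$ has virtual dimension $-1$, and it is the only place where the hyperK\"ahler rotation enters the Kuranishi package in an essential way.

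First I would construct Kuranishi charts at smooth stable discs. For a holomorphic disc $f \in \mathcal{M}_{k,\gamma}(X_\vartheta, L)$ pick a finite-dimensional $\mathrm{Aut}$-invariant subspace $E_{f,\vartheta} \subset L^p(f^*TX \otimes \Lambda^{0,1})$ transverse to $D_f\bar\partial_{J_\vartheta}$ and containing the distinguished direction from Corollary \ref{9}; the zero set of the $E_{f,\vartheta}$-perturbed $\bar\partial$-equation produces a local chart with $\vartheta$ as one of the free parameters. For a nodal configuration I would glue charts on each irreducible component along the standard exponential neck parameters, matching obstruction data pairwise at each node. By Proposition \ref{48}, all components of a given stable map are holomorphic for a single value of $\vartheta$, so a single $S^1$-parameter is carried uniformly across the entire configuration; this is a genuine simplification specific to the K3 setting.

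Next I would globalize by covering the (compact) moduli space with finitely many such charts, extending obstruction bundles to neighborhoods by parallel transport along the $L^2$-connection, and patching via the good coordinate system lemma of \cite{FOOO}. Properties (1)--(3) then come out for free: cyclic $\mathbb{Z}_{k+1}$-invariance by choosing obstruction bundles equivariantly, and compatibility with $\mathfrak{forget}_{k,0}$ by pulling back obstruction data from $\mathcal{M}_{k-1,\gamma}(\mathfrak{X},L)$ and extending trivially across the added marked point. Weak submersivity of $(ev_i, ev_\vartheta)$ follows because boundary deformations of the disc realize $TL$ while the $S^1$-family direction realizes $TS^1_\vartheta$, and both are transverse to $E_{f,\vartheta}$ by construction.

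The main obstacle is property (4): matching the restricted Kuranishi structure on the codimension-one boundary with the fibre product Kuranishi structure of the decompositions (\ref{12})--(\ref{7}). For disc--disc bubbling, Proposition \ref{48} forces both bubbled components to share the same $\vartheta$, which in turn forces $Z_{\gamma_1}/Z_{\gamma_2} \in \mathbb{R}_{>0}$ and explains why these are the only strata appearing; one still needs to verify that the glued obstruction bundle is the direct sum of those of the components, fibred over the common evaluation to $L \times S^1_\vartheta$. For the sphere-bubble stratum in (\ref{7}) one has to match the disc-side family Kuranishi data with closed-curve Kuranishi data on $\mathcal{M}^{cl}_{1,\tilde\gamma}$ via the evaluation at the node and the inclusion $i_*$. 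Both verifications are by now standard but require careful bookkeeping of the additional $S^1$-direction; modulo this, the argument is the one of \cite{F1}\cite{FOOO} with the universal substitution of $J_\vartheta$-families for fixed almost complex structures.
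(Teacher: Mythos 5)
Your proposal follows essentially the same route as the paper: the single essential modification to the Fukaya--Oh--Ohta--Ono package is to feed the $S^1$-family direction into the linearized problem (using Corollary \ref{9} for transversality of that direction and Proposition \ref{48} to pin every component of a stable map to one $\vartheta$, which is also what restricts the boundary strata to $Z_{\gamma_1}/Z_{\gamma_2}\in\mathbb{R}_{>0}$), and then the stabilization of unstable components, good coordinate systems, and the induction on energy compatible with the boundary fibre products are quoted from \cite{F1}\cite{FOOO}. The one place you diverge is in how the extra direction enters: you require the obstruction space $E_{\mathbf{p}}$ to \emph{contain} the line $\mathbb{R}K_{J}(f,\alpha)$, whereas the paper instead enlarges the domain of the linearized operator to $W^{1,p}(f^*TX,(\partial f)^*TL)\times\mathbb{R}_{\vartheta}$ and only asks that $E_{\mathbf{p}}$ be transverse to the image of the enlarged operator $D\bar\partial_{new}$. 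The paper explicitly remarks that it avoids building this line into the obstruction data, because under the parallel-transport identification $I_{f,f'}$ used to spread $E_{\mathbf{p}}$ over the chart the transported line is no longer $\mathbb{R}K_{J}(f',\alpha)$; your containment condition therefore holds only at the chart center. This does not break your argument (transversality and submersivity of $ev_{\vartheta}$ are open conditions, and your normalization even makes $ev_{\vartheta}$ automatically submersive at the center), but you should not phrase the requirement as holding "at every stratum'' or at every point of the chart.
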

\begin{proof}
   Step 1: For each point $\bold{p}=f:(\Sigma=\cup
\Sigma_a,\partial\Sigma)\rightarrow (X,L)$ (we include
$\overrightarrow{z}$ in $\Sigma$ for
   simplicity) in the moduli space $\mathcal{M}_{k,\gamma}(\mathfrak{X},L)$, we will construct a Kuranishi neighborhood. We first consider the case when the domain is stable, namely, then
   the automorphism of each component is finite. Let
       \begin{align*}
        W^{1,p}(f^*TX,(\partial f)^*TL)=\{(v_a)\in\oplus_a W^{1,p}(\Sigma_a; f^*TX,&(\partial f)^*TL)| \\  &v_a \mbox{ coincides on nodes}\},
       \end{align*}
   where $v_a \in W^{1,p}(\Sigma_a; f^*TX,(\partial f)^*TL)$ satisfies
   $v_a\in W^{1,p}(\Sigma_a, f^*TX)$ and $v_a|_{\partial\Sigma_a}\in
   W^{1-1/p,p}(\partial \Sigma_a, f^*TL)$. We may choose $p$ large
   enough such that $v_a$ is continuous on $\Sigma_a$.

    We consider the following linearized operator
      \begin{align*}
          D\bar{\partial}_{new} : W^{1,p}(f^*TX,(\partial f)^*&TL)\times \mathbb{R}_{\vartheta} \rightarrow L^p(f^*TX \otimes \Lambda^{0,1}) \\
                                         &(w, \vartheta) \mapsto D\bar{\partial}w+ \vartheta K_J(f,\alpha)
      \end{align*}
  From Corollary \ref{9}, we know that $D\bar{\partial}_{new}(V,\vartheta)=0$ if and
   only if $D\bar{\partial}V=0$ together with $\vartheta=0$. Since
   $D\bar{\partial}_{new}$ is also Fredholm, one can choose
   $E_{\bold{p}}$ such that
       \begin{enumerate}
         \item $E_{\bold{p}}$ is a finite dimensional (complex) subspace of $L^p(\Sigma,w^*TX \otimes \Lambda^{0,1})$.
         \item $\mbox{Im}D\bar{\partial}_{new}+E_{\bold{p}}=L^p(\Sigma,w^*TX \otimes \Lambda^{0,1})$.
         \item Elements of $E_{\bold{p}}$ has support away from special points on $\partial \Sigma$.
         \item $E_{\bold{p}}$ is preserved under
         $\Gamma_{\bold{p}}=Aut({\bold{p}})$.
       \end{enumerate}
   We may enlarge the obstruction bundle $E$ such that
   $(ev_0,ev_{\vartheta})$ is weakly submersive. Given any small
   smooth deformation $f'$ of $f$, we can find a diffeomorphism of
   the domain
      \begin{align*}
         I_{f,f'}:\Sigma \rightarrow \Sigma'
      \end{align*}
  such that $I_{f,f'}$ smoothly depends on the domain of $f'$ and
  $I_{f,f'}$ is identity on a compact set away from marked points.

  We choose a unitary connection on $TX$ such that $L$ is totally
  geodesic and thus we can have a identification of obstruction
  bundle
      \begin{align} \label{101}
         L^{0,p}(\Sigma,f^*(TX\otimes \Lambda^{0,1})\cong
         L^{0,p}(\Sigma',f'^*(TX\otimes \Lambda^{0,1})
      \end{align}
 induced by $I_{f,f'}$ and again denoted by the same notation. Set
 $E_{f'}=I_{f,f'}(E_{\bold{p}})$ and we consider the equation
       \begin{align}\label{100}
          \bar{\partial}f'\equiv 0 \mbox{  mod }E_{f'}
       \end{align}
Let $U_f$ be the solutions of (\ref{100}) and it is a smooth
manifold of dimension $\mbox{dim }E_{\bold{p}}$ by implicit function
theorem. We define the section of Kuranishi chart by
       \begin{align*}
            s(f')=\bar{\partial}f' \in E_{f'}
       \end{align*}

   Notice that $\mathcal{M}_{\gamma}(\mathfrak{X},L)=\mathcal{M}_{\gamma}(X_{\vartheta},L)$ for some $\vartheta \in
   S^1$ as a topological space, therefore the gluing analysis is the same as the standard one. 
Therefore we construct a Kuranishi chart for each element
$\mathbf{p}=[f:((\Sigma,\partial{\Sigma}),\overrightarrow{z})
\rightarrow (X,L)] \in \mathcal{M}_{k,\gamma}(\mathfrak{X},L)$ when
the domain is stable.

Step 2: If any of the component of the domain $\Sigma_a$ is
unstable, we will follow the construction in \cite{F1} and Appendix
\cite{FO}. We first add interior marked points to make $\Sigma_a$
stable and denote it by $\Sigma_a^+$. we add the marked points in
the way that $\Gamma_{\bold{p}}$ acts on additional marked points
freely. Since $f_a$ is non-degenerate, it is immersed at generic
point on $\Sigma_a$ and without loss of generality we can assume
$f_a$ is immersed at additional marked points.

For each additional marked point $p$, we take a $2$-dimensional
submanifold $\mathcal{D}_p\subseteq X$ such that $\mathcal{D}_p$
intersect with the image of $f_a$ transversally at $p$ and
$\mathcal{D}_p=\mathcal{D}_{\gamma\cdot p}$, for each $\gamma\in
\Gamma_{\bold{p}}$. We will denote the holomorphic maps with addtion
marked points by
$\bold{p}^+=[f^+:(\Sigma^+,\partial\Sigma^+)\rightarrow (X,L)]$

Now we may assume the domain with addition marked points
$\Sigma^+\in \mathcal{M}_{0,k+k'}$, where $k'$ is the number of the
additional marked points. Since $\mathcal{M}_{0,k+k'}$ admits an
orbifold structure, we may assume a neighborhood of
$\Sigma^+\in\mathcal{M}_{0,k+k'}$ is parametrized by
$V(\Sigma^+)/\mbox{Aut}(\Sigma^+)$. Follow the same procedure in
Step 1, we construct an Kuranishi chart $V_{\bold{p}^+}$ for
$\bold{p}^+$. Let $ev_{\mbox{add}}: V_{\bold{p}^+}\rightarrow
X^{k'}$ be the evaluation map of added points. Then
   \begin{align*}
      V_{\bold{p}}:=V_{\bold{p}^+} \; {}_{ev_{\mbox{add}}} \! \times \prod_{\substack{p: \mbox{additional}\\ \mbox{marked
      points}}}
      \mathcal{D}_p
   \end{align*}
is a smooth manifold of expected dimension because of the
transversality condition of $\mathcal{D}_p$. We define
$E_{\bold{p}}$ by
   \begin{align*}
      E_{f'}=E_{\bold{p}^+}=\bigoplus_a E(\Sigma^+_a)
   \end{align*} (up to a parallel transport)
and the Kuranishi map $s_{\bold{p}}$ by
   \begin{align*}
     s_{\bold{p}}(f')=\bar{\partial}f' \in E_{f'},
   \end{align*} which is $\Gamma_{\bold{p}}$-equivariant by construction.

Let $s_{\bold{p}}(f')=0$, then $f':(\Sigma^+_{f'},\partial
\Sigma^+_{f'})\rightarrow (X,L)$ is pseudo-holomorphic and we get
$\psi_{\bold{p}}(f'):=\tilde{f}:(\Sigma^+_{f'},\partial
\Sigma^+_{f'})\rightarrow (X,L)$, where the later is induced by $f'$
by forgetting those additional marked points. So far, we construct a
Kuranishi chart with notation changed by
$(V^0_{\bold{p}},E^0_{\bold{p}},\Gamma_{\bold{p}},s^0_{\bold{p}},\psi^0_{\bold{p}})$
for every point $\bold{p}\in\mathcal{M}_{k,\gamma}(\mathfrak{X},L)$.

Step 3: By Gromov compactness theorem, we find a finite cover
    \begin{align*}
      \mathcal{M}_{k,\gamma}(\mathfrak{X},L)=\bigcup_{\bold{p}\in \mathfrak{U}}
      \psi^0_{\bold{p}}((s^0_{\bold{p}})^{-1}(0)/\Gamma_{\bold{p}}).
    \end{align*}
Choose closed subset $W_{\bold{p}}$ of
$\psi^0_{\bold{p}}((s^0_{\bold{p}})^{-1}(0)/\Gamma_{\bold{p}})$ for
each $\bold{p}\in \mathfrak{U}$ such that
    \begin{align*}
      \mathcal{M}_{k,\gamma}(\mathfrak{X},L)=\bigcup_{\bold{p}\in
      \mathfrak{U}} \mbox{Int}W_{\bold{p}}.
    \end{align*}
and we set
   \begin{align*}
      E{\bold{p}}=\bigoplus_{\bold{p}'\in \mathfrak{U}(\bold{p})}E^0_{\bold{p}'}, \mbox{ where }
      \mathfrak{U}(\bold{p})=\{\bold{p}'\in\mathfrak{U}|\bold{p}\in
      W_{\bold{p}'}\}.
   \end{align*}
 This step the closeness of $W_{\bold{p}}$ guarantee the
coordinate change of Kuranishi structure. Thus we construct a
Kuranishi structrue on a fixed moduli space
$\mathcal{M}_{k,\gamma}(\mathfrak{X},L)$ but without compatibility
condition.

Step 4: At the end, we construct the Kuranishi structures on
$\mathcal{M}_{k,\gamma}(\mathfrak{X},L)$ inductively on
$|Z_{\gamma}|$ such that they are compatible with the decomposition
of boundary. Notice that for the case $[\partial \gamma]=0\in H_1(L)$, there is an
additional boundary component of
$\mathcal{M}_{0,\gamma}(\mathfrak{X},L)$
  \begin{align*}
     \mathcal{M}^{cl}_{1,\tilde{\gamma}}(\mathfrak{X})\times_M L,
  \end{align*}
and the proof is also similar.

\end{proof}

\begin{rmk} It is pointed out in \cite{F2} that we choose $E_{\alpha}$ such that its elements has support
  (uniformly) away from the special points for two reasons:
    \begin{enumerate}
      \item We don't have to perturb in a neighborhood of nodal points and thus the gluing analysis is easier.
      \item It is easier to identify the sections of obstruction bundle of each
      component with the sections of obstruction bundle after
      gluing.
    \end{enumerate}
\end{rmk}

\begin{rmk}
  Here we didn't quotient the line bundle given by $K_J(f,\alpha)$
  because it is hard to identify the quotients with those of
  perturbation of $f$.
\end{rmk}

\begin{thm}\label{1} \cite{F1}\cite{FOOO}
 For each $\epsilon$ and $E_0$, there exists a system of continuous
 family of multi-sections $\{\mathfrak{s}_{k,\gamma}\}$ on $\{\mathcal{M}_{k,\gamma}|k\geq 0, |\int_{\gamma}\Omega|<\epsilon\}$ such that
   \begin{enumerate}
      \item It is $\epsilon$-close to the Kuranishi map in $C^0$
      sense.
      \item It is compatible with forgetful map $\mathfrak{forget}$.
      \item It is invariant under the cyclic permutation of the boundary marked points.
      \item All the relevant evaluation maps induce submersion on zero sets of multi-sections.
      \item For the decomposition of the boundary of moduli spaces as in (\ref{12}) and (\ref{7})),
   the restriction of the multi-sections on the boundary of the
     moduli space coincide with the fibre product of the multi-sections from
     decomposition. 
   \end{enumerate}
\end{thm}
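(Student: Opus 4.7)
The plan is a simultaneous induction on $|Z_\gamma|$ (the energy) and, for each fixed energy, on the number $k$ of boundary marked points, working on all moduli spaces $\mathcal{M}_{k,\gamma}(\mathfrak{X},L)$ with $|Z_\gamma|<\epsilon$ at once. The Kuranishi structure from Theorem \ref{11} is fixed throughout; we only construct the perturbing multi-sections $\mathfrak{s}_{k,\gamma}$ on top of it. Following the scheme of \cite{F1}\cite{FOOO}, the multi-sections are taken to be continuous families, i.e.\ parametrized by auxiliary finite-dimensional smooth spaces $W_{k,\gamma}$ with a fixed volume form, so that they can be averaged to recover enough flexibility to achieve submersivity of evaluation maps while remaining multi-valued.

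For the base case, take the smallest energy $\lambda_0>0$ for which a non-trivial holomorphic disc or sphere class exists. For $|Z_\gamma|<\lambda_0$ the boundary decompositions (\ref{12}) and (\ref{7}) of $\mathcal{M}_{k+1,\gamma}(\mathfrak{X},L)$ are empty, so conditions (4) and (5) are vacuous on the boundary. One constructs $\mathfrak{s}_{0,\gamma}$ directly as a small generic continuous family of multi-section perturbations of $s_{0,\gamma}$ on each Kuranishi chart, then glues via a partition of unity subordinate to the open cover from Step 3 of Theorem \ref{11}, then averages over the finite symmetry groups $\Gamma_{\mathbf p}$. Submersivity of the evaluation map follows because we enlarged $E_{\mathbf p}$ in the construction of the Kuranishi structure so that $(ev_0,ev_\vartheta)$ is already weakly submersive, and a generic small perturbation preserves this. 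To obtain $\mathfrak{s}_{k,\gamma}$ for $k\geq 1$ at this energy level, pull back $\mathfrak{s}_{0,\gamma}$ through the forgetful map $\mathfrak{forget}_{k,0}$; this yields (2) automatically, and then symmetrize over the cyclic $\mathbb{Z}/(k+1)$-action on boundary marked points to obtain (3). Cyclic symmetrization preserves the $\epsilon$-closeness (1) and the submersivity (4) since these are convex conditions.

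For the inductive step, assume $\mathfrak{s}_{k',\gamma'}$ has been constructed for all $(k',\gamma')$ with $|Z_{\gamma'}|<|Z_\gamma|$ (and for the same energy but smaller $k$). By condition (5), the restriction of $\mathfrak{s}_{k,\gamma}$ to each boundary stratum
\[
 \mathcal{M}_{j,\gamma_1}(\mathfrak{X},L)\; {}_{(ev,ev_\vartheta)}\!\times_{(ev,ev_\vartheta)}
 \mathcal{M}_{k-j+1,\gamma_2}(\mathfrak{X},L)
\]
and to $\mathcal{M}^{cl}_{1,\tilde\gamma}\times_{\mathfrak X}(L\times S^1_\vartheta)$ is forced to be the fibre product of multi-sections already defined. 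Since the evaluation maps on the smaller pieces are submersions on the zero sets (condition (4) of the inductive hypothesis), these fibre products are transverse and genuinely define continuous families of multi-sections on $\partial\mathcal{M}_{k+1,\gamma}(\mathfrak{X},L)$ that are themselves $\epsilon$-close to $s$. Using a collar neighborhood of the boundary and a cutoff function, extend this boundary multi-section to the interior of $\mathcal{M}_{k+1,\gamma}(\mathfrak{X},L)$, then add a generic small interior perturbation with support away from the collar to achieve submersivity of $(ev_0,\dots,ev_k,ev_\vartheta)$ on the zero set. Finally, perform the forgetful-pullback for $k\geq 1$ and cyclic symmetrization as in the base case; the pullback preserves (5) because the Kuranishi structures themselves were constructed compatibly with $\mathfrak{forget}$ in Theorem \ref{11}, and cyclic symmetrization preserves (5) because the cyclic action permutes the boundary strata in (\ref{12}) and we arranged the extension to be equivariant on the collar.

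The main obstacle is the simultaneous compatibility of (2), (3), (4), and (5). In particular, the forgetful-map pullback can conflict with cyclic symmetrization and with the prescribed fibre-product boundary value, because these three operations act on different pieces of the moduli space and need not commute on the nose. The resolution, which is the technical heart of the argument, is to order the inductive construction so that at each stage the multi-section is first forced on the deepest (most broken) strata of $\overline{\mathcal M}$, extended outward along collars of successively shallower strata, and only then pulled back via $\mathfrak{forget}$ and cyclically averaged; equivariance of the collar neighborhoods with respect to both the cyclic group action and the forgetful projection—which follows because $\mathfrak{forget}$ commutes with the boundary gluing maps and the obstruction bundles $E_{\mathbf p}$ were chosen $\Gamma_{\mathbf p}$-invariant with support away from the nodes—ensures the three operations compose consistently. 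The continuous family framework provides just enough freedom in the auxiliary parameter space $W_{k,\gamma}$ to absorb any small residual discrepancies while keeping the perturbation $\epsilon$-close to $s$ in $C^0$.
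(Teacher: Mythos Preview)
Your proposal is correct and follows essentially the same approach as the paper: induction on the energy $|Z_\gamma|$, with the base case handled by the minimal-area class (whose moduli space has only smooth domains), the inductive boundary value prescribed by the fibre product of previously constructed multi-sections, and extension from the boundary to the interior. The paper's own proof is a brief sketch that invokes Theorem~5.1 of \cite{F1}, Lemma~7.2.55 of \cite{FOOO} for compatibility on overlaps of boundary strata, and Lemma~3.14 of \cite{FO} for the collar extension; your write-up expands these citations into the actual mechanism (forgetful pullback, cyclic averaging, collar-plus-cutoff extension), which is entirely in line with what those references do.
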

\begin{proof} The proof is similar to theorem 5.1 \cite{F1}.
By Gromov compactness, the class achieve minimal area has compact
moduli space consisting only smooth domain. One constructs
multi-sections on the moduli space using appendix A.3.  By
induction, we have constructed multi-sections on both factor of
right hand side of (\ref{7}) for $\gamma$ with $|Z_{\gamma}|<E_0$. For
second terms on the right hand side of (\ref{7}) and right hand side
of (\ref{12}), multi-sections constructed are compatible on the
overlapped part of moduli spaces by lemma 7.2.55 \cite{FOOO}.
Therefore, we can use the fibre product of multi-sections on each
factor to gives multi-sections on the left hand side of
(12),(\ref{7}). For the first term of right hand side, \cite{FO}
constructs such multi-sections. We use lemma 3.14 \cite{FO} to
extend the multi-sections from the boundary to the whole moduli
space and the multi-sections can satisfy (\ref{8}). It is easy to
see that the extension still has the transversal property. The
assumption of energy bound guarantees we only have finitely many
steps so that the $(1)$ of Theorem \ref{1} can be achieved.
\end{proof}

Notice that the Kuranishi structures and multi-sections given in Theorem \ref{11} and Theorem \ref{1} are not unique. However, there is the following is the cobordism version of Theorem \ref{11} and Theorem \ref{1}.

Assume there is a $1$-parameter family of hyperK\"ahler structure $X_t=(\underline{X},\omega_t,\Omega_t), t\in [0,1]$ such that a submanifold $L$ are holomorphic. Let
  $\mathcal{M}_{k,\gamma}(\mathfrak{X}_t,L)$ denote the moduli space of stable holomorphic discs with boundaries on $L$ in the $S^1$-family complex structures induced by $X_t$. Fix a choice of Kuranishi structures and multi-sections on $\mathcal{M}_{k,\gamma}(\mathfrak{X}_t,L)$, for $t=0,1$, from Theorem \ref{11} and Theorem \ref{1}. 


\begin{thm} \cite{F1}\cite{FOOO}\label{64} Given $E_0>0$, there is a system of Kuranishi structures and families of
   multi-sections $\mathfrak{s}_{k,\gamma}$ on
   $\bigcup_{t\in [0,1]}\mathcal{M}_{k,\gamma}(\mathfrak{X}_t,L)$, $k=0,1$ and $\partial \gamma\neq 0$,
   with $|\int_{\gamma}\Omega|<E_0$ satisfying properties below:
   \begin{enumerate}
     \item The multi-sections $\mathfrak{s}_{k,\gamma}$ are transverse to $0$.
     \item The structure is compatible with the forgetful maps.
     \item All the relevant evaluation maps are submersions restricted on the
     zero locus of the perturbed multi-sections.
     \item For the following decomposition of the boundary of moduli spaces,
        \begin{align} \label{61}
             \partial
           \bigg(\bigcup_{t\in [0,1]}\mathcal{M}_{0,\gamma}&(\mathfrak{X}_t,L)\bigg)=\cup
           \big(\mathcal{M}_{0,\gamma}(\mathfrak{X}_1,L)-\mathcal{M}_{0,\gamma}(\mathfrak{X}_0,L)\big) \notag \\
            &\cup
                                                               \bigcup_{\gamma_1+\gamma_2=\gamma}((\bigcup_{t\in [0,1]} \mathcal{M}_{1,\gamma_1}(\mathfrak{X}_t,L))\; {}_{(ev_0,ev_{\vartheta},ev_t)} \! \times_{(ev_0,ev_{\vartheta},ev_t)} (\bigcup_{t\in [0,1]}
                                                               \mathcal{M}_{1,\gamma_2}(\mathfrak{X}_t,L)))/\mathbb{Z}_2 \notag\\
        \end{align}
        the restriction of the multi-sections on the boundary of
        moduli spaces coincides with the fibre products of Kuranishi structures. \item Assume that $t_0$ is in the image of evaluation map $ev_t$ of (\ref{2016}) below. Namely,  
        \begin{align} \label{2016}
                 (\bigcup_{t\in [0,1]} \mathcal{M}_{1,\gamma_1}(\mathfrak{X}_t,L))\; {}_{(ev_0,ev_{\vartheta},ev_t)} \! \times_{(ev_0,ev_{\vartheta},ev_t)} (\bigcup_{t\in [0,1]}
                                                                                \mathcal{M}_{1,\gamma_2}(\mathfrak{X}_t,L))\
                  \end{align} and 
                  \begin{align}\label{2017}
                   \mathcal{M}_{1,\gamma_1}(\mathfrak{X}_{t_0},L)\; {}_{(ev_0,ev_{\vartheta})} \! \times_{(ev_0,ev_{\vartheta})}                                                                              \mathcal{M}_{1,\gamma_2}(\mathfrak{X}_{t_0},L)\
                  \end{align} are actually the same as topological spaces. Then the Kuranishi structures and multi-sections can be chosen to be the same on (\ref{2016}) and (\ref{2017}).

       \item The Kuranishi structures and multi-sections $\mathfrak{s}_{k,\gamma}$ restricted to the given one on $\mathcal{M}_{k,\gamma}(\mathfrak{X}_t,L)$, for $t=0,1$.

   \end{enumerate}
\end{thm}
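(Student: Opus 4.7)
The plan is to parallel the constructions of Theorems \ref{11} and \ref{1}, treating $t\in[0,1]$ as one extra parameter. I view the total space $\bigcup_{t\in[0,1]}\mathcal{M}_{k,\gamma}(\mathfrak{X}_t,L)$ as a topological space with boundary, whose two boundary strata at $t=0$ and $t=1$ are exactly the ordinary moduli spaces for which Kuranishi data have already been fixed. At a point $\mathbf{p}\in\mathcal{M}_{k,\gamma}(\mathfrak{X}_{t_0},L)$ in the interior $0<t_0<1$, I would enhance the linearized operator $D\bar{\partial}_{\mathrm{new}}$ from the proof of Theorem \ref{11} by an additional $\mathbb{R}_t$-direction coming from varying $t$, giving a Fredholm operator of index one larger. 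The key input is that Proposition \ref{48} and Corollary \ref{9} apply at each fixed $t$: the $S^1$-family rigidity ensures that the obstruction space $E_{\mathbf{p}}$ can be chosen with the usual properties (finite-dimensional, complementary to the image, supported away from special points, $\Gamma_{\mathbf{p}}$-invariant) and moreover enlarged so that the evaluation $(ev_i,ev_\vartheta,ev_t)$ is weakly submersive.

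Then I would construct the Kuranishi structure on the whole family inductively on $|Z_\gamma|$, as in Step 4 of the proof of Theorem \ref{11}. For the classes of smallest area the compactified moduli spaces have no codimension-one fibre product boundary, so the construction is purely local. For larger $|Z_\gamma|$, the boundary of the family moduli space decomposes as in (\ref{61}), and the Kuranishi structure at this boundary is declared to be the fibre product of previously constructed family Kuranishi structures (using the evaluation map $(ev_0,ev_\vartheta,ev_t)$ which now includes the extra $t$-coordinate). Near the other boundary components $t=0,1$ I would use a collar neighborhood of $\{0,1\}\subset[0,1]$ and choose the new Kuranishi charts to literally extend the given ones on $\mathcal{M}_{k,\gamma}(\mathfrak{X}_0,L)$ and $\mathcal{M}_{k,\gamma}(\mathfrak{X}_1,L)$, which is possible because the obstruction bundle and chart construction at a fixed $t$ agree with those in Theorem \ref{11}.

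The multi-sections $\mathfrak{s}_{k,\gamma}$ are then constructed by the same inductive procedure as in Theorem \ref{1}, using the extension lemma (Lemma 3.14 of \cite{FO}): at each induction step the multi-section is already fixed on all boundary strata (by fibre products from (\ref{61}) on interior boundary components and by the given data at $t=0,1$), and we extend to the whole family Kuranishi space while keeping transversality to $0$ and submersivity of the relevant evaluation maps. The energy bound $|\int_\gamma\Omega|<E_0$ guarantees that the induction terminates after finitely many steps, so $\mathfrak{s}_{k,\gamma}$ can be made arbitrarily $C^0$-close to the Kuranishi map.

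The main obstacle is the consistency condition (5), together with compatibility with (4) and (6). The danger is that the multi-section on a boundary stratum of the form \eqref{2016} is determined by the family fibre product, whereas when the evaluation $ev_t$ pins down a single value $t=t_0$, one would like it to agree with the non-family fibre product \eqref{2017}. This is not automatic from the two independent inductive constructions, but it can be arranged by organizing the induction so that the family Kuranishi chart at an interior point $\mathbf{p}\in\mathcal{M}_{k,\gamma}(\mathfrak{X}_{t_0},L)$ is taken to be a product of the fixed-$t_0$ Kuranishi chart (from Theorem \ref{11}) with a small $t$-interval, and the perturbation is chosen to be independent of $t$ in that interval. Under this choice the two fibre products \eqref{2016} and \eqref{2017}, which as noted are the same topological space, carry literally the same Kuranishi data and multi-sections. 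Once the bookkeeping is arranged, the actual gluing and transversality analysis is identical to the non-family case of \cite{F1}\cite{FOOO}.
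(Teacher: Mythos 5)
The paper offers no proof of this statement: it is quoted from \cite{F1}\cite{FOOO} as the cobordism version of Theorems \ref{11} and \ref{1}, so there is no in-paper argument to compare against line by line. Your sketch --- adding the $\mathbb{R}_t$-direction to the linearized operator, enlarging the obstruction spaces so that $(ev_0,ev_\vartheta,ev_t)$ is weakly submersive, inducting on $|Z_\gamma|$ with fibre-product data on the boundary strata of (\ref{61}), using collars at $t=0,1$ to restrict to the given structures, and arranging property (5) by taking product charts with $t$-independent perturbations near a fixed $t_0$ --- is precisely the parametrized extension of the paper's own proofs of Theorems \ref{11} and \ref{1} and the standard route taken in the cited references, so it is essentially the same approach and I see no gap.
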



\subsection{Orientation}
Now we want to orient our moduli space $\mathcal{M}_{\gamma}(\mathfrak{X},L)$ in a coherent way. We first recall the following result on coherent orientation of moduli space of pseudo-holomorphic discs.
\begin{thm}\cite{FOOO} \label{2002}
Given an almost complex manifold $X$ and a totally real submanifold $L$, the moduli space of pseudo holomorphic discs with Lagrangian boundary condition $\mathcal{M}_{\gamma}(X,L)$, $\gamma\in H_2(X,L)$ can be oriented in a coherently provided $L$ is oriented and relatively spin. 
\end{thm}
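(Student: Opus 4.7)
The plan is to reduce the question of orienting $\mathcal{M}_{\gamma}(X,L)$ to orienting, in a functorial way, the determinant line of the linearized $\bar\partial$-operator at each map $f\colon (D^2,\partial D^2)\to (X,L)$. Since $\mathcal{M}_{\gamma}(X,L)$ is cut out by $\bar\partial$ inside a Banach manifold whose tangent space is $W^{1,p}(f^*TX,(\partial f)^*TL)$, an orientation on $\det(D\bar\partial_f)$ varying continuously with $f$ yields an orientation on the moduli space (the $S^1$ of disc automorphisms, once quotiented, contributes in a fixed way and can be treated at the end by the usual procedure for the smaller moduli space $\mathcal{M}_{k+1,\gamma}$ with $k+1$ boundary marked points).

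First I would recall that by the standard homotopy invariance of the index, $\det(D\bar\partial_f)$ depends only on the homotopy class of the bundle pair $(f^*TX,(\partial f)^*TL)\to (D^2,\partial D^2)$. I would therefore attempt to deform this bundle pair, through bundle pairs over $(D^2,\partial D^2)$ whose boundary bundle is totally real, to a standard model. Over $D^2$ the complex bundle $f^*TX$ is trivial, so the whole data is captured by the loop of totally real subspaces $(\partial f)^*TL\subset f^*TX|_{\partial D^2}$, equivalently a map $\partial D^2\to U(n)/O(n)$. The orientation of $L$ kills the component group $\pi_0(U(n)/O(n))=\mathbb{Z}/2$ of this loop, and the relative spin assumption is exactly what is needed to trivialize the obstruction living in $\pi_1(U(n)/O(n))=\mathbb{Z}/2$, in a way that is independent of the chosen capping.

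More precisely, using the relative spin class $st\in H^2(X;\mathbb{Z}/2)$ restricting to $w_2(TL)$, I would choose a triangulation and an oriented real rank-two bundle $V\to X^{(2)}$ with $w_2(V)=st$, then pull $V$ back along $f$ and use the stable trivialization of $TL\oplus V|_L$ over $\partial D^2$ (which exists because $TL\oplus V|_L$ is now spin). This stable trivialization, together with the complex trivialization of $f^*TX$, reduces $(f^*TX,(\partial f)^*TL)$ to the trivial model $(D^2\times\mathbb{C}^n, \partial D^2\times\mathbb{R}^n)$, whose $\bar\partial$-operator has kernel $\mathbb{R}^n$ and zero cokernel, canonically oriented by the orientation of $L$. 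I would then check that two different choices of $V$ (or of the stable trivialization) differ by a loop in $SO$, whose effect on $\det(D\bar\partial)$ is trivial.

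The main obstacle, and the step that requires the most care, is coherence of these orientations under the boundary strata of the form $\mathcal{M}_{k_1+1,\gamma_1}\times_L \mathcal{M}_{k_2+1,\gamma_2}$ appearing in the decompositions (\ref{12}) and (\ref{7}). One has to verify that the orientation produced from the stabilizing trivialization on the glued disc equals, up to an explicit sign depending only on $k_1,k_2,\mu(\gamma_1),\mu(\gamma_2)$, the fibre product of the orientations on the two factors; this is a linear-algebra computation on determinant lines using the pinching model, and it is at this point that the hypothesis that $L$ is relatively spin (rather than merely orientable) is essential, since otherwise the local choices of $V$-trivialization on the two components fail to match across the node by a class in $H^1(\partial\Sigma;\mathbb{Z}/2)$. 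Once this sign rule is established, it propagates inductively and gives the coherent system of orientations used in Theorem \ref{11} and Theorem \ref{64}.
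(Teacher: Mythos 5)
The paper does not prove Theorem \ref{2002}: it is quoted from \cite{FOOO} and used as a black box, so there is no internal proof to compare yours against. Your sketch is, in outline, the standard Fukaya--Oh--Ohta--Ono argument (reduce to orienting determinant lines of the linearized operators, trivialize $f^*TX$ over the disc, use the relative spin class to stabilize and trivialize $TL\oplus V$ over the $2$-skeleton, orient a standard boundary value problem by the orientation of $L$, then check coherence under the boundary decompositions), so the approach is the right one.

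Two points need repair before this would read as a proof. First, the homotopy groups you quote are wrong: $U(n)/O(n)$ is connected and $\pi_1(U(n)/O(n))\cong\mathbb{Z}$ (the Maslov class); the two $\mathbb{Z}/2$'s that orientability and relative spin-ness kill are $\pi_0(O(n))$ and $\pi_1(O(n))$, i.e., the obstructions to choosing, consistently over the $2$-skeleton of $L$, an orientation and a stable trivialization of $TL\oplus V|_L$. Second, a bundle pair $(f^*TX,(\partial f)^*TL)$ of nonzero Maslov index is \emph{not} homotopic to the trivial model $(D^2\times\mathbb{C}^n,\partial D^2\times\mathbb{R}^n)$ --- their indices differ by $\mu(\gamma)$ --- so you cannot ``reduce to the trivial model'' directly. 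The standard fix is to use the trivialization supplied by the relative spin structure to degenerate the problem into the trivial boundary value problem on a disc (kernel $\mathbb{R}^n$, no cokernel, oriented by the orientation of $L$) glued at an interior node to a complex-linear Dolbeault operator on a closed component carrying the Maslov index, which is canonically oriented by its complex structure. With those corrections your outline, including the coherence check on the strata (\ref{12}) and (\ref{7}), matches the argument in \cite{FOOO}.
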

Let $M^0$ denote the orientation for $M$. The orientation of the moduli space $\mathcal{M}_{\gamma}(\mathfrak{X},L)$ is given by 
  \begin{align}\label{2005}
  \mathcal{M}_{\gamma}(\mathfrak{X},L)^0=(S^1_{\vartheta})^0\times \mathcal{M}_{\gamma}(X_{\vartheta},L)^0.
  \end{align} Therefore, it suffices to orient the later two terms in (\ref{2005}). To apply Theorem \ref{2002} to the pair $(X,L)$, it suffices to define the orientation and a relative spin structure for $\mathcal{L}$. The torus $L$ has a natural orientation since it is a special Lagrangian in $X_{\vartheta}$, $\vartheta=\mbox{Arg}Z_{\gamma}$. The equator $S^1_{\vartheta}\in \mathbb{P}\backslash \{0\}$ is also oriented counterclockwise. Since the tangent bundle of torus is trivial, one can choose the
trivial spin structure for $\mathcal{L}$ and the moduli space $\mathcal{M}_{\gamma}(\mathfrak{X},L)$ is oriented follows from Theorem \ref{2002}.


\subsection{Reduced $A_{\infty}$ Structure and Floer Theoretic Counting}
From the Kuranishi structure constructed in section \ref{998},
one can define a filtered $A_{\infty}$ structure using De Rham model developed in \cite{F1}\cite{FOOO}
as follows: Let $\rho_k \in \Lambda^*(L\times S^1_{\vartheta})$ be
differential forms on $L\times S^1_{\vartheta}$. For each $k\geq 1$,
we define
\begin{equation*}
m_{k,\gamma}(\rho_1,\cdots,\rho_k)\in \Lambda^*(L\times
S^1_{\vartheta}), \hspace{5mm}
*=\sum_{i=1}^k (deg(\rho_i)-1)+ 1
\end{equation*}
\begin{align*}
  m_{k,\gamma}(\rho_1,\cdots,\rho_k)=Corr_*(\mathcal{M}_{k+1,\gamma}(\mathfrak{X},L); (ev_1, \dots,
  ev_k,ev_{\vartheta}), (ev_0,ev_{\vartheta}))\\(\rho_1 \times \cdots \times
  \rho_k),
\end{align*}
\begin{equation}
 m_{0,\gamma}(1)=Corr_*(\mathcal{M}_{1,\gamma}(\mathfrak{X},L); tri,
 (ev_0,ev_{\vartheta}))(1) \in \Lambda^2(L\times S^1_{\vartheta}).
\end{equation}
For each $\vartheta\in S^1_{\vartheta}$, we define
\begin{equation}
   m_k^{\vartheta}=\sum_{\substack{\gamma \in \pi_2(X,L),\\ \mbox{Arg}Z_{\gamma}=\vartheta} }m_{k,\gamma}T^{Z_{\gamma}}.
\end{equation}
Finally we define the open Gromov-Witten invariants, which we expect to be a symplectic analogue of $\tilde{\Omega}^{trop}$. We will discuss more about its property in the next section.
\begin{definition}
Given an elliptic K3 surface $X\rightarrow B$ with $L_u$ be the torus fibre over $u\in B$ and $\gamma\in H_2(X,L_u)$ such that $\partial \gamma\neq 0\in H_1(L_u)$, we
define
\begin{equation*}
  \tilde{\Omega}^{Floer}(\gamma;u)=\tilde{\Omega}^{Floer}(\gamma;L_u)=Corr_*(\mathcal{M}_{0,\gamma}(\mathfrak{X},L_u);tri, tri)(1)\in
  \mathbb{R}.
\end{equation*}
\end{definition}

\begin{thm} \label{4}Given $E_0>0$, the operators $\{m_{k,\gamma}^{\vartheta}\}_{k\geq 0}$ form an $S^1$-family of  acyclic filtered $A_{\infty}$
algebra structures modulo $T^{E_0}$ on $\Lambda^*(L\times
S^1_{\vartheta})$ with $1$ as a strict unit. Moreover, the structure
is independent of the choices of Kuranishi structures and
multi-sections chosen, up to pseudo-isotopy of inhomogeneous cyclic
filtered $A_{\infty}$ algebras.
\end{thm}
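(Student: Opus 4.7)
The plan is to derive the filtered $A_{\infty}$ relations as a consequence of the boundary decomposition of the moduli spaces in Theorem \ref{11} together with the Stokes formula for fibre integration on Kuranishi spaces developed in \cite{F1}\cite{FOOO}. For each $\vartheta\in S^1$ and each relative class $\gamma$ with $\operatorname{Arg}Z_{\gamma}=\vartheta$, the perturbed moduli space $(\mathcal{M}_{k+1,\gamma}(\mathfrak{X},L),\mathfrak{s}_{k+1,\gamma})$ is a compact oriented space of expected dimension with corners; property (2) of Theorem \ref{11} makes $(ev_0,ev_{\vartheta})$ weakly submersive, so the correspondence pushforward defining $m_{k,\gamma}$ is well defined on de Rham forms. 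Applying Stokes to $m_{k,\gamma}(\rho_1,\ldots,\rho_k)$ produces a boundary contribution which, via the decomposition (\ref{12}) and property (4) of Theorem \ref{11}, equals the sum of fibre products of lower-order operators. Reading the resulting identity order by order in $T$ gives the filtered $A_{\infty}$ relation modulo $T^{E_0}$.

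For the strict unit property I would invoke compatibility with the forgetful map (property (1) of Theorem \ref{11}): when the input $1$ sits at a marked point, $m_{k,\gamma}(\ldots,1,\ldots)$ factors through $\mathfrak{forget}_{k+1,k}$, and the fibre of this map has positive dimension whenever $(k,\gamma)\neq(2,0)$, so the fibre integral vanishes by dimension; the exceptional case comes from constant discs and recovers $m_2(1,\rho)=\rho$ and $m_2(\rho,1)=(-1)^{|\rho|}\rho$ directly. Cyclic symmetry of the correlator form follows from property (3): the $\mathbb{Z}/(k+1)$ action on $\mathcal{M}_{k+1,\gamma}(\mathfrak{X},L)$ preserves the Kuranishi structure and the multi-section from Theorem \ref{1}, and the orientation recalled in Section 4.3 transforms by the standard cyclic sign. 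For independence of the choices, given two systems producing $m^{(0)}$ and $m^{(1)}$, I would apply Theorem \ref{64} in the constant family $\mathfrak{X}_t\equiv\mathfrak{X}$ to construct Kuranishi structures and multi-sections on $[0,1]\times \mathcal{M}_{k,\gamma}(\mathfrak{X},L)$ restricting on the endpoints to the two given choices. Running the same Stokes argument on the cobordism, with boundary decomposition (\ref{61}) playing the role of (\ref{12}), yields a pseudo-isotopy of filtered $A_{\infty}$ structures in the sense of Section 4.2.6 of \cite{FOOO}; property (5) of Theorem \ref{64}, which identifies Kuranishi data on coincident strata, is exactly what is needed so that the pseudo-isotopy respects the cyclic structure.

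The main obstacle is sign bookkeeping: the boundary decomposition (\ref{12}) identifies $\partial \mathcal{M}_{k+1,\gamma}$ with fibre products along $(ev_*,ev_{\vartheta})$, and the induced boundary orientation must match the product orientation of each fibre product up to a precise sign depending on the indices $i,j$, the degrees of the $\rho_*$, and the orientations of $L$ and of $S^1_{\vartheta}$ fixed in (\ref{2005}). This is resolved in \cite{FOOO} in the usual Lagrangian Floer setting; in our situation the $ev_{\vartheta}$ components agree on the two factors of the fibre product because disc bubbling preserves the complex structure within the $S^1$-family (this is the content of Proposition \ref{48}), so the $S^1_{\vartheta}$-direction factors out of the fibre product and the FOOO sign computation carries over without modification. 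Acyclicity and the structure being inhomogeneous come from the fact that $m_{0,\gamma}(1)$ need not vanish for $\gamma$ with $\partial\gamma\neq 0$, giving a curved but unobstructed $A_{\infty}$ structure; the energy filtration (\ref{1066}) and the energy bound $E_0$ ensure that at each step only finitely many $\gamma$ contribute, so the construction is well defined on the truncated Novikov ring.
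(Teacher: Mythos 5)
Your proposal follows essentially the same route as the paper: the $A_{\infty}$ relations come from applying the Stokes formula (Proposition \ref{2}) and the composition law (Proposition \ref{3}) to the boundary decomposition (\ref{12}), the strict unit from forgetful-map compatibility, cyclic symmetry from the cyclic invariance of the multi-sections, and independence of choices from the cobordism of Theorem \ref{64} in the constant family. The only presentational difference is in the orientation bookkeeping, where the paper works with the totally real $3$-torus $\mathcal{L}=S^1_{\vartheta}\times L$ inside the twistor space with its trivial spin structure and Theorem \ref{2002}, while you factor out the $S^1_{\vartheta}$-direction via Proposition \ref{48}; both reduce to the FOOO sign computation.
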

\begin{proof}
This is a standard argument follows \cite{FOOO}. However, the new
Kuranishi structure induces a new $A_{\infty}$ structure and can be
viewed as a new symplectic invariant. We first will construct an
inhomogeneous cyclic filtered $A_{\infty}$ algebra structure modulo
$T^{E_0}$. From the Kuranishi structure and multi-section
constructed in Theorem \ref{1}, we first prove the $A_{\infty}$
relation modulo $T^E$. The $A_{\infty}$ relation is equivalent to
\begin{align*}
  \sum_{\gamma_1+\gamma_2=\gamma}\sum_{k_1+k_2=k+1}\sum_i
  &(-1)^{deg(\rho_1)+\cdots+\deg(\rho_{i-1})+i-1}\\
  & m_{k_1,\gamma_1}^{\vartheta}(\rho_1,\cdots,m_{k_2,\gamma_2}^{\vartheta}(\rho_i,\cdots,\rho_{i+k_2-1}),\cdots,\rho_k)=0
\end{align*}
We may write the sum from left hand side into
\begin{align*}
  m_{1,0}^{\vartheta}m_{k,\gamma}^{\vartheta}&(\rho_1,\cdots,\rho_k)  \\
  +\sum_i(-1)^{deg(\rho_1)+\cdots+\deg(\rho_{i-1})+i-1}m_{k,\gamma}^{\vartheta}(\rho_1,\cdots,m_{1,0}^{\vartheta}(\rho_i),\cdots,\rho_k)\\
  +\sum_{\substack{\gamma_1+\gamma_2=\gamma,k_1+k_2=k+1 \\ \gamma_1 \neq 0 \text{ or } k_1\neq 1 \\ \gamma_2 \neq 0 \text{ or } k_2 \neq 1}}\sum_i
  (-1)^{deg(\rho_1)+\cdots+\deg(\rho_{i-1})+i-1}\\
   m_{k_1,\gamma_1}^{\vartheta}(\rho_1,\cdots,m_{k_2,\gamma_2}^{\vartheta}(\rho_i,\cdots,\rho_{i+k_2-1}),\cdots,\rho_k)
\end{align*}
Then the $A_{\infty}$ comes from the applying the Stoke's theorem \ref{2} and composition law \ref{3} to (\ref{12}).

Here the signs involved depend on the orientation of the relevant moduli spaces. Recall that $\mathcal{X}$ is the twistor space of $X$ with $X_0,X_{\infty}$ deleted. Let $\mathcal{L}$ be the $3$-torus which is the product of the equator $S^1_{\vartheta}\subseteq \mathbb{P}^1$ and the Lagrangian torus $L$. The complex manifold $\mathcal{X}$ might not be symplectic and $\mathcal{L}$ may not be a Lagrangian in $\mathcal{X}$. However, $\mathcal{L}$ is still a totally real $3$-torus. One can study holomorphic discs in $\mathcal{X}$ with boundaries on $\mathcal{L}$ and the corresponding linearized Cauchy-Riemann operators are Fredholm. From the maximal principle. every holomorphic discs in $\mathcal{X}$ with boundary on $\mathcal{L}$ will factor through the fibre of $\mathcal{X}\rightarrow \mathbb{P}^1$, thus gives rise to an element in $\mathcal{M}_{\gamma}(\mathfrak{X},L)$ for some $\gamma\in H_2(X,L)\subseteq H_2(\mathcal{X},\mathcal{L})$. Notice that the equator $S^1_{\vartheta}\in \mathbb{P}^1\backslash \{\infty\}$ admits an natural orientation and thus $\mathcal{L}$ is orientated. If we choose the trivial spin structure on the $3$-torus $\mathcal{L}$, the orientation of $\mathcal{M}_{\gamma}(\mathcal{X},\mathcal{L})$ from Theorem \ref{2002} will be the same as the orientation of $\mathcal{M}_{\gamma}(\mathfrak{X},L)$. For $0<E_0<E_1$, one can extend the $A_{\infty}$ relation from
modulo $T^{E_0}$ to modulo $T^{E_1}$ by Theorem 8.1 \cite{F1}.

The cyclic symmetry follows from the cyclic symmetry of the
perturbed multi-sections:
    \begin{align*}
      \langle m_{k+1,\gamma}&(\rho_1,\cdots,\rho_k),\rho_0 \rangle \\
           =&Corr_*(\mathcal{M}_{k+1,\gamma}(\mathfrak{X},L);(ev_1,\cdots,ev_k,ev_0),
           tri)(\rho_1\times \cdots \rho_k \times \rho_0)
    \end{align*}

To prove $1$ is a strict unit: for $\gamma \neq 0$ and $k\leq 1$, we
want to claim
   \begin{equation*}\label{5}
     m_{k,\gamma}(\rho_1,\cdots,\rho_{i-1},1,\rho_{i+1},\cdots,\rho_k)=0
   \end{equation*}
 This is because of the compatibility of forgetful map
 $\mathfrak{forget}:\mathcal{M}_k(\mathfrak{X},L) \rightarrow \mathcal{M}_{k-1}(\mathfrak{X},L)$
Let $V$ be the vector tangent to the fibre of the forgetful map
$\mathfrak{forget}$, then
      \begin{equation*}
        \iota_V ((f^s)_{\alpha}^*\rho \wedge \omega)=0,
      \end{equation*}where $\rho=\rho_1\times \cdots \times
      \rho_{i-1}\times 1\times \rho_{i+1}\times \cdots \rho_k$.
      Therefore each components of right hand side of \ref{5} vanish and $1$ is a strict unit.

\begin{rmk}
   Here we need to include the factor $S^1_{\vartheta}$ to make
   $m_k$ degree $1$ operators. Otherwise, $m_k$ will be degree $0$ and do not form $A_{\infty}$-structure.
\end{rmk}

\begin{prop}
  The $A_{\infty}$-structure $(\Lambda^*(L\times
  S^1_{\vartheta}),\{m_{k,\gamma}\})$ constructed in Proposition
  \ref{4} is independent of choice of the Kuranishi structure and the
  family of multi-sections chosen, up to pseudo-isotopy of
  inhomogeneous cyclic filtered $A_{\infty}$ algebra modulo $T^{E_0}$.
\end{prop}
\begin{proof}
  The proof is similar to proposition 4.1 \cite{F2}
\end{proof}
Finally we apply lemma 4.2 \cite{F2} to extend the inhomogeneous
cyclic filtered $A_{\infty}$ algebra structure and finish the proof
of Theorem \ref{4}.
\end{proof}



\subsection{Open Gromov-Witten Invariants}
From now on, we will assume $X$ is an elliptic K3 surface and
$L=L_u$ is a torus fibre for our purpose. After hyperK\"ahler
rotation, $X$ admits an $S^1_{\vartheta}$-family of special
Lagrangian torus fibration. Similar to the situation in tropical case discussed in Section \ref{2008}, the $S^1$-family of complex structures in Section \ref{2009} also motivates the concept of wall of marginal stability in symplectic geometry. However, the similar notion is more complicated.

\begin{definition}
   Given local section $\gamma$ of $\bigcup_u H_2(X,L_u)$, we define locally
     \begin{align*}
     W''_{\gamma_1,\gamma_2}=\{u\in
     B_0|\mbox{Arg}Z_{\gamma_1}=\mbox{Arg}Z_{\gamma_2},
     Z_{\gamma_1}Z_{\gamma_2}\neq 0\, \mbox{ and }\gamma_1,\gamma_2 \mbox{ are not
     colinear}\},
      \end{align*} and
     $W''_{\gamma}=\bigcup_{\gamma=\gamma_1+\gamma_2}W''_{\gamma_1,\gamma_2}$.
\end{definition}
Because the central charge $Z_{\gamma}$ is holomorphic, each
$W''_{\gamma_1,\gamma_2}$ forms a real analytic Zariski closed
subset of real codimension one on $B_0$ if $\partial\gamma_1$ and $\partial \gamma_2$ are not parallel. Indeed, from Cauchy-Riemann equation, the singular points $z_0$ of $W_{\gamma_1,\gamma_2}''$ satisfies $d\mbox{Arg}\frac{Z_{\gamma_1}}{Z_{\gamma_2}}(z_0)=0$. If $z_0\in B_0$, then there are gradient flow line of $|Z_{\gamma_i}|^2$, $i=1,2$ intersecting transversally at $z_0$. Let $v$ be the tangent of the gradient flow line, then $d\mbox{Arg}\frac{Z_{\gamma_1}}{Z_{\gamma_2}}(v)\neq 0$ at $z_0$. If $z_0\in \Delta$, then one can find a coordinate $z$ around $z_0$ such that $z(z_0)=0$. In this coordinate, we may assume the central charges are of the form 
\begin{align*}
Z_{\gamma_1}&=C_1z \\
Z_{\gamma_2}&=C_2(\frac{1}{2\pi i}z\log{z}+C'),
\end{align*} where $C_i\in \mathbb{Z}$ and $C'\in \mathbb{C}^*$. Direct computation shows that $W_{\gamma_1,\gamma_2}$ is two smooth curves glue smoothly at $z_0$.

Notice that the preimage of
$\mathbb{R}e^{i\vartheta}$ is a sub $\mathbb{Z}$-module and thus a
sublattice. In particular, the union in the definition of
$W''_{\gamma}$ is finite. Therefore, $W''_{\gamma}$ is locally a
real analytic Zariski closed subset of the base unless $\gamma$ is not primitive. If a
relative class $\gamma\in H_2(X,L)$ can be represented as a
holomorphic cycle, the phase of central charge $\int_{\gamma}\Omega$
will indicate which complex structure $J_{\vartheta}$ makes $\gamma$
holomorphic. Indeed, assume $\gamma_u \in H_2(\underline{X},L)$ can be realized as the image of a holomorphic map in $X_{\vartheta}$. Then 
  \begin{align*}
     Z_{\gamma}(u)=e^{i\vartheta}\int_{\gamma}e^{-i\vartheta}\Omega=e^{i\vartheta}i \int_{\gamma}\mbox{Im}(e^{-i\vartheta}\Omega).
  \end{align*} The last identity is because $\int_{\gamma}\Omega_{\vartheta}=0$.
In particular, we have $\mbox{Arg}Z_{\gamma}(u)=\vartheta+\pi/2$. In other words, the phase of central charge $\mbox{Arg}Z_{\gamma}$ a priori determines the obstruction for the complex structure on the equator to support $\gamma$ as a holomorphic cycle. When the relative class $\gamma$ supports a holomorphic cycle, the central charge is nothing but
    \begin{align}\label{736}
     Z_{\gamma}= \mbox{(symplectic area) }e^{i(\vartheta+\pi/2)}
    \end{align} from the Proposition \ref{49}.
In particular, the moduli space for family
$\mathcal{M}_{k,\gamma}(\mathfrak{X},L)$ has the same underlying
space as the usual moduli space of holomorphic discs
$\mathcal{M}_{k,\gamma}(X_{\vartheta},L)$, where  $\vartheta=\mbox{Arg}Z_{\gamma}-\pi/2$. We might drop the subindex $\vartheta$ when the target is clear. However, the tangent-obstruction theory (or the Kuranishi structures) on $\mathcal{M}_{\gamma}(\mathfrak{X},L)$ and $\mathcal{M}_{\gamma}(X_{\vartheta},L)$ are different. This motivates the following motivation:
\begin{definition}
  Given local section $\gamma$ of $\Gamma=\bigcup_u H_2(X,L_u)$, we define locally
  \begin{align} \label{230}
    W'_{\gamma}&=\bigcup_{\gamma_1+\gamma_2=\gamma}W'_{\gamma_1,\gamma_2}
    \notag
    \\
     &=\{u \in B \big| \gamma=\gamma_1+\gamma_2 \text{, where $\gamma_1$ and $\gamma_2$ are represented by holomorphic }  \notag \\
     & \qquad \qquad  \text{   discs with boundary on $L_u$ in $X_{\vartheta}$, for some $\vartheta$.} \}
  \end{align}
\end{definition}
  Notice that by Gromov compactness theorem, 
  we have $W'_{\gamma_1,\gamma_2}\subseteq
  W''_{\gamma_1,\gamma_2}$ as a closed subset in standard topology on $B_0$ and the expression in (\ref{230}) is a finite union.
However, $W'_{\gamma}$ might not be real codimension one because of
appearance of holomorphic discs with respect to non-generic (almost)
complex structures. Also, $W'_{\gamma_1,\gamma_2}$ might depend on
the choice of Ricci-flat $\omega$ while the real codimension one
$W''_{\gamma_1,\gamma_2}$ are not. 

Now assume that the moduli space $\mathcal{M}_{\gamma_u}(\mathfrak{X},L_u)$ has  non-empty real codimension one boundary for some $\gamma_u\in H_2(X,L_u)$. Namely, we have 
  \begin{align*}
  \gamma_u=\gamma_{1,u}+\gamma_{2,u} \in H_2(X,L_u)
  \end{align*} and $\mathcal{M}_{\gamma_{i,u}}(X_{\vartheta},L_u)$ are non-empty, for $i=1,2$ and $\vartheta\in S^1$. In particular, we have $Z_{\gamma_1}(u)Z_{\gamma_2}(u)\neq 0$ from (\ref{736}) and 
     \begin{align*}
      \mbox{Arg}Z_{\gamma}(u)=\mbox{Arg}Z_{\gamma_1}(u)=\mbox{Arg}_{\gamma_2}(u)=\vartheta+\pi/2.
     \end{align*}
     The interesting implication is that we may not always have bubbling phenomenon of the moduli space $\mathcal{M}_{\gamma}(\mathfrak{X},L_u)$ unless the torus fibre $L_u$ sits over the locus characterized by
   \begin{align} \label{700}
    \mbox{Arg}Z_{\gamma_1}=\mbox{Arg}Z_{\gamma_2}.
   \end{align} 
Assume that $Z_{\gamma_1}$ is not a multiple of $Z_{\gamma_2}$. Since the central charges are holomorphic functions, the equation (\ref{700}) locally is pluriharmonic and defines a real analytic pseudoconvex hypersurface. In particular, the mean value property of (pluri)harmonic functions implies that locally this hypersurface divides the base into chambers. If $Z_{\gamma_1}=kZ_{\gamma_2}$, then $Z_{\gamma_1-k\gamma_2}=0$. In particular, $dZ_{\gamma_1-k\gamma_2}=0$ together with Lemma \ref{34} implies 
  \begin{align*}
\partial \gamma_1-k\partial \gamma_2=0\in H_1(L,\mathbb{Z})\cong \mathbb{Z}^2
  \end{align*}
 Thus, from the exact sequence (\ref{1002}) there exists positive integers $k_1=kk_2,k_2$, and $\partial\gamma'\in H_1(L,\mathbb{Z})$, such that we have
 \begin{align*}
 \partial\gamma_i=k_i\partial \gamma' \in H_1(L,\mathbb{Z})), \mbox{ $i=1,2$}.
 \end{align*} and 
 \begin{align*}
     \partial\gamma=\partial\gamma_1+\partial\gamma_2=(k_1+k_2)\partial\gamma'
 \end{align*}is not primitive. To sum up, we have proved the following theorem:
   
  \begin{thm}\label{811} Assume that
      \begin{enumerate}
            \item The relative class $\gamma$, with $\partial \gamma \neq 0\footnote{For the case $\partial \gamma=0$, one also has to consider the situation when a rational curve with a point on $L$ appears as real codimension one boundary of the moduli space and thus the invariant can depend on the choice of K\"ahler form of the elliptic fibration. We will leave this case in the forthcoming paper \cite{L7}.}\in H_1(L)$ being primitive.             
            \item The fibre $L_u$ does not sit over the closed subset $W'_{\gamma}$. Notice that there are only finitely many possible decompositions $\gamma=\gamma_1+\gamma_2$ such that $W'_{\gamma_1,\gamma_2}$ is non-empty for a fix choice of $[\omega]$ by Gromov compactness theorem.
          \end{enumerate}
     Then the moduli space $\mathcal{M}_{k,\gamma}(\mathfrak{X},L_u)$ has no boundary. 
  \end{thm}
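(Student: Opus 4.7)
The plan is to show that each stratum in the boundary decompositions of Theorem \ref{11} is forced to be empty by the two hypotheses. By (\ref{12}) and (\ref{7}) every boundary point arises either from a sphere/closed-curve bubble (present only when $\gamma$ lies in the image of $H_2(X)\to H_2(X,L_u)$) or from a disc bubble corresponding to a splitting $\gamma=\gamma_1+\gamma_2$ with $Z_{\gamma_1}/Z_{\gamma_2}\in\mathbb{R}_{>0}$ and both factor moduli $\mathcal{M}_{\cdot,\gamma_i}(\mathfrak{X},L_u)$ non-empty. The sphere-bubble stratum immediately forces $\partial\gamma=0$, contradicting hypothesis (1), so only the disc-bubble strata require work.

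For a disc-bubble stratum to be non-empty, each $\gamma_i$ must be represented by a holomorphic disc in some fibre $X_{\vartheta_i}$ of the twistor family. Formula (\ref{736}) pins down $\mathrm{Arg}\,Z_{\gamma_i}(u)=\vartheta_i+\pi/2$, and the gluing condition $Z_{\gamma_1}/Z_{\gamma_2}\in\mathbb{R}_{>0}$ then forces $\vartheta_1=\vartheta_2$. Thus both $\gamma_i$ are realized as holomorphic discs on $L_u$ with respect to the same complex structure in the equator, which is precisely the defining condition (\ref{230}) of $u\in W'_{\gamma_1,\gamma_2}\subseteq W'_\gamma$. Whenever $Z_{\gamma_1}$ and $Z_{\gamma_2}$ are not proportional as holomorphic functions near $u$, this contradicts hypothesis (2) directly, since in that non-degenerate regime $W'_{\gamma_1,\gamma_2}$ sits inside the real-codimension-one wall $W''_{\gamma_1,\gamma_2}$ and the hypothesis genuinely excludes $u$ from it.

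The main technical point, and where primitivity enters, is the degenerate case in which $Z_{\gamma_1}=kZ_{\gamma_2}$ holds on a neighborhood of $u$ for some $k\in\mathbb{R}_{>0}$, so that $W'_{\gamma_1,\gamma_2}$ may fail to sit in a codimension-one locus and hypothesis (2) provides no information. In this case $dZ_{\gamma_1-k\gamma_2}\equiv 0$ combined with Lemma \ref{34} yields $\partial\gamma_1=k\partial\gamma_2$ in $H_1(L_u,\mathbb{R})$; integrality of both sides forces $k$ to be rational, so that one finds positive integers $k_1,k_2$ and a primitive $\partial\gamma'\in H_1(L_u,\mathbb{Z})$ with $\partial\gamma_i=k_i\partial\gamma'$. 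Then $\partial\gamma=(k_1+k_2)\partial\gamma'$ fails to be primitive, contradicting hypothesis (1). Since the two subcases exhaust all splittings that could produce a non-empty disc-bubble stratum, and each yields a contradiction, the boundary of $\mathcal{M}_{k,\gamma}(\mathfrak{X},L_u)$ is empty.
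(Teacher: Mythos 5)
Your proposal is correct and follows essentially the same route as the paper: the sphere-bubble stratum of (\ref{7}) is killed by $\partial\gamma\neq 0$ via the exact sequence (\ref{1002}), the disc-bubble strata force $\mathrm{Arg}\,Z_{\gamma_1}=\mathrm{Arg}\,Z_{\gamma_2}$ and hence $u\in W'_{\gamma_1,\gamma_2}$, and the degenerate case $Z_{\gamma_1}=kZ_{\gamma_2}$ is excluded exactly as in the paper by deducing $\partial\gamma=(k_1+k_2)\partial\gamma'$ from Lemma \ref{34}, contradicting primitivity. The only difference is presentational: you treat the closed-curve stratum explicitly where the paper relegates it to a footnote.
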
     

\begin{rmk}
Assuming the holomorphic structure of the K3 surface (preserving the elliptic fibration structure) is chosen generically, then the first assumption and be loosen to $\gamma\in H_2(X,L)$ is generic and the theorem still holds.
\end{rmk}

\begin{lem} \label{70}
   Under then same assumption in Theorem \ref{811}, then  $\tilde{\Omega}^{Floer}(\gamma;u)$ is
   well-defined.
\end{lem}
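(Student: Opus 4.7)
The plan is to combine the absence of boundary proved in Theorem \ref{811} with the cobordism version of the Kuranishi theory provided by Theorem \ref{64}. Since $c_1(X)=0$ for a K3 surface, the Maslov index of every relative class with boundary on a special Lagrangian torus vanishes, so $\mathcal{M}_{0,\gamma}(X_{\vartheta},L_u)$ has virtual dimension $-1$. Including the $S^1_{\vartheta}$-parameter from the hyperK\"ahler rotation, the family moduli space $\mathcal{M}_{0,\gamma}(\mathfrak{X},L_u)$ has virtual dimension $0$. By Theorem \ref{811}, under the primitivity assumption on $\partial\gamma$ and the chamber assumption that $L_u$ sits outside $W'_{\gamma}$, the moduli space $\mathcal{M}_{0,\gamma}(\mathfrak{X},L_u)$ has empty boundary. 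Consequently, for any transverse continuous family of multi-sections $\mathfrak{s}$ provided by Theorem \ref{1}, its zero locus is a compact $0$-dimensional weighted oriented branched manifold, and its signed count $Corr_*(\mathcal{M}_{0,\gamma}(\mathfrak{X},L_u);tri,tri)(1)$ is a well-defined rational number.

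The real content of the lemma is independence of this count from the auxiliary choices. Given two choices of Kuranishi data and multi-sections $(\mathcal{V}^{(i)},\mathfrak{s}^{(i)})$, $i=0,1$, I apply Theorem \ref{64} to the trivial one-parameter family $X_t\equiv X$, $t\in[0,1]$. This produces a Kuranishi structure and transverse family of multi-sections on $\bigcup_{t\in[0,1]}\mathcal{M}_{0,\gamma}(\mathfrak{X}_t,L_u)$ restricting at the endpoints to $(\mathcal{V}^{(i)},\mathfrak{s}^{(i)})$. The boundary decomposition (\ref{61}) of this parameterized moduli space consists, besides the two endpoint fibres, only of nodal bubbling strata indexed by splittings $\gamma=\gamma_1+\gamma_2$ with both $\mathcal{M}_{1,\gamma_i}(\mathfrak{X}_t,L_u)$ non-empty for some $t$. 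Because $X_t$ is constant in $t$, the existence of such a splitting places $u$ in $W'_{\gamma_1,\gamma_2}$, contradicting hypothesis (2) of Theorem \ref{811}, unless $Z_{\gamma_1}$ and $Z_{\gamma_2}$ are collinear; and the collinear case, by the computation immediately preceding Theorem \ref{811}, forces $\partial\gamma=(k_1+k_2)\partial\gamma'$ with $k_1+k_2\geq 2$, contradicting the primitivity of $\partial\gamma$. Hence every nodal stratum is empty.

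The parameterized zero locus is therefore a compact oriented weighted $1$-dimensional branched manifold whose boundary equals $(\mathfrak{s}^{(1)})^{-1}(0)-(\mathfrak{s}^{(0)})^{-1}(0)$, so the two endpoint signed counts coincide, showing that $\tilde{\Omega}^{Floer}(\gamma;u)$ is independent of all auxiliary choices. The main subtlety, and where the hypotheses of Theorem \ref{811} cut exactly to the bone, lies in excluding real-codimension-one bubbling along the cobordism: the chamber assumption $u\notin W'_{\gamma}$ kills the generic non-collinear splittings, while the primitivity of $\partial\gamma$ is precisely what kills the collinear splittings that $u\notin W'_{\gamma}$ cannot detect. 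Any weakening of either condition would introduce genuine codimension-one boundary and destroy invariance.
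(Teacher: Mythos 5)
Your proposal is correct and follows essentially the same route as the paper: apply the cobordism Theorem \ref{64} to the trivial one-parameter family, invoke the Stokes-type formula (Proposition \ref{2}) on the boundary decomposition, and observe that the nodal (fibre-product) strata are empty under the hypotheses of Theorem \ref{811}, so the two endpoint counts agree. The only difference is that you spell out the collinear/non-collinear case analysis for the splittings $\gamma=\gamma_1+\gamma_2$, whereas the paper simply notes that the bubbling term is empty "by assumption" since any such splitting would place $u$ in $W'_{\gamma}$; this extra detail is harmless and does not change the argument.
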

\begin{proof}
  Assume there are two different Kuranishi structures, and give rise to $\tilde{\Omega}^{Floer}(\gamma;u)$ and $\tilde{\Omega}'^{Floer}(\gamma;u)$ respectively. Consider the trivial family hpyerK\"ahler structures and apply Theorem \ref{64}.
    \begin{align}\label{102}
    \partial(&[0,1]\times \mathcal{M}_{0,\gamma}(\mathfrak{X},L_u))=(\{0,1\}\times
    \mathcal{M}_{0,\gamma}(\mathfrak{X},L_u)) \notag \\ 
                                                   &\cup
                                                   \bigcup_{\gamma_1+\gamma_2=\gamma}(([0,1]\times \mathcal{M}_{1,\gamma_1}(\mathfrak{X},L_u))\; {}_{(ev_0,ev_{\vartheta},ev_t)} \! \times_{(ev_0,ev_{\vartheta},ev_t)} ([0,1]\times
                                                   \mathcal{M}_{1,\gamma_2}(\mathfrak{X},L_u)))/\mathbb{Z}_2 \notag \\
    \end{align}
Applying the Stokes theorem (Proposition \ref{2}) to both sides of (\ref{102}), we get
   \begin{align*}
    0=&  \tilde{\Omega}^{Floer}(\gamma;u)-\tilde{\Omega}'^{Floer}(\gamma;u)  \\
     +& \mbox{Corr}_*(\bigcup_{\gamma_1+\gamma_2=\gamma}(([0,1]\times \mathcal{M}_{1,\gamma_1}(\mathfrak{X},L_u))\; {}_{(ev_0,ev_{\vartheta},ev_t)} \! \times_{(ev_0,ev_{\vartheta},ev_t)} ([0,1]\times
                                                        \mathcal{M}_{1,\gamma_2}(\mathfrak{X},L_u)))/\mathbb{Z}_2);tri,tri)(1),
   \end{align*}where the last term vanishes because the last term in (\ref{102}) is empty by assumption and the lemma follows.

\end{proof}

\begin{rmk}If we view $X$ as an elliptic fibred K3 surface, then naively we are counting special Lagrangian discs with boundaries on elliptic fibres. Holomorphic discs in $X_{\vartheta}$ corresponds to special Lagrangian discs with phase $\vartheta$ in $X$. It is conjectured that special Lagrangians are stable objects in the Fukaya category. Changing the special Lagrangian boundary conditions is expected to
 be mirror to changing the stability conditions of stability
 condition in Donaldson-Thomas theory. Since our central
 charge is constraint by $\langle dZ,dZ\rangle=0$, the affine
 base $B$ can be viewed as a complex isotropic submanifold of the
 corresponding stability manifold.
\end{rmk}

Now given any two points $u_0, u_1\in B_0$. We choose a $1$-parameter
family of fibration preserving diffeomorphisms $\phi_t$ such that
$\phi_t(L_0)=L_t$, for $t\in [0,1]$. By pulling back the K\"ahler
forms and complex structures to $L_0$, we get a family of hyperK\"ahler structures $(\underline{X},\phi_t^*\omega,\phi^*_t\Omega)$ such that $L=L_0$ is always an elliptic curve.

\begin{prop}\label{2012}
  If $\tilde{\Omega}^{Floer}(\gamma;u)$ is well-defined, then $\tilde{\Omega}^{Floer}(\gamma;u')$ is well-defined for nearby $u'$. Moreover, the invariant $\tilde{\Omega}^{Floer}(\gamma;u)$ is locally a constant. Namely,
    \begin{align*}
     \tilde{\Omega}^{Floer}(\gamma;u)=\tilde{\Omega}^{Floer}(\gamma;u'),
    \end{align*}for $u'$ in a neighborhood of $u$.
\end{prop}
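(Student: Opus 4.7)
The plan is to prove this by a cobordism argument essentially parallel to Lemma \ref{70}, with the trivial family replaced by a $1$-parameter family of hyperK\"ahler structures interpolating between the fibres $L_u$ and $L_{u'}$.

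First I would establish well-definedness on a neighborhood of $u$. By Theorem \ref{811}, well-definedness of $\tilde{\Omega}^{Floer}(\gamma;u)$ requires only that $\partial\gamma$ is primitive and that $u \notin W'_\gamma$. The primitivity condition is a property of the class $\gamma$ and is unchanged under parallel transport. The set $W'_\gamma = \bigcup_{\gamma_1+\gamma_2=\gamma} W'_{\gamma_1,\gamma_2}$ is, by Gromov compactness, a locally finite union of closed subsets of $B_0$ (only finitely many decompositions can produce holomorphic discs of bounded total energy). Hence its complement is open, and any $u'$ sufficiently close to $u$ also avoids $W'_\gamma$, so $\tilde{\Omega}^{Floer}(\gamma;u')$ is well-defined.

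Next, using the fibration-preserving diffeomorphisms $\phi_t$ with $\phi_t(L_0)=L_t$ referenced in the paragraph preceding the proposition, I would pull back to obtain the $1$-parameter family of hyperK\"ahler structures $(\underline{X},\phi_t^*\omega,\phi_t^*\Omega)$ for $t\in [0,1]$, with $L=L_0$ held fixed (and an elliptic curve throughout). This gives the family moduli space
\begin{equation*}
\bigcup_{t\in [0,1]} \mathcal{M}_{0,\gamma}(\mathfrak{X}_t,L),
\end{equation*}
to which Theorem \ref{64} applies, producing a Kuranishi structure with transverse multi-sections whose boundary decomposes as in (\ref{61}): the two ends at $t=0,1$ together with the fibre product bubbling terms indexed by splittings $\gamma=\gamma_1+\gamma_2$.

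The key point is that the bubbling boundary is empty. Indeed, for any $t\in [0,1]$ the fibre $L_t$ lies in $B_0\setminus W'_\gamma$ by shrinking the neighborhood of $u$ if necessary, so that no splitting $\gamma=\gamma_1+\gamma_2$ can simultaneously be represented by holomorphic discs on $L_t$ in any $X_\vartheta$; this is precisely the geometric content of $L_t\notin W'_\gamma$. Consequently the fibre product term in (\ref{61}) is empty, and applying Stokes' theorem (Proposition \ref{2}) together with the composition law (Proposition \ref{3}) to the correspondence $\mathrm{Corr}_*(\cdot;\mathrm{tri},\mathrm{tri})(1)$ gives
\begin{equation*}
0 = \tilde{\Omega}^{Floer}(\gamma;u') - \tilde{\Omega}^{Floer}(\gamma;u),
\end{equation*}
which proves local constancy.

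The main technical obstacle is ensuring that the path of base points, and the corresponding family $\phi_t^*(\omega,\Omega)$, can be chosen so that every intermediate fibre avoids $W'_\gamma$; this relies on the closedness of $W'_\gamma$ combined with the Gromov compactness statement bounding the number of decompositions. A secondary point to verify is that the family Kuranishi structure and multi-sections constructed via Theorem \ref{64} are compatible with those used to define $\tilde{\Omega}^{Floer}(\gamma;u)$ and $\tilde{\Omega}^{Floer}(\gamma;u')$ at the two endpoints, but this is built into item (6) of Theorem \ref{64}.
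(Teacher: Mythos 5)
Your proposal is correct and follows essentially the same route as the paper's own proof: openness of $B_0\setminus W'_{\gamma}$ via the finiteness of the union in (\ref{230}) from Gromov compactness, the pulled-back family $(\underline{X},\phi_t^*\omega,\phi_t^*\Omega)$ with $L=L_0$ fixed, and Theorem \ref{64} plus Stokes (Proposition \ref{2}) applied to the family moduli space whose bubbling boundary is empty along the path. Nothing substantive differs.
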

\begin{proof}
  The union in (\ref{230}) is a finite union by Gromov's compactness theorem. Therefore, $u\notin W'_{\gamma}$ implies $u'\notin W'_{\gamma}$ for all nearby $u'$. In particular, there exists a path connecting $u$ and a nearby $u'$ such that it does not intersect $W'_{\gamma}$.  Applying Theorem \ref{64} to the $1$-parameter family $(\underline{X},\phi_t^*\omega,\phi_t^*\Omega)$, there exists Kuranishi structures and multi-sections with compatibility
    \begin{align}\label{2013}
   \partial(&\bigcup_{t\in [0,1]}\mathcal{M}_{0,\gamma}(\mathfrak{X}_t,L))=(\mathcal{M}_{0,\gamma}(\mathfrak{X}_1,L)-\mathcal{M}_{0,\gamma}(\mathfrak{X}_0,L)) \notag \\ 
                                                      &\cup
                                                      \bigcup_{\gamma_1+\gamma_2=\gamma}((\bigcup_{t\in [0,1]}\mathcal{M}_{0,\gamma_1}(\mathfrak{X}_t,L))\; {}_{(ev_0,ev_{\vartheta},ev_t)} \! \times_{(ev_0,ev_{\vartheta},ev_t)} (\bigcup_{t\in [0,1]}\mathcal{M}_{0,\gamma_2}(\mathfrak{X}_t,L))/\mathbb{Z}_2 \notag \\
    \end{align}
  Apply the Proposition \ref{2} to both sides of (\ref{2013}), we have
  \begin{align*}
   \tilde{\Omega}^{Floer}(\gamma;u)=\tilde{\Omega}^{Floer}(\gamma;L)=\tilde{\Omega}^{Floer}({\phi_1}_*\gamma;\phi_1(L))=\tilde{\Omega}^{Floer}(\gamma;u')
  \end{align*}

\end{proof}

In the definition of the invariant $\tilde{\Omega}^{Floer}(\gamma;u)$, there
is a choice of the Ricci-flat metric and the holomorphic symplectic $2$-form to form the twistor family.
However, we have the following
\begin{cor} \label{216}
Let $(\omega,\Omega),(\omega',\Omega')$ be hyperK\"ahler triples with $\Omega$ and $\Omega'$ are holomorphic symplectic $2$-forms of the fixed elliptic K3 surface $X$. Assume that the
corresponding invariants $\tilde{\Omega}(\gamma;u)$ and
$\tilde{\Omega}'(\gamma;u)$ are well-defined. Then
    \begin{align*}
       \tilde{\Omega}(\gamma;u)=\tilde{\Omega}'(\gamma;u)
    \end{align*}
\end{cor}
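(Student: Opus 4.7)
The plan is to imitate the cobordism argument of Proposition \ref{2012}, but with the $1$-parameter family parametrising the hyperK\"ahler triple rather than the base point. I would first connect $(\omega,\Omega)$ and $(\omega',\Omega')$ by a smooth path $(\omega_t,\Omega_t)_{t\in[0,1]}$ of hyperK\"ahler triples on $\underline X$. Since the underlying complex structure is fixed and $h^{2,0}(X)=1$, the space of holomorphic symplectic $2$-forms is the single $\mathbb{C}^{*}$-orbit $\mathbb{C}^{*}\Omega_{0}$; the K\"ahler cone is convex, and the normalisation $2\omega_t^{2}=\Omega_t\wedge\bar\Omega_t$ cuts out a convex slice of it. Interpolating the K\"ahler classes linearly and invoking Yau's theorem yields the required path, and because the elliptic fibration depends only on the complex structure, the fibre $L=L_u$ is unchanged throughout. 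This gives a $1$-parameter family of $S^{1}$-families of complex structures in which $L$ remains a special Lagrangian.

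Next I would analyse when the total space $\bigcup_{t\in[0,1]}\mathcal{M}_{0,\gamma}(\mathfrak{X}_t,L)$ acquires interior bubbling. By Gromov compactness applied to the compact family, only finitely many splittings $\gamma=\gamma_1+\gamma_2$ with $\partial\gamma_i\neq 0$ can contribute within the bound $\sup_t|Z_\gamma(t)|$. For each such splitting, bubbling at parameter $t$ forces $\mbox{Arg}\,Z_{\gamma_1}(t)=\mbox{Arg}\,Z_{\gamma_2}(t)$, and the primitivity of $\partial\gamma$ (as in the proof of Theorem \ref{811}) prevents $Z_{\gamma_1}$ from being a positive real multiple of $Z_{\gamma_2}$ identically along the path, so this is a genuine real codimension one condition in $t$. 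A generic smooth perturbation of the path within the cone of admissible hyperK\"ahler triples therefore avoids all such walls. Sphere bubbling is excluded altogether because the first term on the right-hand side of (\ref{7}) requires $\partial\gamma=0$, which is contrary to our hypothesis.

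Given such a wall-free path, I would apply Theorem \ref{64} to obtain a cobordism Kuranishi structure and compatible family of multi-sections on $\bigcup_{t\in[0,1]}\mathcal{M}_{0,\gamma}(\mathfrak{X}_t,L_u)$, matching the chosen ones at $t=0,1$ and satisfying the boundary decomposition (\ref{61}). By construction of the path, the fibre-product term on the right-hand side of (\ref{61}) is empty, so the boundary reduces to $\mathcal{M}_{0,\gamma}(\mathfrak{X}_1,L_u)-\mathcal{M}_{0,\gamma}(\mathfrak{X}_0,L_u)$. Applying Stokes' theorem (Proposition \ref{2}) to the constant function $1$ on this cobordism then gives $\tilde\Omega'(\gamma;u)-\tilde\Omega(\gamma;u)=0$, which is the desired equality.

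The main obstacle I expect is the genericity step of the second paragraph: one must ensure that the perturbed path still consists of hyperK\"ahler triples compatible with the elliptic fibration (preserving the constraint $2\omega_t^{2}=\Omega_t\wedge\bar\Omega_t$ and keeping $L$ holomorphic for the rotated complex structure) while simultaneously avoiding every wall from every relevant splitting. The finiteness of relevant splittings supplied by Gromov compactness, together with the codimension one nature of each wall guaranteed by the primitivity of $\partial\gamma$, are what make this transversality argument work; once they are in place the rest is a direct application of the $A_\infty$/Stokes formalism already developed in the paper.
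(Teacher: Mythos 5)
Your overall strategy (connect the two triples by a path, run the cobordism of Theorem \ref{64}, kill the fibre-product boundary term, apply Proposition \ref{2}) is the right skeleton, and your construction of the path via $h^{2,0}(X)=1$ and convexity of the K\"ahler cone is fine. But the genericity step in your second paragraph has a genuine gap. Since the complex structure of $X$ is fixed, every holomorphic symplectic form along your path is a constant multiple $\Omega_t=c(t)\,\Omega$ with $c(t)\in\mathbb{C}^{*}$, so for any splitting $\gamma=\gamma_1+\gamma_2$ the ratio $Z_{\gamma_1}(t)/Z_{\gamma_2}(t)=\int_{\gamma_1}\Omega/\int_{\gamma_2}\Omega$ does not depend on $t$ at all. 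The bubbling condition $Z_{\gamma_1}/Z_{\gamma_2}\in\mathbb{R}_{>0}$ is therefore satisfied either for every $t$ or for no $t$: it is not a codimension-one condition in the path parameter, and no perturbation of the path of hyperK\"ahler triples can move you off it. (Primitivity of $\partial\gamma$ is irrelevant here; it rules out $Z_{\gamma_1}$ being a multiple of $Z_{\gamma_2}$ as functions of $u$, not the phase coincidence at a fixed $u$.) So your argument only works in the case $u\notin W''_{\gamma}$, where the phase condition fails for all $t$ automatically and the fibre-product term in (\ref{61}) is empty without any perturbation.

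The paper closes exactly this gap by perturbing in the base direction instead of the path direction. It first observes that $W''_{\gamma}$ is independent of the choice of triple (for the same reason your perturbation fails), handles $u\notin W''_{\gamma}$ by the direct cobordism as in Lemma \ref{70}, and then for $u\in W''_{\gamma}\setminus W'_{\gamma}$ uses the local constancy of Proposition \ref{2012} (valid for both triples, since both invariants are assumed well-defined) to move to a nearby $u'\notin W''_{\gamma}$, compare there, and move back:
\begin{align*}
\tilde{\Omega}(\gamma;u)=\tilde{\Omega}(\gamma;u')=\tilde{\Omega}'(\gamma;u')=\tilde{\Omega}'(\gamma;u).
\end{align*}
You should replace your transversality argument with this two-case structure; once you do, the rest of your write-up goes through.
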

\begin{proof}
   Notice that $W''_{\gamma}$ is independent of the choices of $(\omega,\Omega)$ or $(\omega',\Omega')$.
   First we assume that $u\notin W''_{\gamma}$ then the corollary follows by the similar cobordism argument as in Lemma
   \ref{70}. In particular, the corollary holds on an real
   analytic Za1riski open subset of $B_0$. For $u\in
   W''_{\gamma}\backslash W'_{\gamma}$, then by Theorem \ref{2012}
   there exists $u'\notin W''_{\gamma}$ near $u$ such that
     \begin{align*}
        \tilde{\Omega}(\gamma;u)=\tilde{\Omega}(\gamma;u')=\tilde{\Omega}'(\gamma;u')=\tilde{\Omega}'(\gamma;u).
     \end{align*}
\end{proof}


Next we want to study the situation when given two points $u_0, u_1\in B_0$ and the path connecting them intersect with $W'_{\gamma}$. We can still use the same argument in Proposition \ref{2012} to compute the difference of the invariant
 $\tilde{\Omega}^{Floer}(\gamma;u_1)-\tilde{\Omega}^{Floer}(\gamma;u_0)$. However, the last term in (\ref{2013}) may have non-trivial contribution to the difference and the invariant may jump. Therefore, we want to compute 
   \begin{align}\label{2015}
   \mbox{Corr}_*(\bigcup_{\gamma_1+\gamma_2=\gamma}(([0,1]\times \mathcal{M}_{1,\gamma_1}(\mathfrak{X},L_u))\; {}_{(ev_0,ev_{\vartheta},ev_t)} \! \times_{(ev_0,ev_{\vartheta},ev_t)} ([0,1]\times
                                                           \mathcal{M}_{1,\gamma_2}(\mathfrak{X},L_u)))/\mathbb{Z}_2);tri,tri)(1).
   \end{align}
First, we notice the following lemma:
 \begin{lem} \label{50}
     Let $X_i\cong B_i\times S^1$, where $B_i$ are manifolds with a compact support volume form $\omega_i$. Assume $f_i:X_i \rightarrow T^2$ are submersions
     such that $[f_1(\{pt\} \times S^1)].[f_2(\{pt\}\times S^1)]=k$, then $\int_X \omega_1
     \wedge \omega_2=k(\int_{B_1}\omega_1)(\int_{B_2}\omega_2)$, where $X=X_1\times_{T^2} X_2$.
  \end{lem}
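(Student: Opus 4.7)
The approach I would take is to realize the fiber product integral as a product-space integral cut out by the Poincar\'e dual of the diagonal, and then use K\"unneth on $T^2$ together with the trivial $S^1$-bundle structure on each $X_i$. Concretely, write $\Delta\subset T^2\times T^2$ for the diagonal; since $\dim X_1 + \dim X_2 - \dim T^2 = \dim X$, the fiber product $X$ is cut out inside $X_1\times X_2$ by pulling back $\Delta$ under $f_1\times f_2$, so the Poincar\'e-duality formula for submersions gives
\begin{equation*}
  \int_X \pi_1^*\omega_1\wedge \pi_2^*\omega_2
  \;=\; \int_{X_1\times X_2} (\pi_1^*\omega_1\wedge \pi_2^*\omega_2)\wedge (f_1\times f_2)^*[\Delta].
\end{equation*}
Picking a basis $a,b\in H^1(T^2)$ with $\int_{T^2}a\wedge b=1$, K\"unneth yields
$[\Delta]=a\otimes b - b\otimes a + 1\otimes[\mathrm{pt}] + [\mathrm{pt}]\otimes 1$ up to sign.

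Next I would do dimension counting to kill unwanted terms. The contributions $1\otimes[\mathrm{pt}]$ and $[\mathrm{pt}]\otimes 1$ vanish because $\omega_i\wedge f_i^*[\mathrm{pt}]$ has degree $\dim B_i + 2 > \dim X_i = \dim B_i+1$, so only the two $H^1\otimes H^1$ terms survive, reducing the integral (via ordinary Fubini on $X_1\times X_2$) to
\begin{equation*}
 \Bigl(\int_{X_1}\omega_1\wedge f_1^*a\Bigr)\Bigl(\int_{X_2}\omega_2\wedge f_2^*b\Bigr)
 -\Bigl(\int_{X_1}\omega_1\wedge f_1^*b\Bigr)\Bigl(\int_{X_2}\omega_2\wedge f_2^*a\Bigr).
\end{equation*}
Each individual integral is computed by using the trivialization $X_i=B_i\times S^1$: if $\theta$ is the $S^1$ coordinate and $f_i^*\alpha = h_i\, d\theta + \beta_i$ with $\beta_i$ involving only $B_i$-directions, then $\omega_i\wedge\beta_i=0$ for degree reasons, so
\begin{equation*}
  \int_{X_i}\omega_i\wedge f_i^*\alpha
  \;=\;\int_{B_i}\omega_i(b)\int_{S^1} h_i(b,\theta)\, d\theta
  \;=\;\Bigl(\int_{B_i}\omega_i\Bigr)\,\langle \alpha,[f_i(\{b\}\times S^1)]\rangle,
\end{equation*}
where the inner integral is independent of $b$ because $[f_i(\{b\}\times S^1)]\in H_1(T^2)$ varies continuously in $b$ and hence is locally constant.

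Substituting back, the determinant $\langle a,[f_1(\{pt\}\!\times\!S^1)]\rangle\langle b,[f_2(\{pt\}\!\times\!S^1)]\rangle - \langle b,[f_1(\{pt\}\!\times\!S^1)]\rangle\langle a,[f_2(\{pt\}\!\times\!S^1)]\rangle$ is exactly the intersection pairing $[f_1(\{pt\}\!\times\!S^1)]\cdot [f_2(\{pt\}\!\times\!S^1)]=k$ on $T^2$, which yields the claimed formula. The main (but mild) obstacle is keeping track of orientation signs coming from the K\"unneth decomposition of $[\Delta]$ versus the intersection pairing convention; these are combinatorial and can be absorbed into the overall orientation on $X$, which is the conventional one induced by the submersions $f_i$. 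A minor technical point is justifying the compactly-supported Fubini step, which is immediate since $\omega_i$ has compact support in the $B_i$ direction and all remaining integrands are smooth and fiberwise bounded.
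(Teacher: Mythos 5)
Your argument is correct (up to the sign bookkeeping you flag, which the paper itself leaves as $\pm k$), but it takes a genuinely different route from the paper's. The paper's proof is a one-line degree argument: it considers the projection $\pi\colon X=X_1\times_{T^2}X_2\to B_1\times B_2$, observes that $\pi$ is proper with fibre over $(b_1,b_2)$ equal to the intersection of the two loops $f_1(\{b_1\}\times S^1)$ and $f_2(\{b_2\}\times S^1)$ in $T^2$, so that $\deg\pi=\pm k$, and concludes $\int_X\omega_1\wedge\omega_2=(\deg\pi)\int_{B_1\times B_2}\omega_1\wedge\omega_2$. That argument is shorter and more geometric, but it silently relies on the two loops meeting transversally over generic $(b_1,b_2)$ so that the signed point count equals the homological intersection number. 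Your version instead works entirely on $X_1\times X_2$: you cut out $X$ by the Poincar\'e dual of the diagonal, decompose $[\Delta]$ by K\"unneth, kill the $1\otimes[\mathrm{pt}]$ and $[\mathrm{pt}]\otimes 1$ terms by degree, and evaluate the two surviving $H^1\otimes H^1$ terms by Fubini on $B_i\times S^1$, so that the intersection number $k$ appears as the determinant of period pairings rather than as a count of fibre points. This buys you a purely cohomological proof with no genericity or pointwise transversality of the loop intersections (transversality of $f_1\times f_2$ to $\Delta$ is automatic from the submersion hypothesis), at the cost of needing closedness of the integrand and the standard localization identity for the pullback of $[\Delta]$. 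One small point worth making explicit in your write-up: the constancy of $\int_{\{b\}\times S^1}f_i^*\alpha$ in $b$ uses that $B_i$ is connected (implicit in the lemma's notation $[f_i(\{pt\}\times S^1)]$), and the overall sign depends on the orientation convention for the fibre product, exactly as in the paper's $\pm k$.
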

  \begin{proof}
    Consider the map $\pi:X\rightarrow B_1\times B_2$. $X$
    defined by $f_1(x)=f_2(y)$, is closed and $\pi$ is proper. Easy
    computation shows that
    \begin{align*}
    \int_X\omega_1\wedge \omega_2=(\mbox{deg}\pi)\cdot\int_{B_1\times
    B_2}\omega_1 \wedge \omega_2=\pm k \cdot
    (\int_{B_1}\omega_1)(\int_{B_2}\omega_2)
    \end{align*}
  \end{proof}
The compatibility of forgetful maps says that if $V_{\alpha_k}$ are local Kuranishi chart for
  $\mathcal{M}_{0,\gamma_k}$, for $k=1,2$, then one can take
  $V'_{\alpha_k}=V_{\alpha}\times S^1$ as Kuranishi chart of
  $\mathcal{M}_{1,\gamma_k}$. Here $\mathcal{M}_{k,\gamma}$ can be either $\mathcal{M}_{k,\gamma}(\mathfrak{X},L)$ or $\bigcup_{t\in [0,1]}\mathcal{M}_{k,\gamma}(\mathfrak{X}_t,L)$. Thus locally we can choose
  $\mathfrak{s}_{\alpha_k,i,j}^{-1}(0)$ to be $X_i$ in above lemma. $\int_{B_k}\omega_k$ is the local contribution
  to $m_{-1,\gamma_k}$ and $\int_X\omega_1\wedge \omega_2$ corresponding local contribution for left
  hand side of (\ref{27}). Notice that the factor $k=\pm\langle \gamma_1,\gamma_2\rangle$ implies that (2015) has no contribution if $\langle\gamma_1,\gamma_2\rangle=-0$. Therefore, the jumping phenomenon for the invariants $\tilde{\Omega}^{Floer}(\gamma;u)$ for charges $\partial \gamma=0$ and $\partial \gamma\neq 0$ decouple, which is parallel to Proposition \ref{1049} in tropical geometry.
    This also coincide with that fact that in the expansion formula of symplectic holomorphic $2$-form constructed in (\cite{GMN}), and the scattering diagram constructed in Theorem \ref{13}
    are decoupled from those $\gamma$ with $[\partial \gamma]=0$. Moreover,
  \begin{prop}
  The invariant $\tilde{\Omega}^{Floer}(\gamma;u)$ is well-defined if $\gamma$ can not be decomposed to $\gamma=\gamma_1+\gamma_2$ such that $\gamma_1,\gamma_2$ can both be represented as holomorphic cycles in $X_{\vartheta}$, for some $\vartheta$ and $\partial \gamma_1,\partial \gamma_2$ are not parallel.
  In particular, the multiple cover contribution is well-defined.
  \end{prop}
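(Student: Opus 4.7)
The plan is to adapt the cobordism-and-Stokes argument used in Lemma \ref{70} and Proposition \ref{2012}. Given two systems of Kuranishi structures and multi-sections on $\mathcal{M}_{0,\gamma}(\mathfrak{X},L_u)$, I would apply Theorem \ref{64} to the trivial hyperK\"ahler family to obtain an interpolating moduli space whose boundary splits as in \eqref{61}. Stokes' theorem (Proposition \ref{2}) then identifies the difference between the two candidate values of $\tilde{\Omega}^{Floer}(\gamma;u)$ with the correspondence integral over the fibre product term
\begin{align*}
\bigcup_{\gamma=\gamma_1+\gamma_2}\!\Big(([0,1]\times\mathcal{M}_{1,\gamma_1}(\mathfrak{X},L_u))\;{}_{(ev_0,ev_\vartheta)}\!\times_{(ev_0,ev_\vartheta)}([0,1]\times\mathcal{M}_{1,\gamma_2}(\mathfrak{X},L_u))\Big)/\mathbb{Z}_2,
\end{align*}
where the sum runs over decompositions in which both factors support holomorphic cycles (otherwise the factors are already empty by Proposition \ref{48}). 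The remaining task is to show each such fibre product contributes zero.

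To evaluate these contributions I would invoke Lemma \ref{50}. By compatibility of the reduced Kuranishi structure with the forgetful map $\mathfrak{forget}_{1,0}$, each factor $\mathcal{M}_{1,\gamma_i}(\mathfrak{X},L_u)$ is locally modelled by $B_i\times S^1$ where the $S^1$ records the position of the boundary marked point on a loop representing $\partial\gamma_i\subset L_u$. The evaluation maps $(ev_0,ev_\vartheta)$ land in $L_u\times S^1_\vartheta$; the $S^1_\vartheta$ factors automatically coincide because by \eqref{736} both holomorphic cycles sit in the same twistor fibre $X_\vartheta$ with $\vartheta=\mathrm{Arg}Z_\gamma-\pi/2$. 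Lemma \ref{50} therefore shows that the contribution of each decomposition is proportional to the intersection number of these two loops on $L_u$, which is $\pm\langle\partial\gamma_1,\partial\gamma_2\rangle$. Under the hypothesis, any such decomposition has $\partial\gamma_1$ parallel to $\partial\gamma_2$, so this number vanishes and the full fibre product term contributes zero, proving well-definedness.

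For the multiple cover assertion, I would verify that the hypothesis is automatic when $\gamma=d\gamma_0$: if $\gamma=\gamma_1+\gamma_2$ with both $\gamma_i$ realised by holomorphic cycles in some $X_\vartheta$, then \eqref{736} forces $Z_{\gamma_1}$ and $Z_{\gamma_2}$ both to lie on the ray $e^{i(\vartheta+\pi/2)}\mathbb{R}_{>0}$, so $Z_{\gamma_1}=\lambda Z_{\gamma_2}$ with $\lambda\in\mathbb{R}_{>0}$. Promoting this pointwise identity to a local functional identity of the holomorphic central charges (using that holomorphic discs persist in families of special Lagrangian fibres via the reduced Kuranishi theory, combined with the real codimension of the phase-alignment condition), one gets $dZ_{\gamma_1-\lambda\gamma_2}=0$, and then Lemma \ref{34} exactly as in the proof of Theorem \ref{811} gives $\partial\gamma_1=\lambda\partial\gamma_2$ with $\lambda\in\mathbb{Q}$. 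Thus the hypothesis of the proposition holds automatically in this setting and the first part applies. The main obstacle I anticipate is precisely this promotion step: verifying that $Z_{\gamma_1}(u)/Z_{\gamma_2}(u)\in\mathbb{R}_{>0}$ at the single point $u$ forces a functional relation in a neighbourhood, so that $dZ$ can be differentiated and Lemma \ref{34} applied. Everything else reduces to a direct application of the machinery already assembled in Sections \ref{2009}--\ref{998} together with the evaluation formula of Lemma \ref{50}.
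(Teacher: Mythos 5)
Your argument for the first assertion is correct and is essentially the paper's own: the difference of the two candidate values localizes on the fibre-product boundary strata of the interpolating moduli space, compatibility with the forgetful map lets you model each factor as $B_i\times S^1$, and Lemma \ref{50} produces the factor $\pm\langle\partial\gamma_1,\partial\gamma_2\rangle$, which vanishes for every decomposition permitted by the hypothesis since all such decompositions have parallel boundary classes.

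The second half of your proposal, however, contains a genuine gap, and it is exactly the step you flagged. From (\ref{736}) you only obtain the pointwise condition $Z_{\gamma_1}(u)/Z_{\gamma_2}(u)\in\mathbb{R}_{>0}$ at the single fibre $u$ over which both cycles are holomorphic; this cannot be promoted to the functional identity $Z_{\gamma_1}=\lambda Z_{\gamma_2}$ on a neighbourhood. The locus where the phases of two central charges with non-parallel boundary classes align is precisely the wall $W''_{\gamma_1,\gamma_2}$, which is a nonempty real hypersurface in general --- this is the entire wall-crossing phenomenon --- so phase alignment at a point certainly does not force $\partial\gamma_1\parallel\partial\gamma_2$. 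Consequently your claim that the hypothesis is automatic whenever $\gamma=d\gamma_0$ is false as stated: a class $d\gamma_0$ may well split at a wall into two holomorphic cycles with non-parallel boundaries. The paper's justification of the multiple-cover assertion needs no such promotion. Decompositions of multiple-cover type, $\gamma_1=k_1\gamma'$ and $\gamma_2=k_2\gamma'$, have proportional (hence parallel) boundaries by fiat and so contribute zero by the first part; and in the settings where the multiple cover formula is actually computed (the Ooguri-Vafa space, $T^*\mathbb{P}^1$, or a small neighbourhood of an $I_1$ fibre), Lemma \ref{360} and Theorem \ref{1033} guarantee that multiples of the Lefschetz thimble $\gamma_e$ are the only classes supporting holomorphic discs, so no other decomposition can occur and the hypothesis of the proposition holds for geometric reasons rather than from any analytic identity among central charges.
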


 Therefore, we may just consider the case when $\langle\partial \gamma_1,\partial \gamma_2\rangle \neq 0$. Assume the path connecting $u_0, u_1$ only intersect with $W''_{\gamma_1,\gamma_2}$ transversally at a point $u_{t_0}$. Then from Theorem \ref{64}, one can replace (\ref{2015}) by 
   \begin{align} \label{2018}
                       \mbox{Corr}_*(\mathcal{M}_{1,\gamma_1}(\mathfrak{X}_{t_0},L)\; {}_{(ev_0,ev_{\vartheta})} \! \times_{(ev_0,ev_{\vartheta})}                          \mathcal{M}_{1,\gamma_2}(\mathfrak{X}_{t_0},L)\ ;tri,tri)(1)
   \end{align}
Again using the compatibility of forgetful map, we have the following propositions:
\begin{thm}\label{32}Assume $\tilde{\Omega}(\gamma_i;u_{t_0})$ are defined and $\gamma_i$ are primitive. Then
  \begin{align}
      (\ref{2018})=\pm\langle \gamma_1,\gamma_2 \rangle \tilde{\Omega}(\gamma_1;u_{t_0})\tilde{\Omega}(\gamma_2;u_{t_0}).     
  \end{align}
\end{thm}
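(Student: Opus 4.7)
The plan is to compute the correlator (\ref{2018}) by invoking the compatibility of the Kuranishi structures with the forgetful map $\mathfrak{forget}_{1,0}:\mathcal{M}_{1,\gamma_i}(\mathfrak{X}_{t_0},L)\to\mathcal{M}_{0,\gamma_i}(\mathfrak{X}_{t_0},L)$ from Theorem \ref{11} and then applying Lemma \ref{50}. First, since $u_{t_0}$ sits on the wall $W''_{\gamma_1,\gamma_2}$, both factors evaluate into the same equator $S^1_\vartheta$ with common phase $\vartheta=\mathrm{Arg}\,Z_{\gamma_1}(u_{t_0})-\pi/2=\mathrm{Arg}\,Z_{\gamma_2}(u_{t_0})-\pi/2$, so the fiber product over $L\times S^1_\vartheta$ reduces to an honest fiber product over the torus $L\cong T^2$, with $S^1_\vartheta$ appearing as a common factor that is integrated out at the end.

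Next, I use the fact that each Kuranishi chart for $\mathcal{M}_{1,\gamma_i}(\mathfrak{X}_{t_0},L)$ can be written locally as $V_{\alpha_i}\times S^1_i$, where $V_{\alpha_i}$ is a Kuranishi chart for the unmarked moduli $\mathcal{M}_{0,\gamma_i}(\mathfrak{X}_{t_0},L)$ and the extra $S^1_i$ parametrizes the position of the boundary marked point on $\partial\Sigma$. Under this splitting, the evaluation $ev_0$ restricted to any slice $\{p\}\times S^1_i$ traces out the boundary of the corresponding holomorphic disc in $L$ and hence represents the homology class $\partial\gamma_i\in H_1(L;\mathbb{Z})$. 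The primitivity of $\gamma_1$ and $\gamma_2$ ensures that this $S^1_i$ is an honest circle (rather than a quotient) and that the splitting contributing to the boundary of $\mathcal{M}_{0,\gamma}(\mathfrak{X},L)$ is unique, so no combinatorial overcount arises. Because $u_{t_0}\in W''_{\gamma_1,\gamma_2}$ also forces $\partial\gamma_1$ and $\partial\gamma_2$ to be non-parallel, the intersection pairing $\langle\partial\gamma_1,\partial\gamma_2\rangle$ is nonzero.

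Now I apply Lemma \ref{50} chart by chart with $T^2=L$, $X_i=V_{\alpha_i}\times S^1_i$, and $f_i=ev_0|_{\{p\}\times S^1_i}$. The intersection coefficient appearing in the lemma is
\begin{align*}
k=[ev_0(\{p\}\times S^1_1)]\cdot[ev_0(\{p\}\times S^1_2)]=\pm\langle\partial\gamma_1,\partial\gamma_2\rangle=\pm\langle\gamma_1,\gamma_2\rangle,
\end{align*}
so each local fiber-product integral factors as this intersection number times the product of the local contributions $\int_{B_i}\omega_i$, which by construction sum over the Kuranishi atlas of $\mathcal{M}_{0,\gamma_i}(\mathfrak{X}_{t_0},L)$ to $\tilde{\Omega}^{Floer}(\gamma_i;u_{t_0})$. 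Assembling these gives
\begin{align*}
(\ref{2018})=\pm\langle\gamma_1,\gamma_2\rangle\,\tilde{\Omega}^{Floer}(\gamma_1;u_{t_0})\,\tilde{\Omega}^{Floer}(\gamma_2;u_{t_0}).
\end{align*}

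The main obstacle is the coherent globalization of this local picture: the product decomposition $V_{\alpha_i}\times S^1_i$ holds only chart-wise, so the chosen Kuranishi structures and multi-sections on $\mathcal{M}_{1,\gamma_i}$ must be genuinely pulled back (up to the extra $S^1_i$ factor) from those on $\mathcal{M}_{0,\gamma_i}$, in a way consistent across overlaps and compatible with the fiber-product structure on the relevant boundary component in (\ref{2013}). This is precisely what the compatibility clauses in Theorem \ref{11}(1), Theorem \ref{1}(2) and Theorem \ref{64}(5) are designed to supply, but the application must be verified carefully. The overall sign, which depends on orientation conventions for $S^1_\vartheta$, $L$, and the cobordism (\ref{2013}), accounts for the absolute value appearing in the Theorem as stated in the introduction.
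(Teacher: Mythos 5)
Your proposal is correct and follows essentially the same route as the paper: the paper's own justification (given in the discussion surrounding Lemma \ref{50}) is precisely to use the compatibility with the forgetful map to write local Kuranishi charts of $\mathcal{M}_{1,\gamma_k}$ as $V_{\alpha_k}\times S^1$, apply Lemma \ref{50} with $T^2=L$ so that the intersection number is $\pm\langle\partial\gamma_1,\partial\gamma_2\rangle$, and identify the local contributions $\int_{B_k}\omega_k$ with those defining $\tilde{\Omega}^{Floer}(\gamma_k;u_{t_0})$. Your additional remarks on primitivity and on the chart-wise versus global product decomposition are consistent with (and slightly more explicit than) the paper's terse treatment.
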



\begin{prop} \label{310}
Let $\{L_t\}$ be a $1$-parameter family of torus fibres such that
passing through $W''_{\gamma}$ exactly once and transversally. Assume
$\gamma=\sum_{i}k_i\gamma_i$, where $k_i\in \mathbb{N}$ and $\gamma_i$ primitive. If
  \begin{align*}
 \tilde{\Omega}^{Floer}&(k\gamma_{i_0})=0,\mbox{ for all $k\leq k_i$ and some $i_0$, then}\\
& \Delta\tilde{\Omega}^{Floer}(\gamma)=0
  \end{align*}
\end{prop}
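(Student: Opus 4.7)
The plan is to follow the same parametrized cobordism scheme used to prove Lemma \ref{70} and Theorem \ref{32}. I choose a $1$-parameter family of fibration-preserving diffeomorphisms $\phi_t$ with $\phi_t(L_0) = L_t$ and pull back $(\omega, \Omega)$ to get a family of hyperK\"ahler structures $(\underline{X},\phi_t^*\omega,\phi_t^*\Omega)$ on which $L$ is always an elliptic curve. Applying Stokes (Proposition \ref{2}) and the composition law to the boundary decomposition (\ref{2013}) of $\bigcup_{t}\mathcal{M}_{0,\gamma}(\mathfrak{X}_t,L)$, together with Theorem \ref{64}, reduces $\Delta\tilde{\Omega}^{Floer}(\gamma)$ to the wall contribution (\ref{2018}); by Lemma \ref{50} and the compatibility of forgetful maps this equals a sum of products
\begin{equation*}
   \Delta\tilde{\Omega}^{Floer}(\gamma) \;=\; \sum_{\gamma=\alpha+\beta}\,\pm\langle\partial\alpha,\partial\beta\rangle\,\tilde{\Omega}^{Floer}(\alpha)\,\tilde{\Omega}^{Floer}(\beta),
\end{equation*}
ranging over splittings for which both factors are realized on the wall $W''_\gamma$ at the crossing point.

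Next I would exploit the wall condition $\mathrm{Arg}\,Z_\alpha = \mathrm{Arg}\,Z_\beta = \mathrm{Arg}\,Z_\gamma$. Since $\gamma = \sum k_i \gamma_i$ is the primitive decomposition and the $\gamma_i$ are exactly the primitive classes of that common phase active at the crossing, any contributing $\alpha$, $\beta$ must lie in the sub-cone generated by these $\gamma_i$. Hence $\alpha = \sum a_i \gamma_i$ and $\beta = \sum b_i \gamma_i$ with $a_i + b_i = k_i$, and in particular $a_{i_0} + b_{i_0} = k_{i_0}$ with both $a_{i_0}, b_{i_0} \le k_{i_0}$.

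The heart of the argument is then an induction on $\sum_i a_i$ (equivalently on $|Z_\alpha|$). If $\alpha$ equals a pure multiple $a_{i_0}\gamma_{i_0}$, the hypothesis gives $\tilde{\Omega}^{Floer}(\alpha)=0$ directly; similarly for $\beta$. Otherwise $\alpha$ is a strictly smaller mixed class in the same family, and I would iterate the same boundary analysis, applying the previously established primitive wall-crossing (Theorem \ref{32}) at each step and Proposition \ref{2012} between walls, to expand $\tilde{\Omega}^{Floer}(\alpha)$ as an iterated product whose atomic factors are $\tilde{\Omega}^{Floer}(m\gamma_j)$ with $m \le k_j$. The $\gamma_{i_0}$-content of $\alpha$ partitions as $a_{i_0}=\sum_l m_l$ with each $m_l\le a_{i_0}\le k_{i_0}$, so by hypothesis at least one atomic factor $\tilde{\Omega}^{Floer}(m_l\gamma_{i_0})$ vanishes. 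Every summand in the expansion of $\Delta\tilde{\Omega}^{Floer}(\gamma)$ therefore contains a zero factor, and the total jump vanishes.

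The main obstacle is controlling the iterated expansion so that no atomic factor escapes the list $\{m\gamma_i : m\le k_i\}$ --- a priori a nested wall-crossing could introduce auxiliary primitive classes. I would handle this by choosing the $1$-parameter family $\phi_t$ together with a small K\"ahler perturbation so that the path meets only the wall $W''_\gamma$ (exploiting Theorem \ref{1004} and Corollary \ref{216} to see $\tilde{\Omega}^{Floer}$ is insensitive to the perturbation) and by a real-analyticity argument showing that any extra primitive direction forced by an auxiliary wall would impose a non-generic alignment of central charges, which can be avoided on a Zariski-open set. With this genericity in hand, the inductive tree expansion stays inside the $\gamma_i$-cone, and the proof concludes.
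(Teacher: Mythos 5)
The paper supplies no separate proof of Proposition \ref{310}; it is presented as a consequence of the same fibre-product analysis of the wall term (\ref{2015})--(\ref{2018}) via Lemma \ref{50}, the composition formula, and the compatibility of forgetful maps. In outline you have reconstructed that intended argument correctly: reduce $\Delta\tilde{\Omega}^{Floer}(\gamma)$ to the boundary contribution at the single crossing of $W''_{\gamma}$, expand the boundary strata until the constituents are pure multiples $m\gamma_j$ with $m\le k_j$, and observe that any such decomposition of $\gamma=\sum_i k_i\gamma_i$ must contain a factor $\tilde{\Omega}^{Floer}(m\gamma_{i_0})$ with $1\le m\le k_{i_0}$, which vanishes by hypothesis. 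That combinatorial core is right, and your worry about ``auxiliary primitive classes'' is in fact handled automatically: every piece of a degenerate stable disc of class $\gamma$ at $u_{t_0}$ has phase $\mathrm{Arg}\,Z_{\gamma}(u_{t_0})$, and the classes of that phase form a sublattice whose relevant cone is generated by the $\gamma_i$, so no genericity perturbation is needed there.

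The genuine gap is in your middle step. You assert that the wall term equals $\sum_{\gamma=\alpha+\beta}\pm\langle\partial\alpha,\partial\beta\rangle\,\tilde{\Omega}^{Floer}(\alpha)\tilde{\Omega}^{Floer}(\beta)$ over \emph{all} splittings and propose to ``apply Theorem \ref{32} at each step'' of the iteration. Theorem \ref{32} is only proved for primitive pieces, and for a mixed non-primitive class $\alpha=\sum_i a_i\gamma_i$ the factor $\mathcal{M}_{1,\alpha}(\mathfrak{X}_{t_0},L)$ may itself have boundary at $u_{t_0}$ (the splitting classes are again of the common phase), so Lemma \ref{50} does not directly produce a product of invariants; pinning down those coefficients in general is essentially the content of the conjectural wall-crossing formula (Conjecture \ref{35}), which the paper does not prove. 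The repair is that the vanishing conclusion never needs the coefficients: organize the corner strata of $\bigcup_t\mathcal{M}_{0,\gamma}(\mathfrak{X}_t,L)$ by splitting trees whose leaves are classes that do not split further at $u_{t_0}$ --- by the proposition immediately preceding \ref{310} these are exactly the pure multiples $m\gamma_j$, for which the multiple-cover contribution is well-defined --- and use the composition formula (Proposition \ref{3}) to see that each tree's correspondence factors through $\prod_{l}\tilde{\Omega}^{Floer}(m_l\gamma_{j_l})$, one factor of which lies on $\gamma_{i_0}$. As written, the appeal to Theorem \ref{32} for non-primitive intermediate classes does not go through and should be replaced by this factorization-through-leaves argument.
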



Therefore, Proposition \ref{310} motivates us to define the wall of
marginal stability in symplectic geometry:
\begin{definition}For $\gamma_1,\gamma_2$ primitive relative classes and $k,l\in \mathbb{N}$, we define
  \begin{align*}
      W_{k\gamma_1,l\gamma_2}=\{u\in B_0| \mbox{Arg}Z_{\gamma_1}=\mbox{Arg}Z_{\gamma_2}, \tilde{\Omega}^{Floer}(k'\gamma_1)
      \tilde{\Omega}^{Floer}(l'\gamma_2)\neq 0 \\
           \mbox{ for some $k'\leq k,l'\leq l$}
      \}.
  \end{align*}
  The wall of marginal stability of $\gamma$ for holomorphic discs counting is
  given by
  \begin{align*}
    W_{\gamma}&=\bigcup_{\substack{\gamma=k\gamma_1+l\gamma_2,\\ \gamma_1,\gamma_2 \mbox{ primitive}}}W_{k\gamma_1,l\gamma_2}
  \end{align*}
\end{definition}It is easy to see that $W_{\gamma}$ is an
real dimension one open subset of $W'_{\gamma}$. Finally, the invariants $\tilde{\Omega}^{Floer}$ also satisfies the reality condition, which is an analogue of Proposition \ref{2010} in tropical geometry.
\begin{prop}If $\tilde{\Omega}^{Floer}(\gamma;u)$ is well-defined,
then $\tilde{\Omega}^{Floer}(-\gamma;u)$ is also well-defined.
Moreover, we have reality condition
  \begin{align*}
   \tilde{\Omega}^{Floer}(\gamma;u)=\tilde{\Omega}^{Floer}(-\gamma;u)
  \end{align*}
\end{prop}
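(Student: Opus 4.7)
The plan is to construct an explicit orientation-preserving isomorphism of Kuranishi spaces $\Phi: \mathcal{M}_{0,\gamma}(\mathfrak{X}, L_u) \to \mathcal{M}_{0,-\gamma}(\mathfrak{X}, L_u)$ coming from the antipodal involution on the twistor $\mathbb{P}^1$ combined with complex conjugation on disc domains, and then to deduce from $\Phi$ both the well-definedness of $\tilde{\Omega}^{Floer}(-\gamma;u)$ and the numerical equality with $\tilde{\Omega}^{Floer}(\gamma;u)$. First I would verify, directly from the explicit formula for $J_\zeta$, that the antipodal map $\iota(\zeta) = -1/\bar\zeta$ on the twistor $\mathbb{P}^1$ satisfies $J_{\iota(\zeta)} = -J_\zeta$, and on the equator $\{|\zeta|=1\}$ reduces to the orientation-preserving shift $\vartheta\mapsto\vartheta+\pi$. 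Next, given a $J_\vartheta$-holomorphic disc $f:(D,\partial D)\to(X_\vartheta,L_u)$ with $[f]=\gamma$, the chain rule applied to $f\circ c$, where $c:D\to D$ is complex conjugation, gives $d(f\circ c)\circ j = -J_\vartheta\circ d(f\circ c) = J_{\vartheta+\pi}\circ d(f\circ c)$, so $f\circ c$ is $J_{\vartheta+\pi}$-holomorphic, and because $c$ reverses the orientation of $D$ we have $[f\circ c]=-\gamma$. The assignment $(\vartheta,f)\mapsto(\vartheta+\pi,\,f\circ c)$ is equivariant under reparametrisation and descends to a self-inverse homeomorphism $\Phi$ between the two family moduli spaces.

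For the well-definedness of $\tilde{\Omega}^{Floer}(-\gamma;u)$, I would observe that $\Phi$ likewise sets up a bijection between splittings $\gamma=\gamma_1+\gamma_2$ in which each $\gamma_i$ is realised by a holomorphic disc in some $X_{\vartheta_i}$ and splittings $-\gamma=(-\gamma_1)+(-\gamma_2)$ in which each $-\gamma_i$ is realised in $X_{\vartheta_i+\pi}$, so that $W'_{-\gamma}=W'_\gamma$ as subsets of $B_0$. Hence $L_u$ avoids $W'_{-\gamma}$ exactly when it avoids $W'_\gamma$, and $\tilde{\Omega}^{Floer}(-\gamma;u)$ is well-defined by the argument of Lemma \ref{70}. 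For the numerical equality I would push the Kuranishi structure and multi-section used to compute $\tilde{\Omega}^{Floer}(\gamma;u)$ forward along $\Phi$; naturality of the linearised Cauchy--Riemann operator under the simultaneous reversal of source and target complex structures shows that the pushed-forward data still satisfies the conditions of Theorem \ref{11} and Theorem \ref{1} (including the boundary decomposition, which $\Phi$ intertwines splitting-by-splitting), and the correspondence of multi-sections identifies the two weighted counts.

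The main obstacle is the orientation check. I would need to verify that the Fredholm orientation of $\bar\partial_{J_\vartheta}$ at $f$ maps to that of $\bar\partial_{J_{\vartheta+\pi}}$ at $f\circ c$ with sign $+1$, using the natural orientation of the special Lagrangian fibre $L_u$ and its trivial spin structure, and that the $S^1_\vartheta$-factor in the orientation convention (\ref{2005}) is preserved by the shift $\vartheta\mapsto\vartheta+\pi$ (which it is, since this is a rotation by $\pi$). The Fredholm sign reduces to a standard Fukaya--Oh--Ohta--Ono calculation for antiholomorphic involutions on bordered stable maps with trivial spin boundary data, and modulo this sign pinning the reality condition $\tilde{\Omega}^{Floer}(\gamma;u)=\tilde{\Omega}^{Floer}(-\gamma;u)$ follows.
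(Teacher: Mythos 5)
Your proposal is correct and follows essentially the same route as the paper: the paper identifies $\mathcal{M}_{\gamma}(\mathfrak{X},L)$ and $\mathcal{M}_{-\gamma}(\mathfrak{X},L)$ as the same topological space with the same Kuranishi structures (a $\gamma$-disc in $X_{\vartheta}$ being a $(-\gamma)$-disc for the conjugate complex structure $-J_{\vartheta}=J_{\vartheta+\pi}$), and then checks that the orientations agree using the conjugacy of the complex structures together with the unchanged orientation of the $S^1_{\vartheta}$-factor. Your version merely makes the identification explicit via $(\vartheta,f)\mapsto(\vartheta+\pi,f\circ c)$ and spells out the $W'_{-\gamma}=W'_{\gamma}$ step, which the paper leaves implicit.
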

\begin{proof}
  Notice that the two relevant moduli spaces $\mathcal{M}_{\gamma}(\mathfrak{X},L)$ and $\mathcal{M}_{-\gamma}(\mathfrak{X},L)$ are the same as topological spaces and same Kuranishi structures. Thus, if one invariant is well-defined so is the other one. It suffices to check the orientation of the two moduli spaces are the same to make sure the signs are the same. The complex structure on $X_{\vartheta}$ and $X_{-\vartheta}$ are complex conjugate to each other. Therefore, the usual moduli spaces $\mathcal{M}_{\gamma}(X_{\vartheta},L)$ and $\mathcal{M}_{-\gamma}(X_{-\vartheta},L)$ have the same orientation, where $\vartheta=\mbox{Arg}Z_{\gamma}$. Also, the orientation of $S^1_{\vartheta}$ for $\mathcal{M}_{\gamma}(\mathfrak{X},L)$ and $\mathcal{M}_{-\gamma}(\mathfrak{X},L)$ are the same and the proposition follows. 
\end{proof}

\subsection{Disc Contribution from Local} \label{1001}
\subsubsection{Local Model: the Ooguri-Vafa Space} \label{801}
All the definitions and arguments in the previous section apply to any hyperK\"ahler surfaces (not necessarily compact) whenever the Gromov compactness holds.  The Ooguri-Vafa space is an elliptic fibration over a unit disc such that the only singular fibre is a nodal curve (or an $I_1$-type singluar fibre) over the origin. The singular central fibre breaks the $T^2$-symmetry into only $S^1$-symmetry. Using Gibbons-Hawkings ansatz, Ooguri and Vafa\cite{OV} wrote down Ricci-flat metrics with a $S^1$ symmetry and thus the Ooguri-Vafa space is hyperK\"ahler. Conversely, all Ricci-flat metrics with an $S^1$-symmetry are in this form. 
From the discussion earlier, there exists an $S^1$-family of integral affine structures on $B_0$. The central fibre is of $I_1$-type implies that the monodromy of affine structure (see the section 6.1) around the singularity is conjugate to $\bigl(
\begin{smallmatrix}
  1 & 1\\
  0 & 1
\end{smallmatrix} \bigr)$. Thus there exists an unique affine line $l_{\vartheta}$ passing through the singularity in the monodromy invariant direction.
      
Following the maximal principle trick in \cite{A}\cite{C}, if we fix $\vartheta$, the only simple holomorphic disc is in the relative class of Lefschetz thimble $\gamma_e$ and with its boundary on torus over $l_{\vartheta}$. Topologically, this holomorphic disc is the union of vanishing cycles over $l_{\vartheta}$. This reflects the fact the standard moduli space of holomorphic discs has virtual dimension minus one and thus generic torus fibre would not bound any holomorphic discs. However, when the $\vartheta$ goes around $S^1$, the affine line $l_{\vartheta}$ will rotate $2\pi$ and every point on the base will be exactly swept once by $l_{\vartheta}$. In other words, every torus fibre bounds a unique simple holomorphic disc (up to orientation) in Ooguri-Vafa space but with respect to a different complex structure. See Figure 1 below. 
\begin{figure}[htb]
\begin{center}
\includegraphics[height=3in,width=6in]{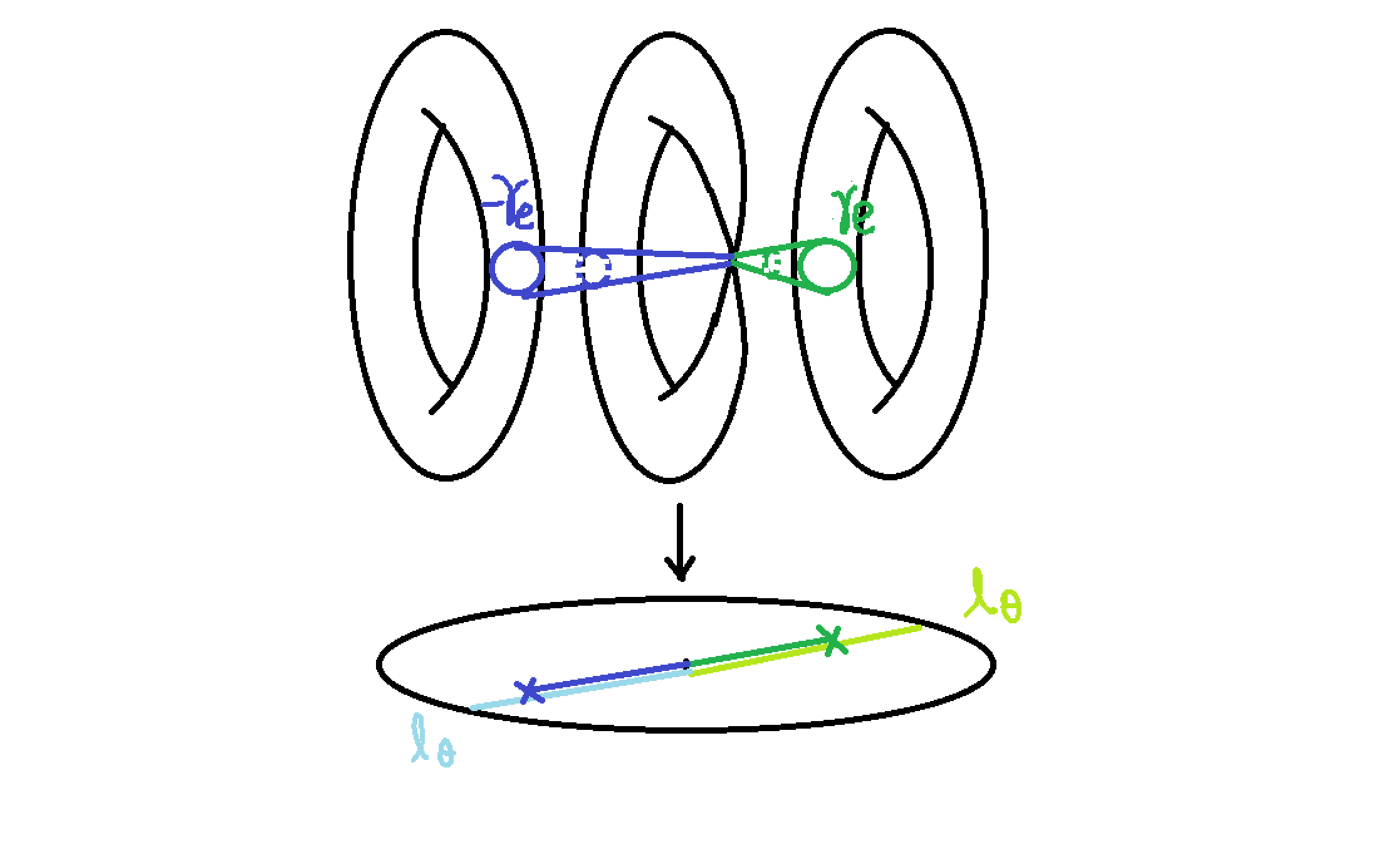}
\caption{The Ooguri-Vafa space and its unique simple holomorphic disc.}
\end{center}
\end{figure}
 
The unique simple holomorphic disc in the Ooguri-Vafa space is Fredholm regular in the $S^1$-family. Thus, it is straightforward to show that $\tilde{\Omega}^{Floer}(\pm\gamma_e)=1$. Moreover, we compute the multiple cover formula in Section \ref{1016},
   \begin{align*}
         \tilde{\Omega}^{Floer}(\gamma,u)=\begin{cases} \frac{(-1)^{d-1}}{d^2}& \text{, if $\gamma=  d\gamma_e$, $d\in \mathbb{Z}$}\\
                0& \text{, otherwise}.
                                                          \end{cases}
        \end{align*}
   
\begin{rmk}
  Notice that the invariant computed above indeed satisfies Conjecture \ref{222}. Moreover, it suggests that
     \begin{align*}
      \Omega^{Floer}(\gamma,u)=\begin{cases} 1& \text{if $\gamma= \pm\gamma_e,$}\\
             0& \text{otherwise,}
                                                       \end{cases}
     \end{align*}which coincides with the BPS counting for the Ooguri-Vafa space \cite{GMN}.
\end{rmk}        

\subsubsection{Existence of Initial Discs in Elliptic K3 Surfaces}
From the discussion in Section \ref{801}, there is a unique holomorphic disc with
boundary on every special Lagrangian torus over monodromy invariant
direction in the Ooguir-Vafa space. Thus it gives rise to a special Lagrangian disc after
hyperK\"ahler rotation. To prove there exists such a holomorphic
discs near the $I_1$-type singular fibre of K3 surface with special
Lagrangian fibration, we need some understanding of geometry of K3 surfaces
near large complex limit. Gross and Wilson construct an approximate
metric by gluing Ooguri-Vafa metric with the semi-flat metric for
elliptic K3 surfaces when the K\"ahler class goes large (along a
straight line in K\"ahler cone). Moreover, they derive some estimate
for the approximate metric:
\begin{thm}(\cite{GW})
  Let $X$ be an elliptic K3 surface with holomorphic volume form $\Omega$. There exists an approximate metric $\omega_{\epsilon}$ which equal to the twisted Ooguri-Vafa metric near the singular fibres and equal to the semi-flat metric \cite{GVY} away from singular fibres and with fibre size
  $\epsilon$. Moreover, if
  $F_{\epsilon}=\log{\big(\frac{\Omega\wedge\bar{\Omega/2}}{\omega^2_{\epsilon}}\big)}$,
  then the solution $u_{\epsilon}$ of the Monge-Amp\`ere equation
    \begin{align*}
       \mbox{det}(g_{i\bar{j}}+i\partial_i
       \bar{\partial}_{\bar{j}}u_{\epsilon})=e^{F_{\epsilon}}
    \end{align*}
 satisfies the follow exponential decay property
    \begin{align*}
      \parallel F_{\epsilon}\parallel_{C^k}\leq D_1
      e^{-D_2/\epsilon}
    \end{align*} for some positive constant $D_1$, $D_2$ depending on $k$.
\end{thm}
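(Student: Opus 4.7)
The plan is to follow the Gross--Wilson gluing strategy: construct two explicit local models (semi-flat away from the discriminant, Ooguri--Vafa near each $I_1$ fibre), verify that they agree to all orders in the overlap with an exponentially small discrepancy, and glue them by a cutoff. The error $F_\epsilon$ then measures the failure of the glued metric to be Ricci-flat, and the exponential bound will come entirely from the overlap estimate.

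First I would construct the semi-flat metric $\omega_{sf,\epsilon}$ on $X_0=X\setminus f^{-1}(\Delta)$ via the Greene--Vafa--Yau/Hitchin recipe: on the total space of $T^*B_0/(R^1f_*\mathbb{Z})$, pull back a convex potential $\phi$ on $B_0$ (chosen so that $[\omega_{sf,\epsilon}]$ lies in the prescribed Kähler class and the fibre size is $\epsilon$) and add the standard fibre-wise flat form. The Legendre dual coordinates on $B_0$ make $\omega_{sf,\epsilon}^2 = \Omega\wedge\bar\Omega/2$ exactly on $X_0$. Second, near each of the $24$ $I_1$ singular fibres I would put the Ooguri--Vafa metric $\omega_{OV,\epsilon}$ from Section \ref{801}: apply the Gibbons--Hawking ansatz to the $S^1$-invariant harmonic function $V_\epsilon = \frac{1}{2\pi}\sum_{n\in\mathbb{Z}}\bigl(\frac{1}{\sqrt{|y|^2+(x-n\epsilon)^2}}-\text{regularization}\bigr)$ on a punctured disc $D^*$ times $S^1$; Poisson summation writes $V_\epsilon$ as a smooth logarithm (the ``semi-flat part'') plus a sum of modified Bessel functions $K_0(2\pi n|y|/\epsilon)$, which decay like $e^{-c|y|/\epsilon}$.

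Third, and this is the heart of the argument, I would compare the two metrics on an annular overlap region $\{r_0 \le |y|\le 2r_0\}$ around each singularity. Because both are hyperKähler and the semi-flat ansatz coincides with the $n=0$ Fourier mode of the Ooguri--Vafa potential, one has
\begin{equation*}
\omega_{OV,\epsilon}-\omega_{sf,\epsilon} = i\partial\bar\partial\Psi_\epsilon,\qquad
\|\Psi_\epsilon\|_{C^k(\text{overlap})}\le C_k\, e^{-c_k/\epsilon},
\end{equation*}
where $\Psi_\epsilon$ is built from the Bessel tail. Pick a smooth cutoff $\chi$ supported on the overlap and define
\begin{equation*}
\omega_\epsilon = \omega_{OV,\epsilon}\text{ inside},\qquad
\omega_\epsilon = \omega_{sf,\epsilon}\text{ outside},\qquad
\omega_\epsilon = \omega_{sf,\epsilon}+i\partial\bar\partial(\chi\Psi_\epsilon)\text{ in between}.
\end{equation*}
Closedness and positivity of $\omega_\epsilon$ are automatic; only in the cutoff annulus is it not Ricci-flat.

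Finally I would estimate $F_\epsilon=\log\bigl(\tfrac{\Omega\wedge\bar\Omega/2}{\omega_\epsilon^2}\bigr)$. On the semi-flat region $F_\epsilon\equiv 0$ by construction; on the Ooguri--Vafa region $F_\epsilon\equiv 0$ since Ooguri--Vafa is hyperKähler with the correct holomorphic volume form; only in the cutoff annulus is $F_\epsilon$ nonzero, and there
\begin{equation*}
\omega_\epsilon^2 - \omega_{sf,\epsilon}^2 = 2\,\omega_{sf,\epsilon}\wedge i\partial\bar\partial(\chi\Psi_\epsilon) + (i\partial\bar\partial(\chi\Psi_\epsilon))^2
\end{equation*}
is bounded in $C^k$ by $C_k\,e^{-c_k/\epsilon}$ thanks to the Bessel estimate and the bounded $C^k$ norm of $\chi$. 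Taking logarithms (the zeroth-order term of $\omega_\epsilon^2/(\Omega\wedge\bar\Omega/2)$ is $1$) yields the required $\|F_\epsilon\|_{C^k}\le D_1 e^{-D_2/\epsilon}$. Existence of $u_\epsilon$ solving the Monge--Ampère equation is then Yau's theorem applied to the compact Kähler class $[\omega_\epsilon]$.

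The main obstacle is the overlap estimate: one must identify the Ooguri--Vafa Kähler potential with the pulled-back semi-flat potential up to the Bessel tail, which requires matching the period integrals of $\Omega$ against $\omega_{OV,\epsilon}$ in the fibre direction and controlling derivatives uniformly as $\epsilon\to 0$. All other steps (existence of the semi-flat metric in the correct class, smoothness of the cutoff interpolation, Yau's theorem) are standard once the instanton decay is established.
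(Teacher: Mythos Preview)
The paper does not prove this theorem at all: it is quoted verbatim as a result of Gross--Wilson \cite{GW} and is immediately followed by a corollary. There is no proof environment, no sketch, nothing to compare against. So strictly speaking your proposal cannot be checked against ``the paper's own proof,'' because there isn't one.

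That said, what you have written is a faithful outline of the actual Gross--Wilson argument. The key mechanism you identify --- Poisson resummation of the Gibbons--Hawking harmonic function into a logarithmic (semi-flat) piece plus a tail of modified Bessel functions $K_0(2\pi n|y|/\epsilon)$, whose $e^{-c|y|/\epsilon}$ decay produces the exponential smallness of $F_\epsilon$ on the gluing annulus --- is exactly the source of the estimate in \cite{GW}. One point to be careful about: the paper here (and Gross--Wilson) work with the \emph{twisted} Ooguri--Vafa metric $T_\sigma^*\omega_{OV}$ pulled back by a local holomorphic section $\sigma$, because a general elliptic K3 need not have a section and the semi-flat metric must be matched accordingly; your sketch should incorporate this twist when identifying potentials on the overlap. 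Otherwise your plan is correct and matches the cited source, even though the present paper offers nothing of its own to compare with.
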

Heuristically, Gross-Wilson proved that the Ricci-flat metric
converges to semi-flat metric when the K3 surface goes to the large
complex limit. Moreover, the K3 surface collapses to an affine manifold with singularities. This is the modified Strominger-Yau-Zaslow conjecture proposed \cite{GW}\cite{KS4}\cite{SYZ}. Therefore, We will use this asymptotic behavior of metric in Corollary \ref{800} as the working definition of large complex limit of K3 surfaces. However, the estimate is not accurate enough in the sense that it is not clear if it includes the contribution of instanton correction from the initial rays (and thus the higher order correction).

Using this estimate and the third order estimates in \cite{Y1}, we have the following corollary:
\begin{cor} \label{800}
    Let $\omega$ be the unique Ricci-flat metric in the class $[\omega_{\epsilon}]$, then the difference with the approximate metric $\omega_{\epsilon}$ is 
      \begin{align*}
        \parallel \omega-\omega_{\epsilon}\parallel_{C^1} \leq D e^{-D'/\epsilon}.
      \end{align*}
\end{cor}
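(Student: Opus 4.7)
The plan is to combine the Gross-Wilson estimate on $F_\epsilon$ with Yau's a priori estimates for the complex Monge--Amp\`ere equation to upgrade $C^0$-control on the potential to $C^1$-control on the metric. Write the Ricci-flat representative as
\begin{equation*}
  \omega = \omega_\epsilon + i\partial\bar\partial \phi_\epsilon,
\end{equation*}
where $\phi_\epsilon$ is normalized, say, by $\int \phi_\epsilon\, \omega_\epsilon^2 = 0$, and satisfies the complex Monge--Amp\`ere equation
\begin{equation*}
  (\omega_\epsilon + i\partial\bar\partial \phi_\epsilon)^2 = e^{F_\epsilon}\omega_\epsilon^2.
\end{equation*}
Since $\|\omega-\omega_\epsilon\|_{C^1}$ is controlled by $\|\phi_\epsilon\|_{C^3}$ (with respect to a suitable background), the target estimate reduces to showing $\|\phi_\epsilon\|_{C^3} \leq D e^{-D'/\epsilon}$, where the $C^3$ norm is taken relative to a reference metric whose geometric quantities (curvature, diameter bounded away from zero along the directions used in the estimates, etc.) remain controlled.

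First I would run Yau's $C^0$ estimate, which yields $\|\phi_\epsilon\|_{C^0} \le C_0 \|e^{F_\epsilon}-1\|_{L^p}$ for some $p$, and hence $\|\phi_\epsilon\|_{C^0} \leq C_0' e^{-D_2/\epsilon}$ by the hypothesis on $F_\epsilon$. Next, the Aubin--Yau second-order estimate controls $\Delta_{\omega_\epsilon}\phi_\epsilon$ in terms of $\|F_\epsilon\|_{C^2}$, a lower bound on the bisectional curvature of $\omega_\epsilon$, and $\|\phi_\epsilon\|_{C^0}$. The exponential decay of $\|F_\epsilon\|_{C^2}$ together with the $C^0$ bound then yields $\|i\partial\bar\partial\phi_\epsilon\|_{C^0} \leq D_1' e^{-D_2'/\epsilon}$. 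Finally the Calabi--Yau third-order estimate (bounding $|\nabla^{1,0}g_{\bar j k}|^2$ in terms of $\|F_\epsilon\|_{C^3}$ and second-derivative information) produces the $C^1$ bound on $i\partial\bar\partial\phi_\epsilon$, once again with exponential decay, thereby giving the conclusion.

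The main obstacle is uniformity of the constants in $\epsilon$. The approximate metric $\omega_\epsilon$ has a small fibre of size $\epsilon$ and is glued from Ooguri--Vafa and semi-flat pieces, so its curvature and injectivity radius degenerate as $\epsilon\to 0$; the constants in the standard statements of Yau's $C^0$, $C^2$, and $C^3$ estimates depend on precisely these geometric quantities. To close the argument one must track how the constants blow up polynomially in $\epsilon^{-1}$ (using the explicit form of $\omega_\epsilon$ in the Ooguri--Vafa region) and observe that any such polynomial loss is absorbed into the exponentially small factor $e^{-D_2/\epsilon}$ from $\|F_\epsilon\|_{C^k}$ at the cost of slightly decreasing the decay constant $D_2$ to some $D'<D_2$. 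This is exactly the strategy carried out in the Gross--Wilson paper, and I would cite their gluing construction together with the standard Yau/Calabi estimates rather than re-deriving each inequality, emphasizing that the exponential smallness of $F_\epsilon$ dominates all polynomial losses in the geometric constants.
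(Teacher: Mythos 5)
Your proposal follows essentially the same route the paper takes: the paper gives no explicit proof of Corollary \ref{800} beyond invoking the Gross--Wilson exponential decay of $\|F_\epsilon\|_{C^k}$ together with Yau's second- and third-order estimates for the Monge--Amp\`ere equation, which is exactly the chain $C^0 \to C^2 \to C^3$ you spell out. Your added discussion of how the geometric constants degenerate only polynomially in $\epsilon^{-1}$ as the fibres collapse, and are therefore absorbed by the factor $e^{-D_2/\epsilon}$, is precisely the point the paper leaves implicit (and partially flags in the remark following the corollary about the lack of global higher-order estimates near the singular fibres), so your write-up is if anything more complete than the paper's.
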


\begin{rmk}
In the paper \cite{GW}, they derived all the $C^{k,\alpha}$-estimate for all $k$ but away from the singular fibres. It is not known to the author at the moment if we can get better global estimate of the difference between the approximate metric $\omega_{\epsilon}$ and true Ricci-flat metric $\omega$.
\end{rmk}

To prove that the fiber $T_y$ indeeds bounds a holomorphic disc corresponding to the initial BPS ray, we will take the following indirect method:
First we will view the $X$ as an elliptic K3 and construct an special Lagrangian disc with boundary on $T_y$ but with respect to the approximate metric $\omega_{\epsilon}$. Then we will deformed it to a special Lagrangian discs with respect to the true Ricci-flat metric and thus it becomes a holomorphic discs after hyperK\"ahler rotation.

\begin{thm} \label{60}
   Let $(X,\omega,\Omega)$ be an elliptic K3 surface with a holomorphic section \footnote{If we only want to know the existence of initial discs near the singular fibre then one can drop this assumption.}, where $\omega$ is the Ricci-flat metric and $\Omega$ is a nowhere vanishing holomorphic volume form. Let $u$ be a point on the above affine ray starting at the singular point $p$. Assume there is no other singular point of affine structure on the affine segment between $u$ and $p$. Then there exists $\epsilon_0=\epsilon_0(u)>0$ such that there exists an immersed holomorphic disc in the relative class $\gamma_e$ and boundary on $L_u$ in $X_{\vartheta}'$. Here $X_{\vartheta}'$ is a K3 surface with special Lagrangian with $\int_{L_u}\omega'<\epsilon_0$. 
 \end{thm}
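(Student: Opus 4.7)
The plan is to combine three ingredients: (i) the explicit holomorphic disc in the Ooguri-Vafa local model described in Section~\ref{801}, (ii) the exponential closeness estimate $\|\omega-\omega_{\epsilon}\|_{C^1}\le De^{-D'/\epsilon}$ of Corollary~\ref{800}, and (iii) a Fredholm-regular perturbation argument in the $S^1$-family of complex structures. The strategy is to produce the disc first for the Gross-Wilson approximate metric $\omega_\epsilon$ and then deform it to one for the genuine Ricci-flat metric $\omega$.

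First I would build the seed disc. Near the $I_1$ singular fibre over $p$, the Gross-Wilson approximate metric coincides with the Ooguri-Vafa metric, and Section~\ref{801} provides, for each point $u$ on the monodromy-invariant affine ray and a uniquely determined phase $\vartheta$, an immersed special Lagrangian disc $D_0$ with $\partial D_0\subset L_u$ in class $\gamma_e$. After hyperK\"ahler rotation this is an immersed $J_\vartheta^{\epsilon}$-holomorphic disc in the approximate hyperK\"ahler structure. Because $u$ lies on the same affine ray as $p$ with no intervening singularities, the image of $D_0$ stays inside the region where $\omega_\epsilon$ equals Ooguri-Vafa glued to semi-flat, so the construction makes sense globally on $X$. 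By Proposition~\ref{48} together with the explicit vanishing-cycle description in the Ooguri-Vafa model, the linearization of the $\bar\partial$-operator coupled with the $S^1_\vartheta$-parameter (as in Corollary~\ref{9}) is surjective on $D_0$: the family moduli space $\mathcal{M}_{\gamma_e}(\mathfrak{X}^{\epsilon},L_u)$ is cut out transversely with the correct expected dimension at $D_0$.

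Next I would run an implicit function theorem in the space of hyperK\"ahler structures on $\underline{X}$ parameterized by $t\in[0,1]$ interpolating from $\omega_\epsilon$ to the true Ricci-flat $\omega$ in the same K\"ahler class. Corollary~\ref{800} gives that this family is exponentially small in $C^1$; by Yau's higher-order estimates one also controls higher derivatives on compact sets away from $p$, which suffices since $D_0$ is fixed. Setting up the standard weighted Sobolev Banach manifold of $W^{1,p}$-maps $\Sigma\to\underline{X}$ with boundary on $L_u$ and adjoining the $S^1_\vartheta$ parameter, the $\bar\partial_{J^{t}_{\vartheta}}$-equation together with the phase condition defines a smooth section of a Banach bundle. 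Its linearization at $(D_0,\vartheta_0,t=0)$ is the surjective operator from the previous step, so by the implicit function theorem we obtain a unique nearby zero for every sufficiently small $t$, giving an immersed $J^{t}_{\vartheta(t)}$-holomorphic disc with boundary on $L_u$ in class $\gamma_e$. At $t=1$ this is the desired disc in $X'_\vartheta$, and we take $\epsilon_0$ small enough that $\int_{L_u}\omega'<\epsilon_0$ forces $\epsilon$ to lie in the regime where Corollary~\ref{800} supplies the needed bound.

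The main obstacle I expect is making the implicit function theorem uniform as $\epsilon\to 0$: the Fredholm regularity of $D_0$ must be accompanied by a bound on the inverse of the linearized operator that does not degenerate with $\epsilon$, while at the same time the $C^1$-defect $\|\omega-\omega_\epsilon\|$ (hence the perturbation term driving the implicit function scheme) must dominate. Because Corollary~\ref{800} gives exponential decay in $\epsilon$ whereas the Fredholm constants for $D_0$ depend only on the Ooguri-Vafa model and on the compact neighborhood swept by $D_0$, a standard contraction-mapping argument closes. A secondary subtlety is that the boundary $L_u$ is itself a fibre of a fibration whose complex structure varies with $t$; this is handled by viewing $L_u$ as a fixed smooth submanifold of $\underline{X}$ and using that the totally real boundary condition is preserved under the $S^1_\vartheta$-rotation of the hyperK\"ahler triple.
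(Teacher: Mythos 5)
Your overall strategy --- seed disc from the Ooguri--Vafa local model, then a quantitative implicit function theorem driven by the exponential closeness of $\omega_\epsilon$ to $\omega$ --- is the same as the paper's, and you correctly identify the decisive quantitative point (the inverse of the linearization must not degenerate faster than the exponentially small defect). But the category in which you run the deformation does not work as stated. Along your interpolation $\omega_t=\omega_\epsilon+t(\omega-\omega_\epsilon)$, the pair $(\omega_t,\Omega)$ is \emph{not} a hyperK\"ahler triple for $0\le t<1$ (indeed not even at $t=0$, since the Gross--Wilson metric only satisfies the Monge--Amp\`ere equation up to an exponentially small error), so there is no integrable $J^t_\vartheta$ obtained by hyperK\"ahler rotation and no well-defined $\bar\partial_{J^t_\vartheta}$-equation to linearize. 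Your seed object is honestly a \emph{special Lagrangian} disc for $(\omega_\epsilon,\Omega)$, not a holomorphic disc for a rotated complex structure. The paper avoids this entirely by deforming in the special Lagrangian category: it considers $F(V,t)=\big(\Phi_V^*\omega_t,\ \Phi_V^*\mathrm{Im}(e^{i\theta(t)}\Omega)\big)$, which makes sense for arbitrary closed forms, with the rotating phase $\theta(t)$ playing the role of your $S^1_\vartheta$-parameter (it is chosen to kill the cohomological obstruction $\int_{D^2}\mathrm{Im}(e^{i\theta}\Omega_t)$); surjectivity and injectivity of the linearization are then established by Hodge theory for the disc with Neumann boundary conditions rather than by Cauchy--Riemann regularity, and only at $t=1$ is the resulting special Lagrangian disc converted into a holomorphic disc by a genuine hyperK\"ahler rotation. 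To salvage your route you would have to either work with non-integrable almost complex structures and re-justify the Fredholm setup and the relation to $\gamma_e$, or switch to the special Lagrangian formulation --- which is what the paper does.

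Two further points would fail as written. First, your claim that higher-order control ``on compact sets away from $p$'' suffices is false: the disc in class $\gamma_e$ is the Lefschetz thimble, whose tip is the node of the singular fibre over $p$, so any neighborhood of the disc necessarily meets the singular fibre; the paper has only the global $C^1$ estimate of Corollary \ref{800} there (and flags in a footnote that a $C^{2,\alpha}$ estimate is what is really wanted). Second, the Fredholm constants do \emph{not} depend ``only on the Ooguri--Vafa model'' independently of $\epsilon$: the fibres collapse as $\epsilon\to 0$ and the paper's bound on the inverse is $\|\,dF_{(0,0)}^{-1}\|\le C\lambda_1\le C'/\mathrm{diam}(X,\omega_\epsilon)^2$, i.e.\ it degenerates polynomially in $\epsilon$. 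The argument closes only because this polynomial degeneration is beaten by the exponential smallness of $\|F(0,t)\|$, and making that precise requires the quantitative implicit function theorem of Proposition \ref{30} with explicit constants, not a generic contraction-mapping remark.
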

\begin{proof}
Without loss of generality, we will construct a special Lagrangian disc of phase $\vartheta=0$. Assume $y$ is on the BPS ray emanated from a singular point. Take a
neighborhood $\mathcal{U}$ of this segment from the singularity to
$y$ and lift the special Lagrangian fibration to the universal cover
of $\mathcal{U}$. The affine coordinate on $\mathcal{U}$ is $(y_1,
-Im\int \tau dy)$ when $|y| < \epsilon$ or $|y|> \epsilon$. Here we
assume the monodromy invariant direction is characterized by
$y_1=0$. 

We first review the construction of $\omega_{\epsilon}$ in \cite{GW}.  From \cite{GW}, there exist a local holomophic section $\sigma$ and a
real function $\phi$ such that
\begin{equation*}
    \omega_{SF}-T_{\sigma}^*\omega_{OV}=i \partial \bar{\partial}\phi
\end{equation*}
Note that the local $S^1$-action (given by $t\rightarrow t+a$) still live on the torus fibration over $\mathcal{U}$, therefore, we can average $\phi$ with respect to the local $S^1$-action and assume it is $S^1$-invariant.
\begin{align*}
 \omega_{\epsilon} &=\omega_{SF}+i\partial \bar{\partial}(\psi(|y|^2)\psi) \\
                                 &=(1-\psi(|y|^2))\omega_{SF}+\psi(|y|^2)T_{\sigma}^*\omega_{OV}    \\
                                   & \quad -i(\psi'(|y|^2)\bar{y}dy\wedge \bar{\partial}\phi+ \psi'(|y|^2)y\partial \phi \wedge d\bar{y}+ \psi''(|y|^2)|y|^2\phi dy \wedge d\bar{y}).
\end{align*}
\begin{rmk}
   There is actually another term should be added to $\omega_{\epsilon}$ to make it in the right cohomology class.
\end{rmk}
The natural candidate for the approximate special disc is given by
$y_1=u=0$, so we want to prove that $\omega_{\epsilon}$ restricted on it to
zero. It is easy to see that the first, second and last term
restrict to zero. Then third and fourth term together restrict to
the disc is $-2y_2dy_2 \wedge Re\partial \phi =-y_2 dy_2 \wedge
d\phi$. Notice that $d\phi (\frac{\partial}{\partial t})=0$, since $\phi$ is
real and is chosen to be $S^1$-invariant. Thus, we get a smooth special Lagrangian
disc with respect to approximate metric $\omega_{new}$ and
boundary on an elliptic torus fibre.

The following proof is the standard deformation theory of special
Lagrangian submanifolds (with boundaries) modified from \cite{B}.
Start with a smooth special Lagrangian disc $f:(D^2,
\partial D^2) \rightarrow ((X, \omega_{\epsilon}, \Omega),L)$ we consider the
a family of embeddings
\begin{align*}
\Phi_V: (D^2,\partial D^2) \rightarrow ((X,\omega_t,\Omega_t),L)\\
\phi_V(x)= \exp_{f(x)}(V(x)),
\end{align*}
where $\omega_t=\omega_{\epsilon}+t(\omega-\omega_{\epsilon})$,
$\Omega_t=e^{i\theta(t)}\Omega$. Also, $\exp$ is specially
constructed and $V$ should satisfy the Neumann boundary condition to
keep $\Phi_V(\partial D^2) \subseteq L$.

It is natural to write down
\begin{align} 
    F(V,t):& W^{1,p}(N_{L/X}) \times \mathbb{R} \rightarrow  X \nonumber\\ 
      F(V,t)(x)=& \bigg( (\exp_xV(x))^*\omega_t, (\exp_xV(x))^*Im(e^{i\theta(t)}\Omega_t) \bigg)
\end{align}
The embedding $\Phi_V(D^2,\partial D^2)$ gives a special Lagrangian
disc in the K3 surface $(X,\omega,\Omega)$ if and only if $F(V,1)=0$. To solve the equation, we first compute its linearization
\begin{equation} \label{1031}
    dF_{(V,t)}(W,s)=(\exp V)^*\bigg(d\iota_V\omega_t+s\frac{d\omega_u}{du}|_t, d\iota_V Im\Omega_t+ s\frac{d Im(e^{i\theta(u)}\Omega_u)}{du}|_t  \bigg).
\end{equation}
In particular, the linearization at $(0,0)$
\begin{equation}
    dF_{(0,0)}(W,s)= \big(d\eta+ s\frac{d\omega_t}{dt}|_0, d*(\psi \eta)+ s \psi \theta'(0) vol_{g_a} \big),
\end{equation}
where $\eta=\iota_W\omega_{\epsilon}$ is a $1$-form and $Re\Omega|_{D^2}=\psi
vol_{g_a}$. Notice that $*(\psi\eta)=*_{\psi}\eta$, where $*_{\psi}$
is the Hodge star operator for the another metric $g_{\psi}$.
Note that $F$ will factor through $C^{0,\alpha}(d\Omega^1(D^2))
\times C^{0,\alpha}(d\Omega^3(D^2))$ because $[(\exp
V)^*\omega_t]=[\omega_{\epsilon}]=0$ and $[(\exp
V)^*Im\Omega_t]=[Im\Omega]=0$.
\begin{prop}
   The linearized operator
   \begin{align*}
   dF_{(0,0)}: C^{1,\alpha}(N_{L/X})^N \times \mathbb{R} \rightarrow C^{0,\alpha}(d\Omega^1(D^2)) \times C^{0,\alpha}(d\Omega^1(D^2))
   \end{align*}\footnote{
     Here we need the $C^{2,\alpha}$-estimate of $u_{\epsilon}$.}
    is surjective, if $\theta(t)$ chosen suitably.
\end{prop}
\begin{proof}
  Let $N$ be a normal vector field of $\partial D^2$ and $\xi \in C^{1,\alpha}(\Omega^1(D^2))$ and $\gamma \in C^{1,\alpha}(\Omega^1(D^2))$. We are trying to solve the system
     \begin{align}
          d\eta&=d\xi+ \frac{d\omega_t}{dt}|_0   \\
          d*_{\psi}\eta&=d\psi+ \frac{d Im\Omega_t}{dt}|_0  \\
          \eta(N)&=0
     \end{align}
  Hodge theory for a manifolds with boundary (p.123 \cite{S1}) shows that this system of equations can be solved with H\"older regularity if and only if
     \begin{enumerate}
          \item  $d \big( d\xi+ \frac{d\omega_t}{dt}|_0 \big)=0= d\big( d\psi+ \frac{d Im\Omega_t}{dt}|_0 \big)$
          \item  $\big( d\psi+ \frac{d Im\Omega_t}{dt}|_0 \big)(E_1,E_2)|_{\partial D^2}=0$ for any vector $E_1, E_2$ tagent to $\partial D^2$.
          \item  $\int_{D^2}( d\xi+ \frac{d\omega_t}{dt}|_0
          \big)*_{\psi}\lambda=0$, for every $\psi$-harmonic form
          $\lambda$ of $D^2$ satisfying Neumann boundary condition.
          \item $\int_{D^2} *_{\psi}\big( d\psi+ \frac{d Im\Omega_t}{dt}|_0
          \big)*_{\psi}\kappa=0$, for every $\psi$-harmonic form
          $\kappa$ of $D^2$ satisfying Neumann boundary condition.
      \end{enumerate}
 Notice that result quoted in \cite{S1} are stated for differential forms with Sobolev regularity but extend to H\"older regularity by standard bootstrapping
 argument for elliptic operators.
 Therefore, $dF$ is surjective if there is no cohomological obstruction from $\big( \omega_t, Im(e^{i\theta(t)}\Omega_t) \big)$, which can be achieved if we choose $\theta(t)$ such that
     \begin{equation*}
         \int_{D^2}d\psi+ \theta'(0) \int_{D^2}\psi Vol_{g_a}=0.
     \end{equation*}
\end{proof}
For the injectivity of $dF_{(0,0)}$, we consider the following system
of equations
\begin{align*}
    d\eta+ s(\omega-\omega_{\epsilon})=0 \\
    d*_{\psi}\eta + s\psi \theta'(0) vol_{g_a}=0 \\
    \eta(N)=0.
\end{align*}
Integrating the second equation over the disc $D^2$, we have
\begin{align*}
    s\psi \theta'(0) Vol(D^2) &= -\int_{D^2}d*_{\psi}\eta \\
                                            &= -\int_{\partial D^2} *_{\psi}\eta =0.
\end{align*}
The last equality holds because the tangent component of $*_{\psi}\eta$ equals to the Hodge star of the normal component of $\eta$, which is zero. Thus the solutions of the system are exactly harmonic
$1$-forms with Neumann boundary condition on $D^2$ (See Section 6
\cite{S1}), which has the same dimension as $b^1(D^2)=0$. However,
the distance from Ooguri-Vafa space to a neighborhood of $I_1$-type
singular fibre of K3 surface is non-trivial. Therefore, we need the
following quantitative implicit function theorem. 
\begin{prop}\label{30}\cite{Z}
   Assume that $\mathcal{B}_1, \mathcal{B}_2$ are Banach spaces and $F:\mathcal{B}_1 \times \mathbb{R} \rightarrow \mathcal{B}_2$ is a map with continuous Frechet derivative. If we have the following conditions:
      \begin{enumerate}
          \item $\frac{\partial F}{\partial V}(0,0): \mathcal{B}_1 \rightarrow \mathcal{B}_2$ is invertible and $\parallel \frac{\partial F}{\partial V}(0,0)^{-1} \parallel \leq C$.
          \item There  exists $r_0 > r>0, t_0>0$ such that for every $(V,t)\in U_{\mathcal{B}_1}(r_0) \times [0,t_0]$,
               \begin{equation}
                   \parallel \frac{\partial F}{\partial V}(V,t)-\frac{\partial F}{\partial V}(0,0) \parallel \leq \frac{1}{2C}, \hspace{2mm} \parallel F(0,t)\parallel _{\mathcal{B}_1} \leq \frac{r}{2C}.
                \end{equation}
       \end{enumerate}
   Then there exists a unique $C^1$-path $V(t)$ in $U_{\mathcal{B}_1}(r)$ for each $t\in [0,t_0]$ such that $F(V(t),t)=0$.
\end{prop}

We will apply Proposition \ref{30} to the linearization (\ref{1031}). So we still need effective bounds for
$\parallel \frac{\partial F}{\partial V}(0,0)^{-1} \parallel$,
$\parallel \frac{\partial F}{\partial V}(V,t)-\frac{\partial
F}{\partial V}(0,0) \parallel$ and $\parallel F(0,t)\parallel$. The first term has the estimate
\begin{align*}
\parallel \frac{\partial F}{\partial V}(0,0)^{-1} \parallel \leq C\lambda_1 \leq
\frac{C'}{diam(X,\omega_{\epsilon})^2}=O(\epsilon),
\end{align*} where $\lambda_1$ here is the first eigenvalue of the Laplace operator associated with $\omega_{\epsilon}$. The second inequality is first eigenvalue estimate and the equality comes from the estimates in \cite{GW}. For the later one, we have
$\parallel \frac{\partial F}{\partial V}(V,t)-\frac{\partial
F}{\partial V}(0,0) \parallel$, $\parallel F(0,t)\parallel
$ are of order $O(C_1e^{-D_2/\epsilon})$ from
the estimates in \cite{GW}.

\end{proof} 
 
\begin{definition}
 The holomorphic disc constructed Theorem \ref{60} is called the initial disc, which corresponds to an initial BPS ray in Section \ref{1032}.
\end{definition} 

There is another way to prove that the regularity of the initial holomorphic
discs by applying the automatic transversality \cite{W2} to K3 surfaces

\begin{prop}
 Assume $X$ is a K3 surface and $L$ is a special Lagrangian, then the
bundle pair $(TX,TL)$ always has Maslov index is $0$. Let $f:(D^2,
\partial D^2) \rightarrow (X,L)$ to be a holomorphic disc with
boundary on $L$. Let $\bold{D}^N_f$ and $\bold{D}^T_f$ be the normal
and tangent splitting of the usual linearized Cauchy-Riemann
operator of $f$, then
   $f$ is regular (in $S^1$-family) if and only if it is immersed or equivalently,
         $(f^*TX,f^*TL)_{\mathbb{C}}\cong \mathcal{O}_{\mathbb{P}^1}(-2)\oplus \mathcal{O}_{\mathbb{P}^1}(2)$.
\end{prop}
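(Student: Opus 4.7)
The plan is to combine the classical doubling construction for bundle pairs with Hofer-Lizan-Sikorav style automatic transversality, amplified by the additional direction in the Cauchy-Riemann operator coming from the $S^1$-family of hyperK\"ahler rotations. The vanishing of the Maslov index of $(TX,TL)$ is immediate: since $L$ is special Lagrangian, the holomorphic volume form $\Omega$ restricted to $L$ trivializes the canonical line bundle (up to a constant phase), so the Maslov index, which is the winding of $\det_{\mathbb{C}} TL$ inside $\det_{\mathbb{C}} TX|_L$ measured in this trivialization, vanishes.

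Next, I would double the bundle pair $(f^*TX, f^*TL)$ across $\partial D^2$ to obtain a rank two holomorphic vector bundle $E := (f^*TX, f^*TL)_{\mathbb{C}}$ on $\mathbb{P}^1 = D^2 \cup_{\partial D^2} \bar D^2$ whose total degree equals the Maslov index and hence equals $0$. By Grothendieck's theorem, $E \cong \mathcal{O}_{\mathbb{P}^1}(a) \oplus \mathcal{O}_{\mathbb{P}^1}(-a)$ for some integer $a \geq 0$. If $f$ is immersed, then $df : TD^2 \hookrightarrow f^*TX$ is a saturated subbundle injection whose double is a saturated inclusion $\mathcal{O}_{\mathbb{P}^1}(2) \hookrightarrow E$ with locally free quotient of degree $-2$. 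The relevant extension group $\mbox{Ext}^1(\mathcal{O}_{\mathbb{P}^1}(-2), \mathcal{O}_{\mathbb{P}^1}(2)) = H^1(\mathcal{O}_{\mathbb{P}^1}(4))$ vanishes, forcing the sequence to split and giving $E \cong \mathcal{O}_{\mathbb{P}^1}(2) \oplus \mathcal{O}_{\mathbb{P}^1}(-2)$, i.e. $a = 2$. Conversely, if $f$ is not immersed, the saturation of $\mbox{Im}(df)$ in $f^*TX$ is a line subbundle of degree $2 + m$, where $m > 0$ is the total vanishing order of $df$, which forces $a \geq 2 + m \geq 3$.

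For the equivalence with regularity in the $S^1$-family, I would use the induced splitting $\mathbf{D}_f = \mathbf{D}^T_f \oplus \mathbf{D}^N_f$ of the linearized Cauchy-Riemann operator associated with the tangent-normal short exact sequence above. The tangent operator $\mathbf{D}^T_f$ on the bundle pair doubling to $\mathcal{O}_{\mathbb{P}^1}(2)$ is always surjective with three-dimensional kernel generated by the infinitesimal M\"obius automorphisms of the disc. The normal operator $\mathbf{D}^N_f$ on the bundle pair doubling to $\mathcal{O}_{\mathbb{P}^1}(-a)$ is Fredholm of index $1 - a$ with vanishing kernel, hence has cokernel of dimension $a - 1$. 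The augmented operator $D\bar\partial_{new}(w, \vartheta) = \mathbf{D}_f w + \vartheta K_{J_\vartheta}(f, \alpha)$ from Section \ref{2009} adds exactly one dimension in the direction $K_{J_\vartheta}(f, \alpha)$, which by Corollary \ref{9} does not lie in $\mbox{Im}(\mathbf{D}_f)$; hence it projects to a nonzero element of $\mbox{coker}(\mathbf{D}^N_f)$. Surjectivity of $D\bar\partial_{new}$ is therefore equivalent to $\dim \mbox{coker}(\mathbf{D}^N_f) \leq 1$, equivalently $a \leq 2$, equivalently $f$ immersed.

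The main obstacle will be the rigorous verification that $K_{J_\vartheta}(f,\alpha)$ projects onto a generator of $\mbox{coker}(\mathbf{D}^N_f)$ rather than accidentally landing in the tangent part. This amounts to checking that $K_\alpha \circ df \circ j$ is pointwise transverse to $df(TD^2)$, which follows from the fact that $K_\alpha$ intertwines $J_\vartheta$ with $J_{\vartheta \pm \pi/2}$ so that for an immersed $f$ the $(0,1)$-part of $df$ in the deformed complex structure $J_{\vartheta, \alpha}$ is automatically normal to $df(TD^2)$; this is essentially Corollary \ref{9} interpreted along the tangent-normal splitting.
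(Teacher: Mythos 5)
Your argument is correct in substance, but it is a genuinely different route from the paper's: the paper disposes of this proposition in one line by citing Wendl's automatic transversality theorem (Theorem 3 of \cite{W2}), whereas you give a self-contained proof via doubling, Grothendieck splitting, and an explicit cokernel count. What your approach buys is transparency: it makes visible exactly why the splitting type $\mathcal{O}(2)\oplus\mathcal{O}(-2)$ is equivalent to immersedness (the saturation of $df(TD^2)$ doubles to $\mathcal{O}(2+m)$ with $m$ the total vanishing order of $df$, and $\mathrm{Ext}^1(\mathcal{O}(-2),\mathcal{O}(2))=0$ splits the sequence when $m=0$), and why exactly one extra parameter direction suffices ($\mathrm{coker}(\mathbf{D}^T_f)=0$ gives $\mathrm{coker}(D\bar\partial)\cong\mathrm{coker}(\mathbf{D}^N_f)$ of real dimension $1+m$, and Corollary \ref{9} guarantees the family direction $K_{J_\vartheta}(f,\alpha)$ hits this cokernel nontrivially, so surjectivity of $D\bar\partial_{new}$ is precisely $m=0$). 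Note that your argument uses integrability of $J_\vartheta$ in an essential way (the linearization at a holomorphic disc is complex linear, so doubling and Riemann--Roch apply), which holds here but is exactly the generality that Wendl's theorem transcends. Two small imprecisions, neither fatal: in the non-immersed case the normal quotient doubles to $\mathcal{O}(-2-m)$, not $\mathcal{O}(-a)$ (one only has $a\geq 2+m$, and the cokernel dimension is $1+m$, not $a-1$); and your closing worry about $K_{J_\vartheta}(f,\alpha)$ ``landing in the tangent part'' is already resolved by the isomorphism $\mathrm{coker}(D\bar\partial)\cong\mathrm{coker}(\mathbf{D}^N_f)$ together with Corollary \ref{9} --- no separate pointwise transversality check is needed.
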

\begin{proof} 
This is a direct consequence of Theorem 3 in
\cite{W2}
\end{proof}


We are first interested in the holomorphic discs with small symplectic area. 
\begin{lem} \label{360}
   Let $X$ be a K3 surface and a Lagrangian fibration $X\rightarrow B$. Given any
   $\delta>0$, then there exists $\epsilon(\delta)>0$ such that any
   holomorphic disc with non-trivial Lagrangian boundary condition satisfies 
     \begin{enumerate}
      \item symplectic area less than $\epsilon(\delta)$ 
      \item non-trivial boundary homology class 
     \end{enumerate}
     should contained in an $\delta$-neighborhood of a singular fibre.
\end{lem}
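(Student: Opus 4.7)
The plan is to proceed by contradiction, combining Gromov compactness with a local topological obstruction. Suppose the lemma fails: then there is $\delta>0$ and a sequence of holomorphic discs $u_n\colon(D^2,\partial D^2)\to(X,L_{p_n})$ with symplectic area $\int_{D^2}u_n^*\omega\to 0$, with $[\partial u_n]\neq 0$ in $H_1(L_{p_n})$, and with $\mathrm{Im}(u_n)$ meeting the complement of the $\delta$-neighborhood $N_\delta(\Sigma)$ of the union $\Sigma\subset X$ of singular fibres, for every $n$.

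The first step is to apply Gromov compactness in this family setting. Since $X$ is compact, the fibres of $\pi$ form a compact family of Lagrangian tori, and the energies $\int u_n^*\omega$ are uniformly bounded, one may (after passing to a subsequence so that $p_n\to p\in B$, and on the regular locus reducing to a fixed Lagrangian via a local fibre-preserving diffeomorphism) extract a Gromov-convergent subsequence with limit a stable holomorphic disc $u_\infty$. Because the total energy in the limit is $0$ and non-constant holomorphic curves in a K\"ahler manifold have strictly positive area, every component of $u_\infty$ must be constant. Consequently $\mathrm{diam}(\mathrm{Im}(u_n))\to 0$, the discs $u_n$ converge uniformly to a single point $y\in X$, and $p_n=\pi(u_n(\partial D^2))\to p=\pi(y)$.

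The second step is to locate $y$ and derive the contradiction by a local topological computation. By hypothesis, choose $z_n\in\mathrm{Im}(u_n)$ with $d(z_n,\Sigma)\geq\delta$; since $z_n\to y$, we obtain $d(y,\Sigma)\geq\delta$, so $y$ lies on a regular fibre and $p\in B_0$. Pick a contractible neighborhood $V\subset B_0$ of $p$ over which $\pi$ trivializes as $\pi^{-1}(V)\cong V\times T^2$, and put $W=\pi^{-1}(V)$. For $n$ large, $\mathrm{Im}(u_n)\subset W$, so the relative class of $u_n$ refines to a class $[u_n]\in H_2(W,L_{p_n})$. In the exact sequence
\begin{equation*}
H_2(W,L_{p_n})\xrightarrow{\partial}H_1(L_{p_n})\longrightarrow H_1(W),
\end{equation*}
the right-hand map is an isomorphism because $W$ deformation retracts onto the fibre $L_{p_n}\cong T^2$; hence $\partial=0$. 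But $[\partial u_n]$ lies in the image of this $\partial$, forcing $[\partial u_n]=0$, contradicting the non-triviality assumption.

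The main obstacle is making the first step rigorous: Gromov compactness must be set up so that it accommodates the moving Lagrangian boundary conditions $L_{p_n}$ and the a priori possibility that $p_n$ accumulates on the discriminant. Once one knows the limit point lies on a regular fibre (which is guaranteed by the $\delta$-neighborhood hypothesis together with the diameter collapse), the rest of the argument is soft topology inside a local trivialization.
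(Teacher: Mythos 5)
Your endgame is identical to the paper's: once the disc is known to be confined to a tubular neighborhood $W\cong V\times T^2$ of a smooth fibre, the vanishing of $\partial\colon H_2(W,L_{p_n})\to H_1(L_{p_n})$ kills the boundary class. Where you differ is in how you establish the confinement. The paper gets it from an a priori gradient (mean value) estimate for harmonic maps: small energy forces small diameter directly, for every disc at once, with no limiting argument. You instead run a compactness-by-contradiction argument. That is a legitimate alternative in principle; it spares you from quoting a quantitative elliptic estimate, at the price of having to set up Gromov compactness for a family of moving Lagrangian boundary conditions.

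That is where the gap sits. Your extraction of a Gromov limit presupposes that $p_n$ converges to a point of the regular locus $B_0$ (you need this to straighten the family $L_{p_n}$ to a fixed Lagrangian by a fibre-preserving diffeomorphism), but you only establish $p\in B_0$ \emph{after} the limit has been taken, using the diameter collapse that the compactness argument itself produces. You flag this as ``the main obstacle'' but do not close it, so as written the argument is circular precisely in the case $p_n\to\Delta$. The circle can be broken without any compactness theorem: if $p_n\to p\in\Delta$, then $L_{p_n}$ Hausdorff-converges into the union $\Sigma$ of singular fibres, so for large $n$ the boundary $u_n(\partial D^2)\subset L_{p_n}$ lies within $\delta/2$ of $\Sigma$, while by hypothesis some $z_n\in\mathrm{Im}(u_n)$ satisfies $d(z_n,\Sigma)\geq\delta$; the interior monotonicity lemma applied on the ball $B(z_n,\delta/2)$, which misses the boundary of the disc, then gives $\mathrm{Area}(u_n)\geq c\,\delta^2$, contradicting the assumption that the areas tend to zero. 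With that case disposed of you may assume $p\in B_0$, and the rest of your argument goes through. (Alternatively, adopting the paper's gradient estimate yields the diameter bound uniformly and makes the case division unnecessary.)
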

\begin{proof}
   This is a consequence of the gradient estimate of harmonic maps
   (ex. theorem 2.1 \cite{Y2}). Indeed, a small symplectic area holomorphic disc with boundary on a smooth torus fibre $L$ area will fall in a neighborhood of
   $L$. However, a (topological) trivial $\mathbb{R}^k\times T^l$
   fibration cannot bound any disc with non-trivial boundary
   homology class.
\end{proof}

The following is another well-known folklore theorem. 
\begin{thm} \label{43}
   Let $\gamma_e$ be the Lefschetz thimble, then
  \begin{align*}
   \tilde{\Omega}^{Floer}(\gamma_{e}; u)=1,
  \end{align*}
   for $u$ closed enough to a singularity of affine structure. Moreover, for $u$ close enough to the singularity,
   $\gamma_e$ is the class achieves minimum energy with $\tilde{\Omega}^{Floer}(\gamma)\neq 0$.
\end{thm}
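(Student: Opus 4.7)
The plan is to reduce the computation to the Ooguri--Vafa local model and then transport the answer back to the K3 surface. By Proposition \ref{2012} the invariant $\tilde{\Omega}^{Floer}(\gamma_e;u)$ is locally constant in $u$ away from the walls, so for the first assertion it suffices to compute it at a single convenient $u$ close to the singularity. For $u$ close enough to the singular point, $|Z_{\gamma_e}(u)|$ can be made arbitrarily small (Corollary \ref{350}), and by Lemma \ref{360} any stable holomorphic disc in $\mathcal{M}_{0,\gamma_e}(\mathfrak{X},L_u)$ must be contained in an arbitrarily small neighborhood of the singular fibre. Via the Gross--Wilson approximation (Corollary \ref{800}) such a neighborhood is $C^1$-close to the Ooguri--Vafa local model.

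Next I would appeal to the description of holomorphic discs in the Ooguri--Vafa space recalled in Section \ref{801}: for each $u\in B_0$ there is, up to orientation, a unique simple holomorphic disc in the class $\gamma_e$, occurring at the unique $\vartheta\in S^1$ for which $u$ lies on the monodromy-invariant ray $\ell_\vartheta$. This disc is Fredholm regular in the $S^1$-family (by automatic transversality in the holomorphic-Lagrangian setting, since the bundle pair has Maslov index $0$ and the disc is immersed), so locally $\mathcal{M}_{0,\gamma_e}(\mathfrak{X},L_u)$ is cut out transversally as an oriented point. The existence of the corresponding disc on the actual elliptic K3 with its Ricci-flat metric is exactly the content of Theorem \ref{60}, whose quantitative implicit function theorem argument gives a unique nearby honest holomorphic disc. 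Combining regularity with Theorem \ref{1} one chooses the trivial multisection on this component, and the contribution is precisely $\pm 1$; the orientation was fixed in Section 4.4 via the $S^1_\vartheta$ factor together with the trivial spin structure on the torus, making the sign $+1$.

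For the minimality claim, suppose $\gamma\in H_2(X,L_u)$ has $\tilde{\Omega}^{Floer}(\gamma;u)\neq 0$ with $|Z_\gamma(u)|\leq |Z_{\gamma_e}(u)|$. Non-vanishing of the invariant forces $\mathcal{M}_{0,\gamma}(\mathfrak{X},L_u)$ to be non-empty, and Lemma \ref{360} places every representative in a small neighborhood of the singular fibre. Topologically such a neighborhood in the Ooguri--Vafa model retracts onto the singular nodal curve, so $H_2$ of the pair $(U,L_u)$ is generated by $\gamma_e$ together with classes pulled back from $H_2(U)$; the condition $\partial\gamma\neq 0$ (needed for the invariant to be well defined, cf.\ Theorem \ref{811}) rules out purely closed classes, and the remaining options are integer multiples $d\gamma_e$. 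Since $|Z_{d\gamma_e}|=|d|\,|Z_{\gamma_e}|$, the bound $|Z_\gamma|\leq |Z_{\gamma_e}|$ forces $|d|=1$, giving $\gamma=\pm\gamma_e$, as required.

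The main technical obstacle is not the $S^1$-family regularity, which is essentially automatic, but rather ruling out ``bubble'' contributions to the Kuranishi boundary at small energy: one must verify that the assumptions of Theorem \ref{811} hold for $u$ sufficiently close to the singularity, i.e.\ that no nontrivial decomposition $\gamma_e=\gamma_1+\gamma_2$ with both $\gamma_i$ representable by holomorphic cycles in some $X_\vartheta$ can occur below the relevant energy. This follows from primitivity of $\gamma_e$ together with the energy bound from Lemma \ref{360}, which prevents any such bubbling within the Ooguri--Vafa neighborhood.
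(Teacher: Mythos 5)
Your overall strategy --- localize near the singular fibre via Lemma \ref{360}, identify the local geometry with the Ooguri--Vafa model, and import the count of the unique regular disc --- is the same as the paper's, and your minimality argument (relative $H_2$ of the neighborhood is generated by the thimble, so small-energy classes with $\partial\gamma\neq 0$ are forced to be $\pm\gamma_e$) matches the paper's use of $H_2(X',L')\cong\mathbb{Z}$ for $X'=\mathbb{C}^2\setminus\{xy=\epsilon\}$. The gap is in how you transfer the count from the model to the actual K3. You conclude that $\mathcal{M}_{0,\gamma_e}(\mathfrak{X},L_u)$ is a single transversally cut out point by combining Theorem \ref{60} (existence of one honest disc near the approximate one) with the quantitative implicit function theorem (Proposition \ref{30}). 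But that IFT gives uniqueness only within a small ball around the approximate special Lagrangian disc; it does not rule out other holomorphic representatives of $\gamma_e$ elsewhere in the neighborhood. The maximum-principle uniqueness you cite holds in the genuine Ooguri--Vafa space, not for the perturbed Ricci-flat metric, and $C^1$-closeness of metrics (Corollary \ref{800}) does not by itself propagate uniqueness of holomorphic discs. So as written you have shown the invariant receives a contribution of $\pm 1$ from one component, not that it equals $1$.

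The paper avoids this issue entirely by a cobordism argument rather than a pointwise identification of the moduli space: it restricts to the preimage of an $\epsilon$-neighborhood, writes $\omega_{K3}=\omega_{OV}+i\partial\bar{\partial}\phi$, and constructs an explicit path of hyperK\"ahler triples $(\underline{X},\omega_t,\Omega_t)$ joining the restricted K3 to the Ooguri--Vafa space by solving a family of complex Monge--Amp\`ere equations with Dirichlet boundary data $t\phi$ (existence from Guan--Li, plus a uniqueness argument via the arithmetic--geometric mean inequality). Since $H_2(X',L')\cong\mathbb{Z}$ is generated by the thimble, there is no splitting $\gamma_e=\gamma_1+\gamma_2$ producing codimension-one boundary of the parametrized moduli space, so by the argument of Proposition \ref{2012} / Theorem \ref{64} the invariant is constant along the path and equals the Ooguri--Vafa value, which is computed by the maximum principle there. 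To repair your version you would either need this deformation argument, or a Gromov-compactness argument showing that for the metric sufficiently close to $\omega_{OV}$ every disc in class $\gamma_e$ lies in the IFT neighborhood of the model disc; neither step is present in your write-up.
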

\begin{proof}
Assume $X$ is a K3 surface with special Lagrangian fibration around
large complex limit. We will replace $X$ by the
preimage of a $\epsilon$-neighborhood of singular point on the base,
with the topology same as Ooguri-Vafa space. Notice that we still
have Gromov's compactness theorem for holomorphic discs with small area in $X$ because of Lemma \ref{360}.

First view the Ooguri-Vafa space as an elliptic fibration. Assume
$\omega_{K3}=\omega_{OV}+i\partial \bar{\partial}\phi$, where $\phi$
is a smooth function.
\begin{lem}
There is a path of hyperK\"ahler triples $X_t=(\underline{X}, \omega_t,
\Omega_t), t\in [0,1]$ connecting the restriction of K3 and the Ooguri-Vafa
space, keeping the elliptic fibration structure.
\end{lem}
\begin{proof}It suffices to prove that the existence and uniqueness of solution to complex Monge-Amp\`ere equation with Dirichlet boundary condition (\ref{1006}) and the solution is smoothly depends on the boundary condition. Take
$u_t=t\phi$ and by the estimate\footnote{Actually we only need the $C^0$-estimate of the difference between approximate metric and Ricci-flat metric here} in Corollary \ref{800},  there exists a
non-negative constant $\epsilon_t$, such that $\epsilon_t=0 $ for
$t\in \{0,1\}$, and $u_t$ are subsolutions of the below equation
\begin{align}\label{1006}
   \begin{cases} (\omega_{OV}+i\partial \bar{\partial}u_t)^2=
   (\frac{1}{2}-\epsilon_t)\Omega\wedge\bar{\Omega} \\
   u_t|_{\partial X}=t\phi  \end{cases}
\end{align}
Therefore, the Theorem 1.1 in \cite{GL} provides a solution for the
Dirichlet problem in (\ref{1006}). Next we want to prove uniqueness of the solution the equation (\ref{1006}). Assume there are two
solutions $\psi$,$\phi$ then we have
   \begin{align*}
      \mbox{det}(g_{i\bar{j}}+\phi_{i\bar{j}})=\mbox{det}(g_{i\bar{j}}+\psi_{i\bar{j}})
   \end{align*} or can rewrite it as
   \begin{align*}
      \mbox{det}(g_{i\bar{j}}+\phi_{i\bar{j}}+(\psi_{i\bar{j}}-\phi_{i\bar{j}}))\mbox{det}(g_{i\bar{j}}+\phi_{i\bar{j}})^{-1}=1
   \end{align*}
By arithmetic-geometric mean inequality,
    \begin{align*}
      \frac{1}{2}[2+\Delta'(\psi-\phi)]\geq 1,
    \end{align*} where $\Delta'$ is the Laplacian of metric
    $(g_{i\bar{j}}+\phi_{i\bar{j}})$. Therefore, $\psi-\phi$ is
    subharmonic with respect to $\Delta'$. Since $\psi$ and $\phi$
    are smooth functions, we may add a constant and assume
    $\psi-\phi \geq 0$. Then
    \begin{align*}
       0=\int_X
       \Delta'(\psi-\phi)^2=2\int_X(\psi-\phi)\Delta'(\psi-\phi)+2\int_X|\nabla'(\psi-\phi)|^2
    \end{align*}
   All the terms arises from integration by part vanishes because $\psi$ and
   $\phi$ satisfy the same Dirichlet boundary condition. The first term of right hand side is non-negative implies
   $\nabla'(\psi-\phi)=0$ or $\phi$, $\psi$ differ by a constant.
\end{proof}

The ambient space $\underline{X}$ is diffeomerphic to 
  \begin{align*}
    X':=\mathbb{C}^2 \backslash \{xy-\epsilon=0\}.
  \end{align*}
There exists a $T^2$-fibration on $X'$ with one $I_1$-singular fibre given by 
  \begin{align*}
     & X'\longrightarrow \mathbb{R}^2  \\
      &  (x,y) \mapsto (|x|^2-|y|^2,\log{|xy-\epsilon|}).
  \end{align*} Let $L'$ be any smooth torus fibre. In particular, one have 
   \begin{align*}
     H_2(X,L_u)\cong H_2(X',L') \cong \mathbb{Z},
   \end{align*} which is generated by the Lefschetz thimble. Now the theorem follows from the cobordism argument similar to Proposition \ref{2012}.

\end{proof}

Similar argument also applies to multiple cover of the initial discs:
\begin{thm}\label{1033}
 Let $\gamma_e$ be the Lefschetz thimble and $d\in \mathbb{Z}$, then
   there exists a neighborhood $\mathcal{U}$ might depending on $d$ such that for $u\in \mathcal{U}$,  there are no relative classes $\gamma$ with $\partial\gamma\neq 0$, $\tilde{\Omega}^{Floer}(\gamma;u)\neq 0$ and $|Z_{\gamma}|<|Z_{d\gamma_e}|$ unless $\gamma=d'\gamma_e$, $|d'|<|d|$.
\end{thm}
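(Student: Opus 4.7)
\medskip

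\noindent\textbf{Proof proposal.} The plan is to mimic the localization argument used in Theorem \ref{43}, but with an energy bound depending on $d$. By Corollary \ref{350}, the central charge $Z_{\gamma_e}$ vanishes at the singularity of the affine structure, so by shrinking a neighborhood $\mathcal{U}$ of the singular point we may assume $|Z_{d\gamma_e}(u)|$ is as small as we please for $u\in\mathcal{U}$. In particular, for any relative class $\gamma$ with $|Z_{\gamma}(u)|<|Z_{d\gamma_e}(u)|$ that admits a holomorphic disc representative with boundary on $L_u$, the symplectic area of the disc (which equals $|Z_{\gamma}|$ in the relevant complex structure $X_{\vartheta}$ by \eqref{736}) is bounded by the chosen small constant.

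First I would invoke Lemma \ref{360} with $\delta$ chosen small enough that the $\delta$-neighborhood of the singular fibre is diffeomorphic to a neighborhood of the $I_1$-fibre in the local model $X'=\mathbb{C}^2\setminus\{xy=\epsilon\}$ discussed at the end of the proof of Theorem \ref{43}. Then fix $\epsilon(\delta)$ from Lemma \ref{360} and shrink $\mathcal{U}$ further so that $|Z_{d\gamma_e}(u)|<\epsilon(\delta)$ for every $u\in\mathcal{U}$. This forces every pseudo-holomorphic disc contributing to $\tilde{\Omega}^{Floer}(\gamma;u)$ (with $\partial\gamma\neq 0$ and $|Z_{\gamma}|<|Z_{d\gamma_e}|$) to lie entirely inside this local neighborhood. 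Since $H_2(X',L')\cong\mathbb{Z}\langle\gamma_e\rangle$, the relative class of any such disc must be of the form $d'\gamma_e$ for some integer $d'$, and the energy bound $|Z_{d'\gamma_e}|=|d'|\cdot|Z_{\gamma_e}|<|d|\cdot|Z_{\gamma_e}|=|Z_{d\gamma_e}|$ immediately gives $|d'|<|d|$.

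Second, I would show that the reduced invariant $\tilde{\Omega}^{Floer}(\gamma;u)$ actually only sees these local discs. For this I would run the cobordism argument of Theorem \ref{43} verbatim, interpolating between the restricted Ricci-flat K3 metric on the preimage of $\mathcal{U}$ and the Ooguri-Vafa metric through a path of hyperK\"ahler triples $(\underline{X},\omega_t,\Omega_t)$ preserving the elliptic fibration; the existence and uniqueness of the interpolating family follows from the same Dirichlet-problem argument for the Monge--Amp\`ere equation used there, together with the estimates of Corollary \ref{800}. Applying Theorem \ref{64} with energy cutoff $E_0$ slightly larger than $|Z_{d\gamma_e}|$ identifies $\tilde{\Omega}^{Floer}(\gamma;u)$ on the K3 side with its counterpart in the Ooguri-Vafa space, and on the Ooguri-Vafa side every nontrivial contribution comes from a multiple of $\gamma_e$.

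The main obstacle is verifying that the boundary contribution in the compatibility formula \eqref{61} of Theorem \ref{64} vanishes in the cobordism step. A splitting $\gamma=\gamma_1+\gamma_2$ with both $\mathcal{M}_{1,\gamma_i}(\mathfrak{X}_t,L)$ nonempty would produce a real codimension one boundary, which a priori could contribute. However, by the first paragraph every such $\gamma_i$ with $\partial\gamma_i\neq 0$ and $|Z_{\gamma_i}|<|Z_{d\gamma_e}|$ is already forced to be a multiple of $\gamma_e$, so the central charges $Z_{\gamma_1},Z_{\gamma_2}$ are collinear in $\mathbb{C}$ and by Lemma \ref{27} (together with the pairing argument from Lemma \ref{50}, which is proportional to $\langle\partial\gamma_1,\partial\gamma_2\rangle=0$) the corresponding fibre product carries a trivial contribution. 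The splittings with $\partial\gamma_i=0$ on one factor are handled by inducting on $|d|$ using the already established case $d=1$ of Theorem \ref{43}, so the invariant extends unchanged across the cobordism, completing the argument.
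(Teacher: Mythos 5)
Your proposal is correct and follows essentially the same route the paper intends: the paper offers no separate proof of Theorem \ref{1033} beyond the remark that the ``similar argument'' of Theorem \ref{43} applies, and your write-up is precisely that argument — Lemma \ref{360} plus the $d$-dependent energy cutoff to confine the discs to the local model $X'$ with $H_2(X',L')\cong\mathbb{Z}\langle\gamma_e\rangle$, followed by the Monge--Amp\`ere interpolation to Ooguri--Vafa and the cobordism of Theorem \ref{64}. Your extra paragraph checking that the codimension-one boundary terms vanish (collinear splittings give $\langle\partial\gamma_1,\partial\gamma_2\rangle=0$) is a detail the paper leaves implicit but is consistent with its treatment elsewhere.
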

We will compute the multiple cover formula for the invariants $\tilde{\Omega}^{Floer}(\gamma_e;u)$ when $u$ is near the singularity of the affine structure in the next section.
\subsection{Multiple Cover Formula of the Initial Holomorphic Discs} \label{1016}
To prove the correspondence theorem, one ingredient is multiple cover
formula of the holomorphic discs discussed in Theorem \ref{43}. Use
argument similar to Theorem \ref{43}, it suffices to compute the
multiple cover contribution from the following local model below:
$X=T^*\mathbb{P}^1$ and $L\cong \mathbb{R}^1\times S^1$ is the fixed
locus of an anti-symplectic, anti-holomorphic involution $\iota$. Using localization, the multiple cover of disc
invariants in certain cases are found in \cite{KS3}\cite{GZ}\cite{PSW}.

We first construct an special Lagrangian fibration with respect to
Iguchi-Hanson metric with $L$ one of the fibre. Let
$X=T\mathbb{P}^1$ be the blow-up of $\mathbb{C}^2/\mathbb{Z}_2$ at
the origin. Let $(y,\lambda),(x,\mu)$ be the coordinate chart on
$X$, where $x=1/y$ and $\mu=\lambda y^2$. Then there is an natural
$S^1$-action on $X$ preserving $\lambda y=\mu x$. The Iguchi-Hanson
metric $\omega_{EH}$ is invariant under this $S^1$-action and thus
   \begin{align*}
      X & \longrightarrow \mathbb{R}^2 \\
     (y,\lambda)&\mapsto (\mu_{S^1}, \mbox{Re}(\lambda y))
   \end{align*}
is a special Lagrangian fibration with smooth fibres homeomorphic to
$\mathbb{R}\times S^1$ \cite{G2}. Let $\iota: (y,\lambda)\mapsto
(\bar{y},\bar{\lambda})$ be an involution which is both anti-holomorphic and
anti-symplectic. The fixed locus of $\iota$ is a special
Lagrangian fibre $L$. Moreover, by maximal principle there exists a
unique simple holomorphic disc (up to reflection $\iota$) in $X$
with boundary on $L$. This is a local model near the singularity of
Ooguri-Vafa space. Locally around the singularity of the fibration,
Iguchi-Hanson metric and Ooguri-Vafa metric are both $S^1$ invariant
and thus arise from Gibbons-Hawking ansatz (section 2.6 \cite{GW}).
Therefore, there is a family of hyperK\"ahler triple from
Gibbons-Hawking ansatz connecting these two spaces. Similar to the Ooguri-Vafa example, there is a unique embedded holomorphic disc in
each space by maximal principle. Therefore, pseudo-isotopy of Kuranishi structure along
this family guarantees we can just compute the multiple cover
formula for moduli space
$\mathcal{M}_d=\mathcal{M}_{d\gamma}(\mathfrak{X},L)$ of discs in
$X=T^*\mathbb{P}^1$ (with $S^1$-family of complex structures by
hyperK\"ahler rotation) with boundary on $L$ and image only mapping
to certain side of the equator.

Let $\gamma$ be the unique (up to sign) relative class bounding
simple holomorphic disc and $\tilde{\gamma}$ is the homology class
of zero section $\mathbb{P}^1$ in $X$. Since the invariant
$\tilde{\Omega}^{Floer}(d\gamma)$ is independent of the choice of
Kuranishi structures, we will choose the Kuranishi structure for
computation purpose as follows: The obstruction bundle $F_d$ is an
orbi-bundle over an orbifold $M_d$ and the Kuranishi map is just the
zero section. For each point $[f]\in \mathcal{M}_d$, it has a
corresponding point in $M_d$ again denoted by $[f]$ and
   \begin{align*}
     F_d|_{[f]}=H^1(D^2,\partial D^2;f^*TX,f^*TL)/\mathbb{R},
   \end{align*}where the quotient $\mathbb{R}$-factor is induced by
   the $S^1$-family of complex structures by hyperK\"ahler rotation.
Also the tangent space of $M_d$ at $[f]$ is
   \begin{align*}
      TM_d|_{[f]}=H^0(D^2,\partial D^2;f^*TX,f^*TL)/Aut(D^2).
   \end{align*}
 Notice that
$\mathcal{M}_d$ is bijective with the moduli space
$\tilde{\mathcal{M}}_d$ of real rational curves of degree $d$ in a
twistor family of $X$. (Notice that this point is quite different
from the situation of \cite{PSW}) Therefore, we double the Kuranishi
structure on $\mathcal{M}_d$ and equip $\tilde{\mathcal{M}}_d$ with
an $\iota$-equivariant Kuranishi structure under this
identification. We may choose the perturbed multi-section on
$\tilde{M}_d$ to be $\iota$-invariant. Therefore,
 \begin{align}\label{320}
    \tilde{\Omega}^{Floer}(d\gamma):&=Corr_*(\mathcal{M}_d;tri,tri)(1)\notag \\
                            &=\frac{1}{2}Corr_*(\tilde{\mathcal{M}}_d;tri,tri)(1)
 \end{align}
The Kuranishi structure on $\tilde{\mathcal{M}}_d$ is a smooth
closed orbifold $\tilde{M}_d$ of real dimension $2d-2$ with an
orbibundle $\tilde{F}_d$ and Kuranishi map is the zero section. In
particular, (\ref{320}) is just the top Chern class of the
orbibundle $\tilde{F}_d$. We will use localization to compute this
top Chern class. Using the fact that $L$ is the fixed locus of
$\iota$, which is both anti-holomorphic and anti-symplectic, one
concludes that maps in $\mathcal{M}_d$ is a $d$-fold cover from
 trees of disks to the zero section $\mathbb{P}^1$ with boundary in $S^1$.
Then there is only two torus fixed point in the moduli space
 $\tilde{\mathcal{M}}_d$, namely,
 the doubling of $f_d$ given by $z \mapsto z^d$ composed with the only embedded
 disc (we will again denote it by $f_d$) and its reflection under the involution $\iota$.

Conversely, the involution $\iota$ can double the holomorphic disc into a rational curve $\mathbb{P}^1$. Moreover, the bundle pair $(f_d^*TX, f^*dTL)$ can doubled to a vector bundle over the $\mathbb{P}^1$ which will be denoted by $(f_d^*TX,f_d^*TL)_{\mathbb{C}}$.

 Let $(a)$ denote the complex line with the $U(1)$-action of weight $a$;
 while $(0)_{\mathbb{R}}$ denotes the real line with the trivial $U(1)$-action.
It is easy to see that the doubling of bundle
\begin{align*}
(f_d^*TX,f_d^*TL)_{\mathbb{C}}\cong \mathcal{O}_{\mathbb{P}^1}(-2).
\end{align*}
 Then straightforward computation shows
\begin{align*}
 &\mbox{Aut}(D^2) = (1/d) + (0)_{\mathbb{R}}, \hspace{50mm} \mbox{ (real dimension $3$)}\\
 &H^0( D^2,\partial D^2; f_d^*TX, f_d^*TL ) =\oplus_{j=1}^d(j/d) + (0)_{\mathbb{R}}.   \quad \quad \mbox{ (real dimension $2d+1$)}
\end{align*}
 So together we have
  \begin{align*}
   T\tilde{M}_d|_{[f_d]}=\oplus_{j=2}^d(j/d).
  \end{align*}
On the other hand,
\begin{align*}
 H^1(D^2,\partial D^2; f_d^*TX, f_d^*TL) = \oplus_{j=1}^{d-1}(-j/d) + (0)_\mathbb{\mathbb{R}} \qquad \mbox{ (real dimension
 $2d-1$)}.
\end{align*}
 So the fibre of obstruction bundle $\tilde{F}_d$ at $f_d$ is given by
 \begin{align*}
   \tilde{F}_d|_{[f_d]} =\oplus_{j=1}^{d-1}(-j/d)
  \end{align*}
because the counting in $S^1$-family is equivalent to changing the
obstruction bundle by $(0)_{\mathbb{R}}$. Finally we get

\begin{align*}
   \tilde{\Omega}^{Floer}(d\gamma)=\frac{1}{2}\int_{\tilde{M}_d}e_{U(1)}(\tilde{F}_d)= \frac{1}{|Aut(f_d)|} \frac{e_{U(1)}(\tilde{F}_d|_{[f_d]})}{e_{U(1)}(T\tilde{M}_d|_{[f_d]})} =\frac{(-1)^{d-1}}{d^2},
\end{align*}which coincides with the multiple cover formula of
$\tilde{\Omega}^{trop}$ for initial discs (\ref{321}). Together with Theorem \ref{1033}, we
prove the multiple cover formula for initial discs:
\begin{thm}\label{301}
  Let $\gamma_e$ be the relative class of Lefschetz thimble around an $I_1$-type singular fibre, then given any $d_0\in \mathbb{N}$, there exists a non-empty neighborhood $\mathcal{U}$ of the singularity such that for each $u\in \mathcal{U}$, we have 
    \begin{align*}
     \tilde{\Omega}^{Floer}(d\gamma_{e}; u)=\frac{(-1)^{d-1}}{d^2}, \mbox{ for every integer $d$, $|d|\leq d_0$}.
    \end{align*}
     Moreover, for $u$ close enough to the singularity,
     $\pm\gamma_e$ are the only classes achieve minimum energy with $\tilde{\Omega}^{Floer}(\gamma)\neq 0$.
\end{thm}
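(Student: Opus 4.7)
The plan is to combine the local rigidity established in Theorems \ref{43} and \ref{1033} with a reduction to a local model on $T^*\mathbb{P}^1$, and then to evaluate the resulting Euler class by equivariant localization. First, given $d_{0}$, I would use Theorem \ref{1033} to pick a neighborhood $\mathcal{U}$ of the singular point such that, for every $|d|\leq d_{0}$ and every $u\in\mathcal{U}$, the only relative classes $\gamma$ with $\tilde{\Omega}^{Floer}(\gamma;u)\neq 0$ and $|Z_\gamma|\leq|Z_{d\gamma_e}|$ are of the form $d'\gamma_{e}$ with $|d'|\leq|d|$. This reduces the computation for each $d$ to the moduli space of $d$-fold multiple covers of the unique simple initial disc supplied by Theorem \ref{43}.

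Next, I would use the metric-independence statement of Corollary \ref{216} together with the pseudo-isotopy invariance of the Kuranishi structure (Theorem \ref{64}) to interpolate the K3 data, which near the singular fibre is $S^{1}$-invariant and arises from a Gibbons--Hawking ansatz, to the standard Eguchi--Hanson ansatz on $X=T^{*}\mathbb{P}^{1}$, with $L$ taken to be the real Lagrangian fixed by the anti-holomorphic, anti-symplectic involution $\iota:(y,\lambda)\mapsto(\bar{y},\bar{\lambda})$. The one-parameter family of Gibbons--Hawking metrics connects the two situations through Ricci-flat metrics preserving the torus fibration, and by Proposition \ref{48} and maximal principle there is a unique simple disc at every stage. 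Thus $\tilde{\Omega}^{Floer}(d\gamma_{e};u)$ equals the multiple cover contribution computed in the local model.

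In the local model, the involution $\iota$ doubles each $d$-fold multiple-cover disc $f_{d}$ to an honest degree-$d$ rational curve in the twistor family, giving an identification of $\mathcal{M}_{d}$ with half of a moduli $\tilde{\mathcal{M}}_{d}$ carrying an $\iota$-equivariant Kuranishi structure of the form $(\tilde{M}_{d},\tilde{F}_{d},0)$ with $\tilde{F}_{d}\to \tilde{M}_{d}$ an orbibundle and zero Kuranishi map. The doubled bundle pair satisfies
\begin{equation*}
   (f_{d}^{*}TX,f_{d}^{*}TL)_{\mathbb{C}}\cong \mathcal{O}_{\mathbb{P}^{1}}(-2)\oplus \mathcal{O}_{\mathbb{P}^{1}}(2),
\end{equation*}
so $\tilde{\Omega}^{Floer}(d\gamma_{e})=\tfrac{1}{2}\int_{\tilde{M}_{d}}e_{U(1)}(\tilde{F}_{d})$. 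With respect to the standard $U(1)$-action on $T^{*}\mathbb{P}^{1}$, only $f_{d}$ and $\iota\circ f_{d}$ are fixed; the weight computation gives $T\tilde{M}_{d}|_{[f_{d}]}=\bigoplus_{j=2}^{d}(j/d)$ and $\tilde{F}_{d}|_{[f_{d}]}=\bigoplus_{j=1}^{d-1}(-j/d)$ after quotienting out the trivial real factors $\mathrm{Aut}(D^{2})$ and the $S^{1}$-family direction. The Atiyah--Bott localization formula then yields
\begin{equation*}
   \tilde{\Omega}^{Floer}(d\gamma_{e})=\frac{1}{|\mathrm{Aut}(f_{d})|}\cdot\frac{e_{U(1)}(\tilde{F}_{d}|_{[f_{d}]})}{e_{U(1)}(T\tilde{M}_{d}|_{[f_{d}]})}=\frac{(-1)^{d-1}}{d^{2}}.
\end{equation*}
The final "Moreover" assertion is immediate from Theorem \ref{43}, which already isolates $\pm\gamma_{e}$ as the unique minimal-energy classes with nonzero invariant near the singularity.

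The main obstacle is the pseudo-isotopy step: I need to verify that the $\iota$-equivariant Kuranishi structure chosen on the local model can be realized as the restriction of a globally compatible Kuranishi structure on $\mathcal{M}_{d\gamma_e}(\mathfrak{X},L_u)$ throughout the Gibbons--Hawking interpolation, so that the orientations, the $S^{1}$-family trivial factor $(0)_{\mathbb{R}}$, and the doubling under $\iota$ are all consistent. Once this is in place, the weight bookkeeping is routine and the stated formula drops out.
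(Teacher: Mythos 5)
Your proposal follows essentially the same route as the paper: reduction to the local model $T^*\mathbb{P}^1$ with the anti-holomorphic, anti-symplectic involution via the Gibbons--Hawking interpolation and pseudo-isotopy of Kuranishi structures, doubling the discs to real rational curves in the twistor family, and evaluating the Euler class of the obstruction orbibundle by equivariant localization with exactly the weights $T\tilde{M}_d|_{[f_d]}=\oplus_{j=2}^d(j/d)$ and $\tilde{F}_d|_{[f_d]}=\oplus_{j=1}^{d-1}(-j/d)$ that the paper uses. The concluding appeal to Theorem \ref{1033} and Theorem \ref{43} for the neighborhood $\mathcal{U}$ and the minimal-energy statement also matches the paper's argument.
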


\begin{rmk}
The author conjecture that the neighborhood $\mathcal{U}$ in Theorem \ref{1033} and Theorem \ref{301} can be chosen independent of $d_0\in \mathbb{Z}$.
\end{rmk}

\begin{rmk}
  It is not enough to show that the initial data $\Omega^{Floer}(\gamma,u)=1$ for $u$
  near the singularity from the observation that there is a unique
  simple disc. More than that, one also needs a correct multiple cover formula Theorem \ref{301}.
\end{rmk}

\begin{rmk}
  For general $I_n$ type singular fibres, we can deform the elliptic K3 surfaces (via a family of elliptic K3 surfaces) into the one with only $I_1$-type singular fibres. Each $I_n$-type singular fibre will deform to $n$ copies of $I_1$-type singular fibres in a neighborhood with parallel initial rays. Thus, from Theorem \ref{301}, the initial data associated to $I_n$-type singular fibres is 
   \begin{align} \label{1333}
   \tilde{\Omega}^{Floer}(d\gamma_e)=\frac{n(-1)^{d-1}}{d^2}. 
   \end{align}

\end{rmk}

\subsection{Corresponding Theorem}
The classical way of constructing tropical discs is taking certain
adiabatic limit of the images of the holomorphic discs under the
fibration. However, this method usually involves hard analysis and
we don't know much about the Calabi-Yau metric. Therefore, we
introduce here another point of view of tropical discs from the
locus of Lagrangian fibres bounding the prescribed class of
holomorphic discs.

We want to prove that all holomorphic discs in this $S^1$-family are all from "scattering"-
gluing of these (multiples of) initial discs. Assume $\gamma_u\in H_2(X,L_u)$ is represented as a holomorphic disc
with boundary on $L_u$ such that
$\tilde{\Omega}^{Floer}(\gamma;u)\neq 0$. By changing $u$ to a generic nearby point, we may assume $\mbox{Arg}Z_{\gamma_u}=0$ and is generic. There is an affine half line $l$ emanating from $y$
on the base such that $Z_{\gamma_t}$ is a
decreasing function of $t\in l$, where $ \gamma_t$ is the
parallel transport of $\partial \gamma_u$ along $l$. 
The function $Z_{\gamma_t}$ is a strictly decreasing function along $l$ without lower bound. Therefore, there is some point $u'\in l$ such that
$Z_{\gamma_{u'}}$. Thus, there are two cases:
  \begin{enumerate}
    \item If $\tilde{\Omega}^{Floer}(\gamma;u)=\tilde{\Omega}^{Floer}(\gamma;u')$ then $L_{u'}$ is a singular fibre by Lemma \ref{360}.  In particular,
    if $L_{u'}$ is of $I_1$-type singular fibre then $\gamma_{u'}$ is represented by multiple cover of the unique area
    minimizing holomorphic disc and $\partial \gamma_u$ is the parallel of $\gamma_{u'}$ along
    $l$.
    \item If $\tilde{\Omega}^{Floer}(\gamma;u)\neq \tilde{\Omega}^{Floer}(\gamma;u')$ then from the Proposition \ref{310} and Theorem \ref{64}, there exists
    $\gamma_{n,u'}$ in the same phase with $\gamma_{u'}$ such that
    $\sum_n k_n\gamma_{n,u'}=\gamma_{u'}$ and $\tilde{\Omega}^{Floer}(\gamma_n;u')\neq 0$. Then we replace $\gamma$ by $\gamma_{i,u'}$ and repeat the same
    processes. The procedure will stop at finite time. Indeed, if there are infinitely many jumps of invariants before $Z_{\gamma_t}$ becomes negative, then there is a limit points $u_0$ where the invariants jump. If $u\in B_0$, then it contradicts to Gromov compactness theorem. If $u\in \Delta$, then it contradicts to the minimality of the initial disc (Theorem \ref{43}). By induction, every holomorphic
    disc with nontrivial invariant give rise to a tropical disc,
    which is formed by the union of the affine segments on the base.
    In particular, the balancing conditions are guaranteed by the
    conservation of charges $\sum_n k_n\gamma_{n,u'}=\gamma_{u'}$ at
    each vertex $u'$.
  \end{enumerate} To sum up, we proved the following theorem by this
  attractor flow mechanism \cite{DM} of holomorphic discs.

\begin{thm} \label{47}
  Let $X$ be an elliptic K3 surface (singular fibres not necessarily of $I_1$-type). For every relative class $\gamma\in H_2(X,L_u)$ with
  $\tilde{\Omega}^{Floer}(\gamma;u)\neq 0$ can be associated an image of a
  tropical disc $\phi$ such that $[\phi]=\gamma$. Moreover, the
  symplectic area of the holomorphic disc is just the total affine
  length of the corresponding tropical disc.
\end{thm}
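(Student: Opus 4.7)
The plan is to run an attractor-flow argument on the base, reversing the ``scattering'' that builds up non-primitive classes out of the initial Lefschetz thimbles. Fix $\gamma\in H_2(X,L_u)$ with $\tilde{\Omega}^{Floer}(\gamma;u)\neq 0$. After perturbing $u$ within a chamber of $W_{\gamma}$ (which is harmless by Proposition \ref{2012} and Corollary \ref{216}), I may assume $\mbox{Arg}\,Z_{\gamma}(u)$ is generic and, after rotating, equal to $0$. By Lemma \ref{34} and Corollary \ref{350}, $dZ_{\gamma}$ is nowhere zero on $B_{0}$, so the level sets of $\mbox{Arg}\,Z_{\gamma}$ are smooth and $|Z_{\gamma_{t}}|$ is strictly monotone along the affine ray $l\subseteq B_{0}$ emanating from $u$ in the direction along which $\mbox{Im}\,Z_{\gamma}$ decreases. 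Here $\gamma_{t}$ is the parallel transport of $\gamma$ along $l$, so $l$ carries a constant phase.

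Next I flow along $l$ and analyze what can happen. Because $|Z_{\gamma_{t}}|$ is monotone decreasing and unbounded from below (Corollary \ref{350}), one of three things must occur at the first ``event'' $u'\in l$: either $L_{u'}$ is a singular fibre, or $u'$ lies on a wall $W_{\gamma}$ across which $\tilde{\Omega}^{Floer}(\gamma;\cdot)$ jumps, or $|Z_{\gamma_{u'}}|$ reaches $0$ and $L_{u'}$ must again be singular by Lemma \ref{360}. In the ``singular fibre'' case, Theorem \ref{43} (and the discussion of multi-covers in Theorem \ref{301}) identifies $\gamma_{u'}$ with a multiple of a Lefschetz thimble $\gamma_{e}$, which is the base case of a tropical disc by Definition \ref{322} (the edge terminates at $\Delta$ in the monodromy-invariant direction, exactly as required). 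In the ``wall'' case, Proposition \ref{310} combined with the cobordism given by Theorem \ref{64} forces a decomposition $\gamma_{u'}=\sum_{n} k_{n}\gamma_{n,u'}$ with $\mbox{Arg}\,Z_{\gamma_{n,u'}}(u')=\mbox{Arg}\,Z_{\gamma_{u'}}(u')$ and $\tilde{\Omega}^{Floer}(\gamma_{n};u')\neq 0$ for each summand. At this vertex I attach outgoing edges in the affine directions dictated by the $\partial\gamma_{n,u'}$ (with multiplicities $k_{n}$) and recurse on each piece.

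Iterating this procedure produces, for every holomorphic disc class with non-vanishing invariant, a rooted tree of affine segments on $B_{\vartheta}$ together with weights. The balancing condition at each interior vertex is automatic: it is exactly the charge conservation $\sum_{n}k_{n}\partial\gamma_{n,u'}=\partial\gamma_{u'}$ induced by the decomposition, combined with the fact that the $Z_{\gamma_{n,u'}}$ share a common phase (so their gradient directions lie on a common affine line through $u'$, giving the requisite primitive vectors via the map (\ref{1009})). By Definition \ref{303}, the central charges add, and by construction the symplectic area $\int_{\gamma}\omega_{\vartheta}=-\mbox{Im}(e^{-i\vartheta}Z_{\gamma})$ telescopes over the tree into the sum of the affine lengths of its edges, yielding the second assertion.

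The main obstacle is proving termination of the recursion: a priori one could imagine an infinite sequence of wall crossings before the flow meets $\Delta$. This is where I would invest the most care. If the sequence of events $u'_{1},u'_{2},\dots$ along $l$ accumulated at some $u_{0}\in B_{0}$, the Gromov compactness for holomorphic discs bounded by the nearby smooth fibres (used throughout Section 4) would force infinitely many distinct relative classes of bounded area to be represented at $u_{0}$, contradicting Gromov compactness together with the discreteness of $H_{2}(X,L_{u_{0}})$. If $u_{0}\in\Delta$ then the minimality statement of Theorem \ref{43} (initial discs achieve the smallest area among those with nonzero invariant near the singularity) forces the sequence to become a sequence of initial-disc classes after finitely many steps, and the flow terminates. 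Finally, the process branches into finitely many children at each vertex (again by Gromov compactness applied to the finite list of decompositions appearing in Proposition \ref{310}), so the resulting tree is finite, completing the construction of the tropical disc.
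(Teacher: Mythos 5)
Your proposal is correct and follows essentially the same route as the paper: the attractor-flow argument along the constant-phase affine ray, the dichotomy between hitting a singular fibre (base case of Lefschetz thimbles) and crossing a wall where the invariant jumps (recursion via Proposition \ref{310} and Theorem \ref{64}), balancing from charge conservation, and termination by Gromov compactness on $B_0$ together with the minimality of the initial disc at $\Delta$ (Theorem \ref{43}). Your write-up is in fact somewhat more careful than the paper's about monotonicity of $|Z_{\gamma_t}|$ and the finiteness of branching, but the underlying argument is the same.
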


\begin{figure}
\begin{center}
\includegraphics[height=3in,width=6in]{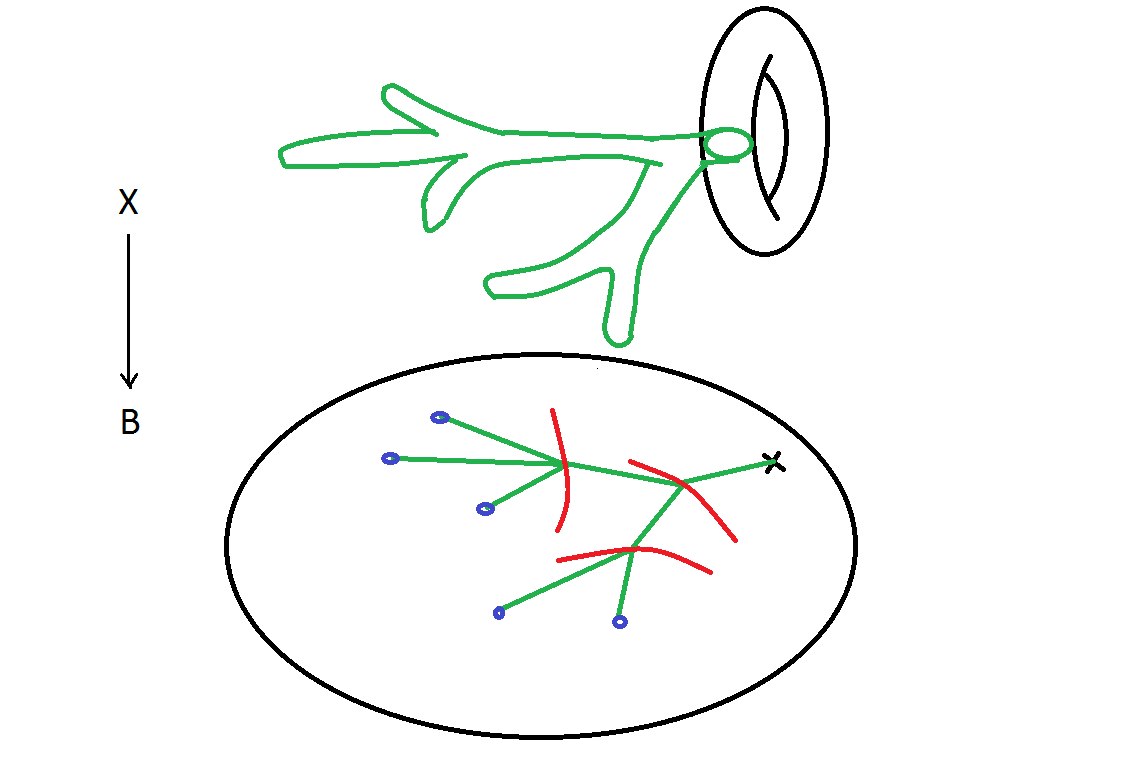}
\caption{Holomorphic discs with non-trivial invariant and its corresponding tropical disc. The red curves are walls of marginal stability and blue little circles are singularity of the affine structure}
\end{center}
\end{figure}

Theorem \ref{47} also helps to understand the topology of holomorphic discs in K3 surfaces with special Lagrangian fibration with only $I_1$-type singularities in the following sense:
\begin{cor} All the holomorphic discs with non-trivial boundary class and with non-trivial open Gromov-Witten invariants in $X$ topologically are coming from "scattering" (gluing) of discs coming frommn singularities.
\end{cor}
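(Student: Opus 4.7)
The plan is to deduce this topological statement directly from Theorem \ref{47} together with the boundary decomposition of the moduli spaces worked out in Theorem \ref{64} and the identification of the initial data in Theorem \ref{43} and Theorem \ref{301}. Given a holomorphic disc $f\colon(D^{2},\partial D^{2})\to(X,L_{u})$ representing a class $\gamma$ with $\partial\gamma\neq 0$ and $\tilde{\Omega}^{Floer}(\gamma;u)\neq 0$, Theorem \ref{47} produces a tropical disc $\phi:G\to B$ with $[\phi]=\gamma$ via the attractor flow mechanism: one flows $u$ backwards along the gradient of $|Z_{\gamma}|^{2}$, which is an affine ray on $B_{\mathrm{Arg}\,Z_{\gamma}}$, and at each wall of marginal stability $W'_{\gamma_{1},\gamma_{2}}$ the charge splits as $\gamma=\sum k_{n}\gamma_{n}$ with $\tilde{\Omega}^{Floer}(\gamma_{n})\neq 0$; the recursion terminates at singularities of the affine structure, where by Theorem \ref{43} the remaining classes must be (multiples of) Lefschetz thimbles.

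The next step is to convert the combinatorial datum $\phi$ into a topological gluing model. To each leaf of $G$ ending at a singular fibre I attach a multiple cover of the initial holomorphic disc of Theorem \ref{301}, which is topologically a disc whose boundary is the associated vanishing cycle parallel-transported to the endpoint. To each edge $e$ of $G$ I attach the cylinder $\partial\gamma_{e}\times e$, where $\partial\gamma_{e}$ is the boundary class carried along $e$ (this is the normal construction footnoted in Definition \ref{303}: the cylinder is the trace of the boundary cycle under parallel transport along the affine edge). To each trivalent interior vertex $v$ I attach a pair of pants joining the two incoming cycles to the outgoing one, and more generally to a higher-valence vertex the unique (up to homotopy) genus-zero surface with the prescribed boundaries whose existence is guaranteed by the balancing condition in Definition \ref{322}. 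The resulting piecewise-constructed surface $\Sigma_{\phi}$ has a single boundary component on $L_{u}$ realizing $\partial\gamma$, and the homology class $[\Sigma_{\phi}]=\gamma$ by the additivity of charges at vertices.

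It remains to identify $f$ with this gluing model up to homotopy rel boundary. The key point is that each wall-crossing step in the attractor flow corresponds, via the boundary decomposition (\ref{7}) of $\mathcal{M}_{0,\gamma}(\mathfrak{X},L)$ in Theorem \ref{11}, to a topological degeneration of $f$ into a nodal configuration whose components lie in classes $\gamma_{1},\gamma_{2}$; cobording across such a nodal degeneration replaces a subdomain by a cylinder-plus-pair-of-pants identical to the model attached at the corresponding vertex of $\phi$. Iterating the decomposition down to the leaves, where Theorem \ref{301} identifies the endpoint components with initial discs, realizes $f$ (up to cobordism of maps) as precisely the gluing $\Sigma_{\phi}$.

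The main obstacle is step two: making the "attach a pair of pants at each vertex" rigorous as a statement about topology of holomorphic maps rather than merely about tropical combinatorics. The termination of the iteration is also delicate and relies on the energy lower bound of Lemma \ref{360} combined with Gromov compactness, but this is already used inside the proof of Theorem \ref{47}. Given those ingredients, the corollary reduces to the observation that the ``scattering tree'' $\phi$ is literally a plumbing diagram for a topological surface assembled from Lefschetz thimbles, cylinders, and pairs of pants, which is what is meant by ``coming from scattering of discs coming from singularities.''
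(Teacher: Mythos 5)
Your proposal is correct and takes essentially the same route as the paper: the paper offers no separate argument for this corollary, treating it as an immediate consequence of Theorem \ref{47} (the attractor-flow construction of the tropical disc) together with the identification of the leaf classes as multiples of Lefschetz thimbles via Theorem \ref{43} and Theorem \ref{301}. Your explicit cylinder/pair-of-pants plumbing model is a fleshed-out version of what the paper only footnotes in Definition \ref{303}, and the obstacle you flag at higher-valence vertices is the same one the paper itself concedes there (``it is not known the suitable local model for vertex of higher valency'').
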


Below we will demonstrate a non-trivial example of wall-crossing
phenomenon of invariants $\tilde{\Omega}^{Floer}(\gamma;u)$.
\begin{ex} \label{900}
  Assume there are two initial rays emanating from two $I_1$-type singularities of phase $\vartheta_0$ intersect
  at $p\in B_0$. From Theorem \ref{60}, there are two initial holomorphic
  discs of relative classes $\gamma_1,\gamma_2$ corresponding to the initial rays which are Fredholm. Moreover, the local model provided in the proof of
  Theorem \ref{60} indicates that they intersect transversally in $L_p$. From automatic
  transversality of K3 surfaces, these two discs cannot be smoothed out in
  $L_p$. To prove that these two discs will smooth when changing the Lagrangian boundary condition, First pick two point $p_1,p_2$ near $p$ but on the
  different side of wall of marginal stability
  $W_{\gamma_1,\gamma_2}$. Let $\psi:(-\epsilon, 1+\epsilon)$ be a path on $B_0$ such that $\psi(0)=p_1$, $\psi(1)=p_2$ and intersect $W_{\gamma_1,\gamma_2}$
  once transversally at $p$. Recall $\mathcal{X}$ is the total space of twistor space of $X$ with two fibres with elliptic fibration threw
  away. Then $L_u\times S^1_\vartheta$ is a totally real torus in
  $\mathcal{X}$. Now consider an complex manifold $\mathcal{X}\times \mathbb{C}$
  with a totally real submanifold
    \begin{align*}
       \mathcal{L}=\bigcup_t (L_{\psi(t)}\times S^1_{\vartheta}).
    \end{align*}
  By our assumption, there are two regular holomorphic discs in
  $\mathcal{X}$ with boundaries in
  $L_p\times\{\vartheta_0\}\subseteq \mathcal{L}$ of relative classes again we denoted by $\gamma_1$, $\gamma_2$. The tangent of
  evaluation maps for both discs are two dimensional and
  transversal from the $C^1$-estimate in Corollary \ref{800}. By Theorem 4.1.2 \cite{BC}, these two discs can be smoothed out
into simple regular discs in $\mathcal{L}$ and the union of initial
holomorhpic discs are indeed the codimension one of the boundary of
the usual moduli space of holomorphic discs
$\mathcal{M}_{0,\gamma_1+\gamma_2}(\mathcal{X},\mathcal{L})$. By maximal principle twice, each of the
holomorphic disc falls in
$\mathcal{M}_{\gamma_1+\gamma_2}(\mathfrak{X},L_{\psi(t)})$ for some
$t$. In particular,
   \begin{align*}
     \mathcal{M}_{1,\gamma_1}(\mathfrak{X},L_p)\times_{L\times
     S^1_{\vartheta}}\mathcal{M}_{1,\gamma_2}(\mathfrak{X},L_p)\subseteq
     \mathcal{M}_{\gamma_1+\gamma_2}(\mathfrak{X},\{L_t\})
   \end{align*} as codimension one boundary. Therefore,
   \begin{align*}
      \Delta\tilde{\Omega}^{Floer}(\gamma_1+\gamma_2)&=\pm Corr_*(\mathcal{M}_{1,\gamma_1}(\mathfrak{X},L_p)\times_{L\times S^1_{\vartheta}}\mathcal{M}_{1,\gamma_2}(\mathfrak{X},L_p);tri,tri)(1)\\
      &=\pm\langle\gamma_1,\gamma_2\rangle
      \tilde{\Omega}^{Floer}(\gamma_1,p)\tilde{\Omega}^{Floer}(\gamma_2,p),
   \end{align*}
by Theorem \ref{60}, Proposition \ref{2} and Theorem \ref{32}.
Assume moreover, that the two $I_1$-type singularities on the base
are closed enough to each other. Using the same argument in the
proof of Theorem \ref{47}, the difference of the invariant appear
the side of the wall of marginal stability satisfying (\ref{31}) and
thus determined the sign.
\end{ex}

\begin{rmk}
  One can view the phenomenon in Example \ref{900} as two special Lagrangian discs smooth out along the boundaries only when their boundaries are constrained by fibres on the side of wall of marginal stability characterized by (\ref{902}). Similar phenomenon is studied by Joyce \cite{J1} for smoothing of two (conical) special Lagrangians without boundaries at a transverse intersection point in Calabi-Yau $3$-folds. Consider a family of Calabi-Yau manifolds $\{X_s\}$ and there are two special Lagrangians (of the same phase) $L_1, L_2$ in $X_0$ intersecting transversally at a point. In particular, $X_0$ falls on the locus characterized by 
    \begin{align}\label{903} 
        \mbox{Arg}Z_{[L_1]}=\mbox{Arg}Z_{[L_2]},
    \end{align} which is parallel to Proposition \ref{31}. Here, the central charge is defined by 
    \begin{align}\label{904}
      Z_{[L]}(s)=\int_{[L]}\Omega_s,
    \end{align}where $[L]\in H_3(X_s)$ and $\Omega_s \in H^{(3,0)}(X_s)\backslash \{0\}$. Notice that while (\ref{904}) is not well-defined but the locus characterized by (\ref{903}) is well-defined. Then Joyce proved that the immersed (conical) special Lagragian in the class $[L_1]+[L_2]$ by gluing the Lawlor neck only appears on the side of the wall where 
       \begin{align*}
          \frac{\mbox{Im}(Z_{[L_1]}\bar{Z}_{[L_2]})}{[L_1].[L_2]}>0.
       \end{align*} In general, if $L_1$ and $L_2$ have the all their intersection transversal then the number of immersed (conical) special Lagrangian in the class $[L_1]+[L_2]$ (from gluing the Lawlor neck) on different sides of the wall is differed by $|[L_1].[L_2]|$.
    
\end{rmk}
Notice that the jump of the invariant is exactly the same as
described in Theorem \ref{63}. We expect the wall-crossing formula
(\ref{37}) also holds for $\tilde{\Omega}^{Floer}(\gamma;u)$.
Together with Theorem \ref{301}, it will imply the following
corresponding of counting holomorphic discs and tropical discs:
\begin{conj}\label{35}
   For elliptic K3 surface with $24$ $I_1$-type singular fibres, $\tilde{\Omega}^{Floer}(\gamma;u)=\tilde{\Omega}^{trop}(\gamma;u)$ for every $u\in B_0$. In particular, the open Gromov-Witten invariant $\tilde{\Omega}^{Floer}(\gamma;u)\in
   \mathbb{Q}$.
\end{conj}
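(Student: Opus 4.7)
The plan is to proceed by induction on the filtration $|Z_\gamma|$, matching the two invariants chamber by chamber and checking that they undergo identical jumps across the walls of marginal stability. Since both $\tilde{\Omega}^{Floer}(\gamma;u)$ and $\tilde{\Omega}^{trop}(\gamma;u)$ vanish outside a discrete set of values of $|Z_\gamma|$ (by Gromov compactness on one side and the energy filtration on the other), the induction is well-posed.

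For the base case, fix $u$ sufficiently close to an $I_1$-singularity. Theorem \ref{301} asserts that $\tilde{\Omega}^{Floer}(d\gamma_e;u)=(-1)^{d-1}/d^2$ for all $d$ up to any prescribed bound, and moreover $\pm\gamma_e$ is the unique minimal-energy class supporting a nonzero Floer invariant. On the tropical side, the identity $\tilde{\Omega}^{trop}(d\gamma_e)=(-1)^{d-1}/d^2$ is precisely formula (\ref{321}). So the two invariants agree on initial data, which by Theorem \ref{47} is the only source of nonzero contributions near the singularity.

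For the inductive step, assume the equality $\tilde{\Omega}^{Floer}(\gamma';u)=\tilde{\Omega}^{trop}(\gamma';u)$ holds for every $\gamma'$ with $|Z_{\gamma'}|<|Z_\gamma|$ and every $u\in B_0$. By Proposition \ref{2012}, $\tilde{\Omega}^{Floer}(\gamma;\cdot)$ is locally constant on the complement of $W_\gamma\subseteq W'_\gamma$, and by construction $\tilde{\Omega}^{trop}(\gamma;\cdot)$ is locally constant on the complement of $W^{trop}_\gamma$. Since $W_\gamma\subseteq W''_\gamma$ and the tropical wall $W^{trop}_\gamma$ is the full codimension-one locus $W''_\gamma$, it suffices to check (i) that the two invariants agree in one chamber adjacent to each wall, and (ii) that they jump by the same amount when crossing. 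For a point far from the wall, one can connect it via a path to a neighborhood of some $I_1$-singularity and use the base case together with Theorem \ref{47} (since any nonzero contribution must come by attractor-flowing back to the initial discs) to identify the two. For the jump, the tropical formula is Theorem \ref{63}, and the Floer side can be attacked by extending the argument of Example \ref{900} and Theorem \ref{32}: at a generic wall-crossing point one exhibits the bubbled configurations at the boundary of $\mathcal{M}_{0,\gamma}(\mathfrak{X},L)$ via fibre products of moduli of subclasses, and then applies Lemma \ref{50} iteratively at each trivalent vertex of the limit tropical disc, together with the known values $\tilde{\Omega}^{Floer}(w_{ij}\gamma_i;u)=\tilde{\Omega}^{trop}(w_{ij}\gamma_i;u)$ from the inductive hypothesis.

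The main obstacle is the second half of the inductive step: Theorem \ref{32} gives the wall-crossing formula for $\tilde{\Omega}^{Floer}$ only when both $\gamma_1$ and $\gamma_2$ are primitive and the splitting is unique. When the wall supports several decompositions $\gamma=\sum k_i\gamma_i$ simultaneously, or when the $\gamma_i$ are themselves non-primitive, one must match the full Kontsevich--Soibelman product with the tropical weighted count $N^{trop}_{\{\partial\gamma_i\}}(\mathbf{w})$ of Definition \ref{1042}. Carrying this out requires a careful analysis of the boundary strata of $\mathcal{M}_{0,\gamma}(\mathfrak{X},L_u)$ of codimension greater than one (obtained by iterated fibre products along chains of walls meeting at the crossing point), together with the combinatorics of Aut$(\mathbf{w})$ to account for symmetries of the bubbled configurations; the analytic input would be a gluing theorem à la Theorem 2.8 of \cite{GPS} but in the Floer-theoretic setting, compatible with the reduced Kuranishi structure of Section \ref{998}. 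Once this combinatorial/Floer-theoretic wall-crossing formula is established in full generality and shown to match (\ref{37}), the induction closes and rationality of $\tilde{\Omega}^{Floer}(\gamma;u)$ follows immediately from that of $\tilde{\Omega}^{trop}(\gamma;u)$.
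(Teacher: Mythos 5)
The statement you are trying to prove is stated in the paper as a \emph{conjecture}, and the paper offers no proof of it; the surrounding text explicitly says only that the author ``expects'' the wall-crossing formula (\ref{37}) to hold for $\tilde{\Omega}^{Floer}$ and that this expectation, \emph{together with} Theorem \ref{301}, ``will imply'' the conjecture. Your sketch reproduces exactly the strategy the paper envisions --- induction on the energy $|Z_\gamma|$, base case from the multiple cover formula of Theorem \ref{301} matched against (\ref{321}), attractor-flow reduction to initial discs via Theorem \ref{47}, and wall-crossing matching of Theorem \ref{63} against Theorem \ref{32} --- but the decisive step is precisely the one you flag as ``the main obstacle'' and then do not carry out. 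Theorem \ref{32} establishes the Floer-side jump only when $\gamma=\gamma_1+\gamma_2$ with both classes primitive and the splitting unique; the general case, with multiple simultaneous decompositions $\gamma=\sum k_i\gamma_i$ and non-primitive constituents, requires an analysis of the higher-codimension boundary strata of $\mathcal{M}_{0,\gamma}(\mathfrak{X},L_u)$ and a Floer-theoretic analogue of Theorem 2.8 of \cite{GPS} compatible with the reduced Kuranishi structure. Neither the paper nor your proposal supplies this, so what you have written is a reduction of the conjecture to an unproven wall-crossing formula, not a proof.

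Two further points deserve attention if you want to close the argument. First, the base case is more delicate than stated: Theorem \ref{301} gives $\tilde{\Omega}^{Floer}(d\gamma_e;u)=(-1)^{d-1}/d^2$ only on a neighborhood $\mathcal{U}$ of the singularity that may depend on the bound $d_0$, and the paper explicitly leaves it as a conjecture that $\mathcal{U}$ can be chosen uniformly in $d$; for a single fixed $\gamma$ this is harmless, but in the induction you quantify over all $\gamma'$ with $|Z_{\gamma'}|<|Z_\gamma|$, and the relevant neighborhoods must be controlled uniformly. Second, the walls on the two sides do not obviously coincide: $W^{trop}_\gamma$ is defined via BPS rays of the scattering diagram, while the Floer invariant is locally constant off $W'_\gamma\subseteq W''_\gamma$, and $W'_\gamma$ depends a priori on the choice of Ricci-flat metric. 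You would need to show that the two invariants jump across literally the same locus (not merely loci contained in the common real-analytic set $W''_\gamma$), which again hinges on the full wall-crossing formula. Until that formula is proven, the conjecture remains open.
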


Therefore, Conjecture \ref{35} and Theorem \ref{301} together
provide an algorithm to compute all the invariants
$\tilde{\Omega}^{Floer}(\gamma,u)$ on elliptic K3 surfaces with $24$
$I_1$-type singular fibres. For more explicit computation of the
invariants $\tilde{\Omega}^{Floer}(\gamma;u)$, one need to study the affine
geometry induced by elliptic fibration to describe the position of
possible walls of marginal stability. The explicit jump from
wall-crossing formula may be computed from the study of quiver
representation moduli spaces \cite{R1} or proving the generating
functions of these invariants satisfy certain modularity. We close
this section with following conjecture which indicates the existence
of a Gopakumar-Vafa type invariants $\Omega^{Floer}$.
\begin{conj} \label{222}
  There exists $\{\Omega^{Floer}(\gamma;u)\in \mathbb{Z}| \gamma \in H_2(X,L_u)\}$
  such that
     \begin{equation*}
         \tilde{\Omega}^{Floer}(\gamma;u)=\sum_{d>
         0}\pm d^{-2}\Omega^{Floer}(\gamma/d;u).
     \end{equation*}
\end{conj}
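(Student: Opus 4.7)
The plan is to \emph{define} $\Omega^{Floer}(\gamma;u)$ by formally Möbius-inverting the asserted multi-cover relation, following the pattern of Lemma \ref{1038}, and then to establish integrality of the resulting rational numbers by induction on the energy $|Z_{\gamma}(u)|$. By construction the generating-function identity will hold with rational coefficients, so the nontrivial content of the conjecture is the integrality of $\Omega^{Floer}(\gamma;u)$.

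For the base case I would appeal to Theorem \ref{301}: the computed value $\tilde{\Omega}^{Floer}(d\gamma_e;u)=(-1)^{d-1}/d^2$ at a Lefschetz thimble $\gamma_e$ is precisely the Möbius transform of the integer assignment $\Omega^{Floer}(\pm\gamma_e)=1$ and $\Omega^{Floer}(d\gamma_e)=0$ for $|d|>1$, with the $I_n$-modification (\ref{1333}) giving $\Omega^{Floer}(\pm\gamma_e)=n$. All lowest-energy classes with nonzero invariants are of this form by Theorem \ref{43}, so integrality holds at the bottom of the induction.

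For the inductive step I would use the correspondence Theorem \ref{47} together with Conjecture \ref{35} to transfer the problem to the tropical side, where the scattering-diagram construction of Section \ref{1032} determines the invariant inductively via the wall-crossing Theorem \ref{32} (and its natural non-primitive generalization suggested by Theorem \ref{63}). After Möbius inversion, each wall-crossing update for $\tilde{\Omega}^{Floer}$ translates into a Kontsevich--Soibelman identity in the tropical Lie group $\mathbb{V}_{trop}$, expressing the new $\Omega^{Floer}$'s in terms of old ones via identities like the pentagon relation of Example \ref{65}. Since the generators $\mathcal{K}_{\gamma}$ of (\ref{1060}) have integer exponents in $z^{\partial\gamma}$, the wall-crossing philosophy predicts that the factorization (\ref{1030}) can be carried out inside the subgroup generated by $\mathcal{K}_{\gamma}$ with integer multiplicities.

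The main obstacle is precisely this last point: proving that the Möbius inversion of the wall-crossing identities produces \emph{integers} rather than mere rationals. This is the content of the integrality conjecture for generalized Donaldson--Thomas invariants, still open for K3 surfaces in this generality. A natural route is to adapt the motivic Hall-algebra techniques of Kontsevich--Soibelman, reinterpreting each factorization step as a product formula in a motivic ring and then taking Euler characteristics; this would reduce integrality to a combinatorial identity about weighted tropical tree counts (Definition \ref{2000}) with the Gopakumar--Vafa refinement. An alternative, geometrically more satisfying approach would be to construct $\Omega^{Floer}(\gamma;u)$ directly as an Euler characteristic of a moduli space of stable special Lagrangian branes representing $\gamma$, but this would require substantial new foundations in Lagrangian Floer theory beyond the present scope.
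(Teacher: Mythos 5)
This statement is labeled a \emph{conjecture} in the paper, and the paper offers no proof of it; it is stated at the end of the section precisely as an open problem indicating the expected existence of a Gopakumar--Vafa-type integer refinement. So there is no paper proof to match your argument against, and your proposal does not close the gap either: it is a reduction, not a proof. Concretely, your inductive step leans on Conjecture \ref{35} (the identification $\tilde{\Omega}^{Floer}=\tilde{\Omega}^{trop}$), which is itself open, and even granting that, the integrality of the Möbius-inverted tropical invariants is exactly the paper's own integrality conjecture for $\Omega^{trop}$, also unproven. A further unproven ingredient you invoke is the non-primitive wall-crossing formula: Theorem \ref{32} is established only for primitive $\gamma_1,\gamma_2$, and its extension suggested by Theorem \ref{63} is not proved on the Floer side, so the inductive propagation of integrality across walls has no foundation.

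What you do have right is the structure of the problem. The Möbius-inversion definition of $\Omega^{Floer}$ is sound as a definition, and the base case is genuinely settled by Theorem \ref{301} together with Theorem \ref{43}: the values $\tilde{\Omega}^{Floer}(d\gamma_e)=(-1)^{d-1}/d^2$ (and the $I_n$ variant (\ref{1333})) do invert to the integer assignment $\Omega^{Floer}(\pm\gamma_e)=n$, $\Omega^{Floer}(d\gamma_e)=0$ for $|d|>1$, and these are the minimal-energy classes with nonzero invariants. You also correctly name the crux --- integrality after Möbius inversion of the wall-crossing identities --- and candidly flag it as open. But identifying the obstacle is not the same as overcoming it, so as written this should be presented as a conditional reduction (integrality of $\Omega^{Floer}$ follows from Conjecture \ref{35} plus the tropical integrality conjecture plus non-primitive wall-crossing), not as a proof of Conjecture \ref{222}.
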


\subsection{Other Local Models}
  The proof of Theorem \ref{47} doesn't depend on the type of
  singular fibre we have. However, different kind of singular fibres
  might impose different initial data instead of (\ref{321}).
  \begin{ex}($I_0^*$-type singular fibre) Let $E_{\tau_i}\equiv \mathbb{C}/\langle 1,\tau_i\rangle$ be elliptic curves for $i=1,2$ and Let $X_0$ be the quotient of
$E_{\tau_1}\times E_{\tau_2}$ by the involution $\iota: z_i\mapsto
-z_i$. Let $X$ be the blow-up of $X_0$ at the $16$ ordinary double
points and $X$ is called the Kummer K3 surface. The Kummer K3
surface admits an elliptic fibration with $4$ singular fibres of
$I^*_0$-type. Since the elliptic fibration is isotrivial, the
$S^1$-family of affine structures reduce to the same one, which
coincides with the $\mathbb{Z}_2$-quotient of the standard affine
structure on elliptic curve $E_{\tau_1}$.

It is pointed out in \cite{F3} that given a pair $(k,l)\in
\mathbb{Q}^2$, consider a map $f:\{|s|<1\}\times \mathbb{R}_t
\rightarrow (X_0,E_{(k,l)})$ by
\begin{align*}
   z_1=(k+il)s, \quad z_2=(k+il)t
\end{align*}
When $\parallel (k,l)\parallel \ll 1$, then the image of $f$ will
falls in a small neighborhood of the $I_0^*$-type singular fibre. It
is straight forward to see that the map $f$ lifts to $X$ and the
image is still a smooth disc. Moreover the pull-back of
Eguchi-Hanson metric $\omega$ and imaginary part of the holomorphic volume form of $X$ is zero.
Therefore, by the similar technique of deformation of special
Lagrangian discs and estimates in \cite{L1}, it can be deformed to a
smooth special Lagrangian discs with respect to true Ricci-flat
metric and thus gives rise to a holomorphic disc from hyperK\"ahler
rotation trick. In particular, they become regular holomorphic discs
after hyperK\"ahler rotation. We expect these are all the
holomorphic discs near the $I_0^*$-type singular fibre.
\end{ex}

\appendix
\flushbottom
\include{AppendixA}
\flushbottom
\section{Kuranishi Structures and related terminologies}
This Appendix is a review of standard things of Kuranishi spaces we
need in this article. Except Proposition \ref{67} doesn't appears in
existing literature, all the rest can be found in
\cite{F1}\cite{FOOO}\cite{FOOO4} with more details.
\subsection{Kuranishi Structure and Good Coordinate}
\begin{definition}(Kuranishi neighborhood) Let $M$ be a Haudorff
topological space. A Kuranishi neighborhood of $p\in M$ is a
$5$-tuple $(V_p,E_p,\Gamma_p, \psi_p, s_p)$ such that
\begin{enumerate}
   \item $V_p$ is a smooth manifold (with corners) and $E_p$ is a
   smooth vector bundle over $V_p$.
   \item $\Gamma_p$ is a finite group acting on $E_p \rightarrow
   V_p$.
   \item $s_p$ is a $\Gamma_p$-invariant continuous section of
   $E_p$.
   \item $\psi_p:s^{-1}(0)\rightarrow M$ is continuous and induced
   homeomorphism between $s^{-1}(0)/\Gamma_p$ and a neighborhood of
   $p\in M$.
\end{enumerate}
\end{definition}
For $x\in V_p$, we denote the isotropy subgroup at $x$ by
      \begin{align*}
      (\Gamma_p)_x=\{\gamma\in \Gamma_p|\gamma x=x\}.
      \end{align*}
Let $o_p\in V_p$ such that $s(o_p)=0$ and $\psi_p([o_p])=p$. We will
assume $o_p$ is fixed by all elements of $\Gamma_p$ and thus the
unique point of $V_p$ mapping to $p$ via $\psi_p$.

\begin{definition}(Kuranishi structure)
 A Kuranishi structure on a Hausdorff topological space $M$ is an
 assignment of Kuranishi neighborhood \\$(V_p,E_p,\Gamma_p,s_p,
 \psi_p)$ for each $p\in M$ and a $4$-tuple
 $(V_{pq},\hat{\phi}_{pq},\phi_{pq},h_{pq})$ to each pair $(p,q)$
 where $p\in M$ and $q\in \psi_p(s^{-1}_p(0)/\Gamma_p)$ satisfying:
 \begin{enumerate}
   \item $V_{pq}$ is an open subset of $V_q$ containing $o_q$.
   \item $h_{pq}$ is an injective homomorphism from $\Gamma_q$ to $\Gamma_p$ such that restricts to an isomorphism
   $(\Gamma_q)_x\rightarrow (\Gamma_p)_{\phi_{pq}(x)}$ for any $x\in V_{pq}$
   \item $\phi_{pq}:V_{pq} \rightarrow V_p$ is an $h_{pq}$-equivariant embedding such that it descends to
   injective map
   $\underline{\phi}_{pq}:U_{pq}=V_{pq}/\Gamma_p\rightarrow
   U_q=V_q/\Gamma_q$.
   \item $\hat{\phi}_{pq}:E|_{V_{pq}}\rightarrow E_p$ is an $h_{pq}$-equivariant embedding of
   vector bundles covering $\phi_{pq}$.
   \item $\hat{\phi}_{pq}\circ s_q= s_p \circ \phi_{pq}$
   \item $\psi_q=\psi_p \circ \underline{\phi}_{pq}$ on
   $(s^{-1}_q(0)\cap V_{pq})/\Gamma_q$.
   \item If $r\in \psi_q(s^{-1}_q \cap V_{pq})$, then
   $\hat{\phi}_{pq} \circ \hat{\phi}_{qr}=\hat{\phi}_{pr}$ in a
   neighborhood of $\psi^{-1}(0)$.
   \item The virtual dimension  $\text{dim}V_p-\text{dim}E_p$ of the Kuranishi structure is
   independent of $p$.
   \item (Cocycle condition) If $r\in \psi_q((V_{pq}\cap
   s_q^{-1}(0))/\Gamma_p)$, $q\in \psi_p(s_p^{-1}(0)/\Gamma_p)$,
   then there exists $\gamma^{\alpha}_{pqr}\in \Gamma_p$ for each
   connected component $(\phi_{qr}^{-1}(V_{pq})\cap V_{qr}\cap
   V_{pr})_{\alpha}$ of $\phi_{qr}^{-1}(V_{pq})\cap V_{qr}\cap
   V_{pr}$ such that
     \begin{align*}
    h_{pq}\circ h_{qr}=\gamma^{\alpha}_{pqr}\cdot h_{pr}\cdot
    (\gamma^{\alpha}_{pqr})^{-1}, \\ \phi_{pq}\circ
    \phi_{qr}=\gamma^{\alpha}_{pqr}\cdot \phi_{pr}, \hspace{3mm}
    \hat{\phi}_{pq}\circ \hat{\phi}_{qr}=\gamma^{\alpha}_{pqr}\cdot
    \hat{\phi}_{pr}.
     \end{align*}
 \end{enumerate}
\end{definition}
We call a Hausdorff topological space equipped with Kuranishi
structure a Kuranishi space. Moreover, we will ask
$\{V_{\alpha},E_{\alpha},\Gamma_{\alpha},s_{\alpha},\psi_{\alpha}|\alpha
\in \mathfrak{U}\}$ is a good coordinate system in the following
sense:
  We have a partial order $<$ on $\mathfrak{U}$, such that $\alpha_1
  \leq \alpha_2$ or $\alpha_2 \leq \alpha_1$ for $\alpha_1,\alpha_2
  \in \mathfrak{U}$ if
      \begin{align*}
         \psi_{\alpha_1}(s^{-1}_{\alpha_1}(0)/ \Gamma_{\alpha_1})\cap
         \psi_{\alpha_2}(s^{-1}_{\alpha_2}(0)/\Gamma_{\alpha_2})
         \neq \O
      \end{align*}
Assume that $\alpha_1 < \alpha_2$, then we have an injective
homomorphism $h_{\alpha_1\alpha_2}:\Gamma_{\alpha_1}\rightarrow
\Gamma_{\alpha_2}$, $\Gamma_{\alpha_2}$-invariant open set
$V_{\alpha_1\alpha_2}\subseteq V_{\alpha_1}$ such that there is an
$h_{\alpha_1\alpha_2}$-equivariant embedding of open set
$\phi_{\alpha_1\alpha_2}:V_{\alpha_1\alpha_2}\rightarrow
V_{\alpha_2}$, and $h_{\alpha_1\alpha_2}$-equivariant bundle map
$\hat{\phi}_{\alpha_1\alpha_2}:E_{\alpha_1}|_{V_{\alpha_1\alpha_2}}\rightarrow
E_{\alpha_2}$ cover $\phi_{\alpha_1\alpha_2}$ such that analogue of
above 5,6,7 are satisfied (more details see \cite{FOOO4}).
\begin{rmk}
  \cite{FOOO4} section 7 guarantees the existence of good coordinate
  for any compact Kuranishi space.
\end{rmk}

   Assume $X_i=(V^i, E^i, \Gamma^i, \psi^i, s^i)$ Kuranishi
   structure and $f_i: X_i \rightarrow Y_i$ strongly continuous and
   weakly submersive. Let $Y=\prod Y_i$, $W$ some manifold with
   corners and $f: W \rightarrow Y$ a smooth map. We can construct
   a Kuranishi structure on
     \begin{align*}
       Z=\prod_i X_i\times_Y W
     \end{align*}
   by taking
     \begin{align*}
       V_=\prod_i V^i \times_Y W, \hspace{5mm} E=\prod_i E^i, \hspace{5mm}\Gamma=\prod_i \Gamma^i
     \end{align*} Since $f_i$ are submersions, $V$ is a smooth
     manifold. It is easy to define the section $s$ and the homeomorphism $\psi$ in a natural way.
\begin{definition}(Fibre product Kuranishi structure) Let $X_i$ have Kuranishi structures. Let $f_i:X_i
\rightarrow Y$ to be strongly continuous and weakly submersive. We
define the Kuranishi structure on $Z=X_1 \times_Y X_2$ by identify
$Z=(X_1\times X_2)\times _{Y^2}Y$, where $Y\rightarrow Y^2,
y\rightarrow (y,y)$.
\end{definition}

\subsection{Partition of Unity}
 Fix $\epsilon>0$ sufficiently small and $\chi^{\delta}:\mathbb{R}\rightarrow [0,1]$ smooth
 function such that
     \begin{equation}
        \chi^{\epsilon}(s)=\begin{cases} 0, & \mbox{if } s>\epsilon  \\
                                                 1, & \mbox{if } s<\frac{\epsilon}{2}  \end{cases}  \\
     \end{equation}
For each $x\in V_{\alpha}$, we put
    \begin{align*}
        \mathfrak{U}_{x,+}&=\{\alpha_+|x\in V_{\alpha\alpha_+}, \hspace{1mm} \alpha
        < \alpha_+ \} \\
        \mathfrak{U}_{x,-}&=\{\alpha_-|[x \hspace{3mm}
        mod \hspace{1mm}\Gamma_{\alpha}]\in
        U_{\epsilon}(V_{\alpha_-\alpha}/\Gamma_{\alpha_-}),
        \hspace{1mm}
        \alpha_-< \alpha \}.
    \end{align*}
For $\alpha_- \in \mathfrak{U}_{x,-}$, we take $x_{\alpha_-}\in
N_{V_{\alpha_-\alpha}}V_{\alpha}$ such that
$\mbox{Exp}(x_{\alpha_-})=x$. The following definition generalize the usual notion of partition of unity on a Kuranishi space. Thus, one can start to talk about integration on Kuranishi spaces. 
\begin{definition}
   A system $\{\chi_{\alpha}|\alpha \in \mathfrak{U}\}$ of
   $\Gamma_{\alpha}$-equivariant smooth functions
   $\chi_{\alpha}:V_{\alpha}\rightarrow [0,1]$ with compact support
   is a partition of unity subordinate to the given good (Kuranishi)
   coordinate if for each $x\in V_{\alpha}$,
      \begin{align}
         \chi_{\alpha}(x)+\sum_{\alpha_-\in
         \mathfrak{U}_{x,-}}\chi^{\epsilon}(\parallel x_{\alpha_-}\parallel)\chi_{\alpha_-}(\mbox{Pr}_{\alpha_-\alpha}(x_{\alpha_-}))+\sum_{\alpha_+\in
         \mathfrak{U}_{x,+}}\chi_{\alpha_+}(\phi_{\alpha\alpha_+}(x))=1.
      \end{align}
\end{definition}

\subsection{Multi-Sections and Compatible Perturbations}
 Let
 $(V_{\alpha},E_{\alpha},\Gamma_{\alpha},s_{\alpha},\psi_{\alpha})$
 be a Kuranihis chart of $\mathcal{M}$ and $x\in V_{\alpha}$. Set
 $\mathcal{S}^l(E_{\alpha,x})$ be the $l$-fold symmetric product of
 $E_{\alpha,x}$. There is a natural map
     \begin{align*}
       tm_m:&\mathcal{S}^l(E_{\alpha,x})\rightarrow
       \mathcal{S}^{lm}(E_{\alpha,x}) \\
       &[a_1,\dots,a_l] \mapsto
       [\underbrace{a_1,\dots,a_1}_{\mbox{$m$ copies}},\dots,\underbrace{a_l,\dots,a_l}_{\mbox{$m$ copies}}]
     \end{align*}

A smooth multi-section $s$ of the orbibundle $E_{\alpha}\rightarrow
V_{\alpha}$ is a set of data $s_i$, such that $s_i(x)\in
\mathcal{S}^{l_i}(E_{\alpha,x})$ for an open cover
$\{U_{\alpha,i}\}$ of $V_{\alpha}$ satisfying
   \begin{enumerate}
      \item $U_{\alpha,i}$ are $\Gamma_{\alpha}$-invariant and $s_i$ is $\Gamma_{\alpha}$-equivariant.
      \item If $x\in U_{\alpha,i}\cap U_{\alpha,j}$, then
             \begin{equation}
                tm_{l_j}(s_i(x))=tm_{l_i}(s_j(x))\in
                \mathcal{S}^{l_il_j}(E_{\alpha,\gamma x})
             \end{equation}
      \item There exists local smooth lifting $\tilde{s}$.
   \end{enumerate}

We identify two multi-section $(\{U_i\},\{s_i\},\{l_i\})$,
$(\{U'_i\},\{s'_i\},\{l'_i\})$ if
   \begin{align*}
      tm_{l_j}(s_i(x))=tm_{l_i}(s'_j(x))\in
      \mathcal{S}^{l_il'_j}(E_{\alpha,\gamma x})\hspace{5mm}\mbox{on $U_i\cap U'_j$.}
   \end{align*}
To add up two multi-sections $s^{(1)}$, $s^{(2)}$ together, we first
refine the associated open cover if necessarily such that they
coincides and same automorphism on each open cover. Then we can
define
    \begin{align*}
       +:&\hspace{10mm}\mathcal{S}^{l_1}(E)\times \mathcal{S}^{l_2}(E) \longrightarrow \mathcal{S}^{l_1l_2}(E) \\
        &([a_1,\cdots, a_{l_1}],[b_1,\cdots,b_{l_2}]) \mapsto [a_i+b_j:i=1,\cdots, l_1,b=1,\cdots, l_2]
    \end{align*}
It is easy to check that $+$ is well-defined, associative and
commutative. However, it only has a monoid structure. Another thing
worth mention is that although $C^0(\mathcal{M})$ acts on the sets of
multi-sections, we don't have $(f+g)s=fs+gs$, for $f,g\in
C^0(\mathcal{M})$!

 To make the integration along fibre well-defined, we introduce the
  auxiliary manifold $W_{\alpha}$ which is a finite dimensional smooth
  oriented manifold. We consider the the pull-back bundle
      \begin{equation*}
         \pi^*E_{\alpha} \rightarrow W_{\alpha} \times V_{\alpha}
      \end{equation*}
  and the action of $\Gamma_{\alpha}$ acts on $W_{\alpha}$ is trivial.
\begin{definition}(perturbed multi-section)
   \begin{enumerate}
      \item A $W_{\alpha}$-parametrized family $\mathfrak{s}_{\alpha}$ of
      multi-section $s_{\alpha}$ is a multi-section of $\pi^*E_{\alpha}$.
      \item Fix a metric on the bundle $E$. We say $\mathfrak{s}_{\alpha}$ is
      $\epsilon$-closed to $s$ if each branch $\mathfrak{s}_{\alpha,i,j}$,
      we have
         \begin{equation*}
            |\mathfrak{s}_{\alpha,i,j}(w,\cdots)-s_{\alpha}(\cdots)|_{C^0}<\epsilon
         \end{equation*} in a neighborhood of $x$, for each
         $(w,x)\in W_{\alpha}\times V_{\alpha}$.
      \item $\mathfrak{s}_{\alpha}$ is transverse to $0$ if every branch $\mathfrak{s}_{\alpha,i,j}$ is transverse to $0$.
      \item With above properties, $f_{\alpha}|_{\mathfrak{s}_{\alpha}^{-1}(0)}$ is a
      submersion if restriction to zero locus of each branch is a
      submersion.
   \end{enumerate}

\end{definition}

\begin{lem}
   Assume $f_{\alpha}:V_{\alpha}\rightarrow M$ is a submersion, then there exists $W_{\alpha}$
   such that for any $\epsilon$ there exists a $W_{\alpha}$-parametrized
   family of multi-sections $\mathfrak{s}_{\alpha}$ satisfying:
   \begin{enumerate}
    \item $\mathfrak{s}_{\alpha}$ is $\epsilon$-closed to $s_{\alpha}$.
    \item $\mathfrak{s}_{\alpha}$ is transverse to zero section.
    \item $f|_{\mathfrak{s}_{\alpha}^{-1}(0)}$ is a submersion.
    \item $\mathfrak{s}_{\alpha}(v,0)=s_{\alpha}(0)$.
   \end{enumerate}
 Moreover, if a given
   $\mathfrak{s}_{\alpha}$ satisfies the condition on a neighborhood of
   $\Gamma_{\alpha}$-invariant compact subset in $V$, then we may extend it
   to $V_{\alpha}$.
\end{lem}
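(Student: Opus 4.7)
The plan is to construct $W_{\alpha}$ as a finite-dimensional parameter space of sections of $E_{\alpha}\to V_{\alpha}$ chosen so that translating $s_{\alpha}$ by elements of $W_{\alpha}$ spans a large enough family of perturbations, and then to apply a Sard-type genericity argument to obtain transversality and the submersion property simultaneously.

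First I would fix a $\Gamma_{\alpha}$-invariant relatively compact open set $V_{\alpha}'\Subset V_{\alpha}$ containing the locus where perturbation is needed, together with a $\Gamma_{\alpha}$-invariant cut-off. For each point $x\in \overline{V_{\alpha}'}\setminus V_{\alpha}'^{(0)}$ (where $V_{\alpha}'^{(0)}$ is the locus where the given multi-section on the compact set already works), pick finitely many smooth local sections $e_{x,1},\dots,e_{x,N_x}$ of $E_{\alpha}$ whose values at $x$ span $E_{\alpha,x}$. Multiplying by bump functions, I may assume they have small support avoiding the special points on $\partial\Sigma$ (as in the construction in Theorem \ref{11}). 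By compactness, finitely many such sections $\{e_j\}_{j=1}^N$ suffice so that at every point $x\in \overline{V_{\alpha}'}$ at least one subset of them spans $E_{\alpha,x}$. To restore $\Gamma_{\alpha}$-equivariance I replace each $e_j$ by its orbit $\{\gamma\cdot e_j\}_{\gamma\in \Gamma_{\alpha}}$ and set $W_{\alpha}:=\mathbb{R}^N$ with the diagonal $\Gamma_{\alpha}$-action permuting these orbits; the parametrized family is
\begin{equation*}
\mathfrak{s}_{\alpha}(w,x)=s_{\alpha}(x)+\sum_{j}w_j\,\bigl(\tfrac{1}{|\Gamma_{\alpha}|}\!\sum_{\gamma\in \Gamma_{\alpha}}\gamma\cdot e_j\bigr)(x),
\end{equation*}
which is manifestly a $\Gamma_{\alpha}$-equivariant multi-section and equals $s_{\alpha}$ at $w=0$, giving properties (1) and (4) for $|w|<\epsilon/C$.

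Next, transversality (2) follows because by construction the map $(w,x)\mapsto \mathfrak{s}_{\alpha}(w,x)\in E_{\alpha,x}$ is a submersion onto each fiber: the $w$-derivative alone surjects onto $E_{\alpha,x}$. Hence the universal zero locus $\mathcal{Z}\subset W_{\alpha}\times V_{\alpha}'$ is a smooth submanifold, and Sard's theorem applied to the projection $\mathcal{Z}\to W_{\alpha}$ gives that for a generic (hence $\epsilon$-small) $w$, each branch $\mathfrak{s}_{\alpha,i,j}(w,\cdot)$ is transverse to zero. For the submersion property (3), I apply Sard's theorem again to the composition $\mathcal{Z}\hookrightarrow W_{\alpha}\times V_{\alpha}'\xrightarrow{f_{\alpha}\circ \mathrm{pr}_2} M$; since $f_{\alpha}$ itself is assumed a submersion and the $w$-direction kills any remaining obstruction, the generic fiber gives $f_{\alpha}|_{\mathfrak{s}_{\alpha}^{-1}(0)}$ submersive. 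Taking a common residual set of parameters in $W_{\alpha}$ gives all four conditions simultaneously.

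Finally, for the extension statement, suppose $\mathfrak{s}_{\alpha}$ is already given on a neighborhood $U$ of a $\Gamma_{\alpha}$-invariant compact $K\subset V_{\alpha}$. Choose a $\Gamma_{\alpha}$-equivariant cut-off $\chi$ with $\chi\equiv 1$ near $K$ and support in $U$, enlarge $W_{\alpha}$ by adjoining new generators supported outside a smaller neighborhood of $K$, and form the combined family $\chi\cdot \mathfrak{s}_{\alpha}^{\text{given}}+(1-\chi)\cdot s_{\alpha}+\sum w_j\tilde{e}_j$. The previous Sard argument, applied only over $V_{\alpha}\setminus\{\chi=1\}$, provides a generic small perturbation achieving transversality and submersivity on the complement while leaving the given multi-section untouched near $K$. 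The main technical point to be careful about is that multi-sections are only a monoid under addition and do not distribute over scalar multiplication, so the cut-off-gluing must be performed branch by branch after refining the open covers indexing the multi-sections so that they match on the overlap $U\cap (V_{\alpha}\setminus\{\chi=1\})$; this is the step where the identification of multi-sections by common refinement is essential.
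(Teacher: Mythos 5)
There is a genuine gap in your equivariance step, and it is precisely the point that the multi-section formalism exists to handle. To make your perturbations $\Gamma_{\alpha}$-equivariant you replace each generator $e_j$ by its group average $\frac{1}{|\Gamma_{\alpha}|}\sum_{\gamma}\gamma\cdot e_j$. At a point $x$ with nontrivial isotropy $(\Gamma_{\alpha})_x$, every such averaged section takes values in the $(\Gamma_{\alpha})_x$-invariant subspace of $E_{\alpha,x}$, which may be a proper subspace; hence your claim that the $w$-derivative alone surjects onto $E_{\alpha,x}$ fails exactly at orbifold points, the universal zero locus $\mathcal{Z}$ need not be cut out transversally there, and neither of your Sard arguments can be run. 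This is the standard obstruction to finding single-valued equivariant transverse perturbations of a section of an orbibundle. The paper's proof avoids it by \emph{not} averaging: it chooses a surjective bundle map $Sur_{\alpha}:W_{\alpha}\times V_{\alpha}\rightarrow E_{\alpha}$ with no equivariance whatsoever, sets $\mathfrak{s}^{(1)}_{\alpha}(w,x)=Sur_{\alpha}(w,x)+s_{\alpha}(x)$, and then takes as the perturbation the genuinely multi-valued section whose $|\Gamma_{\alpha}|$ branches are the translates $\gamma_1\mathfrak{s}^{(1)}_{\alpha},\dots,\gamma_g\mathfrak{s}^{(1)}_{\alpha}$. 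The \emph{set} of branches is equivariant, while each individual branch retains fiberwise surjectivity in the $w$-direction, so every branch is transverse and its zero locus projects submersively to $V_{\alpha}$, which yields properties (2) and (3) simultaneously.

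A secondary mismatch: the lemma is stated in the continuous-family-of-multi-sections framework, in which the entire $W_{\alpha}$-parametrized family is required to be transverse over $W_{\alpha}\times V_{\alpha}$ and one later integrates over $W_{\alpha}$ against a volume form in the smooth correspondence; no single generic value of $w$ is ever selected. Once each branch is a fiberwise submersion onto $E_{\alpha}$ in the $w$-variable, transversality of the family and submersivity of $f$ on its zero locus are immediate, so your two applications of Sard's theorem are unnecessary (and, as noted above, would not be available after averaging). Your treatment of the extension statement by cut-offs and branchwise refinement of the indexing covers is in the right spirit, but it inherits the same defect unless the glued object is kept multi-valued rather than averaged.
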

\begin{proof}
    We first choose $W_{\alpha}$ to be a vector space with dimension large
    enough such that
       \begin{align*}
          Sur_{\alpha}:W_{\alpha}\times V_{\alpha}\rightarrow
          E_{\alpha}
       \end{align*} is a surjective bundle map (not necessarily
       $\Gamma_{\alpha}$-equivariant). Set
       \begin{align*}
         \mathfrak{s}_{\alpha}^{(1)}(w,x)=Sur_{\alpha}(w,x)+s_{\alpha}(x)
       \end{align*}and
       \begin{align*}
          \mathfrak{s}_{\alpha}^{(2)}(w,x)=[\gamma_1\mathfrak{s}_{\alpha}^{(1)}(w,x),\cdots,
          \gamma_g\mathfrak{s}_{\alpha}^{(1)}(w,x)],
       \end{align*} where $\Gamma_{\alpha}=\{\gamma_1,\cdots,\gamma_g\}$.
       $\mathfrak{s}_{\alpha}^{(2)}$ defines a multisection on $W_{\alpha}\times V_{\alpha}$
       which is transverse to $0$ because the extra dimension from the auxiliary $W_{\alpha}$. Finally,
       $\big(\mathfrak{s}_{\alpha}^{(2)}\big)^{-1}(0)\rightarrow V_{\alpha}$ is
       submersive implies that $f|_{(\mathfrak{s}_{\alpha}^{(2)})^{-1}(0)}$ is a
       submersion.
\end{proof}

\begin{thm} \label{999}
\cite{FOOO}There exists a system of multi-sections
$\mathfrak{s}_{k+1,\beta}$ on \\
$\mathcal{M}_{k+1,\beta}(X,L)$ such that
\begin{enumerate}
  \item They are transverse to $0$.
  \item $ev_0$ induces submersion on the zero sets of
  $\mathfrak{s}_{k+1,\beta}$.
  \item The multi-section is preserved by cyclic permutations of boundary points.
  \item The multi-section $\mathfrak{s}_{k+1,\beta}$ is the
  pull-back of the multi-section $\mathfrak{s}_{k,\beta}$ by the
  forgetful map.
  \item The restriction of multi-section to the boundary
     is compatible.
\end{enumerate}
\end{thm}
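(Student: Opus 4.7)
The plan is to construct the system $\{\mathfrak{s}_{k+1,\beta}\}$ by induction on the symplectic area $E(\beta)=\int_{\beta}\omega$, leveraging the Kuranishi structures already supplied by Theorem \ref{11}, and using the boundary decompositions (\ref{12}) and (\ref{7}) to enforce compatibility. The base case is the class $\beta$ of minimal area: by Gromov compactness its moduli space consists only of smooth domains, has no bubbled boundary, and admits transverse multi-sections by the standard single-chart argument recalled in the appendix (the perturbation lemma that produces $W_\alpha$-parametrized families close to the Kuranishi map). Once $\mathfrak{s}_{0,\beta}$ is chosen there, one defines $\mathfrak{s}_{k+1,\beta}$ on moduli spaces with marked points by pull-back along the forgetful map $\mathfrak{forget}_{k+1,0}$, which is automatically weakly submersive; this immediately gives conditions (1), (2) and (4) at the base step. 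Cyclic symmetry (3) is arranged by averaging the multi-section over the cyclic group action on boundary marked points, which preserves transversality since the action is by diffeomorphisms.

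For the inductive step, I assume the multi-sections have been constructed for all $\beta'$ with $E(\beta')<E_0$ satisfying (1)--(5). To build $\mathfrak{s}_{k+1,\beta}$ for a class of energy $E(\beta)=E_0$, I first produce the restriction to $\partial\mathcal{M}_{k+1,\beta}(\mathfrak{X},L)$ from the prescribed boundary decomposition: each stratum in (\ref{12}) (and in (\ref{7}) when $k=0$) is a fibre product of moduli spaces of strictly smaller energy, so I take the fibre product multi-section of the previously constructed data. Lemma 7.2.55 of \cite{FOOO} ensures that these fibre product multi-sections agree on overlaps of different boundary strata, hence glue to a well-defined multi-section on the entire boundary. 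I then extend from the boundary to the interior using Lemma 3.14 of \cite{FO}; this extension preserves transversality after a small further perturbation, and preserves the weak submersivity of $ev_0$ since the boundary multi-section already satisfies it and the obstruction bundle $E_{\mathbf p}$ was chosen in Theorem \ref{11} so that $(ev_0,ev_\vartheta)$ is weakly submersive on each chart.

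Compatibility with the forgetful map (4) across the induction is enforced by constructing the multi-section first on $\mathcal{M}_{0,\beta}(\mathfrak{X},L)$ (or, when $k=0$ does not suffice, on the lowest number of marked points for which the domain is automatically stable) and then defining $\mathfrak{s}_{k+1,\beta}=\mathfrak{forget}_{k+1,0}^{*}\mathfrak{s}_{0,\beta}$. Because the Kuranishi structure of Theorem \ref{11} was built to commute with $\mathfrak{forget}$, the pull-back is still a multi-section on the Kuranishi chart and (1), (2), (5) descend from the case of zero marked points. Cyclic symmetry then needs one further symmetrization over the $\mathbb{Z}/(k+1)$-action; this can be done without destroying (4) by symmetrizing $\mathfrak{s}_{0,\beta}$ before pulling back, since the forgetful map is equivariant with respect to forgetting a chosen marked point from the cyclic group.

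The main obstacle is the simultaneous compatibility of the three requirements (3), (4), and (5): the boundary gluing in (5) prescribes the multi-section on strata whose defining fibre products involve \emph{split} marked-point sets, while (4) demands the whole section be a pull-back under a forgetful map which \emph{destroys} that splitting, and (3) demands cyclic invariance which mixes the strata among themselves. The resolution, following \cite{F1} Theorem 8.1 and \cite{FOOO} Theorem 7.2.72, is that the boundary data built from fibre products of already-symmetrized and pull-back-compatible lower-energy multi-sections automatically inherits both the cyclic symmetry (as cyclic permutation permutes the strata in (\ref{12}) in a way matching the symmetrized fibre product) and the forgetful compatibility (since each factor does). The inductive hypothesis has been arranged precisely so that these matching conditions hold on overlaps, so the extension step has no further obstruction and (1)--(5) persist for energy $E_0$. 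The assumed uniform energy bound $E_0<\infty$ together with Gromov compactness then guarantees that only finitely many inductive steps are required to achieve the multi-section modulo $T^{E_0}$ and hence to achieve $C^0$-closeness to the original Kuranishi map.
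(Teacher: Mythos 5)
Your proposal is correct and follows essentially the same route the paper takes: the paper states Theorem \ref{999} by citation to \cite{FOOO}, but its proof of the parallel family version (Theorem \ref{1}) uses exactly your strategy --- induction on energy starting from the minimal-area class via Gromov compactness, fibre-product multi-sections on the boundary strata of (\ref{12}) and (\ref{7}) made consistent by Lemma 7.2.55 of \cite{FOOO}, extension to the interior by Lemma 3.14 of \cite{FO}, and the energy bound to terminate the induction. Your additional discussion of reconciling cyclic symmetry, forgetful-map compatibility, and boundary compatibility fills in detail the paper leaves implicit, and is consistent with its framework (in particular you correctly demand submersivity only of $ev_0$, matching the paper's remark that compatibility with the forgetful map precludes weak submersivity of $(ev_1,\dots,ev_k)$).
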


\begin{rmk}
  We can take $ev_0$ to be weakly submersive and by cyclic symmetry
  each $ev_i$ is weakly submersive. However, if we ask the choice of
  multi-section is compatible with the forgetful map then the map
  $(ev_1,\cdots, ev_k)$ can not be weakly submersive anymore by
  trivial dimensional counting argument. However, $ev_0$ is weakly submersive already
  can pullback differential forms and define the de Rham model.
\end{rmk}

Moreover, we want the family of multi-section
$\mathfrak{s}_{k,\beta}$ satisfies the compatibility with respect to
the good coordinate. Let $\alpha_1<\alpha_2$: Choose an
$\Gamma_{\alpha_2}$-invariant metric on $V_{\alpha_2}$ and consider
the exponential map
   \begin{align*}
      \mbox{Exp}_{\alpha_1\alpha_2}:B_{\epsilon}N_{\alpha_1\alpha_2}V_{\alpha_2}
      \rightarrow V_{\alpha_2}
   \end{align*}
and denotes the image by
$U_{\epsilon}(V_{\alpha_1\alpha_2}/\Gamma_{\alpha_1})\subseteq
V_{\alpha_2}/\Gamma_{\alpha_2}$. We extend the orbibundle
$E_{\alpha_1}$ to
$U_{\epsilon}(V_{\alpha_1\alpha_2}/\Gamma_{\alpha_1})$ by pullback
of projection
   \begin{align*}
      \mbox{Pr}_{V_{\alpha_1\alpha_2}}:U_{\epsilon}(V_{\alpha_1\alpha_2}/\Gamma_1)
      \rightarrow V_{\alpha_1\alpha_2}/\Gamma_{\alpha_1}
   \end{align*}
and also extend the embedding $E_{\alpha_1}\rightarrow
\hat{\phi}_{\alpha_1\alpha_2}^*E_{\alpha_2}E_{\alpha_2}$ to
$U_{\epsilon}(V_{\alpha_1\alpha_2}/\Gamma_{\alpha_1})$. Fix a
$\Gamma_{\alpha}$-invariant invariant inner product on $E_{\alpha}$
and we have a bundle isomorphism
    \begin{align*}
       E_{\alpha_2}\cong E_{\alpha_1}\oplus
       \frac{\hat{\phi}_{\alpha_1\alpha_2}^*E_{\alpha_2}}{E_{\alpha_1}}
    \end{align*} over $U_{\epsilon}(V_{\alpha_1\alpha_2}/\Gamma_1)$.
One might need to use implicit function theorem and tangent bundle
to modify $\mbox{Pr}_{\alpha_1\alpha_2}$ such that
   \begin{align*}
      ds_{\alpha_2}(\tilde{y}\hspace{3mm} mod
      \hspace{1mm}TV_{\alpha_1}) \equiv
      s_{\alpha_2}(y)\hspace{3mm}mod \hspace{1mm}E_{\alpha_1},
   \end{align*} if $y=\mbox{Exp}_{\alpha_1\alpha_2}(\tilde{y})\in
   U_{\epsilon}(V_{\alpha_1\alpha_2}/\Gamma_{\alpha_1})$.

\begin{definition}
  If $\alpha_1<\alpha_2$, then $\mathfrak{s}_{\alpha_2}$ is compatible with $\mathfrak{s}_{\alpha_1}$ if for each
  $y=\mbox{Exp}_{\alpha_1,\alpha_2}(\tilde{y})\in
  U_{\epsilon}(V_{\alpha_1\alpha_2}/\Gamma_{\alpha_1})$, we have
     \begin{align} \label{8}
        \mathfrak{s}_{\alpha_2}(\tilde{y})=\mathfrak{s}_{\alpha_1}(Pr(\tilde{y}))\oplus
        ds_{\alpha_2}(\tilde{y} \hspace{3mm} mod \hspace{1mm}
        TV_{\alpha_1}).
     \end{align} via isomorphism $E_{\alpha_2}\cong
     E_{\alpha_1}\oplus
     \frac{\phi_{\alpha_1\alpha_2}^*TV_{\alpha_2}}{TV_{\alpha_1}}$
     assuming the moduli space is oriented.
\end{definition}

\subsection{Smooth Correspondence} Let $\mathcal{M}$ with Kuranishi
structure $(V_{\alpha},E_{\alpha},
\Gamma_{\alpha},s_{\alpha},\psi_{\alpha})$,
$f^s:\mathcal{M}\rightarrow N_s$ strongly continuous and
$f^t:\mathcal{M}\rightarrow N_t$ strongly continuous and weakly
submersive. Here we assume that $N_s$, $N_t$ are both smooth manifolds
(possibly with boundaries or corners). We will define the smooth
correspondence
   \begin{align*}
      Corr_*(\mathcal{M};f^s,f^t):\Lambda^d(N_s)\rightarrow
      \Lambda^{d+dimN_t-dimN_s}(N_t),
   \end{align*} where $\Lambda^k(X)$ denotes the space of smooth $k$-forms on the manifold $X$. 
 We first take a compatible continuous family of multi-sections
 $\mathfrak{s}_{\alpha}=\{\mathfrak{s}_{\alpha,i,j}|j=1,\cdots
 l_i\}$ satisfies the lemma and $\tilde{\mathfrak{s}}_{\alpha,i,j}$
 is the local smooth lifting. Let $\rho \in \Lambda(N_s)$. Consider
 a branch $\tilde{\mathfrak{s}}_{\alpha,i,j}$ as a section of $E_{\alpha}$
 over $U_{\alpha,i}\times W_{\alpha}$. We choose a volume form
 $\omega_{\alpha}$ on $W_{\alpha}$ with total mass $1$ and support on an $\epsilon$-neighborhood of $0\in W_{\alpha}$ and partition
 of unity $\chi_i$ for open cover $\{U_{\alpha,i}\}_i$. Then
     \begin{equation*}
       \frac{1}{\#\Gamma_{\alpha}}\sum_i\sum_{j=1}^{l_i} (f^t_{\alpha}\circ \pi_{\alpha}|_{\tilde{\mathfrak{s}}_{\alpha,i,j}^{-1}(0)})_!\frac{1}{l_i}(\chi_i\chi_{\alpha}(f^s_{\alpha})^*\rho
       \wedge \omega_{\alpha})|_{\tilde{\mathfrak{s}}_{\alpha,i,j}^{-1}(0)}
     \end{equation*}
defines the $U_{\alpha,i}$ part of the smooth correspondence
$Corr_*(\mathcal{M};f^s,f^t)(\rho)$ and we use partition unity
$\chi_{\alpha}$ for summing various $\chi_{\alpha}$ to glue them
together.

\begin{rmk}
   The definition of smooth correspondence $Corr_*(\mathcal{M};f^s,f^t)$
   only depends on the Kuranishi structure
   $(V_{\alpha},E_{\alpha},\Gamma_{\alpha},s_{\alpha},\psi_{\alpha})$,
   the auxiliary $(W_{\alpha},\omega_{\alpha})$, the perturbation
   $\mathfrak{s}_{\alpha}$, $f_{\alpha}$ but not depends
   on other choices.
\end{rmk}
\begin{rmk}
  Although one may not be able to exclude the case
  $\mathcal{M}_{\beta}$ has infinitely many components. However,
  $Corr_*(\mathcal{M}_{\beta};tri,tri)$ is always finite.
\end{rmk}

We have the following version of Stoke's theorem composition formula for smooth correspondences.
\begin{prop} \label{2} 
  \begin{equation}
  d \circ Corr_*(\mathcal{M};f^s,f^t)-Corr_*(\mathcal{M};f^s,f^t)\circ
  d=Corr_*(\partial \mathcal{M};f^s,f^t).
  \end{equation}
\end{prop}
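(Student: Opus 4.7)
The plan is to reduce the identity to the classical Stokes theorem (or equivalently the formula for integration along the fibre of a manifold with corners) applied chart-by-chart in a good coordinate system, and then verify that all spurious interior boundary contributions cancel.

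First, I would fix a good coordinate system $\{(V_\alpha,E_\alpha,\Gamma_\alpha,s_\alpha,\psi_\alpha)\}_{\alpha\in\mathfrak U}$ for $\mathcal M$, together with a subordinate partition of unity $\{\chi_\alpha\}$ and compatible families of multi-sections $\mathfrak s_\alpha$ with auxiliary parameter spaces $(W_\alpha,\omega_\alpha)$, so that every $\tilde{\mathfrak s}_{\alpha,i,j}^{-1}(0)\subseteq W_\alpha\times V_\alpha$ is a smooth manifold with corners on which $f^t_\alpha\circ\pi_\alpha$ is a submersion. On each branch, the defining formula for $\mathrm{Corr}_*(\mathcal M;f^s,f^t)(\rho)$ is literally an integration along fibre $(f^t_\alpha\circ\pi_\alpha|_{\tilde{\mathfrak s}_{\alpha,i,j}^{-1}(0)})_!$ of a compactly supported smooth form on a smooth manifold with corners; this reduces all of the global analysis to the classical fact
\begin{equation*}
  d\circ \pi_! - (-1)^{\mathrm{fib.dim}}\pi_!\circ d \;=\; (\pi|_{\partial})_!,
\end{equation*}
applied to each $\pi=f^t_\alpha\circ\pi_\alpha|_{\tilde{\mathfrak s}_{\alpha,i,j}^{-1}(0)}$ with respect to the form $\tfrac1{l_i}\chi_i\chi_\alpha(f^s_\alpha)^*\rho\wedge\omega_\alpha$.

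Next I would decompose the resulting boundary contribution according to the codimension-one strata of $\tilde{\mathfrak s}_{\alpha,i,j}^{-1}(0)$. These strata split into three types: (a) strata coming from $\partial V_\alpha$, which by the Kuranishi-structure axioms match precisely with the restriction of the multi-section to the piece of $\partial\mathcal M$ sitting in the chart, and contribute $\mathrm{Corr}_*(\partial\mathcal M;f^s,f^t)(\rho)$ after summing over $\alpha$; (b) strata coming from $\partial W_\alpha$ or from the boundary of the support of $\omega_\alpha$, which vanish because $\omega_\alpha$ was chosen compactly supported inside $W_\alpha$; and (c) strata coming from $\partial(\mathrm{supp}\,\chi_i\chi_\alpha)$ inside the interior of $V_\alpha$. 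The remaining task is to show that contributions of type (c) cancel when summed over $i$ and $\alpha$.

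For the cancellation among the $\chi_i$ for fixed $\alpha$, I would use that $\{\chi_i\}$ is a partition of unity on $V_\alpha$ so $d(\sum_i\chi_i)=0$, and that the identity $tm_{l_j}(\mathfrak s_{\alpha,i})=tm_{l_i}(\mathfrak s_{\alpha,j})$ on overlaps $U_{\alpha,i}\cap U_{\alpha,j}$ makes the branch-wise averaged integrals on the two sides of an interior wall agree with opposite induced orientations. For the cancellation among charts $\alpha$, the compatibility condition
\begin{equation*}
  \mathfrak s_{\alpha_2}(\tilde y)=\mathfrak s_{\alpha_1}(\mathrm{Pr}(\tilde y))\oplus ds_{\alpha_2}(\tilde y \mod TV_{\alpha_1})
\end{equation*}
together with the definition of a partition of unity subordinate to the good coordinate system guarantees that, along $\phi_{\alpha_1\alpha_2}(V_{\alpha_1\alpha_2})$, the integral over the higher-dimensional zero locus in $V_{\alpha_2}$ equals the integral in $V_{\alpha_1}$ via the Thom isomorphism / fibre-integration over the normal factor, so all boundary pieces produced by the cut-offs $\chi^\epsilon(\|x_{\alpha_-}\|)$ appear with opposite signs on the two adjacent charts and cancel.

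The main obstacle is exactly this bookkeeping of the interior cancellations: one must verify that the orientation conventions on $\tilde{\mathfrak s}_{\alpha,i,j}^{-1}(0)$, the sign coming from integration along fibre, and the sign produced by restricting $\chi_\alpha$ conspire so that only the genuine boundary of $\mathcal M$ survives. Once this compatibility is checked locally on the model for an overlap $V_{\alpha_1\alpha_2}\hookrightarrow V_{\alpha_2}$ (which reduces to Stokes for a product neighborhood $B_\epsilon N_{\alpha_1\alpha_2}V_{\alpha_2}$), the global cancellation follows by summing over the good coordinate, and the formula $d\circ\mathrm{Corr}_*(\mathcal M;f^s,f^t)-\mathrm{Corr}_*(\mathcal M;f^s,f^t)\circ d=\mathrm{Corr}_*(\partial\mathcal M;f^s,f^t)$ results.
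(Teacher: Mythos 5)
The paper offers no proof of this proposition: it sits in the appendix, which is announced as a review, and is attributed to \cite{F1}\cite{FOOO}\cite{FOOO4}, so there is nothing in the text to compare your argument against line by line. Your sketch reproduces the standard argument from those references — reduce chart-by-chart and branch-by-branch to the classical Stokes/fibre-integration formula on the perturbed zero locus, then show the only surviving boundary contribution is the one from $\partial\mathcal{M}$ — and the overall strategy is sound. Two points deserve care. First, since the cutoffs $\chi_i\chi_\alpha$ are smooth and compactly supported in the interiors of the charts, they do not actually create codimension-one strata of your type (c); what happens instead is that applying $d$ to the fibre integral produces terms $\pi_!\bigl(d(\chi_i\chi_\alpha)\wedge\cdots\bigr)$, and it is these that must cancel via $\sum_i d\chi_i=0$ and $\sum_\alpha d\chi_\alpha=0$ \emph{after} the integrands on overlapping branches and charts have been identified — which is precisely where the overlap condition $tm_{l_j}(\mathfrak{s}_{\alpha,i})=tm_{l_i}(\mathfrak{s}_{\alpha,j})$ and the compatibility (\ref{8}) (with the Thom-class/normal-direction localization from $V_{\alpha_2}$ down to $V_{\alpha_1}$) enter. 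Your prose gestures at exactly these ingredients, so this is a matter of framing rather than a gap. Second, your chartwise formula carries a sign $(-1)^{\mathrm{fib.dim}}$ that silently disappears in your final identity; the paper's statement suppresses signs in the same way, so this is a convention issue rather than an error, but it must be tracked consistently if the $A_\infty$ signs in Theorem \ref{4} are to come out correctly.
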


\begin{prop} \label{3}(composition formula) Let $\rho_i \in
\Lambda(N_s^i)$, $i=1,2$ with the following diagram,
\begin{equation*}
   \xymatrix{\mathcal{M}=\mathcal{M}_1 \times \mathcal{M}_2 \ar[d] \ar[rr] & & \mathcal{M}_1 \ar[d]^{f^{1,t}} \ar[r]^{f^{1,s}} & N_s^1  \\
              \mathcal{M}_2 \ar[d]^{f^{2,t}} \ar[r] & N_s^2 \times N_t^1 \ar[r] & N_t^1 \\
              N_t^2  }
\end{equation*}
then we have
  \begin{align}
    &Corr_*(\mathcal{M};f^s,f^t)(\rho_1 \times \rho_2) \nonumber \\
    =&Corr_*(\mathcal{M}_2;f^{2,s},f^{2,t})(Corr_*(\mathcal{M}_1;f^{1,s},f^{1,t})(\rho_1)\times
    \rho_2)
  \end{align}
\end{prop}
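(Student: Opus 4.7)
The plan is to reduce the composition formula to iterated fiber integration on individual Kuranishi charts via the standard Fubini argument, combined with the functorial properties of the fiber product Kuranishi structure. First, I would fix good coordinate systems $\{(V_{i,\alpha_i}, E_{i,\alpha_i}, \Gamma_{i,\alpha_i}, s_{i,\alpha_i}, \psi_{i,\alpha_i})\}$ on $\mathcal{M}_i$ for $i=1,2$, together with compatible continuous families of transverse multi-sections $\mathfrak{s}_{i,\alpha_i}$ parametrized by auxiliary smooth manifolds $W_{i,\alpha_i}$ carrying top forms $\omega_{i,\alpha_i}$ of total mass one supported near the origin, and subordinate partitions of unity $\chi_{i,\alpha_i}$. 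The hypothesis that $f^{i,t}$ is weakly submersive, together with the auxiliary space construction, guarantees $f^{i,t}|_{\mathfrak{s}_{i,\alpha_i}^{-1}(0)}$ is a submersion.

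Next, I would construct the fiber product Kuranishi structure on $\mathcal{M} = \mathcal{M}_1 \times \mathcal{M}_2$ using the prescription from the appendix: charts are $V_{\alpha_1} \times_{N_t^1} V_{\alpha_2}$ (smooth because $f^{1,t}$ is weakly submersive), obstruction bundles are $E_{1,\alpha_1} \oplus E_{2,\alpha_2}$, groups are $\Gamma_{1,\alpha_1} \times \Gamma_{2,\alpha_2}$, and the perturbed multi-section is $\mathfrak{s}_{1,\alpha_1} \oplus \mathfrak{s}_{2,\alpha_2}$ parametrized by $W_{1,\alpha_1} \times W_{2,\alpha_2}$ with volume form $\omega_{1,\alpha_1} \wedge \omega_{2,\alpha_2}$. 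Transversality of the fiber product perturbation follows from the transversality of each factor together with the submersivity of $f^{2,s}$ composed with the projection to $N_t^1$ restricted to $\mathfrak{s}_{2,\alpha_2}^{-1}(0)$. Cutoffs are chosen as $\chi_{1,\alpha_1}\cdot\chi_{2,\alpha_2}$.

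Then both sides of the identity can be written out on each chart of $\mathcal{M}$. The left-hand side is, up to the averaging factor $1/(\#\Gamma_{1,\alpha_1}\cdot\#\Gamma_{2,\alpha_2})$,
\begin{equation*}
\bigl(f^t\circ\pi\bigr)_!\bigl(\chi_{1,\alpha_1}\chi_{2,\alpha_2}\,(f^{1,s})^*\rho_1\wedge (f^{2,s})^*\rho_2\wedge \omega_{1,\alpha_1}\wedge\omega_{2,\alpha_2}\bigr)\bigl|_{(\mathfrak{s}_1\oplus\mathfrak{s}_2)^{-1}(0)},
\end{equation*}
while the right-hand side is the two-stage iterated pushforward: first integrate along the fiber of $f^{1,t}|_{\mathfrak{s}_1^{-1}(0)}$ against $\chi_{1,\alpha_1}\omega_{1,\alpha_1}(f^{1,s})^*\rho_1$, then integrate the result (pulled back via $N_s^2\times N_t^1\to N_t^1$, then wedged with $\rho_2$) along $f^{2,t}|_{\mathfrak{s}_2^{-1}(0)}$. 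Equality follows from Fubini's theorem for fiber integration along submersions, applied to the composition $\mathfrak{s}_1^{-1}(0)\times_{N_t^1}\mathfrak{s}_2^{-1}(0)\to\mathfrak{s}_2^{-1}(0)\to N_t^2$, which is valid precisely because both $f^{1,t}$ on $\mathfrak{s}_1^{-1}(0)$ and $f^{2,t}$ on $\mathfrak{s}_2^{-1}(0)$ are submersions after perturbation.

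The main obstacle will be bookkeeping: ensuring that the partition-of-unity factors, the quotients by isotropy groups, and the auxiliary volume-form factors on each Kuranishi chart assemble consistently under the coordinate changes of the good coordinate systems, so that the local Fubini identity globalizes. Once the perturbation on $\mathcal{M}$ is genuinely the product perturbation (which requires only that we take the auxiliary $W$-space on $\mathcal{M}$ to be the product and the partition of unity to be the product), the remaining content is purely the classical Fubini theorem for fiber integration, and no further analytic input is needed.
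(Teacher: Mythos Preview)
The paper does not actually supply a proof of this proposition; it is stated as a standard result and attributed to \cite{F1}\cite{FOOO}\cite{FOOO4} at the start of the appendix. So there is no ``paper's own proof'' to compare against. Your outline is the standard argument one finds in those references: take product good coordinates, product multi-sections parametrized by the product auxiliary space, and reduce to Fubini for iterated integration along submersions. As a strategy this is correct, and the bookkeeping you flag (partitions of unity, isotropy quotients, compatibility under coordinate change) is exactly where the work lies but contains no hidden obstruction once the product perturbation is used.
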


At the end, we have an open analogue of divisor axiom though we
never use it.
\begin{prop} \label{67}Assume $\beta$ is primitive and $L$ doesn't fall on $W'_{\beta}$.
Let $\rho \in \Lambda^1(L\times S^1_{\vartheta})$, $d\rho=0$, and
$\mathfrak{forget}:\mathcal{M}_{1,\beta}(\mathfrak{X},L)\rightarrow
\mathcal{M}_{0,\beta}(\mathfrak{X},L)$, then
   \begin{align*}
      Corr_*(\mathcal{M}_{1,\beta}(L);(ev_0,ev_{\vartheta}),\mathfrak{forget})(\rho)=\bigg(\int_{\partial\beta}\rho \bigg) \cdot m_{-1,\beta}(L)
   \end{align*}
\end{prop}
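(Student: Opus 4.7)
The plan is to interpret the correspondence via integration along the fiber of the forgetful map, which is an $S^1$-bundle (in the Kuranishi sense). The hypotheses that $\beta$ is primitive and $L\notin W'_{\beta}$ guarantee by Theorem \ref{811} that $\mathcal{M}_{0,\beta}(\mathfrak{X},L)$ has no codimension-one boundary, so that $m_{-1,\beta}(L):=Corr_*(\mathcal{M}_{0,\beta}(\mathfrak{X},L);tri,tri)(1)=\tilde{\Omega}^{Floer}(\beta;L)$ is well-defined. By Theorem \ref{11}(1) and Theorem \ref{1}(2), the Kuranishi structures and the continuous families of multi-sections on $\mathcal{M}_{1,\beta}$ are compatible with $\mathfrak{forget}:\mathcal{M}_{1,\beta}(\mathfrak{X},L)\to\mathcal{M}_{0,\beta}(\mathfrak{X},L)$, meaning they are pulled back from those on $\mathcal{M}_{0,\beta}$. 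This allows me to factorize the correspondence into fiber integration along $\mathfrak{forget}$ (whose fiber over $[f]$ is identified with $\partial D^{2}/\mathrm{Aut}(f)$, the location of the unique boundary marked point) followed by the correspondence that defines $m_{-1,\beta}(L)$.

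Next I would carry out the fiber integration explicitly. On the fiber above $[f]\in\mathcal{M}_{0,\beta}$, the map $ev_{\vartheta}$ is constant: by Proposition \ref{48} the disc $f$ is holomorphic only for the unique complex structure $J_{\vartheta_{0}}$ with $\vartheta_{0}=\mathrm{Arg}\,Z_{\beta}-\pi/2$, so the $S^{1}_{\vartheta}$-coordinate is fixed along the fiber. The map $ev_{0}$ restricted to the fiber parametrizes the boundary loop $f|_{\partial D^{2}}:\partial D^{2}\to L$, which represents $\partial\beta\in H_{1}(L)$. Because $\rho$ is closed on $L\times S^{1}_{\vartheta}$, its restriction $\rho|_{L\times\{\vartheta_{0}\}}$ is a closed $1$-form on $L$, and hence
\[
\int_{\mathrm{fiber}}(ev_{0},ev_{\vartheta})^{*}\rho \;=\; \int_{\partial D^{2}}(f|_{\partial D^{2}})^{*}\rho|_{L\times\{\vartheta_{0}\}} \;=\; \int_{\partial\beta}\rho,
\]
independent of the chosen point $[f]\in\mathcal{M}_{0,\beta}$.

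Since the fiberwise integral is a constant function on $\mathcal{M}_{0,\beta}$, it can be pulled out of the remaining correspondence, yielding
\[
Corr_{*}(\mathcal{M}_{1,\beta}(L);(ev_{0},ev_{\vartheta}),\mathfrak{forget})(\rho) \;=\; \bigg(\int_{\partial\beta}\rho\bigg)\cdot m_{-1,\beta}(L).
\]
The main technical subtlety is to make the fiber integration rigorous at the level of multi-sections on $\mathcal{M}_{1,\beta}$: one must know that the perturbed zero locus really does fiber as an $S^{1}$-bundle over the perturbed zero locus on $\mathcal{M}_{0,\beta}$. This is exactly the compatibility with $\mathfrak{forget}$ built into Theorem \ref{1}(2). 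A minor additional point is that for a perturbed solution $f'$ near a genuine holomorphic $f$, the boundary $f'|_{\partial D^{2}}$ is a small perturbation of $f|_{\partial D^{2}}$ and therefore still represents $\partial\beta\in H_{1}(L)$, so the closedness of $\rho$ combined with homological invariance still gives $\int_{\partial D^{2}}(f'|_{\partial D^{2}})^{*}\rho=\int_{\partial\beta}\rho$, and the argument goes through.
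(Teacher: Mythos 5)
Your proof is correct and follows essentially the same route as the paper: both factor the correspondence through $\mathfrak{forget}$ using the compatibility of the Kuranishi structures and multi-sections with the forgetful map, and both reduce the computation to the fiberwise integral of $(ev_0,ev_{\vartheta})^*\rho$ over the boundary circle, which equals $\int_{\partial\beta}\rho$ by closedness of $\rho$ and constancy of $ev_{\vartheta}$ on the fiber. The paper packages this via the projection formula and the composition formula (Proposition \ref{3}) rather than carrying out the fiber integration by hand, but the substantive step --- including the subtlety of making sense of a correspondence whose target is the Kuranishi space $\mathcal{M}_{0,\beta}$ rather than a manifold --- is the same one you identify.
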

\begin{proof}
\begin{align*}
         &\int_{L\times S^1_{\vartheta}}m_{0,\beta}\wedge \rho\\
         =&\int_{L\times S^1_{\vartheta}} Corr_*(\mathcal{M}_{1,\beta}(L);tri,(ev_0,ev_{\vartheta}))(1)\wedge \rho \\
         =&Corr_*(\mathcal{M}_{1,\beta}(L);(ev_0,ev_{\vartheta}),tri)(\rho)         \\
         =&Corr_*(\mathcal{M}_{1,\beta}(L);(ev_0,ev_{\vartheta}),tri\circ \mathfrak{forget})(\rho)         \\
         =&Corr_*(\mathcal{M}_{0,\beta}(L);id,tri)(Corr_*(\mathcal{M}_{1,\beta}(L_t);(ev_0,ev_{\vartheta}),\mathfrak{forget})(\rho)) \\
         =&Corr_*(\mathcal{M}_{0,\beta}(L);tri,tri)(1)\cdot \int_{\partial\beta} \rho\\
         =&m_{-1,\beta} \cdot  \int_{\partial\beta} \rho
\end{align*} The second equality is from the projection formula of integration along fibres. The fourth equality is a bit subtle.
The smooth correspondence is originally defined only when the target
is a smooth manifold. However, the compatibility of forgetful map
guarantees the definition extends to this case.
\end{proof}

\begin{bibdiv}
\begin{biblist}
\bibselect{file001}
\end{biblist}
\end{bibdiv}

Department of Mathematics, Stanford University\\
E-mail address: yslin221@stanford.edu

\end{document}